\newcommand{\xdasharrow}[2][->]{
\tikz[baseline=-\the\dimexpr\fontdimen22\textfont2\relax]{
\node[anchor=south,font=\scriptsize, inner ysep=1.5pt,outer xsep=2.2pt](x){#2};
\draw[shorten <=3.4pt,shorten >=3.4pt,dashed,#1](x.south west)--(x.south east);
}
}
\newcommand{\moplus}{\mathop{\textstyle{\bigoplus}}\limits}
\newcommand{\FF}{\mathbb{F}}
\newcommand{\ZZ}{\mathbb{Z}}
\newcommand{\PP}{\mathbb{P}}
\newcommand{\QQ}{\mathbb{Q}}
\newcommand{\RR}{\mathbb{R}}
\newcommand{\OOO}{{\mathscr{O}}} 
\newcommand{\type}[1]{$\mathrm{#1}$}
\newcommand{\rk}{\operatorname{rk}}
\newcommand{\rr}{\operatorname{r}}
\newcommand{\dd}{\operatorname{d}}
\newcommand{\x}{\mathbf{x}}
\newcommand{\f}{\mathbf{f}}
\newcommand{\h}{\mathbf{h}}
\newcommand{\e}{\mathbf{e}}
\newcommand{\ba}{\mathbf{a}}
\newcommand{\bv}{\mathbf{v}}
\newcommand{\balpha}{{\boldsymbol{\upalpha}}}
\newcommand{\repsilon}{{\boldsymbol{\upepsilon}}}
\newcommand{\bLambda}{\boldsymbol{\Lambda}}
\newcommand{\Eff}{\operatorname{Eff}}
\newcommand{\Amp}{\operatorname{Amp}}
\newcommand{\Mov}{\operatorname{Mov}}
\newcommand{\NE}{\overline{\operatorname{NE}}}
\newcommand{\PGL}{\operatorname{PGL}}
\newcommand{\GL}{\operatorname{GL}}
\newcommand{\Sing}{\operatorname{Sing}}
\newcommand{\Aut}{\operatorname{Aut}}
\newcommand{\Pic}{\operatorname{Pic}}
\newcommand{\Cl}{\operatorname{Cl}}
\newcommand{\Gr}{\operatorname{Gr}}
\newcommand{\Bs}{\operatorname{Bs}}
\newcommand{\can}{{\operatorname{can}}}
\newcommand{\xref}[1]{\textup{\ref{#1}}}
\newcommand{\hD}{\widehat{D}}
\newcommand{\hE}{\widehat{E}}
\newcommand{\hM}{\widehat{M}}
\newcommand{\hP}{\widehat{P}}
\newcommand{\hS}{\widehat{S}}
\newcommand{\hX}{\widehat{X}}
\newcommand{\hZ}{\widehat{Z}}
\newcommand{\tS}{\tilde{S}}
\newcommand{\tX}{\tilde{X}}
\newcommand{\tD}{\tilde{D}}
\newcommand{\ti}{\tilde\imath}
\newcommand{\fD}{\mathfrak{D}}
\newcommand{\tcE}{\tilde{\mathscr{E}}}
\newcommand{\cE}{\mathscr{E}}
\newcommand{\cF}{\mathscr{F}}
\newcommand{\cI}{\mathscr{I}}
\newcommand{\cO}{\mathscr{O}}
\newcommand{\cS}{\mathscr{S}}
\newcommand{\cT}{\mathscr{T}}
\newcommand{\cU}{\mathscr{U}}
\newcommand{\bH}{\mathbf{H}}
\DeclareMathOperator{\Bl}{Bl}
\DeclareMathOperator{\Fl}{Fl}
\DeclareMathOperator{\codim}{codim}
\DeclareMathOperator{\Coker}{Coker}
\DeclareMathOperator{\Proj}{Proj}
\DeclareMathOperator{\rank}{rank}
\DeclareMathOperator{\Hom}{Hom}
\DeclareMathOperator{\Ext}{Ext}
\DeclareMathOperator{\Tor}{Tor}
\DeclareMathOperator{\Ker}{Ker}
\newcommand{\rc}{\mathrm{c}}
\newcommand{\rA}{\mathrm{A}}
\newcommand{\rD}{\mathrm{D}}
\newcommand{\rE}{\mathrm{E}}
\newcommand{\rC}{\mathrm{C}}
\newcommand{\rR}{\mathrm{R}}  
\renewcommand\labelenumi{\rm (\roman{enumi})}
\renewcommand\theenumi{\rm (\roman{enumi})}
\theoremstyle{plain}
\newtheorem{theorem}{Theorem}[section]
\newtheorem{lemma}[theorem]{Lemma}
\newtheorem{proposition}[theorem]{Proposition}
\newtheorem{corollary}[theorem]{Corollary}
\newtheorem*{claim*}{Claim}
\theoremstyle{definition}
\newtheorem{definition}[theorem]{Definition}
\newtheorem*{definition*}{Definition}
\newtheorem*{notation*}{Notation}
\newtheorem{remark}[theorem]{Remark}
\newcounter{NO}\numberwithin{NO}{subsection}
\title{On higher-dimensional del Pezzo varieties}
\author{Alexander Kuznetsov}
\address{
Steklov Mathematical Institute of Russian Academy of Sciences, Moscow, RUSSIA
\newline\indent
Laboratory of Algebraic Geometry, SU-HSE, 
7 Vavilova Str., Moscow, 117312, RUSSIA
}
\email{akuznet@mi-ras.ru} 
\author{Yuri Prokhorov}
\address{
Steklov Mathematical Institute of Russian Academy of Sciences, Moscow, RUSSIA
\newline\indent
Department of Algebra, Faculty of Mathematics, Moscow State
University, Moscow, 119 991, RUSSIA
\newline\indent
Laboratory of Algebraic Geometry, SU-HSE, 
7 Vavilova Str., Moscow, 117312, RUSSIA
}
\email{prokhoro@mi-ras.ru} 
\thanks{We were partially supported by the HSE University Basic Research Program.}
\begin{document}

\maketitle

\begin{abstract}
We study del Pezzo varieties, higher-dimensional analogues of del Pezzo surfaces.
In particular, we introduce ADE classification of del Pezzo varieties,
show that in type~$\rA$ the dimension of non-conical del Pezzo varieties is bounded by~$12 - d - r$,
where~$d$ is the degree and~$r$ is the rank of the class group, and classify maximal del Pezzo varieties.
\end{abstract}

{\small\tableofcontents}

\section{Introduction}
Del Pezzo surfaces form one of the most important and classical families of varieties in algebraic geometry.
In this paper we study their higher-dimensional analogues.

\begin{definition}
\label{def:dp}
A \emph{del Pezzo variety} is a variety of dimension~$n = \dim(X) \ge 2$ 
with at worst terminal singularities such that
\begin{equation}
\label{eq:kx-a}
- K_X = (n-1)A_X, 
\end{equation}
where~$A_X$ is an ample Cartier divisor class, called the \emph{fundamental divisor class} of~$X$.
\end{definition}

We will see later that the class group~$\Cl(X)$ of Weil divisors of a del Pezzo variety is torsion free 
(see Corollary~\ref{cor:clx-free}), so~$A_X$ is intrinsic to~$X$; 
however we will sometimes include it into the notation.
The main discrete invariants of a del Pezzo variety~$X$ is its 
\emph{degree}
\begin{align}
\label{eq:ddx}
\dd(X) &\coloneqq (A_X)^n\\
\intertext{and the rank of the class group}
\label{eq:rrx}
\rr(X) &\coloneqq \rank(\Cl(X)).
\end{align}
For~$n = 2$ we have the relation~$\dd(X) + \rr(X) = 10$, but it does not hold in higher dimensions.

We state a general classification theorem for del Pezzo varieties in Theorem~\ref{cla:dP} below.
This theorem has a long history:
first results of this sort for threefolds were obtained in the works of~G.~Fano in the first half of 20th century
and in the case of smooth threefolds the classification was completed by V.A.~Iskovskikh in~\cite[Theorem~4.2]{Isk:Fano1e}.
In higher dimensions the systematic study of del Pezzo varieties was initiated by T.~Fujita~\cite{Fujita-all};
in particular, Fujita proved Theorem~\ref{cla:dP} for smooth del Pezzo varieties of arbitrary dimension,
as well as parts~\ref{cla:dP1}-\ref{cla:dP4} of Theorem~\ref{cla:dP} in full generality.
However for other parts of the classification (especially for part~\ref{cla:dP5}) it is hard to find references,
so we provide a general statement here and give a proof in the body of the paper.
See also Theorem~\ref{thm:quintic-detailed} for more details about del Pezzo varieties of degree~$5$.

Recall that a \emph{line} on a polarized variety~$(X,A)$
is a smooth rational curve whose degree with respect to~$A$ equals~$1$. 
A polarized variety~$(X,A)$ is said to be \emph{conical}
if there is a point~$\bv \in X$ (called a \emph{vertex} of~$X$) 
such that~$X$ is swept by lines passing through~$\bv$.

\begin{theorem}
\label{cla:dP}
Let~$X$ be a del Pezzo variety of dimension~$n \ge 3$.
Then~$1 \le \dd(X) \le 8$ and 
\begin{enumerate}
\item\label{cla:dP1}
If $\dd(X)=1$, then~$X$ is isomorphic to a hypersurface in a weighted projective space
\begin{equation*}
X_6\subset \PP(1^n,2,3)
\end{equation*}
of degree~$6$ which does not pass through the singular points of~$\PP(1^n,2,3)$.

\item\label{cla:dP2}
If $\dd(X)=2$, then~$X$ is isomorphic to a hypersurface in a weighted projective space
\begin{equation*}
X_4\subset \PP(1^{n+1},2) 
\end{equation*}
of degree~$4$ which does not pass through the singular point of~$\PP(1^{n+1},2)$.

\item\label{cla:dP3}
If $\dd(X)=3$, then $X$ is isomorphic to a cubic hypersurface
\begin{equation*}
X_3 \subset \PP^{n+1}.
\end{equation*}

\item\label{cla:dP4}
If $\dd(X)=4$, then $X$ is isomorphic to a complete intersection of two quadrics
\begin{equation*}
X_{2,2} \subset \PP^{n+2}.
\end{equation*}

\item\label{cla:dP5}
If $\dd(X) = 5$ and~$X$ is non-conical, then $X$ is isomorphic to a complete intersection
\begin{equation*}
\Gr(2,5) \cap \PP^{n+3} \subset \PP^9,
\end{equation*}
in particular~$n \le 6$.

\item\label{cla:dP6}
If $\dd(X) = 6$ and~$X$ is non-conical, then~$X$ is isomorphic to a complete intersection
\begin{equation*}
(\PP^2 \times \PP^2) \cap \PP^{n+4} \subset \PP^8\qquad 
\text{or}\qquad \PP^1 \times \PP^1 \times \PP^1;
\end{equation*}
in particular~$\rr(X) \ge 2$ and~$n \le 4$.

\item\label{cla:dP7}
If~$\dd(X)=7$ and~$X$ is non-conical, then~$X$ is isomorphic to the blowup of $\PP^3$ at a point;
in particular~$\rr(X) = 2$ and~$n = 3$.

\item\label{cla:dP8}
If~$\dd(X) = 8$ and~$X$ is non-conical, then~$X \cong \PP^3$;
in particular~$\rr(X) = 1$ and~$n = 3$.
\end{enumerate}
\end{theorem}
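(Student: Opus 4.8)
The plan is to argue by induction on $n$ using general hyperplane sections, reducing the degree bound to the theory of del Pezzo surfaces, and then to reconstruct $X$ degree by degree from the graded ring $R(X,A_X) = \bigoplus_{m \ge 0} H^0(X, mA_X)$. First I would set up the inductive ladder. Applying Kawamata--Viehweg vanishing to $K_X + mA_X = (m - n + 1)A_X$ yields $H^i(X, mA_X) = 0$ for all $i > 0$ whenever $m \ge 2 - n$; in particular $H^i(X, \cO_X) = 0$ and $H^i(X, A_X) = 0$ for $i > 0$, so $\chi(\cO_X) = 1$. A Bertini argument, using that $X$ is terminal, should show that a general $Y \in |A_X|$ is an irreducible normal variety with terminal singularities, and adjunction gives
\begin{equation*}
-K_Y = -(K_X + A_X)|_Y = (n-2)A_Y, \qquad A_Y = A_X|_Y,
\end{equation*}
so $Y$ is again a del Pezzo variety of dimension $n - 1$ with $\dd(Y) = \dd(X)$. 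Iterating and using the sequences $0 \to \cO_X \to \cO_X(A_X) \to \cO_Y(A_Y) \to 0$ together with the vanishing, I obtain a del Pezzo surface $S$ of degree $\dd(X)$ and the identity $h^0(X, A_X) = (n-2) + h^0(S, -K_S) = n + \dd(X) - 1$; equivalently $(X, A_X)$ has $\Delta$-genus $1$.

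The degree bound then follows from surface theory: $\dd(X) = (-K_S)^2 \le 9$, with equality only when $S \cong \PP^2$ polarized by $\cO_{\PP^2}(3)$. To exclude $\dd(X) = 9$ for $n \ge 3$, I would show that having every surface section isomorphic to the cubic Veronese forces $X$ to be the projective cone over it; but a discrepancy computation (blow up the vertex; the exceptional $\PP^2$ has normal bundle $\cO(-3)$ and discrepancy $0$) shows this cone is canonical and \emph{not} terminal, contradicting the hypothesis. This gives $1 \le \dd(X) \le 8$.

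For the reconstruction I would analyse $R(X, A_X)$ according to $\dd(X)$. When $\dd(X) \ge 3$ the class $A_X$ is very ample and $\Delta$-genus $1$ embeds $X \subset \PP^{\,n + \dd(X) - 2}$ with $\deg X = \codim X + 2$; projective normality and the structure of the homogeneous ideal then identify $X$ with a cubic (part~\ref{cla:dP3}), a complete intersection of two quadrics (part~\ref{cla:dP4}), or, for $\dd(X) = 5, 6$, a linear section of one of the \emph{maximal} del Pezzo varieties. The cases $\dd(X) = 1, 2$ are handled separately, since $A_X$ is then not very ample and one must pass to $|2A_X|$ or $|3A_X|$, recovering the weighted hypersurface models of parts~\ref{cla:dP1}--\ref{cla:dP2}. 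For $\dd(X) \ge 5$ I would invoke the classification of maximal del Pezzo varieties: every non-conical $X$ is a linear section of a maximal one, the maximal del Pezzo varieties of degrees $5$ and $6$ being $\Gr(2,5) \subset \PP^9$, the Segre $\PP^2 \times \PP^2 \subset \PP^8$, and $\PP^1 \times \PP^1 \times \PP^1$; their dimensions give the bounds $n \le 6$ and $n \le 4$, while degrees $7, 8$ collapse to the blowup of $\PP^3$ and to $\PP^3$ itself.

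The main obstacle is twofold. Technically, running the ladder in the terminal (non-smooth) category requires a Bertini theorem guaranteeing that general members of $|A_X|$ stay normal and terminal and that adjunction applies, together with control of the base locus in degrees $1$ and $2$. Conceptually, the hard part is the high-degree reconstruction, especially part~\ref{cla:dP5}: one must prove that a non-conical del Pezzo variety of degree $5$ is \emph{extendable} to, and is cut out as a linear section of, $\Gr(2,5)$ --- which is exactly where the classification of maximal del Pezzo varieties and the dimension bound $n \le 12 - \dd(X) - \rr(X)$ developed in the paper do the essential work, and where the conical case must be carefully separated out.
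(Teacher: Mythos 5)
Your setup --- the ladder of fundamental divisors, Kawamata--Viehweg vanishing, adjunction giving $\dd(Y)=\dd(X)$, and the $\Delta$-genus-one count $h^0(X,A_X)=n+\dd(X)-1$ --- is exactly the paper's Proposition~\ref{DP:|A|}, and your treatment of degrees $\le 4$ via the graded ring is the classical route the paper itself cites for parts~\ref{cla:dP1}--\ref{cla:dP4}. The genuine gap is at degrees $\ge 5$, which is precisely the part the paper flags as having no adequate reference. There your proposal is circular: you propose to ``invoke the classification of maximal del Pezzo varieties'' and the bound $n\le 12-\dd(X)-\rr(X)$, but in the paper those results (Theorem~\ref{thm:intro-bircla}, Theorem~\ref{thm:intro-dpa}, Theorem~\ref{thm:intro-a-detailed}, Corollary~\ref{cor:drn}) are proved \emph{after} Theorem~\ref{cla:dP} and \emph{use} it: the proof of Theorem~\ref{thm:intro-bircla}\ref{it:bircla-am} quotes Theorem~\ref{cla:dP} to identify the $\rr(X_0)=1$ cases as $\PP^3$ and $\Gr(2,5)\cap\PP^{n+3}$, and the proof of Theorem~\ref{thm:intro-a-detailed} uses the uniqueness of $\Gr(2,5)$, i.e.\ part~\ref{cla:dP5}. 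The assertion that every non-conical quintic del Pezzo variety is ``extendable to, and cut out as a linear section of, $\Gr(2,5)$'' \emph{is} part~\ref{cla:dP5}; invoking extendability is assuming the conclusion. The paper's actual argument for this step is independent and of a different nature: blow up a general point (Proposition~\ref{prop:dP4:constr-i}), flop to a $\QQ$-factorial model admitting a non-birational $K$-negative extremal contraction (Lemma~\ref{lem:another-blowup}, Proposition~\ref{propo:ext-rays}), identify that contraction using the self-contained classification of del Pezzo bundles on $\PP^2$ and $\PP^1\times\PP^1$ (Propositions~\ref{prop:bundle-p2} and~\ref{prop:bundle-p1p1}) and of quadric bundles (Proposition~\ref{prop:quadric-over-p1}), and then recognize the anticanonical models as Schubert linear sections of $\Gr(2,5)$ by explicit constructions (Lemma~\ref{lem:three-schuberts}, the flop of \cite{KP-Mu}, and Lemma~\ref{lem:special-sextic} for degree~$6$). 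A blind proof must supply an argument of this kind, or else a genuine extension theorem valid in the terminal (non-smooth) category; your proposal supplies neither.

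A secondary, more easily repaired gap concerns the bound $\dd(X)\le 8$. Your exclusion of $\dd(X)=9$ rests on the claim that a variety all of whose general linear surface sections are $v_3(\PP^2)$ must be a cone over it; this non-extendability statement for the Veronese is nontrivial and is only asserted (your discrepancy computation for the cone itself is correct, but it is the forcing step that carries the weight). The paper never argues this way: the bound falls out of the birational classification, and can also be read off from Proposition~\ref{prop:class-divisor}, where $\Xi(X)$ is a \emph{nonzero} negative definite lattice of rank $10-\dd(X)-\rr(X)$, whence $\dd(X)+\rr(X)\le 9$ and so $\dd(X)\le 8$.
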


Note that a del Pezzo variety~$X$ of degree~$d \le 4$ is conical 
if and only if in an appropriate coordinate system of the ambient (weighted) projective space
some variables of degree~$1$ do not appear in the equations. 
Similarly, conical varieties of degree~$d \ge 5$ are obtained as linear sections 
of iterated cones over the del Pezzo varieties~
\begin{equation*}
\Gr(2,5),
\quad
\PP^2 \times \PP^2,
\quad
\PP^1 \times \PP^1 \times \PP^1,
\quad
\Bl_P(\PP^3),
\quad\text{and}\quad
\PP^3
\end{equation*}
in the embeddings given by their fundamental classes.
In what follows we mostly consider non-conical varieties.

So far, the story looks quite boring and does not differ much from the story of del Pezzo surfaces.
It becomes much more interesting when one takes into account the birational aspect,
which allows one to connect in an interesting way del Pezzo varieties of different degrees.
It turns out very useful to introduce the following weakening of the definition.

\begin{definition}
\label{def:almostDP}
An \emph{almost del Pezzo variety} is a variety of dimension~$n = \dim(X) \ge 2$ 
with at worst terminal singularities for which~\eqref{eq:kx-a} holds,
where~$A_X$ is a nef and big Cartier divisor,
and the map $\Phi_{|mA_X|} \colon X \to \PP^N$ given by the linear system~$|mA_X|$ for~$m\gg 0$ 
is a \emph{small morphism}, i.e., does not contract divisors.
\end{definition}

\begin{remark}
P.~Jahnke and Th.~Peternell in~\cite{Jahnke-Pet} used the name ``almost del Pezzo'' for a different class of varieties;
on the one hand, they considered only 
smooth varieties (they called them ``almost del Pezzo manifolds'');
on the other hand they did not assume the morphism~$\Phi_{|mA_X|}$ to be small.
In this setup they provided a birational classification, 
which is quite close to our results stated in Theorem~\ref{thm:primitive-classification} below.
\end{remark} 

For surfaces Definition~\ref{def:dp} is equivalent to Definition~\ref{def:almostDP},
but in higher dimensions these definitions differ.
To explain the difference note that for an almost del Pezzo variety~$X$ 
one can always consider its \emph{anticanonical model}
\begin{equation*}
X_\can \coloneqq \Phi_{|mA_X|}(X)
\qquad\text{for~$m \gg 0$},
\end{equation*}
(i.e., the image of the morphism~$\Phi_{|mA_X|}$); see Lemma~\ref{lem:adp-dp} for another description of~$X_\can$.
Then the \emph{anticanonical morphism}
\begin{equation*}
\xi \colon X \longrightarrow X_\can
\end{equation*}
is small.
Conversely, 
$\QQ$-factorializations (see Definition~\ref{def:q-factorialization}) of any del Pezzo variety are almost del Pezzo,
and any pair of almost del Pezzo varieties~$X'$ and~$X''$ such that~\mbox{$X'_\can \cong X''_\can$}
is related by a \emph{pseudoisomorphism}, i.e., by a birational map~$\chi \colon X' \dashrightarrow X''$
which induces an isomorphism of complements of some closed subsets of codimension~$2$.
Moreover, if both~$X'$ and~$X''$ are $\QQ$-factorial, 
the map~$\chi$ is a composition of flops (Lemma~\ref{lemma:repant-models}).

Thus, classification of del Pezzo varieties up to isomorphism is equivalent 
to classification of almost del Pezzo varieties up to pseudoisomorphism
or $\QQ$-factorial almost del Pezzo varieties up to flops.
Note that the invariants~$\dd(X)$ and~$\rr(X)$ can be defined for almost del Pezzo varieties 
by the same formulas~\eqref{eq:ddx} and~\eqref{eq:rrx}, and that
\begin{equation*}
\dd(X_\can) = \dd(X),
\qquad 
\rr(X_\can) = \rr(X)
\end{equation*}
for any almost del Pezzo variety~$X$. 

Our approach to birational classification of del Pezzo varieties is quite close to the approach of Jahnke and Peternell.
Namely, to classify del Pezzo varieties birationally we introduce the following

\begin{definition}
\label{def:imprimitive}
An almost del Pezzo variety~$X$ is \emph{imprimitive} 
if it is pseudoisomorphic to a $\QQ$-factorial almost del Pezzo variety~$X'$ which admits
a $K$-negative birational extremal contraction.
Otherwise, we say~$X$ is \emph{primitive}.
\end{definition}

In fact, using a result from~\cite{Andreatta-Tasin} we show in Lemma~\ref{lemma:ext-rays0} 
that any $K$-negative birational contraction of a $\QQ$-factorial almost del Pezzo variety
is the blowup of several distinct smooth points on another almost del Pezzo variety.
Using this, we deduce the following structural result.

\begin{theorem}
\label{thm:imprimitive}
For any del Pezzo variety~$X$ 
there is a primitive del Pezzo variety~$X_0$
and a collection~$P_1,\dots,P_k \in X_0$ of distinct smooth points such that 
\begin{equation*}
X \cong (\Bl_{P_1,\dots,P_k}(X_0))_\can.
\end{equation*}
Moreover, $\dd(X) = \dd(X_0) - k$ and~$\rr(X) = \rr(X_0) + k$.
\end{theorem}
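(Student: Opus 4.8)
The plan is to argue by induction on the rank $\rr(X)$, using the birational contractions supplied by Lemma~\ref{lemma:ext-rays0} to peel off exceptional divisors one batch at a time. First I would replace $X$ by a $\QQ$-factorialization $X'$ (Definition~\ref{def:q-factorialization}): this is a $\QQ$-factorial almost del Pezzo variety, pseudoisomorphic to $X$, with $\dd(X')=\dd(X)$, $\rr(X')=\rr(X)$ and $X'_\can\cong X_\can$. Since $A_X$ is ample, the anticanonical morphism of $X$ is an isomorphism, so $X_\can\cong X$ and hence $X'_\can\cong X$. If no $\QQ$-factorial almost del Pezzo variety pseudoisomorphic to $X$ admits a $K$-negative birational extremal contraction, then $X$ is primitive by Definition~\ref{def:imprimitive}, and the statement holds with $X_0=X$ and $k=0$; this is the base of the induction.

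Otherwise $X$ is imprimitive, so after replacing $X'$ by a flop (Lemma~\ref{lemma:repant-models}) I may assume $X'$ carries a $K$-negative birational extremal contraction $f\colon X'\to Y$. By Lemma~\ref{lemma:ext-rays0} the variety $Y$ is again almost del Pezzo and $f$ is the blowup $X'=\Bl_{P_1,\dots,P_m}(Y)$ of $m\ge 1$ distinct smooth points; being an extremal contraction of a $\QQ$-factorial variety, $Y$ is $\QQ$-factorial as well. Comparing fundamental classes through $A_{X'}=f^*A_Y-\sum E_i$ and computing top self-intersections gives
\begin{equation*}
\dd(Y)=\dd(X')+m=\dd(X)+m, \qquad \rr(Y)=\rr(X')-m=\rr(X)-m,
\end{equation*}
so the anticanonical model $Y_\can$ is a del Pezzo variety with $\rr(Y_\can)=\rr(Y)=\rr(X)-m<\rr(X)$.

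Now I would apply the inductive hypothesis to $Y_\can$: there is a primitive del Pezzo variety $X_0$ and distinct smooth points $Q_1,\dots,Q_j\in X_0$ with $Y_\can\cong(\Bl_{Q_1,\dots,Q_j}(X_0))_\can$, $\dd(Y_\can)=\dd(X_0)-j$ and $\rr(Y_\can)=\rr(X_0)+j$. Combining with the displayed formulas yields $\dd(X)=\dd(X_0)-(m+j)$ and $\rr(X)=\rr(X_0)+(m+j)$, so $k=m+j$ is forced; it remains to promote these numerical relations to the asserted isomorphism. For this I would transport the points $P_1,\dots,P_m$ along the chain of pseudoisomorphisms $Y\dashrightarrow \Bl_{Q_1,\dots,Q_j}(X_0)\dashrightarrow X_0$ to points $\bar P_1,\dots,\bar P_m\in X_0$, and use that blowing up a smooth point commutes with a pseudoisomorphism that is a local isomorphism near it: this would identify $X=(\Bl_{P_1,\dots,P_m}(Y))_\can$ with $(\Bl_{\bar P_1,\dots,\bar P_m,Q_1,\dots,Q_j}(X_0))_\can$, the iterated blowup of disjoint smooth points being a simultaneous blowup.

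The main obstacle is precisely this last transport step. A priori the fixed points $P_i$ need not avoid the flopping loci of the pseudoisomorphisms connecting $Y$ to the model over $X_0$, nor the exceptional divisors of $\Bl_{Q_1,\dots,Q_j}(X_0)\to X_0$, and distinct $P_i$ (together with their images) must be shown to remain distinct and to stay off the centers $Q_l$. Since all these bad loci have codimension at least two while the transport of a smooth point together with its exceptional divisor is controlled in codimension one, I would resolve this by arranging the order of the extremal contractions so that the divisors $E_i$ are contracted last --- equivalently, by running the relevant part of the program relatively over the image of $\sum E_i$ --- so that each center $\bar P_i$ is literally the image of $P_i$ under a morphism defined at $P_i$, landing in the smooth locus of $X_0$ at points distinct from the $Q_l$ and from one another. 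Verifying that such a reordering is available, and that it preserves the almost del Pezzo property at every intermediate stage, is the technical heart of the argument.
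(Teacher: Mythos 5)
Your strategy (induct on $\rr(X)$, peel off one $K$-negative blowup via Lemma~\ref{lemma:ext-rays0}, recurse on the anticanonical model of the target) is sensible, but the proof has a genuine gap exactly where you flag it, and the remedies you sketch do not close it. The transport of the points is only half the problem: the $P_i$ do avoid the flopping loci (they lie on no $K$-trivial curve by Lemma~\ref{lemma:ext-rays0}, and a small birational morphism over a $\QQ$-factorial neighbourhood is an isomorphism), so $\bar P_i\in\Bl_{Q_1,\dots,Q_j}(X_0)$ is well defined; but nothing in your argument prevents $\bar P_i$ from lying on an exceptional divisor $E_{Q_l}\cong\PP^{n-1}$ of $\Bl_{Q_1,\dots,Q_j}(X_0)\to X_0$. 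Avoidance of $K$-trivial curves does not help, since a line $\ell\subset E_{Q_l}$ has $K\cdot\ell=-(n-1)\neq 0$; and your codimension heuristic does not apply because the $P_i$ are fixed points, not generic ones. If $\bar P_i\in E_{Q_l}$, the iterated blowup is not a simultaneous blowup of points of $X_0$ and the whole identification collapses (this is precisely the failure mode of condition~\ref{prop:dP4:constr-ia} of Proposition~\ref{prop:dP4:constr-i} described in the remark following it). Ruling this out is possible, but it requires a lattice/positivity argument you never make: if $\bar P_i$ lay on $E_{Q_l}$, the strict transform of $E_{Q_l}$ on $X'=\Bl_{P_i}(Y)$ would be an effective divisor with $\langle A_{X'},\,\cdot\,\rangle=0$, contradicting Lemma~\ref{lem:ad=1}. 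Moreover, even after the points land correctly you still owe an argument that $\Bl_{\bar P_1,\dots,\bar P_m,Q_1,\dots,Q_j}(X_0)$ is almost del Pezzo with anticanonical model $X$: a pseudoisomorphism does not automatically preserve the almost del Pezzo property (nefness of $A$ is exactly what flops destroy), so "blowup commutes with a local isomorphism" is not enough; one needs an honest morphism from an almost del Pezzo variety, as in Corollary~\ref{cor:targets}. Finally, the "reordering of extremal contractions" you propose is not a construction --- it is the statement to be proved.

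For comparison, the paper avoids transporting points altogether. It fixes a \emph{maximal pairwise orthogonal} collection of exceptional classes $E_1,\dots,E_k\in\Cl(X)$ and applies Proposition~\ref{prop:exceptional-contraction}: each class has a unique (and, by Lemma~\ref{lem:ad=1}, irreducible) effective representative by Proposition~\ref{prop:E-eff}; for $D=\sum E_i$, Lemma~\ref{lem:d-mmp} gives a $\QQ$-factorialization on which every $D$-negative extremal ray is $K$-negative; the program then contracts one $\hE_i$ at a time, and the remaining exceptional divisors stay disjoint from the contracted one because $\langle E_1,E_i\rangle=0$ while $A_X\vert_{E_1}$ is ample by~\eqref{eq:coe-e} --- disjointness is forced by the lattice, not by position. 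This produces $\hX\cong\Bl_{\hP_1,\dots,\hP_k}(\hX_0)$ in one stroke; the almost del Pezzo property of $\Bl_{P_1,\dots,P_k}(X_0)$ downstairs follows from Corollary~\ref{cor:targets} applied to the morphism $\hX\to\Bl_{P_1,\dots,P_k}(X_0)$, and primitivity of $X_0$ follows from \emph{maximality} of the orthogonal collection via Corollary~\ref{cor:imprimitivity} and Lemma~\ref{lem:contractions-class}\ref{it:class-blowup}. To salvage your induction you would essentially have to re-prove Proposition~\ref{prop:exceptional-contraction}, i.e.\ replace point-transport by this orthogonality-plus-ampleness disjointness argument.
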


Similarly, we show in Lemma~\ref{lemma:ext-rays0a} that any $K$-negative extremal contraction 
from a $\QQ$-factorial almost del Pezzo variety
to a lower-dimensional variety is a $\PP^{n-2}$-fibration over a del Pezzo surface
or a flat quadric bundle over~$\PP^1$.
Using this we obtain 

\begin{theorem}
\label{thm:primitive-classification}
If~$X$ is a primitive non-conical del Pezzo variety then one of the following holds:
\begin{enumerate}
\item 
\label{it:r1}
$\rr(X) = 1$ and~$\dd(X) \in \{1,2,3,4,5,8\}$;
\item 
\label{it:r2q}
$\rr(X) = 2$ and~$X = \hX_\can$, where~$\hX \to \PP^1$ is a quadric bundle, and~$\dd(X) \in \{1,2,4\}$;
\item 
\label{it:r2p}
$\rr(X) = 2$ and~$X = \hX_\can$, where~$\hX = \PP_{\PP^2}(\cE)$, 
and~$\dd(X) \in \{1,2,3,5,6\}$;
\item 
\label{it:r3}
$\rr(X) = 3$ and~$X = \hX_\can$, where~$\hX = \PP_{\PP^1 \times \PP^1}(\cE)$, 
and~$\dd(X) \in \{2,4,6\}$.
\end{enumerate}
In the last two cases~$\cE$ is a vector bundle of rank~\mbox{$n - 1$} with~$\rc_1(\cE) = K$.
\end{theorem}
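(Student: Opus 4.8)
The plan is to run the Minimal Model Program on a $\QQ$-factorialization of $X$ and to read the structure of $X$ off the only kind of extremal contraction that primitivity leaves available. First I would fix a $\QQ$-factorialization $\xi\colon\hX\to X$ (Definition~\ref{def:q-factorialization}). By the discussion preceding Definition~\ref{def:imprimitive}, $\hX$ is a $\QQ$-factorial almost del Pezzo variety with $\hX_\can=X$ and $\xi$ small; since $\hX$ is $\QQ$-factorial this gives $\rr(X)=\rank\Cl(\hX)=\rho(\hX)$. Because $-K_{\hX}=(n-1)A_{\hX}$ is nef and big, the cone $\NE(\hX)$ is polyhedral, every extremal ray is $K$-nonpositive, and the $K$-trivial rays are exactly those contracted by the small morphism $\xi$. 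As $A_{\hX}$ is big it cannot vanish on every extremal ray, so there is a $K$-negative extremal ray $R$. Primitivity forbids the contraction of $R$ from being birational, since that would be a $K$-negative birational extremal contraction of a $\QQ$-factorial almost del Pezzo variety pseudoisomorphic to $X$; hence the contraction of $R$ is of fiber type. If $\rho(\hX)=1$ it contracts $\hX$ to a point, so $X=\hX$ has Picard rank $1$, and Theorem~\ref{cla:dP}---in which degrees $6$ and $7$ force $\rr\ge 2$---yields $\dd(X)\in\{1,2,3,4,5,8\}$, which is case~\ref{it:r1}.

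Now suppose $\rho(\hX)\ge 2$ and apply Lemma~\ref{lemma:ext-rays0a} to the fiber-type contraction of $R$. If it is a flat quadric bundle $\hX\to\PP^1$, then it is extremal, hence has relative Picard rank $1$, so $\rho(\hX)=2$ and we are in case~\ref{it:r2q}. Otherwise $\pi\colon\hX\to S$ is a $\PP^{n-2}$-fibration over a del Pezzo surface $S$, and I claim $S$ is minimal. Indeed, if $E\subset S$ were a $(-1)$-curve, contracting the ray of $\NE(\hX)$ lying over $[E]$ would give a divisorial contraction of $\hX$ contracting $\pi^{-1}(E)$; as $\xi$ is small it contracts no divisor, so this ray is not $K$-trivial, and since $-K_{\hX}$ is nef it is $K$-negative, producing a $K$-negative birational extremal contraction and contradicting primitivity. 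Hence $S\cong\PP^2$ or $S\cong\PP^1\times\PP^1$, and $\rho(\hX)=\rho(S)+1$ gives $\rr(X)=2$ (case~\ref{it:r2p}) or $\rr(X)=3$ (case~\ref{it:r3}); in particular $\rr(X)\le 3$. A $\PP^{n-2}$-fibration with integral fibers over a smooth surface is a projective bundle, so $\hX=\PP_S(\cE)$ for a rank $n-1$ bundle $\cE$; comparing the relative canonical class of $\PP_S(\cE)$ with $-K_{\hX}=(n-1)A_{\hX}$ and identifying $A_{\hX}$ with the tautological class fixes the normalization $\rc_1(\cE)=K$.

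It remains to determine which degrees occur. In each of the cases~\ref{it:r2q}--\ref{it:r3} I would express $\dd(X)=(A_{\hX})^n$ through the fibration data---the Segre classes of $\cE$ for the projective bundles, and the discriminant of the quadric bundle in case~\ref{it:r2q}---and then impose that $A_{\hX}$ be nef and big, that the anticanonical model $X$ have terminal singularities and be non-conical, and that no $K$-negative birational contraction reappear after flops. I expect this enumeration to be the main obstacle: one must pin down exactly which bundles $\cE$ (respectively which quadric bundles) yield an admissible $X$, rule out the remaining degrees, and exhibit an example realizing each value in the lists $\{1,2,4\}$, $\{1,2,3,5,6\}$ and $\{2,4,6\}$. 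The bound $\dd(X)\le 8$ and the degree--rank constraints of Theorem~\ref{cla:dP} cut the possibilities down, and non-conicality is what excludes the iterated cone cases, but the precise lists must be extracted by this case-by-case computation.
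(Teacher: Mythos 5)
Your structural skeleton---pass to a $\QQ$-factorialization, produce a $K$-negative extremal ray, use primitivity to force it to be of fiber type, and invoke Lemma~\ref{lemma:ext-rays0a}---is the same as the paper's (Proposition~\ref{propo:ext-rays}), and your reduction of case~\ref{it:r1} to Theorem~\ref{cla:dP} is correct. But your proof that the base surface $S$ is minimal fails. You assert that a $(-1)$-curve $E\subset S$ yields an extremal ray of $\NE(\hX)$ whose contraction is divisorial with exceptional divisor $\pi^{-1}(E)$. No such contraction can exist: by Lemma~\ref{lemma:ext-rays0} the exceptional divisor of \emph{any} $K$-negative birational extremal contraction of a $\QQ$-factorial almost del Pezzo variety is isomorphic to $\PP^{n-1}$, whereas $\pi^{-1}(E)\cong\PP_E(\cE\vert_E)$ is a $\PP^{n-2}$-bundle over $\PP^1$ and has Picard rank~$2$. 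What actually happens is visible in the example $\hX=\PP_{\FF_1}(\cE_{\FF_1,2})\cong\FF_1\times_{\PP^1}\FF_1$ (Remark~\ref{rem:x633-flop}): the extremal ray lying over the $(-1)$-curve is spanned by $A$-trivial (hence $K$-trivial) curves, its contraction is \emph{small}, and one must first flop these curves, landing in $\Bl_{P_1,P_2}(\PP^3)$, before the divisor becomes contractible. This is exactly why Definition~\ref{def:imprimitive} allows passage to pseudoisomorphic models. The correct argument (Corollary~\ref{cor:dP4:constr2}) is lattice-theoretic: $\pi^*E\in\Cl(\hX)$ is an exceptional class by Lemma~\ref{lem:contractions-class}, and any exceptional class forces imprimitivity by Corollary~\ref{cor:imprimitivity}, whose proof (Proposition~\ref{prop:exceptional-contraction}) performs the required flops via a relative MMP.

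The second and larger gap is that the degree lists, which are the real content of the statement, are never proved; you defer them as ``the main obstacle'' and only outline a plan, and the plan is missing the key tool. In the paper the upper bounds $\dd(X)\le 7$ over $\PP^2$, $\dd(X)\le 6$ over $\PP^1\times\PP^1$, and $\dd(X)\le 5$ for quadric bundles with $\rr(X)=2$ come from Lemma~\ref{lem:dpv-dpb} (where non-conicality yields $H^0(Z,\cE)=0$ and hence $\rc_2(\cE)\ge 2$), Lemma~\ref{lem:dpb-maximal}, Propositions~\ref{prop:bundle-p2} and~\ref{prop:bundle-p1p1}, and Proposition~\ref{prop:quadric-over-p1} (whose exclusion of degree~$6$ uses the discriminant double cover, not just intersection numbers). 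More importantly, the exclusions of the remaining degrees---$4$ and $7$ over $\PP^2$, odd degrees over $\PP^1\times\PP^1$, and $3$, $5$ for quadric bundles---are obtained by the lattice computations of Lemmas~\ref{lem:lattice-p-p2}, \ref{lem:lattice-p-p1p1}, \ref{lem:lattice-q-p2} and by Lemma~\ref{lem:schubert}, all of which again rest on the criterion of Corollary~\ref{cor:imprimitivity}. Your proposed test, ``no $K$-negative birational contraction reappears after flops,'' is precisely what Corollary~\ref{cor:imprimitivity} converts into a checkable arithmetic condition on $\Cl(X)$; without that reformulation the test cannot be carried out by the Segre-class computation you describe, so the enumeration does not close.
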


This motivates us to give the following

\begin{definition}
\label{def:dpb}
A vector bundle~$\cE$ on a surface~$Z$ is a \emph{del Pezzo bundle} 
if~\mbox{$\rk(\cE) \ge 2$}, \mbox{$\rc_1(\cE) = K_Z$} and~$\PP_Z(\cE)$ is an almost del Pezzo variety
such that~$\PP_Z(\cE)_\can$ is non-conical.
\end{definition} 

Looking at Theorem~\ref{thm:imprimitive} and Theorem~\ref{thm:primitive-classification} together,
it is natural to ask how one can distinguish which of types~\ref{it:r1}--\ref{it:r3}
a primitive contraction~$X_0$ of a given almost del Pezzo variety~$X$ has.

To answer this question we combine the birational point of view as in~\cite{Jahnke-Pet}
with the lattice structure of the class group~$\Cl(X)$
introduced in the three-dimensional case by the second named author.
So, following~\cite{P:GFano1}, for any almost del Pezzo variety~$X$ 
we define a symmetric bilinear form on~$\Cl(X)$ by
\begin{equation}
\label{eq:product}
\langle D_1,\, D_2 \rangle \coloneqq A_X^{n-2} \cdot D_1\cdot D_2. 
\end{equation} 
Note that~$\langle A_X, A_X \rangle = \dd(X)$.
When~$n = 2$, this boils down to the standard intersection product~$\langle D_1, D_2 \rangle = D_1 \cdot D_2$
in the Picard group of a del Pezzo surface.

In the statement of the next theorem a ``general linear surface section'' 
is an intersection of~$n - 2$ general fundamental divisors in~$X$ 
(i.e., divisors from the linear system~$|A_X|$);
we prove in~\S\ref{ss:fundamental} it is a smooth del Pezzo surface~$S$ of degree~$\dd(S) = \dd(X)$.

\begin{theorem}
\label{thm:intro-clx}
Let~$X$ be an almost del Pezzo variety of dimension~$n \ge 3$.
\begin{enumerate}
\item 
If~$i \colon S \hookrightarrow X$ is a general linear surface section of~$X$ then the restriction map 
\begin{equation*}
\Cl(X) \xrightarrow{\ i^*\ } \Cl(S)
\end{equation*}
induces an isomorphism of lattices~$\Cl(X) \cong \Xi(X)^\perp \subset \Cl(S)$,
where~$\Xi(X)^\perp$ is the orthogonal complement
of a negative definite sublattice~\mbox{$\Xi(X) \subset K_S^\perp \subset \Cl(S)$}
of rank~$m = 10 - \dd(X) - \rr(X)$ which has one of the following Dynkin types:
\begin{equation*}
\rA_m,
\quad
1 \le m \le 7,
\quad\text{or}\quad
\rD_m,
\quad 
4 \le m \le 7, 
\quad\text{or}\quad 
\rE_m, 
\quad
6 \le m \le 8.
\end{equation*}
\item 
If~$\dim(X) \ge 4$ and~$Y \subset X$ is a general fundamental divisor containing~$S$
then~$Y$ is an almost del Pezzo variety and
\begin{equation*}
\Xi(Y) = \Xi(X)
\qquad\text{and}\qquad 
\Cl(Y) = \Cl(X),
\end{equation*}
as sublattices in~$\Cl(S)$.
\item 
If~$X' = \Bl_P(X)$ where~$P \in X$ is a smooth point and~$X'$ is almost del Pezzo then
\begin{equation*}
\Xi(X') = \Xi(X)
\qquad\text{and}\qquad 
\Cl(X') = \Cl(X) \oplus \ZZ,
\end{equation*}
as sublattices in~$\Cl(S')$, where~$S' \cong \Bl_P(S)$ is a general linear surface section of~$X'$.
\end{enumerate} 

\end{theorem}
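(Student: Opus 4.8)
The plan is to establish the three assertions in the order (ii), (iii), (i), since the first two serve as the inductive and blow-up tools needed for (i). Throughout I use the results of \S\ref{ss:fundamental}: a general linear surface section $S$ is a smooth del Pezzo surface of degree $d=\dd(X)$; adjunction gives $i^*A_X=-K_S$; and because the cycle class of $S$ is $A_X^{n-2}$, one has $\langle D_1,D_2\rangle=(i^*D_1)\cdot(i^*D_2)$ for $D_1,D_2\in\Cl(X)$, so $i^*$ is an isometry onto its image. I also use the standard structure of $\Pic(S)\cong\ZZ^{10-d}$, of signature $(1,9-d)$, with $K_S^\perp$ the negative definite root lattice of type $\rE_{9-d}$; since $-K_S=i^*A_X$ lies in the image, its orthogonal complement $\Xi(X)$ automatically sits inside $K_S^\perp$ and is negative definite.

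For part (ii) I would first check that a general $Y\in|A_X|$ is an almost del Pezzo variety of dimension $n-1$: adjunction rewrites \eqref{eq:kx-a} as $-K_Y=(n-2)A_Y$ with $A_Y=A_X|_Y$ nef and big Cartier, terminality of $Y$ comes from Bertini and inversion of adjunction, and smallness of $\Phi_{|mA_Y|}$ is inherited by restricting the small morphism of $X$. The lattice identity is then a Noether--Lefschetz statement: passing to the anticanonical models, where the polarization is ample, and using $\dim X\ge 4$ (hence $\dim Y\ge 3$), the restriction $\Cl(X)=\Cl(X_\can)\to\Cl(Y_\can)=\Cl(Y)$ is an isomorphism. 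Transitivity of restriction gives $i_X^*=i_Y^*\circ\big(\Cl(X)\xrightarrow{\sim}\Cl(Y)\big)$, so the two images coincide in $\Cl(S)$, whence $\Cl(Y)=\Cl(X)$ and $\Xi(Y)=\Xi(X)$. The delicate point is the precise form of the Grothendieck--Lefschetz theorem for Weil divisor class groups and the verification of its cohomological hypotheses for terminal singularities.

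For part (iii) I would argue directly on the blowup. Taking $S$ through $P$, the induced general surface section of $X'=\Bl_P(X)$ is $S'\cong\Bl_P(S)$, with exceptional $(-1)$-curve $e=E\cap S'$ and $i'^*[E]=[e]$; writing $\mu\colon S'\to S$, compatibility of restrictions reads $i'^*=\mu^*\circ i^*$ on $\Cl(X)\subset\Cl(X')$, so $i'^*\Cl(X')=\mu^*\big(i^*\Cl(X)\big)\oplus\ZZ e$ inside $\Pic(S')=\mu^*\Pic(S)\perp\ZZ e$. Since $e$ is orthogonal to all pullback classes, taking orthogonal complements gives $\Xi(X')=\mu^*\Xi(X)\cong\Xi(X)$, while $\Cl(X')=\Cl(X)\oplus\ZZ$. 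This is essentially bookkeeping once $S'\cong\Bl_P(S)$ and $\dd(X')=\dd(X)-1$, $\rr(X')=\rr(X)+1$ are recorded (cf. Theorem~\ref{thm:imprimitive}).

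For part (i), part (ii) lets me descend to $n=3$, where $\Cl(X)$ and $\Xi(X)$ live in one fixed $\Cl(S)$. For a $3$-fold, injectivity and saturation of $i^*\colon\Cl(X)\hookrightarrow\Pic(S)$ follow from the Lefschetz-type theorem for an ample (on $X_\can$) divisor in a normal $3$-fold; together with the isometry property this already gives that $\Xi(X)=(i^*\Cl(X))^\perp$ is saturated, negative definite, of rank $m=10-d-\rr(X)$ in $\rE_{9-d}$. The substantive claim is that $\Xi(X)$ is an \emph{irreducible} root lattice of the listed type. Conceptually $\Xi(X)$ is the vanishing lattice of the family of surface sections: by the Noether--Lefschetz principle $i^*\Cl(X)$ is the monodromy-invariant part of $\Pic(S)$, so $\Xi(X)$ is spanned by vanishing cycles --- $(-2)$-classes in $K_S^\perp$ --- hence a root lattice, and irreducibility of the monodromy action forces irreducible type $\rA$, $\rD$, or $\rE$. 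To obtain the sharp ranges, and in particular to exclude $\rA_8$ and $\rD_8$ (which occur for surfaces but must not here), I would use the invariance from (iii) together with Theorem~\ref{thm:imprimitive} to reduce to a primitive $X_0$ and then read off $\Xi(X_0)$ from the explicit models in Theorem~\ref{cla:dP} and Theorem~\ref{thm:primitive-classification}: for $\rr=1$ the image is $\ZZ(-K_S)$ and $\Xi=K_S^\perp$ gives $\rE_8,\rE_7,\rE_6,\rD_5,\rA_4,\rA_1$ in degrees $1,2,3,4,5,8$, while the quadric-bundle and $\PP_Z(\cE)$ cases produce the remaining smaller types. I expect this last step --- proving irreducibility and the sharp bounds, which is precisely where $\dim X\ge 3$ is indispensable and surfaces alone fail --- to be the main obstacle, and the place where the full classification is genuinely used.
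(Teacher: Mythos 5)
Your overall architecture coincides with the paper's: part (iii) is exactly the bookkeeping of Lemma~\ref{lem:clx-xix-blowup}, and your endgame for part (i) --- reduce to a primitive model via Theorem~\ref{thm:imprimitive} together with part (iii), then read off $\Xi(X_0)$ from the classification (rank one gives $K_S^\perp$, hence $\rE_8,\rE_7,\rE_6,\rD_5,\rA_4$ in degrees $1,\dots,5$ and $\rA_1$ for $\PP^3$; $\PP^{n-2}$-bundles over $\PP^2$ or $\PP^1\times\PP^1$ give type $\rA$; quadric bundles over $\PP^1$ give type $\rD$) --- is precisely what the paper does in \S\ref{ss:proofs}. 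The genuine divergence is the Lefschetz step underlying (ii) and the saturation statement in (i): you outsource it to Grothendieck--Lefschetz/Noether--Lefschetz theorems for divisor class groups, whereas Proposition~\ref{prop:class-divisor} gives a self-contained argument: blow up the elliptic curve $C$ cut out by two general fundamental divisors, obtain a fibration $\Bl_C(X)\to\PP^{n-2}$ by del Pezzo surfaces, identify $\Cl(X)\cong\Pic(S)^{\uppi_1(U,u_0)}$ via the relative Picard sheaf \cite{K22}, and use the surjectivity $\uppi_1(U_H,u_0)\twoheadrightarrow\uppi_1(U,u_0)$ of \cite{HL73} for the ladder step. That argument buys two things your citation route does not: saturation for free (invariants of a group acting on a torsion-free group are saturated), and validity for a \emph{general}, not merely very general, surface section.

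Two concrete gaps remain in your version. First, the cited theorems \cite{Ravindra-Srinivas} require the polarization to be ample and globally generated (or very ample); by Proposition~\ref{DP:|A|} this fails exactly when $\dd(X)=1$, where $|A_X|$ has a base point. The paper flags this explicitly (``in the case $\dd(X)\ge 2$ its first part follows from \cite{Ravindra-Srinivas}''), and its uniform proof exists precisely to cover degree one; you call this point ``delicate'' but do not close it, and without the elliptic-curve blowup trick (or some substitute) your proof of (ii) and of saturation in (i) has a hole at $\dd(X)=1$. Second, your conceptual claim that $\Xi(X)$ is an irreducible root lattice because ``irreducibility of the monodromy action'' forces it is unjustified: a threefold section $X_3$ has terminal, hence isolated but in general nonempty, singularities, so the discriminant of the family of surface sections is reducible (the dual variety plus the loci of sections through singular points), and the classical irreducibility theorem for vanishing cohomology of Lefschetz pencils --- which rests on smoothness of the total space and irreducibility of the dual variety --- does not apply; note also that absence of monodromy invariants alone does not imply irreducibility of the representation. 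Fortunately this heuristic is not load-bearing in your plan, since you ultimately derive the type list from the classification of primitive varieties --- which is also how the paper avoids ever proving an abstract irreducibility statement.
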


\begin{remark}
Of course, a given del Pezzo variety has many non-isomorphic linear surface sections;
but as we will see in Proposition~\ref{prop:class-divisor} their class groups can be identified 
(and such an identification can be made canonical up to monodromy action)
in such a way that the sublattices~$i^*\Cl(X) \subset \Cl(S)$ and~$\Xi(X) \subset \Cl(S)$ are identified.
\end{remark} 

The following result relates Dynkin type of the lattice~$\Xi(X)$ from Theorem~\ref{thm:intro-clx}
to the type of a primitive contraction~$X_0$ of~$X$ from Theorem~\ref{thm:primitive-classification}. 

\begin{theorem}
\label{thm:intro-bircla}
Let~$X$ be a non-conical del Pezzo variety of dimension~$n \ge 3$. 
\begin{enumerate}
\item 
\label{it:bircla-am}
$\Xi(X)$ has type~$\rA_m$, $1 \le m \le 7$, if and only if 
\begin{itemize}
\item 
$X \cong \PP^3$, hence~$\rr(X) = 1$ and~$m = 1$, or
\item 
$X \cong \Gr(2,5) \cap \PP^{n+3}$, a smooth linear section, hence~$\rr(X) = 1$ and~$m = 4$, or
\item 
$X \cong \Bl_{P_1,\dots,P_k}(\PP_Z(\cE))_\can$
where~$Z = \PP^2$ or~$Z = \PP^1 \times \PP^1$ and~$\cE$ is a del Pezzo bundle on~$Z$;
in this case~$k = \rr(X) - \rr(Z) - 1$ and~$m = \rc_2(\cE) - 1$.
\end{itemize}
\item 
\label{it:bircla-dm}
$\Xi(X)$ has type~$\rD_m$, $4 \le m \le 7$, if and only if~$X \cong \Bl_{P_1,\dots,P_k}(X_0)_\can$,
where~$X_0$ is 
\begin{itemize}
\item
a complete intersection in~$\PP^1 \times \PP^{n+2}$ of three divisors of bidegree~$(1,1)$, $(1,1)$, and~$(0,2)$;
in this case~$\dd(X_0) = 4$, $\rr(X_0) = 2$, $k = \rr(X) - 2$, and~$m = 4$;
\item 
a complete intersection in~$\PP^{n+2}$ of two quadrics;
in this case~$\dd(X_0) = 4$, $\rr(X_0) = 1$, $k = \rr(X) - 1$, and~$m = 5$;
\item 
a divisor in~$\PP^1 \times \PP^{n}$ of bidegree~$(2,2)$
in this case~$\dd(X_0) = 2$, $\rr(X_0) = 2$, $k = \rr(X) - 2$, and~$m = 6$;
\item 
a complete intersection in~$\PP^1 \times \PP^{2n}$ of $n$ divisors of bidegree~$(1,1)$ 
and one divisor of bidegree~$(1,2)$;
in this case~$\dd(X_0) = 1$, $\rr(X_0) = 2$, $k = \rr(X) - 2$, and~$m = 7$.
\end{itemize}
In all these cases~$\dd(X) \le 4$ and~$\dd(X) + \rr(X) \le 6$, while~$\dim(X) = n$ is arbitrary.
\item 
\label{it:bircla-em}
$\Xi(X)$ has type~$\rE_6$, $\rE_7$, or~$\rE_8$, if and only if~$X \cong \Bl_{P_1,\dots,P_{r - 1}}(X_0)_\can$,
where~$X_0$ is 
\begin{itemize}
\item 
a divisor of degree~$3$ in~$\PP^{n+1}$, $\dd(X_0) = 3$, $\rr(X_0) = 1$, $m = 6$;
\item 
a divisor or degree~$4$ in~$\PP(1^{n+1},2)$, $\dd(X_0) = 2$, $\rr(X_0) = 1$, $m = 7$;
\item 
a divisor or degree~$6$ in~$\PP(1^n,2,3)$, $\dd(X_0) = 1$, $\rr(X_0) = 1$, $m = 8$.
\end{itemize}
In these cases~$\dd(X) \le 3$ and~$\dd(X) + \rr(X) \le 4$, while~$\dim(X) = n$ is arbitrary.
\end{enumerate}
\end{theorem}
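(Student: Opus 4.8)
The plan is to reduce the computation of the Dynkin type of~$\Xi(X)$ first to the \emph{primitive} case and then to a surface section, where it becomes a root-lattice computation on a del Pezzo surface. First, using Theorem~\ref{thm:imprimitive} I would write~$X \cong \Bl_{P_1,\dots,P_k}(X_0)_\can$ with~$X_0$ primitive; since Theorem~\ref{thm:intro-clx}(iii) shows that blowing up a smooth point leaves~$\Xi$ unchanged, one gets~$\Xi(X) = \Xi(X_0)$, so it suffices to treat the primitive varieties of Theorem~\ref{thm:primitive-classification}. Next, by Theorem~\ref{thm:intro-clx}(ii) I would pass to a general linear surface section~$i\colon S\hookrightarrow X_0$, a smooth del Pezzo surface with~$A_{X_0}|_S = -K_S$ and~$\dd(S) = \dd(X_0)$; the pairing~\eqref{eq:product} restricts to the intersection form on~$\Cl(S)$, and~$\Xi(X_0)$ is the orthogonal complement of~$i^*\Cl(X_0)$ inside~$\Cl(S)$, which lies in~$K_S^\perp$. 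The whole problem thus becomes: compute this orthogonal complement for each primitive type.

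When~$\rr(X_0) = 1$ (Theorem~\ref{thm:primitive-classification}\ref{it:r1}) one has~$\Cl(X_0) = \ZZ\cdot A_{X_0}$, hence~$i^*\Cl(X_0) = \ZZ\cdot(-K_S)$ and~$\Xi(X_0) = K_S^\perp$ is the \emph{entire} root lattice of the del Pezzo surface~$S$. For a del Pezzo surface of degree~$d$ this lattice has irreducible Dynkin type precisely when~$d\in\{1,2,3,4,5,8\}$, namely
\begin{equation*}
d = 8\colon \rA_1,
\quad
d = 5\colon \rA_4,
\quad
d = 4\colon \rD_5,
\quad
d = 3\colon \rE_6,
\quad
d = 2\colon \rE_7,
\quad
d = 1\colon \rE_8,
\end{equation*}
and matching these degrees against Theorem~\ref{thm:primitive-classification}\ref{it:r1} would yield the cases~$X\cong\PP^3$ and~$X\cong\Gr(2,5)\cap\PP^{n+3}$ of part~\ref{it:bircla-am}, the complete intersection of two quadrics of part~\ref{it:bircla-dm}, and the three weighted hypersurfaces of part~\ref{it:bircla-em}, with~$k = \rr(X) - 1$ throughout.

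For the fibred primitives I would argue as follows. For a bundle primitive~$X_0 = \PP_Z(\cE)_\can$ with~$Z = \PP^2$ or~$\PP^1\times\PP^1$ (Theorem~\ref{thm:primitive-classification}\ref{it:r2p},~\ref{it:r3}), the equality~$A_{X_0}\cdot(\text{fibre}) = 1$ forces the projection to restrict to a \emph{birational} morphism~$S\to Z$, exhibiting~$S$ as~$Z$ blown up at~$\rc_2(\cE)$ points; then~$\Cl(S) = \pi^*\Cl(Z)\oplus\bigoplus_j\ZZ E_j$ and~$i^*\Cl(X_0)$ is spanned by~$\pi^*\Cl(Z)$ and~$\sum_j E_j$, so its orthogonal complement in~$K_S^\perp$ is~$\{\sum_j a_j E_j : \sum_j a_j = 0\} \cong \rA_{\rc_2(\cE)-1}$. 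This gives type~$\rA_m$ with~$m = \rc_2(\cE)-1$ and, via~$\rr(X_0) = \rr(Z)+1$, the count~$k = \rr(X)-\rr(Z)-1$ of part~\ref{it:bircla-am}. For a quadric-bundle primitive~$\hX\to\PP^1$ of degree~$\dd(X_0)\in\{1,2,4\}$ (Theorem~\ref{thm:primitive-classification}\ref{it:r2q}), the surface section~$S\to\PP^1$ is a conic-bundle del Pezzo surface, and I would show that the~$(-2)$-classes orthogonal to both the fibre class and~$K_S$ are exactly those supported on the reducible fibres, whose incidence graph is the Dynkin diagram of type~$\rD_{8-\dd(X_0)}$; this yields~$m = 8 - \dd(X_0)\in\{4,6,7\}$, and together with the~$\rD_5$ from the two-quadrics case realizes all of~$\rD_m$,~$4\le m\le 7$, in part~\ref{it:bircla-dm}. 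The explicit projective models in each bullet I would transcribe from the models produced while proving Theorem~\ref{thm:primitive-classification}.

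Finally, for the converse implications I would check that the assignment sending a primitive type to the Dynkin type of~$\Xi$ is injective once~$\rr(X_0)$ is also recorded; for example both~$\PP^3$ and~$\PP^1\times\PP^1\times\PP^1$ give type~$\rA_1$, but with~$\rr(X_0)=1$ and~$\rr(X_0)=3$ respectively. As every non-conical del Pezzo variety reduces to a primitive one by Theorem~\ref{thm:imprimitive}, the three cases are exhaustive, so the equivalences follow; the numerical bounds in parts~\ref{it:bircla-dm} and~\ref{it:bircla-em} are then immediate from~$m = 10 - \dd(X) - \rr(X)$ and the admissible ranges of~$m$. I expect the genuine obstacle to be the two lattice computations above: pinning down that the bundle section is~$Z$ blown up at exactly~$\rc_2(\cE)$ points, and verifying that the reducible-fibre curves on the conic-bundle surface assemble into a~$\rD_m$ diagram rather than some other negative definite root configuration.
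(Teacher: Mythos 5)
Your proposal is correct and follows essentially the same route as the paper: reduce to a primitive model via Theorem~\ref{thm:imprimitive} and the blowup-invariance of~$\Xi$, then compute~$\Xi(X_0)$ on a general linear surface section for each primitive class (rank one giving~$K_S^\perp$, projective bundles giving~$\rA_{\rc_2(\cE)-1}$ from the exceptional classes of~$S \to Z$, quadric bundles giving~$\rD_{8-\dd}$ from the reducible conic-bundle fibres), exactly the computations the paper carries out in the proof of Theorem~\ref{thm:intro-clx} and then cites in its proof of this theorem. The only cosmetic difference is that you inline those lattice computations rather than quoting them, and you source the explicit projective models from Theorem~\ref{thm:primitive-classification} where the paper cites Theorem~\ref{cla:dP} and Proposition~\ref{prop:quadric-over-p1}.
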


In the case of del Pezzo varieties of type~$\rA_m$ we can give the following uniform description.

\begin{theorem}
\label{thm:intro-dpa}
If~$X$ is a non-conical del Pezzo variety of type~$\rA_m$ with~$\rr(X) \ge 2$ then
\begin{equation*}
X \cong \PP_Z(\cE)_\can,
\end{equation*}
where~$Z$ is a del Pezzo surface, $\cE$ is a del Pezzo bundle with~$\rc_2(\cE) = m + 1$, 
\begin{equation*}
\dd(X) = K_Z^2 - \rc_2(\cE),
\qquad\text{and}\qquad
\rr(X) = 11 - K_Z^2 = \rr(Z) + 1.
\end{equation*}
In particular, $2 \le \rc_2(\cE) \le K_Z^2 - 1$.
\end{theorem}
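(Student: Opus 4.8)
The plan is to derive this as a consequence of the structural results already assembled in Theorem~\ref{thm:intro-bircla} and Theorem~\ref{thm:primitive-classification}, combined with the lattice bookkeeping of Theorem~\ref{thm:intro-clx}. First I would invoke Theorem~\ref{thm:intro-bircla}\ref{it:bircla-am}: since~$X$ is non-conical of type~$\rA_m$ with~$\rr(X) \ge 2$, it cannot be~$\PP^3$ (which has~$\rr = 1$) nor a smooth linear section of~$\Gr(2,5)$ (also~$\rr = 1$), so the only surviving possibility is the third bullet, namely~$X \cong \Bl_{P_1,\dots,P_k}(\PP_Z(\cE))_\can$ with~$Z \in \{\PP^2,\, \PP^1 \times \PP^1\}$ and~$\cE$ a del Pezzo bundle. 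The crux of the theorem is then the assertion that in fact no points are blown up, i.e.\ that~$k = 0$ and~$X \cong \PP_Z(\cE)_\can$ directly.

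To force~$k = 0$ I would argue that blowing up a point strictly enlarges the Dynkin type out of type~$\rA$. The mechanism is Theorem~\ref{thm:intro-clx}(iii): the blowup~$X' = \Bl_P(X)$ preserves~$\Xi(X') = \Xi(X)$ while enlarging~$\Cl(X') = \Cl(X) \oplus \ZZ$, and the rank of~$\Xi$ is~$m = 10 - \dd - \rr$; tracking how the negative definite root sublattice~$\Xi$ sits inside~$K_S^\perp \subset \Cl(S)$ shows that the operation of passing from~$\PP_Z(\cE)_\can$ to its one-point blowup adds a node to the Dynkin diagram in a way that produces a~$\rD$ or~$\rE$ configuration rather than extending the~$\rA$-chain. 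Concretely, for a primitive~$\PP_Z(\cE)_\can$ of type~$\rA_m$ the sublattice~$\Xi$ is the~$\rA_m$-lattice spanned by roots coming from the fibral structure, and the exceptional class of each blown-up point is orthogonal to the fundamental class but interacts with this chain so as to create a branch point. Thus~$k \ge 1$ would contradict the hypothesis that~$X$ has type~$\rA_m$, giving~$k = 0$.

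With~$k = 0$ established, the numerical identities are read off from the already-proven relations. Theorem~\ref{thm:intro-bircla}\ref{it:bircla-am} gives~$m = \rc_2(\cE) - 1$, i.e.\ $\rc_2(\cE) = m + 1$ as claimed. For the degree, I would restrict the intersection form to a general linear surface section: by the projective bundle structure~$\hX = \PP_Z(\cE)$ together with~$\dd(X) = \dd(X_{\can})$ and the Grothendieck relation on~$\PP_Z(\cE)$, the self-intersection of the fundamental class computes to~$\dd(X) = K_Z^2 - \rc_2(\cE)$; here the input~$\rc_1(\cE) = K_Z$ from the definition of a del Pezzo bundle is what makes the mixed terms collapse to this clean form. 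The rank formula follows since~$X \cong \PP_Z(\cE)_\can$ has~$\rr(X) = \rr(\PP_Z(\cE)) = \rr(Z) + 1$, and combining with~$m = 10 - \dd(X) - \rr(X)$ and the del Pezzo surface relation~$\dd(Z) + \rr(Z) = 10$ (so~$K_Z^2 = \dd(Z) = 10 - \rr(Z)$) yields~$\rr(X) = 11 - K_Z^2$. Finally the inequality~$2 \le \rc_2(\cE) \le K_Z^2 - 1$ is equivalent to~$1 \le m \le K_Z^2 - 2$: the lower bound~$m \ge 1$ is part of the type-$\rA_m$ range, while the upper bound follows from~$\dd(X) = K_Z^2 - \rc_2(\cE) \ge 1$ (degree positivity of the almost del Pezzo variety).

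The main obstacle I anticipate is the step forcing~$k = 0$: showing that blowing up points of~$\PP_Z(\cE)_\can$ genuinely changes the Dynkin type from~$\rA$ to~$\rD$ or~$\rE$ requires a careful analysis of how the exceptional classes embed into the root lattice~$\Xi \subset K_S^\perp$ and how they attach to the existing~$\rA_m$-chain. This is essentially a lattice-theoretic computation on the del Pezzo surface section~$S$, and the cleanest route is likely to identify the configuration of~$(-2)$-classes explicitly and recognize the resulting Dynkin diagram; the classification in Theorem~\ref{thm:intro-bircla} already encodes the answer, so in practice this reduces to matching~$X$ against the list there and observing that the type-$\rA$ entries with~$\rr \ge 2$ are exactly the un-blown-up projective bundles.
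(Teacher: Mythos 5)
The central step of your proposal---forcing $k=0$---is both a misreading of the statement and mathematically false. The surface $Z$ in the conclusion of Theorem~\ref{thm:intro-dpa} is an \emph{arbitrary} del Pezzo surface, not one of the two minimal ones appearing in Theorem~\ref{thm:intro-bircla}\ref{it:bircla-am}; indeed the rank formula $\rr(X)=\rr(Z)+1$ in the statement forces $Z$ to be non-minimal as soon as $\rr(X)\ge 4$, and such type-$\rA$ varieties do exist (by Theorem~\ref{thm:intro-bircla}\ref{it:bircla-am} they have $k=\rr(X)-\rr(Z_0)-1\ge 1$ blown-up points over the minimal base $Z_0$; see also Theorem~\ref{thm:intro-a-detailed} and the type-$\rA$ entries with $\rr\ge 4$ in the table of Appendix~\ref{sec:details}). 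Moreover, your proposed mechanism contradicts the very result you invoke: part (iii) of Theorem~\ref{thm:intro-clx} states that blowing up a smooth point \emph{preserves} $\Xi$, hence preserves the Dynkin type, so a blowup can never move a type-$\rA$ variety into type $\rD$ or $\rE$. The ADE invariant is constant along the operation ``blow up points, then pass to the anticanonical model''---that is precisely why it is useful---so it cannot be used to rule such blowups out. Had $k=0$ been correct, it would (via $k=\rr(X)-\rr(Z_0)-1$) bound $\rr(X)\le 3$ for all type-$\rA$ varieties, which is false.

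The idea your proof is missing is Lemma~\ref{lem:dpb-converse}: if $\Bl_P(\PP_{Z_0}(\cE_0))$ is almost del Pezzo, it is pseudoisomorphic to $\PP_{Z'}(\cE')$, where $Z'=\Bl_{z}(Z_0)$ is the blowup of the image point $z$ of $P$ and $\cE'$ is the del Pezzo bundle obtained from $\cE_0$ by the elementary transformation~\eqref{eq:dpb-sequence}. The paper's proof simply applies this lemma $k$ times to the output of Theorem~\ref{thm:intro-bircla}\ref{it:bircla-am}, absorbing every blown-up point into the base surface and producing $X\cong\PP_Z(\cE)_\can$ with $Z$ a possibly non-minimal del Pezzo surface. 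Your numerical bookkeeping is essentially fine once this is in place, but it must be carried out for $\cE$ on the non-minimal $Z$: the degree formula $\dd(X)=K_Z^2-\rc_2(\cE)$ is Lemma~\ref{lem:dpv-dpb} (the Grothendieck relation with $\rc_1(\cE)=K_Z$), the rank formula comes from $\rr(X)=\rr(Z)+1$ and $\rr(Z)=10-K_Z^2$, and then $\rc_2(\cE)=K_Z^2-\dd(X)=(11-\rr(X))-\dd(X)=m+1$ since $m=10-\dd(X)-\rr(X)$; one also checks (e.g.\ from these formulas, since both $\dd$ and $K^2$ drop by $1$ at each step) that $\rc_2$ is unchanged by the elementary transformations, consistently with the type being preserved.
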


This story would not be complete without a classification of del Pezzo vector bundles.
We say that a non-conical del Pezzo variety is \emph{maximal}, 
if it cannot be represented as a fundamental divisor in another non-conical del Pezzo variety.
Similarly, a del Pezzo bundle on a del Pezzo surface~$Z$ is \emph{maximal} 
if there is no embedding~$\cE \hookrightarrow \cE'$
into another del Pezzo bundle~$\cE'$ with~$\cE'/\cE \cong \cO_Z$. 

\begin{theorem}
\label{thm:intro-dpb}
Let~$Z$ be a del Pezzo surface and let~$\cE$ be a del Pezzo bundle on~$Z$.
\begin{enumerate}
\item 
\label{it:cemax}
There is a canonical exact sequence
\begin{equation}
\label{eq:ce-cemax}
0 \longrightarrow \cE \longrightarrow \tcE \longrightarrow H^1(Z,\cE) \otimes \cO_Z \longrightarrow 0,
\end{equation}
where~$\tcE$ is a maximal del Pezzo bundle 
and~$\PP_Z(\cE)_\can$ is a linear section of~$\PP_Z(\tcE)_\can$.
Moreover, $\rk(\cE) \le \rk(\tcE) = \rc_2(\tcE) = \rc_2(\cE)$.

\item
\label{it:uniqueness}
A maximal del Pezzo bundle~$\cE$ with~$\rc_2(\cE) \le K^2_Z - 2$ is unique up to isomorphism
except for the case~$K^2_Z = 4$, $\rc_2(\cE) = 2$, where there is a $1$-dimensional family of such bundles.

\item
\label{it:almost-uniqueness}
There is a bijection between isomorphism classes of maximal del Pezzo bundles~$\cE$ with~$\rc_2(\cE) = K^2_Z - 1$ 
and points~$z \in Z$ such that~$\Bl_z(Z)$ is del Pezzo;
in particular, if~$K^2_Z \ge 6$ then the isomorphism class of~$\PP_Z(\cE)$ is unique.
\end{enumerate}
\end{theorem}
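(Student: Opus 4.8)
The plan is to prove part~\ref{it:cemax} by an explicit universal extension and to reduce parts~\ref{it:uniqueness} and~\ref{it:almost-uniqueness} to a deformation-theoretic study of the resulting maximal bundles.

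For part~\ref{it:cemax} I would first record the Riemann--Roch identity $\chi(Z,\cE) = \rk(\cE) - \rc_2(\cE)$, which follows from $\rc_1(\cE) = K_Z$ and $\chi(\cO_Z) = 1$. Combined with the vanishings $H^0(Z,\cE) = H^2(Z,\cE) = 0$, valid for any del Pezzo bundle since $\cE$ is suitably negative, this yields $h^1(Z,\cE) = \rc_2(\cE) - \rk(\cE)$. The next step is the cohomological criterion for maximality: $\cE$ is maximal if and only if $H^1(Z,\cE) = 0$. Indeed, an extension $0 \to \cE \to \cE' \to \cO_Z \to 0$ is classified by $\Ext^1(\cO_Z,\cE) = H^1(Z,\cE)$; if this group vanishes the extension splits and $\PP_Z(\cE\oplus\cO_Z)_\can$ is a cone, so no del Pezzo $\cE'$ exists, whereas a nonzero class produces a del Pezzo bundle $\cE'$ having $\cE$ as a fundamental divisor. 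I then define $\tcE$ as the universal extension associated with the identity in $\Hom(H^1(Z,\cE), H^1(Z,\cE)) = \Ext^1(H^1(Z,\cE)\otimes\cO_Z,\, \cE)$; the long exact cohomology sequence, using $H^1(Z,\cO_Z) = 0$, shows the connecting map is an isomorphism, hence $H^1(Z,\tcE) = 0$ and $\tcE$ is maximal. Multiplicativity of Chern classes over the trivial quotient gives $\rc_1(\tcE) = K_Z$ and $\rc_2(\tcE) = \rc_2(\cE)$, and combining with the identity above yields $\rk(\tcE) = \rk(\cE) + h^1(Z,\cE) = \rc_2(\cE) = \rc_2(\tcE)$. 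Finally, the inclusion $\cE \hookrightarrow \tcE$ with quotient $\cO_Z^{\oplus h}$, $h = h^1(Z,\cE)$, realizes $\PP_Z(\cE)$ as an iterated general fundamental divisor in $\PP_Z(\tcE)$, so by Theorem~\ref{thm:intro-clx}(ii) the anticanonical model $\PP_Z(\cE)_\can$ is a codimension-$h$ linear section of $\PP_Z(\tcE)_\can$; in particular $\tcE$ is again a del Pezzo bundle.

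For parts~\ref{it:uniqueness} and~\ref{it:almost-uniqueness} the strategy is to regard maximal del Pezzo bundles, which are now acyclic ($H^\bullet(Z,\cE) = 0$), as points of a moduli space of sheaves with $\rc_1 = K_Z$ and $\rk = \rc_2$. The key computation is $\Ext^2(\cE,\cE) = H^0(Z,\,\cE^\vee\otimes\cE\otimes K_Z)^\vee = 0$, the endomorphism sheaf being insufficiently positive to survive the twist by $K_Z$, so deformations are unobstructed and the moduli space is smooth with tangent space $\Ext^1(\cE,\cE)$, of dimension $\dim\Hom(\cE,\cE) - \chi(\cE,\cE)$, where $\chi(\cE,\cE) = \sum_i (-1)^i \dim\Ext^i(\cE,\cE) = (\rc_2(\cE)-1)K_Z^2 - \rc_2(\cE)^2$ by Riemann--Roch. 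Uniqueness is thus equivalent to rigidity $\Ext^1(\cE,\cE) = 0$. I would then determine, for each value of $\rc_2$, the actual value of $\dim\Hom(\cE,\cE)$: a dichotomy appears according to whether the maximal bundle is $(-K_Z)$-stable, in which case $\dim\Hom(\cE,\cE) = 1$ and a genuine family occurs, or decomposes and destabilizes, in which case it is rigid. Running this through the numerics shows rigidity, hence uniqueness, for all $2 \le \rc_2(\cE) \le K_Z^2 - 2$ except the single case $K_Z^2 = 4$, $\rc_2(\cE) = 2$, where the bundle is stable and $\Ext^1(\cE,\cE)$ is one-dimensional, giving part~\ref{it:uniqueness}. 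For part~\ref{it:almost-uniqueness}, where $\rc_2(\cE) = K_Z^2 - 1$ and $\dd(\PP_Z(\cE)_\can) = 1$, the same count gives $\dim\Ext^1(\cE,\cE) = 2$; here I would construct an explicit bijection between stable maximal bundles and points $z$ with $\Bl_z(Z)$ del Pezzo by an elementary modification of a fixed bundle along the fibre over $z$, recovering $z$ intrinsically as the degeneracy locus of a canonical section.

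The main obstacle is the dimension count in parts~\ref{it:uniqueness}--\ref{it:almost-uniqueness}: because maximal del Pezzo bundles need not be simple, frequently splitting as sums of line bundles already for $Z = \PP^2$, one cannot take $\dim\Hom(\cE,\cE) = 1$ for granted, and the rigid versus non-rigid dichotomy must be resolved by controlling the Harder--Narasimhan and Jordan--Hölder structure of $\cE$ with respect to $-K_Z$. Establishing the explicit geometric parametrization by points of $Z$ in part~\ref{it:almost-uniqueness} is the second delicate point.
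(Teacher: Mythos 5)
Your part~(i) follows the same route as the paper (Lemma~\ref{lem:dpb-maximal}: the universal extension), and your cohomology bookkeeping is correct, but the argument is circular at the decisive point. To conclude that~$\tcE$ is a \emph{del Pezzo} bundle you invoke Theorem~\ref{thm:intro-clx} and the fundamental-divisor formalism, which presuppose that~$\PP_Z(\tcE)$ is already an almost del Pezzo variety --- exactly what must be proved. The implication has to run upward, from the divisor to the ambient variety: the paper does this with Lemma~\ref{lem:criterion-divisor}\ref{it:divisor-dp}, checking by a fiberwise dimension count that no effective divisor of~$\PP_Z(\tcE)$ is disjoint from~$\PP_Z(\cE)$. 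The same omission undermines your maximality criterion ``maximal iff~$H^1(Z,\cE)=0$'': the claim that a nonzero extension class produces a del Pezzo bundle~$\cE'$ requires verifying the almost del Pezzo property \emph{and} non-conicality of~$\PP_Z(\cE')_\can$ for the non-split extension, which you do not address. The paper avoids this by using the criterion ``maximal iff~$\rk(\cE)=\rc_2(\cE)$'', which follows at once from the inequality~$\rk \le \rc_2$ of Lemma~\ref{lem:dpv-dpb} applied to any putative~$\cE'$, since an extension by~$\cO_Z$ raises the rank but not~$\rc_2$.

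For parts~(ii) and~(iii) your moduli-theoretic plan is genuinely different from the paper's, but as it stands it is not a proof. First, rigidity~$\Ext^1(\cE,\cE)=0$ gives only \emph{local} uniqueness; to conclude there is a single isomorphism class you would also need connectedness (or irreducibility) of the relevant moduli of maximal del Pezzo bundles, which is nowhere addressed --- two non-isomorphic rigid bundles with the same invariants are not excluded by your argument. Second, the decisive input --- the value of~$\dim\Hom(\cE,\cE)$, the vanishing~$\Ext^2(\cE,\cE)=0$, and the stable-versus-split dichotomy --- is precisely where the classification lives, and you defer it (``running this through the numerics'') rather than carry it out; since maximal del Pezzo bundles are usually direct sums, none of this is formal. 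The paper's proof rests on a structural result absent from your proposal: every del Pezzo bundle is the kernel of a map~$\cO_Z^{\oplus n} \to \cI_{z_1,\dots,z_d}(-K_Z)$ with~$z_1,\dots,z_d$ in general position, the maximal ones being kernels of the full evaluation map (Lemma~\ref{lem:dpb-ideal}). Uniqueness on~$\PP^2$ and~$\PP^1\times\PP^1$ then follows from explicit resolutions of these ideal sheaves (Propositions~\ref{prop:bundle-p2} and~\ref{prop:bundle-p1p1}); the remaining surfaces are reduced to these via the blowup and duality constructions (Proposition~\ref{prop:dpb}, Corollary~\ref{cor:dpb-duality}, Remark~\ref{rem:dpb-other}), which is where the exceptional $1$-parameter family of spinor bundles for~$K_Z^2=4$, $\rc_2=2$ appears; and part~(iii) is the~$d=1$ case of the same presentation (Remark~\ref{rem:drb-d1}), with the point~$z$ recovered as the image of the base point of the fundamental linear system. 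Your numerology~$\chi(\cE,\cE)=(\rc_2-1)K_Z^2-\rc_2^2$ (giving~$0$ and~$-1$ in the exceptional cases) is consistent with all of this, but consistency is not a derivation.
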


We describe explicitly maximal del Pezzo bundles on~$\PP^2$ and~$\PP^1 \times \PP^1$
in Proposition~\ref{prop:bundle-p2} and Proposition~\ref{prop:bundle-p1p1}, respectively,
and sketch the classification on the other del Pezzo surfaces in Remark~\ref{rem:dpb-other}.

Combining the inequality~$\rk(\cE) \le \rc_2(\cE)$ from Theorem~\ref{thm:intro-dpb}\ref{it:cemax}
with inequalities of Theorem~\ref{thm:intro-bircla} for varieties of types~$\rD_m$ and~$\rE_m$,
we obtain the following boundedness results:

\begin{corollary}
\label{cor:drn}
Let~$X$ be a del Pezzo variety of dimension~$n \ge 3$. 
Then 
\begin{equation*}
\dd(X) + \rr(X)\le 9.
\end{equation*}
If, moreover, $X$ is non-conical of type~$\rA_m$ then
\begin{equation*}
\dd(X) + \rr(X) + n \le 12.
\end{equation*}
\end{corollary}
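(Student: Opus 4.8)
The plan is to convert both inequalities into statements about the rank $m$ of the lattice $\Xi(X)$ and about the numerical invariants of the associated del Pezzo bundle, and then to read off the bounds from the earlier classification results.

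For the inequality $\dd(X) + \rr(X) \le 9$ I would argue as follows. The first part of Theorem~\ref{thm:intro-clx} applies to every almost del Pezzo variety of dimension $n \ge 3$, and in particular to $X$ whether or not it is conical; it produces a lattice $\Xi(X)$ of rank $m = 10 - \dd(X) - \rr(X)$ whose Dynkin type is one of $\rA_m$, $\rD_m$, $\rE_m$. In every admissible type one has $m \ge 1$, so $\dd(X) + \rr(X) = 10 - m \le 9$. For non-conical $X$ the conclusion can be seen even more sharply from Theorem~\ref{thm:intro-bircla}: types $\rD_m$ and $\rE_m$ force $\dd + \rr \le 6$ and $\dd + \rr \le 4$ respectively, so the value $9$ can only be approached in type $\rA_1$, attained by $\PP^3$ and by $\Bl_P(\PP^3)$.

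For the second inequality I would first observe that, since $\dd + \rr = 10 - m$ in type $\rA_m$, the claim $\dd(X) + \rr(X) + n \le 12$ is equivalent to $n \le m + 2$. When $\rr(X) \ge 2$ this follows at once from the bundle description: Theorem~\ref{thm:intro-dpa} gives $X \cong \PP_Z(\cE)_\can$ with $\rc_2(\cE) = m + 1$, and since the anticanonical morphism is small, hence birational, we have $n = \dim \PP_Z(\cE) = 2 + \rk(\cE) - 1 = \rk(\cE) + 1$. Applying the inequality $\rk(\cE) \le \rc_2(\cE)$ from Theorem~\ref{thm:intro-dpb}\ref{it:cemax} then yields $n = \rk(\cE) + 1 \le \rc_2(\cE) + 1 = m + 2$.

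The remaining case $\rr(X) = 1$ is the one I expect to require the most care, since the bundle description of Theorem~\ref{thm:intro-dpa} is no longer available and one must instead extract the dimension bound from the explicit classification. Here I would invoke the rank-one list in Theorem~\ref{thm:intro-bircla}\ref{it:bircla-am}, which offers exactly two non-conical possibilities of type $\rA_m$: the case $X \cong \PP^3$, with $m = 1$ and $n = 3 = m + 2$, and the case $X \cong \Gr(2,5) \cap \PP^{n+3}$, with $m = 4$, for which the inequality $n \le 6 = m + 2$ is precisely the dimension constraint recorded in Theorem~\ref{cla:dP}\ref{cla:dP5}. Thus $n \le m + 2$ holds in every case, and therefore $\dd(X) + \rr(X) + n = (10 - m) + n \le 12$, completing the argument.
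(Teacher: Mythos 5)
Your proof is correct. It coincides with the paper's argument in the central case and improves on it at the edges, so a brief comparison is in order. For the type-$\rA$ inequality with $\rr(X)\ge 2$ the two arguments are the same: your combination of Theorem~\ref{thm:intro-dpa} with the inequality $\rk(\cE)\le\rc_2(\cE)$ of Theorem~\ref{thm:intro-dpb}\ref{it:cemax} is exactly the content of Corollary~\ref{cor:bundle-rdn}, which the paper cites and which is proved from the same inequality~\eqref{eq:dpb-inequality}. The differences are elsewhere. First, you obtain $\dd(X)+\rr(X)\le 9$ directly from Theorem~\ref{thm:intro-clx}: the lattice $\Xi(X)$ has rank $m=10-\dd(X)-\rr(X)$, and every admissible Dynkin type in that theorem has $m\ge 1$. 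The paper instead derives this bound type by type, using $\dd+\rr\le 6$ for types $\rD$ and $\rE$ (Theorem~\ref{thm:intro-bircla}) and, in type $\rA$, the inequality $\dd+\rr+n\le 12$ together with $n\ge 3$. Your route is shorter and, more importantly, applies verbatim to conical varieties, since Theorem~\ref{thm:intro-clx} carries no non-conicality hypothesis while Theorems~\ref{thm:intro-bircla} and~\ref{thm:intro-dpa} do; this matters because the first inequality of the corollary is asserted for all del Pezzo varieties. Second, you treat the case $\rr(X)=1$ of the type-$\rA$ inequality separately, via the list in Theorem~\ref{thm:intro-bircla}\ref{it:bircla-am} and the dimension constraint $n\le 6$ of Theorem~\ref{cla:dP}\ref{cla:dP5}. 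The paper's one-line proof does not address this case: Theorem~\ref{thm:intro-dpa} requires $\rr(X)\ge 2$, so neither $\PP^3$ (which has $\dd+\rr=9\ge 7$) nor the smooth linear sections $\Gr(2,5)\cap\PP^{n+3}$ are covered by the results it cites, even though both satisfy the asserted bounds. Your explicit check closes that small gap.
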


The next theorem provides a detailed classification of del Pezzo varieties of type~$\rA$.

\begin{theorem}
\label{thm:intro-a-detailed}
For each pair~$(d,r)$ of integers such that~$d \ge 1$, $r \ge 2$, and~$d + r \le 9$ and each~$3 \le n \le 12 - d - r$ 
there is a del Pezzo variety~$X = X_{d,r,n}$ with~$\dd(X) = d$, $\rr(X) = r$, $\dim(X) = n$ of type~$\rA_{10-d-r}$;
it is contained as a linear section in the unique maximal del Pezzo variety~$X_{d,r,12-d-r}$.
Moreover,
\begin{enumerate}
\item 
\label{it:a-moduli}
For~$(d,r) \in \{(1,6), (1,7), (1,8), (2,7) \}$ there is a family of dimension~$2r - d - 9$
parameterizing maximal del Pezzo varieties~$X_{d,r,12-d-r}$.
\item 
\label{it:a-pair}
For~$(d,r) \in \{(2,3), (4,3), (6,3) \}$ there are exactly two distinct 
maximal del Pezzo varieties~$X_{d,r,12-d-r}$ and~$X^*_{d,r,12-d-r}$.

\item 
\label{it:a-single}
For all other~$(d,r)$ there is a unique maximal del Pezzo variety~$X_{d,r,n}$.
\end{enumerate}
\end{theorem}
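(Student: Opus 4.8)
The plan is to reduce the whole statement to the classification of del Pezzo bundles and of their base surfaces, and then to count. By Theorem~\ref{thm:intro-dpa}, every non-conical del Pezzo variety $X$ of type $\rA_m$ with $\rr(X) = r \ge 2$ is of the form $X \cong \PP_Z(\cE)_\can$ for a del Pezzo surface $Z$ with $K_Z^2 = 11 - r$ and a del Pezzo bundle $\cE$ with $\rc_2(\cE) = 11 - d - r$ and $\rk(\cE) = \dim(X) - 1 = n - 1$, where $d = \dd(X) = K_Z^2 - \rc_2(\cE)$; conversely each such pair $(Z,\cE)$ produces such an $X$. Thus classifying the varieties $X_{d,r,n}$ amounts to classifying pairs $(Z,\cE)$ up to the equivalence $\PP_Z(\cE)_\can \cong \PP_{Z'}(\cE')_\can$. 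The admissible range of invariants is then read off directly: $\rc_2(\cE) \ge 2$ gives $d + r \le 9$, the inequality $\rc_2(\cE) \le K_Z^2 - 1$ from Theorem~\ref{thm:intro-dpa} gives $d \ge 1$, and the bound $\rk(\cE) \le \rc_2(\cE)$ from Theorem~\ref{thm:intro-dpb}\ref{it:cemax} gives $n - 1 \le 11 - d - r$, i.e. $n \le 12 - d - r$, with equality exactly when $\rk(\cE) = \rc_2(\cE)$, which by the same item (the quotient in~\eqref{eq:ce-cemax} then has rank $0$) forces $\cE = \tcE$ to be maximal. Hence $X_{d,r,12-d-r} = \PP_Z(\tcE)_\can$ is precisely the maximal variety.

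For existence I would first produce, for each admissible $(d,r)$, a del Pezzo surface $Z$ of degree $11 - r$ (these exist for every degree $1 \le 11 - r \le 9$) carrying a maximal del Pezzo bundle $\tcE$ with $\rc_2(\tcE) = 11 - d - r$; the existence of such bundles is supplied by the explicit descriptions in Proposition~\ref{prop:bundle-p2}, Proposition~\ref{prop:bundle-p1p1}, and Remark~\ref{rem:dpb-other}. This yields $X_{d,r,12-d-r}$. For $n < 12 - d - r$ I would take iterated general fundamental-divisor sections of the maximal variety and invoke Theorem~\ref{thm:intro-clx}(ii), which preserves $\Xi$, $\Cl$, and hence the type $\rA_m$, the degree, and the rank, lowering only the dimension; a Bertini argument shows these sections remain terminal, so they are genuine del Pezzo varieties, while Theorem~\ref{thm:intro-dpb}\ref{it:cemax} identifies them with the linear sections $\PP_Z(\cE)_\can$ of $\PP_Z(\tcE)_\can$ for suitable subbundles $\cE$ of rank $n-1$ (equivalently, kernels of general surjections $\tcE \twoheadrightarrow \cO_Z^{\oplus(12-d-r-n)}$). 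Since the maximal bundle $\tcE$ is by definition not a proper subsheaf of a larger del Pezzo bundle, $X_{d,r,12-d-r}$ is not a fundamental divisor in a larger non-conical del Pezzo variety, which is maximality.

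It remains to count the maximal varieties, i.e. the pairs $(Z,\tcE)$ up to isomorphism, and this splits as the moduli of the del Pezzo surface $Z$ of degree $11 - r$ together with the moduli of maximal bundles on $Z$ from Theorem~\ref{thm:intro-dpb}\ref{it:uniqueness}--\ref{it:almost-uniqueness}. When $d \ge 2$ one has $\rc_2(\tcE) = K_Z^2 - d \le K_Z^2 - 2$, so by part~\ref{it:uniqueness} the bundle is rigid unless $(K_Z^2,\rc_2) = (4,2)$, i.e. $(d,r) = (2,7)$, where there is a $1$-dimensional family; when $d = 1$ one has $\rc_2(\tcE) = K_Z^2 - 1$, so by part~\ref{it:almost-uniqueness} the bundles are parametrized by the points $z \in Z$ with $\Bl_z(Z)$ del Pezzo, a $2$-dimensional family that collapses to a single class precisely when $K_Z^2 \ge 6$. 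Combining these with the moduli dimension $\max(0,\,2r-12)$ of a del Pezzo surface of degree $11 - r$ gives exactly the families of dimension $2r - d - 9$ in the cases $(1,6),(1,7),(1,8),(2,7)$ of part~\ref{it:a-moduli}, and a finite count in every remaining case.

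The finite count must then be resolved, and \textbf{this is the main obstacle}: it is the degree-$8$ base, $r = 3$, where there are two del Pezzo surfaces, $\PP^1 \times \PP^1$ and $\Bl_z(\PP^2)$. Here I would show that $\PP^1\times\PP^1$ carries a maximal del Pezzo bundle with $\rc_2 = 8 - d$ only for \emph{even} $d$ (in agreement with the list $\dd \in \{2,4,6\}$ of Theorem~\ref{thm:primitive-classification}(iv) and the explicit classification in Proposition~\ref{prop:bundle-p1p1}), whereas $\Bl_z(\PP^2)$ realizes every admissible $d$ but produces an \emph{imprimitive} variety, whose anticanonical model blows down to a variety of lower rank. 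For even $d$ the two constructions therefore give one primitive and one imprimitive variety — hence non-isomorphic — yielding the pairs $X_{d,r,12-d-r}$ and $X^*_{d,r,12-d-r}$ of part~\ref{it:a-pair} for $(d,r) \in \{(2,3),(4,3),(6,3)\}$ (for instance $\PP^1\times\PP^1\times\PP^1$ versus $\Bl_{z_1,z_2}(\PP^3)_\can$ when $(d,r)=(6,3)$); for odd $d$ only the $\Bl_z(\PP^2)$ bundle survives, so the variety is unique. In all other cases the surface $Z$ of degree $11 - r$ is unique and the bundle is rigid, giving part~\ref{it:a-single}. The delicate points throughout are the parity constraint on $\PP^1 \times \PP^1$, the primitive-versus-imprimitive dichotomy that forces non-isomorphism within each pair, and the exclusion of accidental isomorphisms $\PP_Z(\cE)_\can \cong \PP_{Z'}(\cE')_\can$ between bundles on non-isomorphic bases.
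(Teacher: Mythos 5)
Your overall reduction (via Theorem~\ref{thm:intro-dpa}) to classifying pairs~$(Z,\cE)$, the derivation of the numerical bounds, the linear-section construction for~$n < 12-d-r$, and the moduli counts in part~\ref{it:a-moduli} all match the paper's argument. But your treatment of the case~$\rr(X)=3$ rests on a false premise, and this is a genuine gap. You claim that~$\PP^1\times\PP^1$ carries a maximal del Pezzo bundle with~$\rc_2 = 8-d$ \emph{only for even}~$d$, citing Theorem~\ref{thm:primitive-classification}\ref{it:r3} in support. That theorem restricts the degrees of \emph{primitive} varieties fibered over~$\PP^1\times\PP^1$; it says nothing about existence of bundles. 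In fact Proposition~\ref{prop:bundle-p1p1} explicitly constructs maximal del Pezzo bundles~$\cE_{\PP^1\times\PP^1,k}$ for every~$2 \le k \le 7$, i.e.\ for all~$d \in \{1,\dots,6\}$ including odd~$d$; what Lemma~\ref{lem:lattice-p-p1p1} shows is that for~$d \in \{1,3,5\}$ the resulting variety is \emph{imprimitive}, not that it fails to exist.

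Consequently, for~$(d,r) \in \{(1,3),(3,3),(5,3)\}$ your argument concludes uniqueness by declaring that ``only the~$\Bl_z(\PP^2)$ bundle survives,'' when in reality there are \emph{two} competing constructions --- $\PP_{\PP^1\times\PP^1}(\cE_{\PP^1\times\PP^1,k})_\can$ and~$\PP_{\FF_1}(\cE_{\FF_1,k})_\can$, both imprimitive --- and uniqueness requires proving these anticanonical models are isomorphic. This is precisely the hardest step of the paper's proof: both varieties are rewritten as~$\big(\Bl_x(X_0)\big)_\can$ with~$X_0 = \PP_{\PP^2}(\cE_{\PP^2,k})_\can$ and~$x$ a point in general position, and one must show such a point is unique up to~$\Aut(X_0)$. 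For~$k=3$ this is obvious ($X_0 \cong \PP^2\times\PP^2$); for~$k=5$ it follows from transitivity of~$\PGL_5$ on pairs of transverse $2$-dimensional subspaces (using~$X_0 \cong \Bl_{x'}(\Gr(2,5))_\can$); but for~$k=7$ it requires identifying~$X_0$ with the double cover of~$\PP^8$ branched along the discriminant quartic of quadrics in~$\PP^3$ through a fixed point, and proving that~$\Aut(X_0)$ acts transitively on the complement of the branch divisor. None of this appears in your proposal, and it cannot be waved away: without it, parts~\ref{it:a-pair} and~\ref{it:a-single} are unproven exactly on the diagonal~$r=3$, $d$ odd.
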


In Appendix~\ref{sec:examples} we list explicit equations for the most of maximal del Pezzo varieties of type~$\rA$,
and here we just show del Pezzo varieties of all types schematically.

\begin{equation*}
\newcommand{\scale}{.8}
\label{eq:figure-bounded}
\vcenter{\hbox{
%%%%%%%%%%%%% A type %%%%%%%%%%%%%
\begin{tikzpicture}[>=stealth, xscale = \scale, yscale = \scale]
\draw[->] (0,0) -- (8,0) node[above]{$\dd(X)$};
\draw[->] (0,0) -- (0,9) node[right]{$\rr(X)$};
\fill[color = lightgray, rounded corners] (0.8,8) -- (0.8,6) -- (1,5.8) -- (2,6.8) -- (2.2,7) -- (2,7.2) -- (1,8.2) -- (0.8,8);
\foreach \i in {1,2,3,4,5,6,7,8} \node[below] at (\i,0) {\i};
\foreach \i in {1,2,3,4,5,6,7,8} \node[left] at (0,\i) {\i};
\foreach \i in {5,8} \filldraw[black] (\i,1) circle (.2em);
\foreach \i in {1,2,3,5,6} \filldraw[black] (\i,2) circle (.2em);
\foreach \i in {4,7} \draw (\i,2) circle (.2em); 
\foreach \i in {2,4,6} \filldraw[black] (\i+.2,3.2) circle (.2em);
\foreach \i in {1,2,3,4,5,6} \draw (\i,3) circle (.2em); 
\foreach \i in {1,2,3,4,5} \draw (\i,4) circle (.2em); 
\foreach \i in {1,2,3,4} \draw (\i,5) circle (.2em); 
\foreach \i in {1,2,3} \draw (\i,6) circle (.2em); 
\foreach \i in {1,2} \draw (\i,7) circle (.2em); 
\foreach \i in {1} \draw (\i,8) circle (.2em); 
\draw[->] (8,1) -- (7,2);
\draw[->] (6.2,3.2) -- (5,4);
\foreach \i in {2,3,4,5,6,7} \draw[->] (\i,9-\i) -- (\i-1,10-\i);
\draw[->] (6,2) -- (5,3);
\foreach \i in {2,3,4,5} \draw[->] (\i,8-\i) -- (\i-1,9-\i);
\draw[->] (5,2) -- (4,3);
\draw[->] (4.2,3.2) -- (3,4);
\foreach \i in {2,3,4} \draw[->] (\i,7-\i) -- (\i-1,8-\i);
\draw[->] (5,1) -- (4,2);
\foreach \i in {2,3,4} \draw[->] (\i,6-\i) -- (\i-1,7-\i);
\draw[->] (3,2) -- (2,3);
\draw[->] (2,3) -- (1,4);
\draw[->] (2.2,3.2) -- (1,4);
\draw[->] (2,2) -- (1,3);
\node [left] at (1,8) {$\rA_1$};
\node [left] at (1,7) {$\rA_2$};
\node [left] at (1,6) {$\rA_3$};
\node [left] at (1,5) {$\rA_4$};
\node [left] at (1,4) {$\rA_5$};
\node [left] at (1,3) {$\rA_6$};
\node [left] at (1,2) {$\rA_7$};
\end{tikzpicture}
%%%%%%%%%%%%% D type %%%%%%%%%%%%%
\begin{tikzpicture}[>=stealth, xscale = \scale, yscale = \scale]
\draw[->] (0,0) -- (4,0) node[above]{$\dd(X)$};
\draw[->] (0,0) -- (0,6) node[right]{$\rr(X)$};
\foreach \i in {1,2,3,4} \node[below] at (\i,0) {\i};
\foreach \i in {1,2,3,4,5} \node[left] at (0,\i) {\i};
\foreach \i in {4} \filldraw[black] (\i,1) circle (.2em); 
\foreach \i in {1,2,4} \filldraw[black] (\i,2) circle (.2em); 
\foreach \i in {3} \draw (\i,2) circle (.2em); 
\foreach \i in {1,2,3} \draw (\i,3) circle (.2em); 
\foreach \i in {1,2} \draw (\i,4) circle (.2em); 
\foreach \i in {1} \draw (\i,5) circle (.2em); 
\draw[->] (4,1) -- (3,2);
\draw[->] (4,2) -- (3,3);
\draw[->] (3,2) -- (2,3);
\draw[->] (2,2) -- (1,3);
\draw[->] (3,3) -- (2,4);
\draw[->] (2,3) -- (1,4);
\draw[->] (2,4) -- (1,5);
\node [left] at (1,5) {$\rD_4$};
\node [left] at (1,4) {$\rD_5$};
\node [left] at (1,3) {$\rD_6$};
\node [left] at (1,2) {$\rD_7$};
\end{tikzpicture}
%%%%%%%%%%%%% E type %%%%%%%%%%%%%
\begin{tikzpicture}[>=stealth, xscale = \scale, yscale = \scale]
\draw[->] (0,0) -- (3,0) node[above]{$\dd(X)$};
\draw[->] (0,0) -- (0,4) node[right]{$\rr(X)$};
\foreach \i in {1,2,3} \node[below] at (\i,0) {\i};
\foreach \i in {1,2,3} \node[left] at (0,\i) {\i};
\foreach \i in {1,2,3} \filldraw[black] (\i,1) circle (.2em); 
\foreach \i in {1,2} \draw (\i,2) circle (.2em); 
\foreach \i in {1} \draw (\i,3) circle (.2em); 
\draw[->] (3,1) -- (2,2);
\draw[->] (2,1) -- (1,2);
\draw[->] (2,2) -- (1,3);
\node [left] at (1,3) {$\rE_6$};
\node [left] at (1,2) {$\rE_7$};
\node [left] at (1,1) {$\rE_8$};
\end{tikzpicture}
}
}
\end{equation*}
The three ``maps'' of this ``atlas'' correspond to varieties of types~$\rA$, $\rD$, and~$\rE$, respectively.
Black dots stand for primitive and white dots for imprimitive varieties,
and the arrows correspond to the operation of blowup of a general point 
(which decreases~$\dd(X)$ by~$1$ and increases~$\rr(X)$ by~$1$, keeping their sum constant),
followed by passing to the anticanonical model.
The grey area on the first map shows those maximal varieties of type~$\rA$ that have moduli.   

\begin{remark}
\label{rem:hpd}
If~$\cE$ is a del Pezzo bundle with~$2 \le \rc_2(\cE) \le K_Z^2 - 2$ 
the dual bundle~$\cE^\vee$ is globally generated (see Lemma~\ref{lem:dpv-dpb})
and we prove in Corollary~\ref{cor:dpb-duality} that the operation
\begin{equation*}
\cE \longmapsto \cE^\perp \coloneqq \Ker\big(H^0(Z,\cE^\vee) \otimes \cO_Z \longrightarrow \cE^\vee\big)
\end{equation*}
is an involution sending a del Pezzo bundle~$\cE$ with~$\rc_2(\cE) = k$ 
to the del Pezzo bundle~$\cE^\perp$ with~$\rc_2(\cE^\perp) = K_Z^2 - k$.
Combining this with \emph{linear homological projective duality} from~\cite[\S8]{K:hpd}
we obtain an interesting relation between derived categories 
of the corresponding maximal del Pezzo varieties of type~$\rA_{k-1}$ and~$\rA_{K^2_Z - 1 - k}$,
see~\cite{BB22} for a discussion of the case where~$Z$ is the quintic del Pezzo surface and~$k = 2$.
\end{remark}

The involution of the set of all del Pezzo bundles with~$2 \le \rc_2(\cE) \le K_Z^2 - 2$ 
described in Remark~\ref{rem:hpd}
gives rise to a duality that takes a del Pezzo variety~$\PP_Z(\cE)_\can$ of degree~\mbox{$K_Z^2 - \rc_2(\cE) \ge 2$}
to the del Pezzo variety~$\PP_Z(\cE^\perp)$ of degree~$\rc_2(\cE) \ge 2$.
Our last theorem explains what happens in the missing case of del Pezzo varieties of degree~$1$.

Recall from Theorem~\ref{cla:dP} that any del Pezzo variety~$X$ with~$\dd(X) = 1$ is a hypersurface in~$\PP(1^n,2,3)$.
We denote by~$x_0 \in X$ the base point of the projection~$X \dashrightarrow \PP^{n-1}$ and by
\begin{equation*}
\fD(X) \subset \PP^{n-1}
\end{equation*}
the discriminant divisor of the elliptic fibration~$\Bl_{x_0}(X) \to \PP^{n-1}$.
On the other hand, given a del Pezzo surface~$S$ of degree~$d$ 
we denote by~$\bH(S) \subset S \times \PP^{d}$ the universal anticanonical divisor,
and given a line~$L \subset S$ we set~$\Pi_L \coloneqq (L \times \PP^{d}) \cap \bH(S)$; 
this is a divisor in~$\bH(S)$.

\begin{theorem}
\label{thm:dp1-4-cubic}
For~$3 \le n \le 9$ there is a bijection between the sets of isomorphism classes of
\begin{itemize}
\item 
maximal non-conical del Pezzo varieties~$X$ of type~$\rA_{n-2}$ with~$\dd(X) = 1$, and
\item 
smooth del Pezzo surfaces~$S$ of degree~$n-1$,
\end{itemize}
such that~$S$ is projective dual to~$\fD(X)$ and 
there is a pseudoisomorphism~$\Bl_{x_0}(X) \dashrightarrow \bH(S)$ over~$\PP^{n-1}$
that takes the exceptional divisor of~$\Bl_{x_0}(X)$ 
to the divisor~$\Pi_L \subset \bH(S)$.
\end{theorem}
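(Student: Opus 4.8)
The plan is to translate $X$ into the del Pezzo bundle language of Theorem~\ref{thm:intro-dpa}, read off the surface $S$ from the point attached to the bundle in Theorem~\ref{thm:intro-dpb}\ref{it:almost-uniqueness}, and then prove that the two genus-one fibrations over $\PP^{n-1}$ coincide up to pseudoisomorphism; the projective duality and the bijection fall out along the way.

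\emph{Setting up the correspondence.} Since $X$ is non-conical of type $\rA_{n-2}$ with $\dd(X)=1$, Theorem~\ref{thm:intro-clx} gives $n-2=10-\dd(X)-\rr(X)$, so $\rr(X)=11-n\ge 2$ and Theorem~\ref{thm:intro-dpa} applies: $X\cong\PP_Z(\cE)_\can$ with $Z$ a del Pezzo surface, $\rc_2(\cE)=n-1$ and $K_Z^2=11-\rr(X)=n$. Maximality of $X$ means maximality of $\cE$, so $\rk(\cE)=\rc_2(\cE)=n-1=K_Z^2-1$ by Theorem~\ref{thm:intro-dpb}\ref{it:cemax}, and Theorem~\ref{thm:intro-dpb}\ref{it:almost-uniqueness} attaches to $\cE$ a point $z\in Z$ with $\Bl_z(Z)$ del Pezzo. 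I set $S\coloneqq\Bl_z(Z)$, a smooth del Pezzo surface of degree $K_Z^2-1=n-1$, with blowdown $\beta\colon S\to Z$ and exceptional line $L\subset S$. As $h^0(S,-K_S)=n$, the base of $\bH(S)$ is $|-K_S|=\PP^{n-1}$, and $-K_S\cdot L=1$ makes $\Pi_L$ a rational section of $\pi\colon\bH(S)\to\PP^{n-1}$.

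\emph{Two genus-one fibrations with a section.} Resolving the projection from the base point $x_0$ of $|A_X|$ gives $f\colon\Bl_{x_0}(X)\to\PP^{n-1}$ with $f^*\cO(1)=\rho^*A_X-E$, where $\rho$ is the blowup and $E$ its exceptional divisor; since $K_X=-(n-1)A_X$ a direct computation yields
\begin{equation*}
K_{\Bl_{x_0}(X)}=f^*\cO_{\PP^{n-1}}(1-n),
\end{equation*}
and $f|_E\colon E\to\PP^{n-1}$ is an isomorphism, so $E$ is a section. On the other side $\bH(S)=\PP_S(\mathscr{M})$ is the projectivization of the syzygy bundle
\begin{equation*}
\mathscr{M}\coloneqq\Ker\big(H^0(S,-K_S)\otimes\cO_S\longrightarrow -K_S\big),
\end{equation*}
which has $\rc_1(\mathscr{M})=K_S$ and $\rc_2(\mathscr{M})=K_S^2=n-1$; the map $\pi$ is given by $\cO_{\PP_S(\mathscr{M})}(1)$, and adjunction for the bidegree-$(-K_S,1)$ divisor $\bH(S)\subset S\times\PP^{n-1}$ gives $K_{\bH(S)}=\pi^*\cO(1-n)$ as well. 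Thus both are relatively minimal genus-one fibrations over $\PP^{n-1}$ with a section and the same relative dualizing sheaf.

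\emph{Matching the fibrations (the main obstacle).} The crux is to show the two fibrations have isomorphic generic fibres, whence they are birational over $\PP^{n-1}$ and, being relatively minimal, pseudoisomorphic. Here I would exploit $S=\Bl_z(Z)$ to compare bundles directly: I expect $\cE$ to be the elementary modification of the syzygy bundle of $-K_Z$ at the point $z$ (this is how $z$ enters Theorem~\ref{thm:intro-dpb}\ref{it:almost-uniqueness}), and then pulling back along $\beta$ and performing the elementary transformation along $L$ to identify $\mathscr{M}$ with the resulting transform of $\beta^*\cE$. This identification produces a birational map $\Bl_{x_0}(X)\dashrightarrow\bH(S)$ over $\PP^{n-1}$; the delicate point, which I expect to be the main difficulty, is to check that it contracts no divisor on either side, i.e.\ is an isomorphism in codimension one. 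Granting this together with relative minimality of both models, uniqueness of minimal models up to flops upgrades the map to a pseudoisomorphism, which must carry the section $E$ to the section $\Pi_L$, both being the sections distinguished by $z$.

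\emph{Projective duality and bijectivity.} A pseudoisomorphism over $\PP^{n-1}$ matches the loci of singular fibres in codimension one, so $\fD(X)$ equals the discriminant of $\pi$, namely the dual variety $S^\vee\subset|-K_S|=\PP^{n-1}$ (interpreted via the anticanonical family when $\dd(S)=2$); biduality then gives $S=\fD(X)^\vee$, the asserted duality. The assignment $X\mapsto S$ is well defined because $Z$, $\cE$, and hence $z$ are intrinsic to $X$ by Theorem~\ref{thm:intro-dpa}. For the inverse I would run the construction backwards: a del Pezzo surface $S$ of degree $n-1$ with a line determines $\bH(S)\to\PP^{n-1}$, whose relatively minimal model with section $\Pi_L$ is $\Bl_{x_0}(X)$ for a unique degree-one del Pezzo variety $X$, and the bundle identification above shows this $X$ is maximal of type $\rA_{n-2}$ and that the two constructions are mutually inverse. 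Since $\bH(S)$ depends only on $S$, the recovered $X$ is independent of the choice of $(-1)$-curve used to reconstruct $(Z,z)$, which is what turns $X\mapsto S$ into a genuine bijection rather than a correspondence. The only point needing real care is surjectivity: the image consists of those degree-$(n-1)$ del Pezzo surfaces containing a line, which is all of them for $3\le n\le 8$ and all but $\PP^1\times\PP^1$ for $n=9$, the latter being excluded precisely because it carries no line $L$ and hence no section $\Pi_L$.
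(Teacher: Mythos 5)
Your reduction to del Pezzo bundles and the comparison of the two genus-one fibrations $\Bl_{x_0}(X)\to\PP^{n-1}$ and $\bH(S)\to\PP^{n-1}$ is the same route the paper takes, and the step you single out as ``the main obstacle'' is in fact not an obstacle at all: the elementary-transformation identification of $\cE$ with the syzygy bundle of $-K_S$, the resulting pseudoisomorphism $\Bl_{x_0}(\PP_Z(\cE))\dashrightarrow\bH(S)$ over $\PP^{n-1}$, and the fact that it carries the exceptional divisor $E_0$ to $\Pi_{L_0}$, are precisely Proposition~\ref{prop:bl-pze-hblz} (built on Proposition~\ref{prop:dpb} and Lemma~\ref{lem:dpb-converse}), so this part of your sketch is correct and fillable. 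The duality $\fD(X)\cong S^\vee$ then follows exactly as you say.

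The genuine gap is in your final paragraph, where you assert that ``since $\bH(S)$ depends only on $S$, the recovered $X$ is independent of the choice of $(-1)$-curve.'' This is a non sequitur: $X$ is not recovered from $\bH(S)$ alone but from the pair $(\bH(S),\Pi_L)$ --- concretely, $X\cong\Proj\big(\bigoplus_m H^0(\bH(S),\cO(m(H+\Pi_L)))\big)$ --- and the divisor $\Pi_L$, hence a priori the output, changes with $L$. Moreover your well-definedness claim that ``$Z$, $\cE$, and hence $z$ are intrinsic to $X$ by Theorem~\ref{thm:intro-dpa}'' is false: for a degree-$7$ surface $S$ one line contracts to $Z=\FF_1$ and another to $Z=\PP^1\times\PP^1$, and the corresponding presentations $\PP_Z(\cE)_\can$ of one and the same $X$ sit over non-isomorphic bases (this is exactly the coincidence $\big(\PP_{\FF_1}(\cE_{\FF_1,7})\big)_\can\cong\big(\PP_{\PP^1\times\PP^1}(\cE_{\PP^1\times\PP^1,7})\big)_\can$ established in the proof of Theorem~\ref{thm:intro-a-detailed}); the paper instead derives well-definedness of $X\mapsto S$ from the intrinsic characterization $S\cong\fD(X)^\vee$. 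The independence $X_{S,L}\cong X_{S,L'}$ for any two lines is the actual heart of the paper's proof and requires a real argument: for $n\ge 6$ it follows from the uniqueness of the maximal variety with given $(d,r)$ in Theorem~\ref{thm:intro-a-detailed}; for $n\le 5$ the paper first treats disjoint lines $L,L'$ by contracting both to a degree-$(n+1)$ surface $Z_0$, realizing the two models as blowups of the degree-$2$ variety $\PP_{Z_0}(\cE_0)$ at points exchanged by its Geiser involution, which yields a pseudoautomorphism of $\bH(S)$ swapping $\Pi_L$ and $\Pi_{L'}$, and then handles arbitrary pairs by passing through a third line disjoint from both. Without some such argument, neither injectivity of $X\mapsto S$ nor well-definedness of the inverse is established, so the bijection claim collapses. (Your closing observation that for $n=9$ the surface $\PP^1\times\PP^1$ carries no line and is therefore not hit is a fair remark about the boundary case, but it does not substitute for the missing independence statement.)
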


This theorem generalizes and extends the main result of~\cite{ACPS:DoubleVC}, 
which is its special case where~$n = 3$.
For more details about this bijection see~\S\ref{ss:degree-1}.

\medskip 

The paper is organized as follows.
In~\S\ref{sec:geometry} we discuss the basic properties of del Pezzo varieties 
and study their birational geometry;
in particular we classify all $K$-negative extremal contractions of $\QQ$-factorial almost del Pezzo varieties.
In~\S\ref{sec:lattice} we study the lattice structure of~$\Cl(X)$ defined by the bilinear form~\eqref{eq:product},
and describe its behavior with respect to extremal contractions.
In~\S\ref{sec:dpb} we prove the basic properties of del Pezzo bundles,
including their classification.
We also describe relatively minimal almost del Pezzo quadric bundles over~$\PP^1$.
In~\S\ref{sec:proofs} we collect the proofs of all results listed in the Introduction.
In~\S\ref{sec:cones} we describe the effective and moving cones of a del Pezzo variety in terms of its class group.
We also describe all possible $\QQ$-factorializations of del Pezzo varieties with~$\rr(X) \le 3$.
In~\S\ref{sec:schubert} we discuss in more detail del Pezzo varieties of degree~$5$.
In Appendix~\ref{sec:examples} we list equations of maximal del Pezzo varieties of type~$\rA$.
Finally, in Appendix~\ref{sec:details} we describe roots and special classes of del Pezzo varieties. 

\medskip 

{\parindent=0pt\bf Conventions.}
We work over an algebraically closed field of characteristic~$0$.

\medskip 

{\parindent=0pt\bf Acknowledgements.}
We would like to thank Costya Shramov and Andrey Trepalin for useful discussions
and the anonymous referee for his very helpful comments.

\section{Geometry of del Pezzo varieties}
\label{sec:geometry}

In this section we discuss the geometry of del Pezzo varieties. 

\subsection{Fundamental linear system}
\label{ss:fundamental}

We start with basic properties of the linear system of the fundamental divisor class~$A = A_X$ defined by~\eqref{eq:kx-a}.
Recall the bilinear form on the class group~$\Cl(X)$ of Weil divisors defined in~\eqref{eq:product}.

\begin{lemma}
\label{lem:vanishings}
If~$X$ is an almost del Pezzo variety, $\dim(X) = n$, and~$D \in \Cl(X)$ is a nef class then
\begin{align*}
H^i(X,\cO_X(kA_X + D)) &= 0
&&
\text{for~$i > 0$ and~$k \ge 2 - n$},\\
H^i(X,\cO_X(kA_X)) &= 0
&&
\text{for~$0 < i < n$ and all~$k \in \ZZ$}.
\end{align*}
Moreover, if the divisor class~$kA_X + D$ is effective and non-trivial, then 
\begin{equation*}
k\dd(X)+\langle D,\, A_X\rangle > 0
\qquad\text{and}\qquad 
k\langle D,\, A_X\rangle +\langle D,\, D\rangle\ge 0. 
\end{equation*}
\end{lemma}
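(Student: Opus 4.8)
The plan is to dispatch the two cohomology vanishings first and then read off the two numerical inequalities by restriction.

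\emph{First vanishing (Kawamata--Viehweg).} Since $-K_X = (n-1)A_X$, any class of the form $M = kA_X + D$ can be rewritten as
\[
M = K_X + \big((k+n-1)A_X + D\big).
\]
The twist $L := (k+n-1)A_X + D$ is nef and big as soon as $k \ge 2-n$: then $k+n-1 \ge 1$, so $(k+n-1)A_X$ is a positive multiple of the nef and big Cartier class $A_X$, and adding the nef class $D$ preserves both nefness and bigness (big $+$ nef $=$ big). As $X$ has terminal, hence klt, singularities, Kawamata--Viehweg vanishing applies to $K_X + L$ and yields $H^i(X,\cO_X(kA_X+D)) = 0$ for all $i>0$, which is the first assertion. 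The point that needs care is that $D$, a priori only a Weil class, must be $\QQ$-Cartier for $L$ to be $\QQ$-Cartier and for KV to apply in the stated form; this is built into the notion of nefness (a $\QQ$-Cartier notion) or can be arranged after passing to a $\QQ$-factorialization, and I regard checking this hypothesis as the main technical obstacle.

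\emph{Second vanishing (Serre duality).} Terminal singularities are Cohen--Macaulay, and $\cO_X(kA_X)$ is a line bundle because $A_X$ is Cartier, so Serre duality gives $H^i(X,\cO_X(kA_X)) \cong H^{n-i}(X,\cO_X(K_X-kA_X))^\vee$, where $K_X - kA_X = -(n-1+k)A_X = k'A_X$ with $k' = -(n-1)-k$. For $0 < i < n$ I split on the sign of $k$: if $k \ge 2-n$ the first assertion (with $D=0$) already gives $H^i = 0$; if $k \le 1-n$ then $k' \ge 0 \ge 2-n$, so the first assertion applied to $H^{n-i}(X,\cO_X(k'A_X))$ (note $0 < n-i < n$) forces the dual to vanish. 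Since $1-n$ and $2-n$ are consecutive integers, these two ranges exhaust $\ZZ$, which proves the second assertion for every $k$.

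\emph{Numerical inequalities.} Let $E := kA_X + D$ be effective and nonzero. Expanding the bilinear form gives $\langle E, A_X\rangle = A_X^{n-1}\cdot E = k\,\dd(X) + \langle D, A_X\rangle$ and $\langle D, E\rangle = k\langle D, A_X\rangle + \langle D, D\rangle$, so the two claims are exactly $\langle E, A_X\rangle > 0$ and $\langle D, E\rangle \ge 0$. For the first, I use that for an almost del Pezzo variety $A_X = \xi^* A_{X_\can}$ with $A_{X_\can}$ ample and $\xi\colon X \to X_\can$ small; by the projection formula $A_X^{n-1}\cdot E = A_{X_\can}^{n-1}\cdot \xi_* E$, and since $\xi$ contracts no divisor, $\xi_* E$ is a nonzero effective divisor, so its intersection with the ample class $A_{X_\can}^{n-1}$ is strictly positive. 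For the second, I restrict to a general linear surface section $i\colon S \hookrightarrow X$, which is a smooth del Pezzo surface: there $\langle D, E\rangle = (D|_S)\cdot(E|_S)$, where $D|_S$ is nef and $E|_S$ is effective (a general $S$ is not contained in $\Supp E$), so the product is $\ge 0$ on the surface.

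Thus the smallness of $\xi$ provides the strict inequality in the first estimate, while the second follows from the surface-section restriction; the genuinely delicate input remains the $\QQ$-Cartier hypothesis that legitimizes the Kawamata--Viehweg step.
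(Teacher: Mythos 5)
Your proof is correct and takes essentially the same route as the paper's: the identical decomposition $kA_X + D = K_X + \big((k+n-1)A_X + D\big)$ fed into Kawamata--Viehweg, the same Serre-duality reduction $H^i(X,\cO_X(kA_X)) \cong H^{n-i}(X,\cO_X((1-n-k)A_X))^\vee$ for the middle cohomology, and the same pairing arguments (pairing with $A_X$ plus smallness of the anticanonical morphism for the strict inequality, pairing with $D$ plus nefness for the weak one). Your only deviations are cosmetic: you route the second inequality through a general linear surface section (the paper pairs with $D$ directly, invoking nefness of $A_X$ and $D$; compatibility of the form with restriction is the paper's Lemma~\ref{lem:product-divisor}), and you explicitly flag the $\QQ$-Cartier status of the Weil class $D$, a point the paper leaves implicit.
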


\begin{proof}
Using~\eqref{eq:kx-a} we write~$kA_X + D = K_X + (k + n - 1)A_X + D$.
The first equality follows from Kawamata--Viehweg vanishing 
because~$k + n - 1 \ge 1$ by the assumption and therefore~$(k + n - 1)A_X + D$ is nef and big.
The second equality follows from the first (for~\mbox{$D = 0$}) 
and Serre duality~$H^i(X,\cO_X(kA_X)) \cong H^{n-i}(X,\cO_X(1-n-k)A_X)^\vee$.

Assume~$kA_X + D$ is effective and non-trivial.
Then the first inequality is obtained by taking the product of~$kA_X + D$ with~$A_X$ 
and follows from Definition~\ref{def:almostDP} as~$\Phi_{|A_X|}$ does not contract divisors.
Similarly, the second is obtained by taking the product of~$kA_X + D$ with~$D$ 
and follows from the nef property of~$A_X$.
\end{proof} 

In what follows we denote by
\begin{equation*}
\Phi_{|A_X|} \colon X \dashrightarrow \PP^{\dim |A_X|}
\end{equation*}
the rational map induced by the fundamental linear system~$|A_X|$, 
which is regular as soon as~$|A_X|$ is base point free 
(as we prove below, this holds if~$\dd(X) \ge 2$).
We call divisors~$Y \subset X$ in~$|A_X|$ \emph{fundamental divisors}.
The following results are well known and can be found in many papers 
(see~\cite[\S2]{Isk:Fano1e}, \cite{Shin1989}, \cite{Fujita-all}, \cite{Fujita-1986-1}, \cite{Fujita1990}).

\begin{proposition}
\label{DP:|A|}
Let $(X,A_X)$ be a del Pezzo variety of dimension~$n$.
Then the following assertions hold.
\begin{enumerate}
\item 
\label{DP:|A|1}
$\dim|A_X|=\dd(X)+n-2$.

\item 
\label{DP:|A|3a}
If $\dd(X)=1$, the linear system~$|A_X|$ has a unique base point~$x_0 \in X$ which is a smooth point of~$X$.
Any fundamental divisor~$Y \in |A_X|$ is smooth at~$x_0 \in X$.

\item 
\label{DP:|A|3}
If $\dd(X)\ge 2$, the linear system~$|A_X|$ is base point free
and the morphism~$\Phi_{|A_X|}$ is finite onto its image.

\item
\label{DP:|A|4}
If $\dd(X)\ge 3$, the linear system~$|A_X|$ is very ample.

\item 
\label{DP:|A|2}
A general fundamental divisor $Y\in |A_X|$ is a del Pezzo variety and~\mbox{$\dd(Y) = \dd(X)$};
if~$Y$ is conical then so is~$X$.
\end{enumerate}
\end{proposition}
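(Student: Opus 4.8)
The plan is to prove part~\ref{DP:|A|1} directly by Riemann--Roch, and to obtain the remaining parts by descending to a general surface section along a ``ladder'', thereby reducing them to classical facts about del Pezzo surfaces. For~\ref{DP:|A|1}, the vanishings of Lemma~\ref{lem:vanishings} give $H^i(X,\cO_X(A_X))=0$ for $i>0$ (take $k=1$, $D=0$), so $\dim|A_X|=\chi(X,\cO_X(A_X))-1$. I would then view $P(m):=\chi(X,\cO_X(mA_X))$ as a degree-$n$ polynomial in $m$ with leading coefficient $\dd(X)/n!$. The same Lemma together with Serre duality gives $\chi(\cO_X)=1$ and $P(m)=0$ for $2-n\le m\le-1$ (here $h^0=0$ as $m<0$, the middle cohomology vanishes, and $h^n$ vanishes since $-(n-1+m)A_X$ has no sections), as well as the symmetry $P(m)=(-1)^nP(-(n-1)-m)$. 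These $n-2$ roots, together with $P(0)=1$ and the symmetry, determine $P$, and evaluating yields $P(1)=\dd(X)+n-1$, i.e. $\dim|A_X|=\dd(X)+n-2$.

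The engine for the other parts is the ladder. Since $X$ is terminal, $\codim_X\Sing(X)\ge 3$, so a general surface section $S=Y_1\cap\dots\cap Y_{n-2}$ (with $Y_i\in|A_X|$ general) avoids $\Sing(X)$ and, by Bertini, is a smooth irreducible surface; adjunction gives $-K_S=A_X|_S=:A_S$ ample, so $S$ is a smooth del Pezzo surface with $\dd(S)=A_S^2=A_X^n=\dd(X)$. Iterating the sequence $0\to\cO_X\to\cO_X(A_X)\to\cO_Y(A_Y)\to 0$ and using $H^1(\cO_X)=0$ on each rung produces a surjection $H^0(X,A_X)\twoheadrightarrow H^0(S,A_S)$, so $|A_X|$ restricts onto $|A_S|=|{-K_S}|$. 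This also yields most of~\ref{DP:|A|2}: for a general fundamental divisor $Y$, adjunction gives $-K_Y=(n-2)A_Y$ with $A_Y$ ample Cartier and $\dd(Y)=\dd(X)$, while terminality of $Y$ follows from a Bertini-type theorem for terminal singularities (applied where $|A_X|$ is base point free, the single base point of the degree-one case being treated separately).

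Parts~\ref{DP:|A|3} and~\ref{DP:|A|4} then follow by transfer from $S$. For $\dd(X)\ge 2$ and any $x\in X$, I choose a surface section through $x$; as $|{-K_S}|$ is base point free for $\dd(S)\ge 2$ and restriction is surjective, $x\notin\Bs|A_X|$, so $|A_X|$ is base point free, and the induced morphism has $A_X=\Phi_{|A_X|}^*\cO(1)$ ample, hence contracts no curve and is finite. For $\dd(X)\ge 3$, choosing surface sections through prescribed points and tangent directions and using very ampleness of $|{-K_S}|$ for $\dd(S)\ge 3$ gives separation of points and tangent vectors. For~\ref{DP:|A|3a}, base point freeness would force $\dd(X)=(\Phi_{|A_X|}^*\cO(1))^n=0$, so $\Bs|A_X|\ne\varnothing$; since $\Bs|A_X|\subseteq\bigcap Y_i=S$ and maps into $\Bs|{-K_S}|=\{x_0\}$ (the unique base point of a degree-one del Pezzo surface), $\Bs|A_X|=\{x_0\}$. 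Smoothness of $X$ and of a general $Y$ at $x_0$ follows because the general members cutting out the curve $S\cap Y$ form a regular sequence in the Cohen--Macaulay ring $\cO_{X,x_0}$, so smoothness of the curve section at $x_0$ lifts to $X$, and smoothness of a general $Y$ at $x_0$ follows from that of a general curve section.

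Finally, for the conical clause a smooth rational curve of $A_Y$-degree one is a line of $(X,A_X)$, so if a general $Y$ is a cone with vertex $\bv_Y$ then $Y$ lies in the union $F_{\bv_Y}\subseteq X$ of lines of $X$ through $\bv_Y$. For $\dd(X)\ge 3$ I would run a dimension count on $Y\mapsto\bv_Y$: if $\dim F_{\bv_Y}=n-1$ then $Y$ is a component of $F_{\bv_Y}$, hence determined by $\bv_Y$ up to finitely many choices, forcing $\dim|A_X|=\dd(X)+n-2\le n$, a contradiction; therefore $F_{\bv_Y}=X$ and $X$ is a cone. I expect the two genuine obstacles to be exactly the inputs flagged above: the Bertini statement ensuring the general fundamental divisor remains \emph{terminal} (and not merely canonical), which must be applied with care at the degree-one base point; and the conical clause in the low-degree cases $\dd(X)\le 2$, where the vertex count is inconclusive and one must instead exploit the explicit structure of $\Phi_{|A_X|}$ (a double cover of $\PP^n$ when $\dd(X)=2$, and the elliptic fibration on $\Bl_{x_0}(X)$ when $\dd(X)=1$) to detect the cone.
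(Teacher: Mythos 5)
Your computation of $\dim|A_X|$ in part~\ref{DP:|A|1} is correct and is essentially the paper's own argument (Riemann--Roch plus Lemma~\ref{lem:vanishings}, following~\cite{Alexeev:gd}). The genuine gap is in your engine for parts~\ref{DP:|A|3a} and~\ref{DP:|A|3}: the claim that a general surface section $S=Y_1\cap\dots\cap Y_{n-2}$ is a \emph{smooth} del Pezzo surface is circular at exactly the points where it has to do work. Bertini gives smoothness of $S$ only away from $\Bs|A_X|$, and $S$ can be chosen to avoid $\Sing(X)$ only away from $\Bs|A_X|$, because every member of $|A_X|$ contains the base locus. Before~\ref{DP:|A|3a} and~\ref{DP:|A|3} are proved you have no control over $\Bs|A_X|$ --- a priori it could be positive-dimensional, could meet $\Sing(X)$, and all the $Y_i$ could be singular along it --- so you may not assume $S$ is a smooth del Pezzo surface; yet the statements you want to transfer from $S$ (base point freeness when $\dd(X)\ge 2$; smoothness of $X$ and of every $Y$ at $x_0$ when $\dd(X)=1$) are precisely this missing control. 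Your regular-sequence remark correctly propagates smoothness \emph{upward} from a section, but nothing in your argument starts the propagation. The paper is structured to avoid this trap: it proves~\ref{DP:|A|3a} and~\ref{DP:|A|3} by induction on dimension inside the larger class of \emph{canonical} del Pezzo varieties, where the induction step uses adjunction together with Alexeev's theorem \cite[Theorem~0-5]{Alexeev:gd} --- a general fundamental divisor of a canonical del Pezzo variety is again canonical, with no base point freeness hypothesis --- and the equality $\Bs|A_X|=\Bs|A_X\vert_Y|$ coming from $H^1(X,\cO_X)=0$; the base of the induction is the Du Val surface case \cite{Hidaka-Watanabe}, so smoothness at $x_0$ enters only in dimension two and then climbs the ladder (a Cartier divisor smooth at a point forces the ambient variety to be smooth there). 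Terminality of the general $Y$ in part~\ref{DP:|A|2} is deduced only afterwards, from \cite[Lemma~5.17]{KM:book} together with part~\ref{DP:|A|3a}, exactly as you anticipated but cannot yet justify.

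The second gap you flag yourself: your proof of the conical clause in part~\ref{DP:|A|2} works only for $\dd(X)\ge 3$, since the vertex-counting inequality $\dd(X)+n-2\le n$ is no contradiction when $\dd(X)\le 2$, and you leave those cases to an unspecified analysis of the Geiser and Bertini double covers. The paper's argument is uniform in the degree and avoids the vertex map altogether: if the general $Y$ is a cone with vertex $\bv$, the lines of $Y$ through $\bv$ form an $(n-2)$-dimensional family, and since $Y$ moves in $|A_X|$ (equivalently, since $N_{Y/X}$ has degree $A_X\cdot L=1$ on each such line $L$), these lines deform inside $X$ with one extra parameter while still passing through $\bv$; hence the lines of $X$ through $\bv$ form an $(n-1)$-dimensional family and sweep out all of $X$, so $X$ is conical. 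Until both points are repaired, your proposal does not prove the proposition in the range $\dd(X)\le 2$, which is exactly where it is delicate.
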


\begin{proof}
\ref{DP:|A|1} 
The dimension of~$|A_X|$ can be computed using Riemann--Roch theorem and Lemma~\ref{lem:vanishings}
(see, e.g., \cite[proof of Proposition~1-1]{Alexeev:gd}). 

\ref{DP:|A|3a}, \ref{DP:|A|3} 
We use induction on~$n$ to prove these properties for the wider class of \emph{canonical del Pezzo varieties},
i.e., varieties with canonical singularities satisfying~\eqref{eq:kx-a}.
The base of induction is the case~$n = 2$; 
in this case~$X$ is a del Pezzo surface with canonical (hence Du Val) singularities
and the properties~\ref{DP:|A|3a} and~\ref{DP:|A|3} are well known (see~\cite[\S4]{Hidaka-Watanabe}).
So, assume~$n \ge 3$ and let~$Y \in |A_X|$ be a general fundamental divisor.
The restriction map
\begin{equation*}
H^0(X,\, \OOO_X(A_X)) \longrightarrow H^0(Y,\, \OOO_Y(A_X\vert_Y)) 
\end{equation*}
is surjective because~$H^1(X,\cO_X) = 0$ by Lemma~\ref{lem:vanishings} 
(which holds for canonical del Pezzo varieties by the same argument), 
hence~$\Bs |A_X|= \Bs |A_X\vert_Y|$.
On the other hand, $Y$ has canonical singularities by~\cite[Theorem~0-5]{Alexeev:gd} and
\begin{equation}
\label{eq:ky-kx}
K_Y = (K_X + A_X)\vert_Y = (2 - n)A_X\vert_Y
\end{equation}
by adjunction, therefore, the induction hypothesis holds for~$Y$ 
and we conclude that both~\ref{DP:|A|3a} and~\ref{DP:|A|3} hold for~$Y$,
and hence they also hold for~$X$.

\ref{DP:|A|4}
Here we also apply induction on the dimension as in~\cite{Fujita:def} or~\cite[\S2]{Isk:Fano1e}.

\ref{DP:|A|2}
If~$\dd(X) \ge 2$ then~$|A_X|$ is base point free by~\ref{DP:|A|3}
hence a general fundamental divisor is terminal by~\cite[Lemma 5.17]{KM:book}.
Similarly, if~$\dd(X) = 1$ a general fundamental divisor 
is terminal away from the base point~$x_0 \in X$ of~$|A_X|$ 
and smooth at~$x_0$ by~\ref{DP:|A|3a}, hence it is terminal everywhere.
Moreover, \eqref{eq:ky-kx} still holds, hence~$Y$ is a del Pezzo variety.
Finally, the equality~$\dd(Y) = \dd(X)$ follows from the projection formula.

If~$Y$ is conical then there is a point~$\bv \in Y$ such that 
there is an $(n-2)$-dimensional family of lines on~$Y$ passing through~$\bv$.
Since~$Y$ is general, it follows that lines on~$X$ passing through~$\bv$ form an~$(n-1)$-dimensional family,
hence~$X$ is conical.
\end{proof} 

\begin{remark}
\label{rem:involutions}
If~$\dd(X) = 2$ the morphism~$\Phi_{|A_X|} \colon X \to \PP^n$ is a double covering.
Similarly, if~$\dd(X) = 1$ it is easy to prove that the linear system~$|2A_X|$ is base point free 
and the morphism~$\Phi_{|2A_X|} \colon X \to \PP(1^n,2)$ is a double covering.
The involutions of these double coverings 
are called \emph{Geiser involution} and \emph{Bertini involution}, respectively, \cite{Dolgachev-Iskovskikh}, \cite{P:invol}.
\end{remark}

Applying Proposition~\ref{DP:|A|}\ref{DP:|A|2} several times we construct a chain
\begin{equation*}
S \coloneqq X_2 \subset \dots \subset X_{n-1} \subset X_n = X
\end{equation*}
of del Pezzo varieties~$X_k$ of dimension~$k$, which is useful for various inductive arguments.
Such a chain is called a \emph{ladder}~\cite{Fujita1990}, 
and the surface~$S$ is called a \emph{linear surface section of~$X$}.

\begin{lemma}
\label{lem:s-p-smooth}
If~$X$ is a del Pezzo variety, $\dd(X) \ge 2$, and~$P \in X$ is a smooth point,
a general linear surface section~$S \subset X$ containing~$P$
is a smooth del Pezzo surface of degree~$\dd(S) = \dd(X)$.
\end{lemma}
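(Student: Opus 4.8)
The plan is to argue by induction on $n = \dim(X)$, building a ladder through $P$ one step at a time. The base case $n = 2$ is immediate: here $S = X$ itself, which by Definition~\ref{def:dp} has terminal, hence smooth, singularities in dimension two, is a del Pezzo surface with $\dd(S) = \dd(X)$, and contains the given point $P$. For the inductive step it therefore suffices to produce a single fundamental divisor $Y \in |A_X|$ passing through $P$ which is again a del Pezzo variety with $\dd(Y) = \dd(X)$ and for which $P$ is a smooth point; applying the inductive hypothesis to the pair $(Y, P)$ then yields a smooth del Pezzo surface $S \subset Y$ of degree $\dd(Y) = \dd(X)$ through $P$. Since $H^1(X, \cO_X) = 0$ by Lemma~\ref{lem:vanishings}, the restriction $H^0(X, \cO_X(A_X)) \to H^0(Y, \cO_Y(A_X|_Y))$ is surjective, so each fundamental divisor of $Y$ extends to one of $X$; hence $S$ is cut out on $X$ by $n-2$ members of $|A_X|$ through $P$ and is indeed a general linear surface section of $X$ containing $P$.

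To produce $Y$, I would consider the hyperplane $|A_X|_P \subset |A_X|$ of fundamental divisors through $P$; it is nonempty because $\dim|A_X| = \dd(X) + n - 2 \ge n$ by Proposition~\ref{DP:|A|}\ref{DP:|A|1}. Since $\dd(X) \ge 2$, the system $|A_X|$ is base point free and $\Phi := \Phi_{|A_X|}$ is finite onto its image by Proposition~\ref{DP:|A|}\ref{DP:|A|3}, so the base locus of $|A_X|_P$ is the finite fibre $Z := \Phi^{-1}(\Phi(P)) \ni P$. On $X \setminus Z$ the system $|A_X|_P$ is base point free, so by Bertini and \cite[Lemma~5.17]{KM:book} a general $Y \in |A_X|_P$ has terminal singularities there; combined with smoothness at the points of $Z$ (discussed below), $Y$ is terminal everywhere. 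Adjunction gives $K_Y = (K_X + A_X)|_Y = -(n-2)(A_X|_Y)$ as in~\eqref{eq:ky-kx}, so $A_X|_Y$ is an ample Cartier fundamental class and $Y$ is a del Pezzo variety (cf.\ Proposition~\ref{DP:|A|}\ref{DP:|A|2}), with $\dd(Y) = (A_X|_Y)^{n-1} = A_X^{n-1}\cdot Y = A_X^n = \dd(X)$ by the projection formula.

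The delicate point, and the one I expect to be the main obstacle, is smoothness of $Y$ at the points of $Z$ — and in particular at $P$ — because these are exactly the base points of the constrained system $|A_X|_P$, so a naive Bertini argument does not apply there. The key is to control at each $Q \in Z$ the image of the differential map $H^0(X, \cO_X(A_X) \otimes \mathfrak{m}_Q) \to \mathfrak{m}_Q/\mathfrak{m}_Q^2 \cong T^\vee_Q X$: when $\dd(X) \ge 3$ the system $|A_X|$ is very ample by Proposition~\ref{DP:|A|}\ref{DP:|A|4}, so this map is surjective and a general $Y$ does not contain the embedded tangent space at $Q$, hence is smooth there; when $\dd(X) = 2$ the morphism $\Phi$ is the double covering of Remark~\ref{rem:involutions}, and a direct computation in the local model $t^2 = x_1$ shows the image of the differential is $(n-1)$-dimensional, which is still enough to make a general member smooth at $Q$. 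Since each $Q \in Z$ is a smooth point of $X$ — for $\dd(X) = 2$ the fibre $Z = \{P, \tau(P)\}$ is preserved by the covering involution $\tau$, which maps $\Sing(X)$ to itself, so the conjugate point $\tau(P)$ is smooth whenever $P$ is — these finitely many transversality conditions are open and dense, and a general $Y \in |A_X|_P$ satisfies all of them simultaneously. This gives $Y$ smooth at $P$ and completes the inductive step.
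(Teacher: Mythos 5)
Your proof is correct and takes essentially the same route as the paper's: Bertini away from the base locus of~$|A_X - P|$, identification of that base locus as~$\{P\}$ (via very ampleness when~$\dd(X) \ge 3$) or~$\{P,\tau(P)\}$ (via the degree-two covering and Geiser involution when~$\dd(X) = 2$), smoothness of a general fundamental divisor at those base points, and induction on dimension. The paper's writeup is merely terser, compressing your explicit ladder and the local computation at the branch divisor into ``a combination of the above observations and a simple induction.''
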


\begin{proof}
Since~$X$ is terminal, $\codim(\Sing(X)) \ge 3$, 
hence a general linear surface section~\mbox{$S \subset X$} does not intersect~$\Sing(X)$.
Moreover, by Bertini's theorem~$S$ is smooth away from the base locus of the linear system~$|A_X - P|$.

If~$\dd(X) \ge 3$ the class~$A_X$ is very ample by Proposition~\ref{DP:|A|}\ref{DP:|A|4}, 
hence~$\Bs|A_X - P| = \{P\}$ and a general fundamental divisor containing~$P$ is smooth at~$P$ 
because~$P \in X$ is a smooth point.

If~$\dd(X) = 2$, the map~$\Phi_{|A_X|} \colon X \to \PP^n$ is a double covering
and~$\Bs|A_X - P| = \{P,\tau(P)\}$, where~$\tau$ is the Geiser involution.
Moreover, since~$P \in X$ is a smooth point, 
$\Phi_{|A_X|}(P) \in \PP^n$ is either away from the branch divisor, or on its smooth locus.
In either case, a general fundamental divisor containing~$P$ is smooth at~$P$ and~$\tau(P)$.

A combination of the above observations and a simple induction proves the lemma.
\end{proof} 

\begin{remark}
The statement of Lemma~\ref{lem:s-p-smooth} is still true for~$\dd(X) = 1$,
but as we do not need this case, we omit the proof.
\end{remark}

\subsection{Almost del Pezzo varieties}

Recall that almost del Pezzo varieties 
have been defined in Definition~\ref{def:almostDP}
and the anticanonical model~$X_\can$ of an almost del Pezzo variety~$X$ 
has been defined as the image of the morphism~$\Phi_{|mA_X|}$ for~$m$ sufficiently large.

\begin{lemma}
\label{lem:adp-dp}
If~$X$ is an almost del Pezzo variety and~$\xi \colon X \to X_\can$ is the anticanonical morphism, 
then~$X_\can$ is a del Pezzo variety and~$A_X = \xi^*A_{X_\can}$.
Moreover,
\begin{equation*}
X_{\can} \cong \Proj \left( \moplus_{m\ge 0}^{} H^0\big(X, \cO_X(mA_X)\big) \right)
\cong \Proj \left( \moplus_{m\ge 0}^{} H^0\big(X, \cO_X(-mK_X)\big) \right).
\end{equation*}
\end{lemma}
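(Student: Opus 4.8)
The plan is to establish the three assertions in sequence: that $X_{\can}$ is a del Pezzo variety, that $A_X = \xi^* A_{X_{\can}}$, and that $X_{\can}$ is the Proj of the indicated section rings. The natural strategy is to exploit the fact that $\xi$ is a small morphism, so that divisorial data and the structure of section rings are preserved under $\xi$.

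First I would address the two isomorphisms on the right. The equality of the two section rings is immediate from the defining relation $-K_X = (n-1)A_X$ of~\eqref{eq:kx-a}, which gives $\cO_X(mA_X) \cong \cO_X(-m(n-1)^{-1}K_X)$ upon reindexing; more precisely, after clearing the factor $n-1$ one checks that the two graded rings have the same Proj since they differ only by a Veronese-type rescaling of the grading, and passing to Proj is insensitive to such a rescaling. To identify $X_{\can}$ with $\Proj\big(\moplus_{m \ge 0} H^0(X,\cO_X(mA_X))\big)$, I would argue that $A_X$ is nef and big (by Definition~\ref{def:almostDP}), so the section ring $R(X,A_X) = \moplus_{m\ge 0} H^0(X,\cO_X(mA_X))$ is finitely generated, and its Proj is by construction the image of $\Phi_{|mA_X|}$ for $m \gg 0$, which is exactly $X_{\can}$.

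Next I would prove $A_X = \xi^* A_{X_{\can}}$. Since $\xi$ is the morphism defined by $|mA_X|$, by construction there is an ample class $A_{X_{\can}}$ on $X_{\can}$ with $\xi^*(mA_{X_{\can}}) = mA_X$, and as $A_{X_{\can}}$ is Cartier and $\Cl$ is torsion-free (Corollary~\ref{cor:clx-free}), this descends to $\xi^* A_{X_{\can}} = A_X$. Because $\xi$ is small, it is an isomorphism in codimension one, so pushing forward and pulling back divisor classes are mutually inverse; this is what lets one transport the Cartier divisor $A_X$ to a well-defined class $A_{X_{\can}}$ on the target, and it is also the key point for the final assertion.

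Finally, to see that $X_{\can}$ is a del Pezzo variety, I would verify the three conditions of Definition~\ref{def:dp}: the ampleness of $A_{X_{\can}}$ holds by construction; the relation $-K_{X_{\can}} = (n-1)A_{X_{\can}}$ follows by pulling back along the small morphism $\xi$, since $\xi^*K_{X_{\can}} = K_X$ (smallness means $\xi$ is crepant, as it contracts no divisors so there are no discrepancy contributions), and then $\xi^*(-K_{X_{\can}}) = -K_X = (n-1)A_X = \xi^*((n-1)A_{X_{\can}})$, whence equality of the classes downstairs again by smallness. The condition I expect to be the main obstacle is showing that $X_{\can}$ has terminal singularities: one must argue that the small crepant morphism $\xi$ from a terminal variety produces a target that is again terminal. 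Here I would invoke that $\xi$ is crepant and small, so discrepancies are preserved, and that terminality of $X$ forces terminality of $X_{\can}$; the subtlety is that small contractions can in principle worsen singularities, so the argument must use that all discrepancies computed on $X_{\can}$ agree with those on $X$ via the crepancy $K_X = \xi^* K_{X_{\can}}$, combined with the fact that $\xi$ extracts no divisors, so every divisor over $X_{\can}$ is already a divisor over $X$ with the same (positive) discrepancy.
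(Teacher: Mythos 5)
Your outline tracks the paper's proof closely (descend $A_X$ to the anticanonical model, get the del Pezzo relation via smallness, get terminality via crepancy on a common resolution, identify the two $\Proj$'s via a Veronese subring), but there is a genuine gap at the first technical step: the existence of an ample \emph{Cartier} divisor~$A_{X_\can}$ with~$\xi^*A_{X_\can} = A_X$. By construction of~$\xi = \Phi_{|mA_X|}$ you only know that the hyperplane class~$H$ on~$X_\can$ pulls back to~$mA_X$; the class you call~$A_{X_\can}$ would have to be an $m$-th root of~$H$ in~$\Pic(X_\can)$, and nothing ``by construction'' provides such a root. Your appeal to torsion-freeness would at best prove \emph{uniqueness} of a Cartier class pulling back to~$A_X$, not its existence; moreover Corollary~\ref{cor:clx-free} is proved later in the paper and only for (almost) del Pezzo varieties, so invoking it for~$X_\can$ before you have shown that~$X_\can$ is del Pezzo is circular. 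The missing ingredient is exactly what the paper cites: the base point free theorem \cite[Theorem~3.3]{KM:book}. From it one gets that~$|mA_X|$ and~$|(m+1)A_X|$ are both free for~$m \gg 0$ and define the same contraction~$\xi$, so~$\xi^*H_m = mA_X$ and~$\xi^*H_{m+1} = (m+1)A_X$ for ample Cartier classes~$H_m, H_{m+1}$ on~$X_\can$; then~$A_{X_\can} \coloneqq H_{m+1} - H_m$ is Cartier, satisfies~$\xi^*A_{X_\can} = A_X$, and~$mA_{X_\can} = H_m$ is ample (using injectivity of~$\xi^*$ on~$\Pic$, since~$\xi_*\cO_X = \cO_{X_\can}$). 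Relatedly, your claim that nef and big alone yields finite generation of the section ring is false in general; it is semiampleness (again BPF, or Definition~\ref{def:almostDP}, which already grants that~$\Phi_{|mA_X|}$ is a morphism) that makes~$\Proj$ of the section ring equal to the image.

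A second, more minor, issue is one of logical order: you derive~$-K_{X_\can} = (n-1)A_{X_\can}$ by \emph{pulling back}~$K_{X_\can}$, but~$\xi^*K_{X_\can}$ only makes sense once~$K_{X_\can}$ is known to be $\QQ$-Cartier, which at that point in your argument it is not. The paper avoids this by pushing forward instead: since~$\xi$ is small, pushforward is a bijection on Weil divisor classes, so~$K_{X_\can} = \xi_*K_X = (1-n)\,\xi_*A_X = (1-n)A_{X_\can}$; this simultaneously proves that~$X_\can$ is Gorenstein and that~$\xi$ is crepant, after which your terminality argument (discrepancies computed on a common resolution agree, and~$\xi$ extracts no divisors) goes through exactly as in the paper. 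The Veronese-subring identification of the two $\Proj$'s is correct and coincides with the paper's final step.
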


\begin{proof}
By the base point free theorem (\cite[Theorem~3.3]{KM:book}) the class~$A_X$ 
is the pullback of an ample Cartier divisor on~$X_\can$, which we denote by~$A_{X_\can}$.
Since~$\xi$ is small, the singularities of $X_\can$ are terminal and we have
\begin{equation*}
A_{X_\can} = \xi_*(\xi^*A_{X_\can}) = \xi_*A_X
\qquad\text{and}\qquad 
K_{X_\can} = \xi_*K_X,
\end{equation*}
hence~\eqref{eq:kx-a} implies~$K_{X_\can} = (1 - n)A_{X_\can}$, where~$n = \dim(X)$; 
in particular~$X_\can$ is Gorenstein and~$\xi$ is crepant.
Also note that any resolution~$\tX$ of~$X$ is also a resolution of~$X_\can$,
and since~$\xi$ is crepant, the discrepancies of~$\tX$ over~$X$ and over~$X_\can$ are the same,
in particular~$X_\can$ is terminal because~$X$ is.
This proves that~$X_\can$ is a del Pezzo variety.

For the last part note that~$H^0(X, \cO_X(mA_X)) \cong H^0(X_\can, \cO_{X_\can}(mA_{X_\can}))$ for any~$m$
because~$\xi$ induces a bijection preserving linear equivalence between Weil divisors on~$X$ and~$X_\can$,
hence
\begin{equation*}
X_{\can} 
\cong \Proj \left( \moplus_{m\ge 0}^{} H^0(X_\can, \cO_{X_\can}(mA_{X_\can})) \right)
\cong \Proj \left( \moplus_{m\ge 0}^{} H^0(X, \cO_X(mA_X) \right),
\end{equation*}
where the first isomorphism follows from ampleness of~$A_{X_\can}$.
Passing to the $(n-1)$-st Veronese subring in the right hand side, 
we obtain an identification of~$X_{\can}$ with the projective spectrum of the plurianticanonical ring of~$X$.
\end{proof}

\begin{lemma}
\label{lem:dp-adp}
If~$X$ is a del Pezzo variety and~$\xi \colon X' \to X$ is a small birational morphism
then~$X'$ is an almost del Pezzo variety, $X'_\can \cong X$, and
\begin{equation}
\label{eq:dp-adp-invariants}
\dd(X') = \dd(X),
\qquad\text{and}\qquad
\Cl(X') \cong \Cl(X);
\qquad\text{in particular}\quad 
\rr(X') = \rr(X).
\end{equation}
\end{lemma}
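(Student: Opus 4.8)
The plan is to transport the del Pezzo structure from~$X$ to~$X'$ along~$\xi$, the key point being that a small birational morphism is automatically crepant. Since~$X$ is del Pezzo, $K_X = -(n-1)A_X$ is Cartier, so~$X$ is Gorenstein and~$\xi^*K_X$ makes sense as a Cartier divisor on the (normal) variety~$X'$. First I would set~$A_{X'} \coloneqq \xi^*A_X$ and observe that~$\xi$ is crepant: the Weil divisor~$K_{X'} - \xi^*K_X$ pushes forward to~$0$ under~$\xi$, hence is supported on the exceptional locus; but~$\xi$ contracts no divisors, so this support has codimension~$\ge 2$ and the divisor vanishes. Thus~$K_{X'} = \xi^*K_X = -(n-1)A_{X'}$, so relation~\eqref{eq:kx-a} holds on~$X'$ with the Cartier class~$A_{X'}$.

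Next I would verify the remaining conditions of Definition~\ref{def:almostDP}. The class~$A_{X'} = \xi^*A_X$ is nef, being the pullback of the ample class~$A_X$, and it is big because~$(A_{X'})^n = (A_X)^n = \dd(X) > 0$ by the projection formula for the birational morphism~$\xi$. For terminality I would pass to a resolution~$\pi\colon W \to X'$, so that~$\xi\circ\pi\colon W \to X$ is a resolution of~$X$ as well; since~$\xi$ is small, a prime divisor of~$W$ is~$\pi$-exceptional over~$X'$ if and only if it is exceptional over~$X$, and crepancy of~$\xi$ yields equal discrepancies~$a(E,X') = a(E,X)$ for all such~$E$. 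Hence terminality of~$X$ transfers to~$X'$.

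It then remains to identify the anticanonical model. By the projection formula together with~$\xi_*\cO_{X'} = \cO_X$, one has~$H^0(X',\cO_{X'}(mA_{X'})) \cong H^0(X,\cO_X(mA_X))$ for every~$m$, so the linear system~$|mA_{X'}|$ is the~$\xi$-pullback of~$|mA_X|$ and the rational map~$\Phi_{|mA_{X'}|}$ factors as~$\Phi_{|mA_X|}\circ\xi$. For~$m \gg 0$ the class~$mA_X$ is very ample on~$X$, so~$\Phi_{|mA_X|}$ is a closed embedding; therefore~$\Phi_{|mA_{X'}|}$ is~$\xi$ followed by an embedding and is in particular small. This shows that~$X'$ is an almost del Pezzo variety with~$X'_\can = \Phi_{|mA_{X'}|}(X') \cong X$. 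Finally~$\dd(X') = (A_{X'})^n = (A_X)^n = \dd(X)$ as already noted, and since~$\xi$ is an isomorphism in codimension~$1$ it induces a linear-equivalence-preserving bijection between prime Weil divisors on~$X'$ and on~$X$, whence~$\Cl(X') \cong \Cl(X)$ and~$\rr(X') = \rr(X)$.

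The main obstacle is really the single crepancy identity~$K_{X'} = \xi^*K_X$: once this is established, the del Pezzo relation on~$X'$, the invariance of discrepancies (hence terminality), the computation of the degree, and the identification of~$X'_\can$ all follow formally. Accordingly I would take care to phrase the vanishing of~$K_{X'} - \xi^*K_X$ cleanly, relying only on the absence of~$\xi$-exceptional divisors, rather than on any positivity or vanishing input.
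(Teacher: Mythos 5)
Your proposal is correct and follows essentially the same route as the paper's proof: set~$A_{X'} \coloneqq \xi^*A_X$, use smallness of~$\xi$ to get crepancy (hence terminality of~$X'$) and the relation~$K_{X'} = (1-n)A_{X'}$, factor~$\Phi_{|mA_{X'}|} = \Phi_{|mA_X|}\circ\xi$ to conclude that~$X'$ is almost del Pezzo with~$X'_\can \cong X$, and deduce~\eqref{eq:dp-adp-invariants} from smallness and the projection formula. The only difference is that you spell out in full the standard facts the paper merely asserts, namely the vanishing of~$K_{X'} - \xi^*K_X$ and the transfer of terminality via a common resolution.
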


\begin{proof}
Since~$\xi$ is small, the singularities of $X'$ are terminal.
Moreover, \eqref{eq:kx-a} implies the equality~$K_{X'} = \xi^*K_X = (1 - n)\xi^*A_X$, 
hence~$A_{X'} \coloneqq \xi^*A_X$ is a fundamental divisor for~$X'$.
It is nef and big by construction; moreover, we have
\begin{equation*}
\Phi_{|mA_{X'}|} = \Phi_{|mA_X|} \circ \xi,
\end{equation*}
also by construction, and since~$\xi$ is small we conclude 
that~$\Phi_{|mA_{X'}|}$ does not contract divisors as soon as~$\Phi_{|mA_X|}$ does not.
Therefore, $X'$ is an almost del Pezzo variety and~$X'_\can \cong X$.
The equalities~\eqref{eq:dp-adp-invariants} follow from smallness of~$\xi$ and projection formula.
\end{proof} 

\begin{definition}
\label{def:q-factorialization}
A \emph{$\QQ$-factorialization} of a normal variety~$X$ is a proper birational morphism~\mbox{$\xi \colon \hX\to X$} 
such that~$\hX$ is $\QQ$-factorial and~$\xi$ is small.
\end{definition}

Note that any $\QQ$-factorialization of a del Pezzo variety is almost del Pezzo by Lemma~\ref{lem:dp-adp}.
Note also that by~\cite[Corollary~1.4.3]{BCHM} a $\QQ$-factorialization exists 
for any variety with at worst log terminal singularities;
in particular, it exists for del Pezzo varieties.

The following lemma uses the notion of pseudoisomorphism defined in the Introduction. 

\begin{lemma}
\label{lem:dp-weak-dp} 
If~$X' \dashrightarrow X''$ is a pseudoisomorphism of almost del Pezzo varieties then
there is an isomorphism~$X'_\can \cong X''_\can$ compatible with the anticanonical
morphisms of~$X'$ and~$X''$.
Conversely, if~$X'$ and~$X''$ are $\QQ$-factorializations of~$X$ then~$X'$ 
is pseudoisomorphic to~$X''$ over~$X$.
\end{lemma}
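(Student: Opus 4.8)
The plan is to treat the two implications separately, using the description of the anticanonical model as the $\Proj$ of the anticanonical ring from Lemma~\ref{lem:adp-dp}. Throughout, the key principle is that an isomorphism in codimension~$1$ preserves all data of divisorial (reflexive) sheaves, since these are determined at codimension-$1$ points.

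For the forward direction, let $\chi \colon X' \dashrightarrow X''$ be a pseudoisomorphism, so there are open subsets $U' \subset X'$ and $U'' \subset X''$ with complements of codimension~$\ge 2$ such that $\chi|_{U'} \colon U' \xrightarrow{\ \sim\ } U''$ is an isomorphism. First I would check that $\chi$ preserves the fundamental class. Indeed, by~\eqref{eq:kx-a} we have $A_{X'} = -\tfrac{1}{n-1}K_{X'}$ and similarly for $X''$, and the canonical class is a Weil divisor class determined by its restriction to any open subset whose complement has codimension~$\ge 2$; hence $\chi|_{U'}$ identifies $A_{X'}$ with $A_{X''}$, i.e.\ $\chi_* A_{X'} = A_{X''}$ in $\Cl(X'')$. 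Next I would use that, for every $m \ge 0$, the space $H^0(X', \cO_{X'}(mA_{X'}))$ consists of the rational functions $f$ with $\operatorname{div}(f) + mA_{X'} \ge 0$, a condition imposed only at prime divisors and therefore unchanged under the codimension-$1$ isomorphism $\chi|_{U'}$. This yields, compatibly with multiplication, isomorphisms
\[
H^0(X', \cO_{X'}(mA_{X'})) \xrightarrow{\ \sim\ } H^0(X'', \cO_{X''}(mA_{X''})),
\]
which assemble into an isomorphism of graded anticanonical rings. Applying $\Proj$ and Lemma~\ref{lem:adp-dp} produces an isomorphism $\phi \colon X'_\can \xrightarrow{\ \sim\ } X''_\can$.

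It then remains to verify compatibility with the anticanonical morphisms $\xi' \colon X' \to X'_\can$ and $\xi'' \colon X'' \to X''_\can$. Both $\xi'$ and $\xi''$ are small, hence isomorphisms in codimension~$1$, and the graded-ring isomorphism defining $\phi$ was induced by $\chi$ itself; consequently the two rational maps $\phi \circ \xi'$ and $\xi'' \circ \chi$ from $X'$ to $X''_\can$ agree on the dense open subset $U'$, and therefore coincide as rational maps. This is exactly the asserted compatibility. I expect this final verification --- confirming that the abstractly produced $\Proj$-isomorphism genuinely intertwines the two contractions, rather than merely existing --- to be the only point requiring care.

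For the converse, let $\xi' \colon X' \to X$ and $\xi'' \colon X'' \to X$ be two $\QQ$-factorializations and set $\chi \coloneqq (\xi'')^{-1} \circ \xi' \colon X' \dashrightarrow X''$. Since $\xi'$ and $\xi''$ are small, each is an isomorphism over an open subset of $X$ whose complement has codimension~$\ge 2$; taking a common such open $U \subset X$, the map $\chi$ restricts to an isomorphism $(\xi')^{-1}(U) \xrightarrow{\ \sim\ } (\xi'')^{-1}(U)$. Because $\xi'$ and $\xi''$ are small, the complements of these preimages in $X'$ and $X''$ again have codimension~$\ge 2$, so $\chi$ is a pseudoisomorphism, and it is over $X$ by construction. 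This direction is essentially immediate from the definition of smallness and requires no further input.
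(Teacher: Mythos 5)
Your proposal is correct and takes essentially the same route as the paper's proof: use smallness to identify Weil divisor classes and their spaces of sections, obtain an isomorphism of graded rings, conclude via the $\Proj$ description of Lemma~\ref{lem:adp-dp}, and observe that the converse is immediate because neither $\QQ$-factorialization contracts a divisor. The one point to tighten is your claim that $\chi_*A_{X'} = A_{X''}$: your codimension-one argument directly gives only $\chi_*K_{X'} = K_{X''}$, and dividing by $n-1$ in $\Cl(X'')$ presupposes absence of $(n-1)$-torsion (established only later, Corollary~\ref{cor:clx-free}); the paper sidesteps this by working with the plurianticanonical ring $\bigoplus_m H^0\big(X,\cO_X(-mK_X)\big)$, which Lemma~\ref{lem:adp-dp} already accommodates and for which your restriction argument works verbatim.
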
 

\begin{proof}
Let~$\psi \colon X' \dashrightarrow X''$ be a pseudoisomorphism.
Since~$\psi$ is small, $\psi_*$ induces a bijection 
compatible with linear equivalence of Weil divisors on~$X'$ and~$X''$,
which takes~$K_{X'}$ to~$K_{X''}$. 
It follows that
we have isomorphisms
\begin{equation*}
H^0\big(X', \cO_{X'}(-mK_{X'})\big) \cong H^0\big(X'', \cO_{X''}(-mK_{X''})\big)
\end{equation*}
for all~$m$, hence~$X'_\can \cong X''_\can$ by Lemma~\ref{lem:adp-dp}.

The second claim is obvious --- if~$\xi' \colon X' \to X$ and~$\xi'' \colon X'' \to X$ are $\QQ$-factorializations,
the map~$(\xi'')^{-1} \circ \xi'$ is a pseudoisomorphism 
because neither~$\xi'$ nor~$\xi''$ contract divisors.
\end{proof} 

\begin{definition}
\label{def:crepant-model} 
A \emph{crepant model} of an almost del Pezzo variety~$X$
is an almost del Pezzo variety~$X'$ pseudoisomorphic to~$X$.
\end{definition}

\begin{lemma}
\label{lemma:repant-models}
Any almost del Pezzo variety~$X$ has a finite number of crepant models.
Any crepant model of~$X$ is a small contraction over~$X_\can$ of a $\QQ$-factorialization of~$X_\can$.
Any pair of crepant $\QQ$-factorial models are connected by a sequence of flops over~$X_\can$.
\end{lemma}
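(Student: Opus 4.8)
The plan is to fix the del Pezzo variety $Y := X_\can$ and reduce all three assertions to the birational geometry of small modifications of $Y$. By Lemma~\ref{lem:adp-dp}, $Y$ is a del Pezzo variety and the anticanonical morphism $X \to Y$ is small. By Lemma~\ref{lem:dp-weak-dp} every crepant model $X'$ of $X$ satisfies $X'_\can \cong Y$, compatibly with anticanonical morphisms; conversely, if $X' \to Y$ and $X'' \to Y$ are small birational morphisms from almost del Pezzo varieties, the induced birational map $X' \dashrightarrow X''$ is small, hence a pseudoisomorphism. Combined with Lemma~\ref{lem:dp-adp}, this identifies \emph{crepant models of $X$ with the almost del Pezzo varieties admitting a small birational morphism to $Y$}. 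Fixing a $\QQ$-factorialization $\hX \to Y$ (which exists by~\cite[Corollary~1.4.3]{BCHM} and is almost del Pezzo by Lemma~\ref{lem:dp-adp}), it is itself such a crepant model, so every $\QQ$-factorial crepant model is pseudoisomorphic to $\hX$, i.e.\ is a small $\QQ$-factorial modification (SQM) of $\hX$, and conversely.

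Next I would exhibit $\hX$ as a Mori dream space. Since $\hX$ is $\QQ$-factorial and terminal with $-K_{\hX} = (n-1)A_{\hX}$ nef and big, Kodaira's lemma lets me write $-K_{\hX} \sim_\QQ H + E$ with $H$ ample and $E \ge 0$ effective; then for small $\varepsilon > 0$ the class $-K_{\hX} - \varepsilon E \sim_\QQ (1 - \varepsilon)(-K_{\hX}) + \varepsilon H$ is ample while $(\hX, \varepsilon E)$ remains klt, so $\hX$ is of Fano type. By~\cite{BCHM} a variety of Fano type is a Mori dream space; in particular $\overline{\Mov}(\hX)$ decomposes into finitely many Mori chambers, there are only finitely many SQMs of $\hX$, and any two of them are joined by a finite chain of flops. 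Because the pseudoisomorphism class determines the anticanonical ring (Lemma~\ref{lem:dp-weak-dp}), all these SQMs carry compatible small morphisms to $Y$ and all the flops are automatically flops over $Y$. This already proves finiteness of the \emph{$\QQ$-factorial} crepant models and the third assertion.

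To pass to arbitrary crepant models I would use $\QQ$-factorializations in the other direction. Given any crepant model $X'$, choose a $\QQ$-factorialization $\hX' \to X'$; composing with $X' \to Y$ gives a small birational morphism $\hX' \to Y$ from a $\QQ$-factorial variety, so $\hX'$ is a $\QQ$-factorialization of $Y$, hence an SQM of $\hX$, while $\hX' \to X'$ is a small contraction over $Y$. This is exactly the second assertion. For finiteness of \emph{all} crepant models I would then argue that each of the finitely many SQMs $\hX'$ admits only finitely many contractions over $Y$: any contraction $\hX' \to Z$ over $Y$ is small, since a contracted divisor could not map to a divisor of $Y$ without contradicting smallness of $\hX' \to Y$, and small contractions of the $\QQ$-factorial Mori dream space $\hX'$ correspond to the faces of the rational polyhedral cone $\operatorname{Nef}(\hX'/Y)$ lying in the relative movable cone, of which there are finitely many. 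Since every crepant model arises as such a target $Z$ (and is again almost del Pezzo by Lemma~\ref{lem:dp-adp}), there are finitely many crepant models, completing the first assertion.

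The main obstacle is the input from the minimal model program: the whole argument hinges on the Fano-type structure of $\hX$ and the resulting Mori dream space property --- finiteness of SQMs, polyhedrality of the movable cone, and flop-connectedness --- which rest on~\cite{BCHM}. The one genuinely geometric point to verify carefully is that every birational contraction over $Y$ is small and crepant; this is what keeps us inside the class of almost del Pezzo varieties and guarantees that the connecting maps are flops over $Y$ rather than divisorial contractions or more general steps of the program.
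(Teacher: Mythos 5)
Your reduction to $Y = X_\can$, your Fano-type/Mori-dream-space setup, and your treatment of the first two assertions are correct and essentially the same as the paper's: the paper likewise obtains finiteness from the FT property together with \cite[Corollary~1.3.2]{BCHM} and \cite[Proposition~1.11]{hu-keel:mds}, and proves the second assertion by exactly your composition argument (take a $\QQ$-factorialization $X''$ of a crepant model $X'$, compose with $X' \dashrightarrow X$ to get a pseudoisomorphism, and conclude that $X''$ is a $\QQ$-factorialization of $X_\can$ with $X''\to X'$ small over $X_\can$). Your direct verification of the Fano-type property via Kodaira's lemma simply replaces the paper's citation of Prokhorov--Shokurov and is fine.

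The gap is in the third assertion. The Mori dream space property of $\hX$ gives that any two small $\QQ$-factorial modifications (SQMs) of $\hX$ are connected by a chain of wall crossings in the Mori chamber decomposition of $\Mov(\hX)$; but wall crossings of a general Mori dream space are small modifications, \emph{not} automatically flops --- they can be $K$-negative or $K$-positive. They are $K$-trivial precisely when both adjacent chambers have nef anticanonical class, i.e.\ when both models lie over $Y$. Your justification that ``all these SQMs carry compatible small morphisms to $Y$'' by appeal to Lemma~\ref{lem:dp-weak-dp} is circular: that lemma applies to pseudoisomorphisms \emph{between almost del Pezzo varieties}, and whether an arbitrary SQM of $\hX$ is almost del Pezzo --- equivalently, whether $-K$ stays nef on it, equivalently whether it admits any morphism to $Y$ --- is exactly the point at issue; equality of anticanonical rings does not produce a morphism to the $\Proj$ of that ring unless the anticanonical class is already semiample on the model in question. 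There are at least three ways to close this: (a) cite \cite[Corollary~1.1.3]{BCHM} for models over the base $Y$, which is what the paper does; (b) run your chamber argument relatively over $Y$, i.e.\ with $\Mov(\hX/Y)$ and SQMs over $Y$, where every model is crepant over $Y$ (its structure map is small), so every wall crossing is automatically $K$-trivial; or (c) prove by induction on wall crossings, starting from the chamber of $\hX$ and using Lemma~\ref{lemma:ext-rays0} (a $\QQ$-factorial almost del Pezzo variety has no \emph{small} $K$-negative extremal contractions, since its $K$-negative birational extremal contractions are blowups of points), that every chamber of $\Mov(\hX)$ contains $A_{\hX}$ in its closure. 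As written, the passage from ``chain of small modifications'' to ``chain of flops over $X_\can$'' is unsupported.
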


\begin{proof}
Any almost del Pezzo variety~$X$ is an FT (``Fano type'') variety~\cite[Lemma-Definition~2.6]{P-Sh:JAG}, 
i.e., there is a boundary~$B$ such that the pair~$(X,B)$ is a klt log Fano variety. 
Moreover, if $X$ is $\QQ$-factorial, then it is a Mori dream space~\cite[Corollary~1.3.2]{BCHM},
hence it has only a finite number of crepant models~\cite[Definition~1.10 and Proposition~1.11]{hu-keel:mds}. 

Let~$X'$ be a crepant model of~$X$ and let~$X''$ be a $\QQ$-factorialization of~$X'$.
Composing a pseudoisomorphism~$X' \dashrightarrow X$ with a $\QQ$-factorialization morphism~$X'' \to X'$,
we obtain a pseudoisomorphism~$\psi \colon X'' \dashrightarrow X$ of almost del Pezzo varieties.
Using Lemma~\ref{lem:dp-weak-dp} we conclude that~$X_\can \cong (X'')_\can$
hence~$X''$ is a $\QQ$-factorialization of~$X_\can$,
and hence~$X'$ is a small contraction of a $\QQ$-factorialization.

The last assertion follows from~\cite[Corollary~1.1.3]{BCHM}.
\end{proof}

The following observation is a partial converse to Proposition~\ref{DP:|A|}\ref{DP:|A|2};
its second part is extremely useful for verification of the almost del Pezzo property.

\begin{lemma}
\label{lem:criterion-divisor}
Let~$X$ be a Gorenstein variety of dimension~$n \ge 3$ such that the equality
\begin{equation*}
-K_X = (n - 1)A_X
\end{equation*}
holds for a Cartier divisor class~$A_X$.
\begin{enumerate}
\item 
\label{it:positive-divisors-dp}
The variety~$X$ is almost del Pezzo if and only if~$X$ is terminal, $A_X$ is nef and big, 
and~$A_X^{n-1}\cdot D > 0$ for any effective non-trivial divisor~$D \subset X$.
\item 
\label{it:divisor-dp}
Let~$Y \subset X$ be a divisor in the linear system~$|A_X|$.
If~$(Y,A_X\vert_Y)$ is an almost del Pezzo variety, 
$X$ has terminal singularities away from~$Y$,
and there are no effective divisors on~$X$ disjoint from~$Y$
then~$X$ is also an almost del Pezzo variety.
\end{enumerate}
\end{lemma}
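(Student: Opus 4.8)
The plan is to establish part~\ref{it:positive-divisors-dp} first and then reduce part~\ref{it:divisor-dp} to it. For the ``only if'' direction of~\ref{it:positive-divisors-dp}, suppose $X$ is almost del Pezzo. Terminality and the nef-and-big property of $A_X$ are part of Definition~\ref{def:almostDP}, so only the inequality needs proof. Let $\xi\colon X\to X_\can$ be the anticanonical morphism; by Lemma~\ref{lem:adp-dp} it is small and $A_X=\xi^*A_{X_\can}$ with $A_{X_\can}$ ample. Given an effective non-trivial divisor $D$, smallness of $\xi$ ensures that $\xi_*D$ is again a non-trivial effective divisor on $X_\can$, and the projection formula gives $A_X^{n-1}\cdot D=A_{X_\can}^{n-1}\cdot\xi_*D>0$ by ampleness of $A_{X_\can}$.

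For the ``if'' direction, assume $X$ is terminal, $A_X$ is nef and big, and $A_X^{n-1}\cdot D>0$ for every effective non-trivial $D$. Writing $mA_X=K_X+(m+n-1)A_X$ with $(m+n-1)A_X$ nef and big for $m\gg 0$, the base point free theorem~\cite[Theorem~3.3]{KM:book} makes $|mA_X|$ base point free, so $\xi\coloneqq\Phi_{|mA_X|}\colon X\to\PP^N$ is a morphism; it is birational onto its image since $A_X$ is big. It remains to show $\xi$ contracts no divisor. If a prime divisor $D$ were contracted, then $\dim\xi(D)\le n-2$, and restricting the relation $mA_X\sim\xi^*\cO_{\PP^N}(1)$ to $D$, the projection formula would force $A_X^{n-1}\cdot D=(A_X\vert_D)^{n-1}=0$, since this number is computed from the pullback of $\cO_{\PP^N}(1)^{n-1}$ to $D$ and $\dim\xi(D)\le n-2$; this contradicts the hypothesis. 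Hence $\xi$ is small and $X$ is almost del Pezzo.

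To prove part~\ref{it:divisor-dp} I will check the three conditions of part~\ref{it:positive-divisors-dp} for $X$. Nefness of $A_X$ is immediate: for a curve $C\not\subset Y$ we have $A_X\cdot C=Y\cdot C\ge 0$ since $Y\in|A_X|$, while for $C\subset Y$ we have $A_X\cdot C=(A_X\vert_Y)\cdot C\ge 0$ because $A_X\vert_Y$ is the nef fundamental class of the almost del Pezzo variety $Y$. Bigness follows from the nefness together with
\[
A_X^n=A_X^{n-1}\cdot Y=(A_X\vert_Y)^{n-1}=\dd(Y)>0.
\]
For the positivity condition, write an effective non-trivial divisor as $D=aY+D'$ with $a=\mult_Y(D)\ge 0$ and $D'$ effective with $Y\not\subset\Supp(D')$. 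If $a\ge 1$, then $A_X^{n-1}\cdot D=a\,\dd(Y)+A_X^{n-1}\cdot D'>0$, the first term being positive and the second non-negative by nefness. If $a=0$, then $D$ does not contain $Y$; since no effective divisor is disjoint from $Y$, the restriction $D\vert_Y$ is a non-trivial effective divisor on $Y$, and $A_X^{n-1}\cdot D=(A_X\vert_Y)^{n-2}\cdot(D\vert_Y)>0$ by part~\ref{it:positive-divisors-dp} applied to $Y$.

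The remaining, and main, point is the terminality of $X$. It holds away from $Y$ by hypothesis, and along $Y$ I would deduce it by inversion of adjunction~\cite{KM:book}: $Y$ is terminal (being almost del Pezzo) and is a Cartier divisor in the Gorenstein variety $X$, which forces $X$ to be terminal in a neighbourhood of $Y$. This is the step I expect to be the most delicate, since one must pass from terminality of the \emph{specific} divisor $Y$ --- not merely a general member of $|A_X|$ --- to terminality of the ambient $X$; once it is in place, part~\ref{it:positive-divisors-dp} applies and shows that $X$ is almost del Pezzo.
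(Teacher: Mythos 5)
Your proposal is correct and, in its overall architecture, matches the paper's proof: part~\xref{it:positive-divisors-dp} is argued the same way in both directions (smallness of the anticanonical morphism plus the projection formula one way; the base point free theorem and the intersection hypothesis the other way, which you spell out slightly more explicitly than the paper does), and part~\xref{it:divisor-dp} is reduced to part~\xref{it:positive-divisors-dp} with the same bigness computation~$A_X^n=(A_X\vert_Y)^{n-1}>0$, the same splitting of an effective divisor according to whether it contains~$Y$, and the same appeal to inversion of adjunction for terminality along~$Y$. On that last step the paper simply cites~\cite[Theorem~9.1.14]{Ishii:book}; your appeal to~\cite{KM:book} is legitimate but should be made precise, since the statement there is the plt/klt form: it gives that~$(X,Y)$ is plt near~$Y$ because~$Y$ is klt, and then~$a(E;X)=a(E;X,Y)+\mult_E(Y)\ge a(E;X,Y)+1>0$ for every exceptional divisor~$E$ with center contained in~$Y$, which (combined with terminality of~$X\setminus Y$) is exactly terminality of~$X$ near~$Y$. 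The one place where you genuinely diverge is nefness of~$A_X$ in part~\xref{it:divisor-dp}: you check it curve by curve, using~$A_X\cdot C=Y\cdot C\ge 0$ for~$C\not\subset Y$ and nefness of~$A_X\vert_Y$ for~$C\subset Y$, whereas the paper deduces it from base point freeness of~$|mA_X|$, proved by an inductive cohomological argument ($H^1(Y,\cO_Y(mA_X\vert_Y))=0$ from Lemma~\xref{lem:vanishings} together with Serre vanishing yield~$H^1(X,\cO_X(mA_X))=0$, hence surjectivity of restriction to~$Y$, hence~$\Bs|mA_X|\subset Y$ coincides with~$\Bs|mA_X\vert_Y|=\varnothing$). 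Your route is shorter and more elementary; the paper's buys, as by-products, the base point freeness of~$|mA_X|$ and the vanishing~$H^1(X,\cO_X(mA_X))=0$, neither of which is needed for the lemma itself. A minor point common to both proofs: when~$n=3$ the divisor~$Y$ is a del Pezzo \emph{surface}, so ``part~\xref{it:positive-divisors-dp} applied to~$Y$'' should be read as ampleness of~$-K_Y$ in that case.
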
 

\begin{proof}
\ref{it:positive-divisors-dp}
If~$X$ is an almost del Pezzo variety we only need to check the inequality.
Since the morphism~$\xi \colon X \to X_\can$ does not contract divisors 
and~$A_X = \xi^*A_{X_\can}$, we have
\begin{equation*}
A_X^{n-1} \cdot D = (A_{X_\can})^{n-1} \cdot \xi(D) > 0
\end{equation*}
because~$A_{X_\can}$ is ample.
Conversely, the morphism~$\Phi_{|mA_X|}$ cannot contract a divisor~$D$ 
because~\mbox{$A_X^{n-1} \cdot D > 0$},
hence~$X$ is an almost del Pezzo variety.

\ref{it:divisor-dp}
First of all, $X$ has terminal singularities by~\cite[Theorems~9.1.14]{Ishii:book}.
Further, consider the exact sequence of sheaves
\begin{equation*}
0 \longrightarrow \cO_X((m-1)A_X) \longrightarrow \cO_X(mA_X) \longrightarrow \cO_Y(mA_X\vert_Y) \longrightarrow 0.
\end{equation*}
Since~$H^1(Y,\cO_Y(mA_X\vert_Y)) = 0$ for all~$m$ by Lemma~\ref{lem:vanishings}
and~$H^1(X,\cO_X(mA_X)) = 0$ for all~\mbox{$m \ll 0$} (by Serre vanishing), 
a simple induction proves that~$H^1(X,\cO_X(mA_X)) = 0$ for all~$m$,
and hence the restriction morphism
\begin{equation*}
H^0(X,\cO_X(mA_X)) \longrightarrow H^0(Y,\cO_Y(mA_X\vert_Y)) 
\end{equation*}
is surjective for all~$m$.
Since~$mY \in |mA_X|$, the base locus of~$|mA_X|$ is contained in~$Y$, 
and since for~$m \gg 0$ the linear system~$|mA_X\vert_Y|$ is base point free
(because~$Y$ is an almost del Pezzo variety),
it follows that~$|mA_X|$ is base point free as well, hence~$A_X$ is nef.
Moreover, we have~$A_X^n = (A_X\vert_Y)^{n-1} > 0$, hence~$A_X$ is also big.
It remains to note that 
\begin{equation*}
A_X^{n-1} \cdot Y = (A_X\vert_Y)^{n-1}
\qquad\text{and}\qquad 
A_X^{n-1} \cdot D = (A_X\vert_Y)^{n-2} \cdot (D \cap Y)
\end{equation*}
for any irreducible effective divisor~$D \subset X$ not contained in~$Y$.
Applying part~\ref{it:positive-divisors-dp}, 
the almost del Pezzo property of~$Y$,
and the fact that~$D \cap Y \ne \varnothing$,
we conclude that both these numbers are positive, hence~$X$ is an almost del Pezzo variety,
again by part~\ref{it:positive-divisors-dp}.
\end{proof}

Recall that the blowup of a general point on a del Pezzo surface of degree at least~$2$ is again a del Pezzo surface.
We prove below that almost the same holds for del Pezzo varieties, and make the generality assumptions explicit. 

\begin{proposition}
\label{prop:dP4:constr-i}
Let~$(X, A)$ be an almost del Pezzo variety of dimension~$n$.
Let~$P\in X$ be a smooth point and let $\sigma \colon \tX\to X$ be the blowup of~$P$ with exceptional divisor~$E$.
Then
\begin{equation*}
K_{\tX} = (1 - n)A_{\tX},
\qquad\text{where}\qquad 
A_{\tX} = \sigma^*A_X - E,
\end{equation*}
and~$\Cl(\tX) = \Cl(X) \oplus \ZZ E$.

Moreover, if~$X$ is del Pezzo and~\mbox{$\dd(X)\ge 2$}, 
then~$\tX$ is almost del Pezzo if and only if
\begin{enumerate}
\item\label{prop:dP4:constr-ia}
the family of lines in~$X$, passing through~$P$ has dimension at most~$n-3$, and
\item\label{prop:dP4:constr-ib}
if $\dd(X)=2$, the point~$P$ does not lie on the ramification divisor of the 
double covering~\mbox{$\Phi_{|A_X|} \colon X \to \PP^n$} given by~$|A_X|$.
\end{enumerate}
Finally, $\dd(\tilde X) = \dd(X)-1$ and~$\rr(\tilde X) = \rr(X)+1$.
\end{proposition}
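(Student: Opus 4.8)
The plan is to dispatch the unconditional assertions directly and then reduce the almost del Pezzo property to the numerical criterion of Lemma~\ref{lem:criterion-divisor}\ref{it:positive-divisors-dp}. Since $P$ is a smooth point of the $n$-dimensional variety $X$, the blowup has $E \cong \PP^{n-1}$ and the standard discrepancy formula gives $K_{\tX} = \sigma^*K_X + (n-1)E$; substituting $K_X = -(n-1)A_X$ yields $K_{\tX} = (1-n)(\sigma^*A_X - E)$, so $A_{\tX} := \sigma^*A_X - E$ is a fundamental class (Cartier, since $E$ is Cartier). The decomposition $\Cl(\tX) = \Cl(X) \oplus \ZZ E$ follows from the excision sequence $\ZZ\,[E] \to \Cl(\tX) \to \Cl(\tX \setminus E) \to 0$ together with $\tX \setminus E \cong X \setminus \{P\}$ and $\codim_X\{P\} = n \ge 2$, the map $\sigma^*$ providing a splitting and $E$ being non-torsion. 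Using $(\sigma^*A_X)^a\cdot E^b = 0$ for $a,b>0$ and $E^n = (-1)^{n-1}$, I record the numbers I will need:
\begin{equation*}
A_{\tX}^n = \dd(X) - 1,
\qquad
A_{\tX}^{n-1}\cdot E = 1,
\qquad
A_{\tX}^{n-1}\cdot \tilde D = A_X^{n-1}\cdot D - \mult_P D,
\end{equation*}
the last for the strict transform $\tilde D$ of an irreducible divisor $D \subset X$. Since every irreducible divisor on $\tX$ is either $E$ or such a $\tilde D$, and $\tX$ is terminal (the discrepancy $n-1$ is positive and $X$ is terminal), Lemma~\ref{lem:criterion-divisor}\ref{it:positive-divisors-dp} reduces everything to deciding when $A_{\tX}$ is nef and big and these quantities are all strictly positive.

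For necessity, assume $\tX$ is almost del Pezzo. If the family of lines through $P$ had dimension $\ge n-2$, these lines would sweep a subvariety of dimension $\ge n-1$; each line $\ell$ is smooth with $\mult_P\ell = 1$, so $A_{\tX}\cdot\tilde\ell = 1-1 = 0$, and $\Phi_{|mA_{\tX}|}$ would contract this covering family, hence contract a divisor (or all of $\tX$), contradicting that the anticanonical morphism is birational and small; this forces condition~\ref{prop:dP4:constr-ia}. If $\dd(X) = 2$ and $P$ lay on the ramification divisor $R$ of $\Phi_{|A_X|} \colon X \to \PP^n$, put $Q = \Phi_{|A_X|}(P)$ and take $D = \Phi_{|A_X|}^{-1}(T_Q B) \in |A_X|$, the preimage of the tangent hyperplane to the branch divisor $B$ at $Q$; a local computation in the double-cover equation $z^2 = f$ gives $\mult_P D = 2$, whence $A_{\tX}^{n-1}\cdot\tilde D = A_X^n - 2 = \dd(X)-2 = 0$, again contradicting Lemma~\ref{lem:criterion-divisor}\ref{it:positive-divisors-dp}; this forces condition~\ref{prop:dP4:constr-ib}.

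For sufficiency I will first establish the inequality $\mult_P\bar C \le A_X\cdot\bar C$ for every irreducible curve $\bar C \subset X$, with equality only when $\bar C$ is a line through $P$. For $\dd(X) \ge 3$ the class $A_X$ is very ample by Proposition~\ref{DP:|A|}\ref{DP:|A|4}, so this is the elementary bound $\mult_P \le \deg$ for projective curves, equality forcing projection from $P$ to contract $\bar C$ to a point, i.e. $\bar C$ a line; for $\dd(X) = 2$, condition~\ref{prop:dP4:constr-ib} makes $\Phi_{|A_X|}$ étale near $P$, so the bound on the image curve transports back. Consequently $A_{\tX}\cdot\tilde C \ge 0$ for all strict transforms, so $A_{\tX}$ is nef, and since $A_{\tX}^n = \dd(X)-1 \ge 1$ it is also big. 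Finally, for an irreducible divisor $D \subset X$ through $P$, cutting by $n-2$ general members of $|A_X|$ through $P$ (which are smooth at $P$) produces an irreducible curve $C \subset D$ with $\mult_P C = \mult_P D$ and $A_X\cdot C = A_X^{n-1}\cdot D$; the inequality gives $A_{\tX}^{n-1}\cdot\tilde D = A_X^{n-1}\cdot D - \mult_P D \ge 0$, and equality would force $C$ to be a line, so $D$ would be swept by lines through $P$, making the family of such lines have dimension $\ge n-2$ and contradicting condition~\ref{prop:dP4:constr-ia}. Hence all the relevant intersection numbers are strictly positive, $\tX$ is almost del Pezzo, and the formulas $\dd(\tX) = A_{\tX}^n = \dd(X)-1$ and $\rr(\tX) = \rank\Cl(\tX) = \rr(X)+1$ follow from the first part.

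The main obstacle is the equality analysis in the final step: pinning down exactly when $A_X^{n-1}\cdot D = \mult_P D$ and converting it, through a general linear-section curve, into the geometric statement that $D$ is covered by lines through $P$. This is the point that couples the numerical positivity demanded by Lemma~\ref{lem:criterion-divisor}\ref{it:positive-divisors-dp} to the conditions~\ref{prop:dP4:constr-ia} and~\ref{prop:dP4:constr-ib}, and where smoothness of $P$ and (for $\dd(X)=2$) its position off the ramification divisor are genuinely used.
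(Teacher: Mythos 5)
Your proof is correct, but it follows a genuinely different route from the paper's. Both arguments start identically (blowup formula for the canonical class, excision for~$\Cl$), and both ultimately rest on Lemma~\ref{lem:criterion-divisor} --- but on different halves of it. The paper reduces everything to dimension two: by Lemma~\ref{lem:s-p-smooth} a general linear surface section $S \ni P$ is a smooth del Pezzo surface, conditions~\ref{prop:dP4:constr-ia} and~\ref{prop:dP4:constr-ib} translate into exactly the classical conditions under which $\Bl_P(S)$ is again a del Pezzo surface (quoted from the surface literature), and then the almost del Pezzo property is propagated up the ladder $\tS = \Bl_P(S) \subset \cdots \subset \tX$ by repeated use of Lemma~\ref{lem:criterion-divisor}\ref{it:divisor-dp}, the only global input being that every effective divisor of~$\tX$ meets~$\tS$; necessity is then immediate because $\tS$ is a linear surface section of~$\tX$. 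You instead verify the numerical criterion of Lemma~\ref{lem:criterion-divisor}\ref{it:positive-divisors-dp} directly on~$\tX$: the engine is the classical bound $\mult_P\bar C \le A_X\cdot\bar C$ for irreducible curves, with equality only for lines through~$P$ (very ampleness when $\dd(X)\ge 3$, transport through the double cover, \'etale at~$P$ by~\ref{prop:dP4:constr-ib}, when $\dd(X)=2$), which gives nefness of~$A_{\tX}$ and, after cutting a divisor through~$P$ by general fundamental divisors through~$P$, the strict positivity $A_{\tX}^{n-1}\cdot\tD > 0$; your necessity arguments (a divisor swept by $A_{\tX}$-trivial curves for~\ref{prop:dP4:constr-ia}; the pullback of the tangent hyperplane of the branch divisor, which has $\mult_P\ge 2$, for~\ref{prop:dP4:constr-ib}) are likewise direct. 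What the paper's route buys is brevity: all the Bertini-type bookkeeping your argument must carry (irreducibility of the cut curve, preservation of $\mult_P$ under general sections through~$P$, the equality analysis) is absorbed into the known surface statement. What your route buys is self-containedness --- no appeal to the surface result --- and it makes completely visible where each hypothesis enters the numerical criterion; it also avoids the dimension count, left implicit in the paper, relating ``no line of~$S$ through~$P$ for general~$S$'' to condition~\ref{prop:dP4:constr-ia}.

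Two small repairs. First, your justification of terminality of~$\tX$ (``the discrepancy $n-1$ is positive and $X$ is terminal'') is not a valid principle: extracting a single divisor of positive discrepancy over a terminal variety need not yield a terminal variety (the $(1,2)$-weighted blowup of~$\CC^2$ has discrepancy~$2$ but acquires an $A_1$ point). The fact you need holds for a simpler reason: $\tX$ is smooth in a neighborhood of~$E$, being the blowup of a smooth point, and is isomorphic to~$X$ elsewhere. Second, for $\dd(X)=2$ the transport of the bound through $\Phi_{|A_X|}$ should split according to the degree of $\Phi_{|A_X|}\vert_{\bar C}$ onto its image: if that degree is~$2$ one gets $\mult_P\bar C \le \tfrac{1}{2}A_X\cdot\bar C$, which is strict, so equality can only occur in the degree-$1$ case, where $\bar C$ maps isomorphically to a line and is therefore a line through~$P$. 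Neither point affects the correctness of the argument.
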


\begin{proof}
The first part follows immediately from the blowup formula for the canonical class.

Assume the conditions~\ref{prop:dP4:constr-ia} and~\ref{prop:dP4:constr-ib} are satisfied.
Recall that by Lemma~\ref{lem:s-p-smooth} a general linear surface section~\mbox{$S \subset X$} through~$P$ is smooth.
Moreover, $\dd(S) = \dd(X) \ge 2$, $P$ does not lie on a line in~$S$, and in the case where~$\dd(S) = 2$, 
$P$ does not lie on the ramification divisor of~$\Phi_{|A_S|}$.
Therefore, $\Bl_P(S)$ is a del Pezzo surface (see, e.g., \cite[Lemma~4.3]{Yas}).
Now the equality~$A_{\tX} = \sigma^*A_X - E$ shows that~$\tS \coloneqq\Bl_P(S)$ 
is a linear surface section of~$\tX$.
We check below that any effective divisor~$D \subset \tX$ has a non-trivial intersection with~$\tS$.

Indeed, if~$D \cap E \ne \varnothing$, then~$D \cap E \subset E \cong \PP^{n-1}$ is a hypersurface,
while~$\tS \cap E \subset E$ is a line, hence the intersection~$D \cap E \cap \tS$ is nontrivial.
On the other hand, if~\mbox{$D \cap E = \varnothing$}, 
then~\mbox{$D = \sigma^{-1}(D_0)$} for a divisor~$D_0 \subset X$,
and~$D_0 \cap S \ne \varnothing$ because~$X$ is almost del Pezzo, hence~$D \cap \tS \ne \varnothing$ as well.

Now applying Lemma~\ref{lem:criterion-divisor}\ref{it:divisor-dp} several times 
we see that~$\tX$ is an almost del Pezzo variety.

Conversely, if~$\tX$ is an almost del Pezzo variety then~$\Bl_P(S)$ must be a del Pezzo surface,
hence $P$ does not lie on a line in~$S$, and in the case where~$\dd(S) = 2$, 
$P$ does not lie on the ramification divisor of~$\Phi_{|A_S|}$.

The last part of the proposition is obvious.
\end{proof} 

\begin{remark}
One possibility for condition~\ref{prop:dP4:constr-ia} to fail (similarly to the case of surfaces) 
is if~$X$ itself is a blowup of a smooth point on~$X'$ and~$P$ lies on the exceptional divisor of this blowup.
However this is not the only possibility;
another example is if~\mbox{$P \in X$} is an \emph{Eckardt point}, i.e., the vertex of a conical divisor in~$X$
(this is possible only for $\dd(X)\le 3$).
\end{remark} 

\subsection{Minimal model program for almost del Pezzo varieties}
\label{ss:contractions}

In this section we describe $K$-negative extremal contractions of almost del Pezzo varieties.
Recall that a \emph{contraction} is a proper surjective morphism with connected fibers to a normal scheme.
We say that a contraction~$f \colon X \to Z$ is \emph{$K$-negative} if~$-K_X$ is $f$-ample,
and \emph{extremal}, if the relative Picard number of~$f$ is~$1$.
We start with birational contractions.

\begin{lemma} 
\label{lemma:ext-rays0}
Let~$X$ be a $\QQ$-factorial almost del Pezzo variety
and let~\mbox{$f \colon X\to X'$} be a $K$-negative extremal birational contraction.
Then~$X'$ is a $\QQ$-factorial almost del Pezzo variety of degree~$\dd(X') = \dd(X) + 1$,
$f$ is the blowup of a smooth point, and the exceptional divisor~$E$ of~$f$ is isomorphic to~$\PP^{n-1}$ and satisfies
\begin{equation}
\label{eq:coe-e}
\cO_E(E) \cong \cO_{\PP^{n-1}}(-1)
\qquad\text{and}\qquad
\cO_E(A_X\vert_E) \cong \cO_{\PP^{n-1}}(1).
\end{equation} 
Moreover, the point~$P \coloneqq f(E)\in X'$ does not lie on a $K$-trivial curve.
\end{lemma}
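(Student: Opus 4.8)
The plan is to invoke the structural result on $K$-negative birational extremal contractions of $\QQ$-factorial almost del Pezzo varieties referenced in the Introduction, namely the consequence of~\cite{Andreatta-Tasin} that any such contraction is the blowup of a smooth point on another almost del Pezzo variety. First I would set up the standard theory: since $X$ is $\QQ$-factorial and of Fano type (being almost del Pezzo), the cone theorem applies and extremal contractions exist; a $K$-negative \emph{birational} extremal contraction $f\colon X \to X'$ must have positive-dimensional fibers and, being extremal, contracts a single divisor $E$. The cited result then identifies $f$ as the blowup $\sigma$ of a smooth point $P = f(E)$, so that $E \cong \PP^{n-1}$ and $\cO_E(E) \cong \cO_{\PP^{n-1}}(-1)$ by the standard blowup formula.

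Next I would verify that $X'$ is again almost del Pezzo and compute the invariants. Since $f$ is $K$-negative and birational, and $X$ is almost del Pezzo, the canonical class pulls back as in Proposition~\ref{prop:dP4:constr-i} read in reverse: writing $A_{X} = \sigma^* A_{X'} - E$ we get $K_{X} = (1-n)A_{X} = \sigma^*K_{X'} + (n-1)E$, which forces $K_{X'} = (1-n)A_{X'}$ with $A_{X'} = \sigma_* A_X$ Cartier and ample on the relevant locus. To conclude $X'$ is almost del Pezzo I would check its singularities are terminal (the image of a point blowup of a smooth point preserves terminality of the complement, and $P$ is smooth), that $A_{X'}$ is nef and big, and that $\Phi_{|mA_{X'}|}$ is small. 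The degree computation $\dd(X') = A_{X'}^n = (\sigma^* A_{X'})^n = (A_X + E)^n = A_X^n + 1 = \dd(X)+1$ follows from the projection formula together with $(\sigma^*A_{X'})^{n-1}\cdot E = 0$ and $E^n = (-1)^{n-1}$; the two isomorphisms in~\eqref{eq:coe-e} then read off from $A_X\vert_E = (\sigma^*A_{X'} - E)\vert_E = -E\vert_E \cong \cO_{\PP^{n-1}}(1)$.

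The final and least formal assertion is that $P = f(E)$ does not lie on a $K$-trivial curve of $X'$. Here I would argue by contradiction: a $K_{X'}$-trivial curve is the same as an $A_{X'}$-trivial curve (by $K_{X'} = (1-n)A_{X'}$), and on an almost del Pezzo variety the $A_{X'}$-trivial curves are exactly those contracted by the anticanonical morphism $\xi'\colon X' \to X'_{\can}$; these form the exceptional locus, which has codimension at least $2$ since $\xi'$ is small. If such a curve $C$ passed through $P$, then its strict transform on $X = \Bl_P(X')$ would interact with $E$ in a way incompatible with $X$ being almost del Pezzo --- concretely, one produces an effective divisor or curve class on $X$ violating the positivity $A_X^{n-1}\cdot D > 0$ of Lemma~\ref{lem:criterion-divisor}\ref{it:positive-divisors-dp}, or one contradicts the fact that $\xi$ and $\xi'$ have the same anticanonical model. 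I expect this last step to be the main obstacle, since it requires translating the ``$K$-trivial curve through $P$'' condition into a concrete positivity failure; the clean way is probably to observe that blowing up a point on a $K$-trivial (i.e.\ $\xi'$-exceptional) locus would create a divisor with non-positive fundamental degree, contradicting that $X$ is almost del Pezzo. The preceding parts are essentially bookkeeping with the projection formula and the cited contraction theorem.
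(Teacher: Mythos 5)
Your proposal follows the same skeleton as the paper's proof (invoke~\cite{Andreatta-Tasin}, deduce the blowup structure and~\eqref{eq:coe-e} from~$A_X = \sigma^*A_{X'} - E$, verify that~$X'$ is almost del Pezzo, and prove the last claim by contradiction via strict transforms), and your degree computation is correct; but three steps contain genuine gaps. First, your appeal to~\cite{Andreatta-Tasin} is circular: the statement you cite --- that any such contraction is the blowup of a smooth point on another \emph{almost del Pezzo} variety --- is precisely the lemma being proved. What~\cite[Theorem~1.1]{Andreatta-Tasin} actually gives, \emph{after} one verifies its hypothesis, is that~$f$ is a \emph{weighted} blowup of a smooth point with weights~$(1,1,b,\dots,b)$; the paper verifies the hypothesis by choosing~$H$ ample on~$X'$ so that~$M = A_X + f^*H$ is ample and computing~$(K_X + (n-2)M)\cdot \rR = -A_X\cdot\rR < 0$, and then uses that~$X$ is Gorenstein to force~$b = 1$. (Also, your parenthetical claim that an extremal birational contraction ``contracts a single divisor'' is not automatic --- birational extremal contractions can be small, as for flips; ruling this out is part of what the cited theorem does.) Second, the assertion that~$X'$ is almost del Pezzo is only announced as a to-do list. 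The crux --- that no effective divisor on~$X'$ is contracted by~$\Phi_{|mA_{X'}|}$, equivalently~$A_{X'}^{n-1}\cdot D > 0$ for every effective nontrivial~$D$ --- is exactly the nontrivial point, and you give no argument. The paper handles it by passing to a general linear surface section~$S \subset X$: its image~$S' = f(S)$ is a del Pezzo surface of degree~$\dd(S)+1$ (whence the degree formula), any divisor on~$X'$ disjoint from~$S'$ would pull back to a divisor on~$X$ disjoint from~$S$, and Lemma~\ref{lem:criterion-divisor}\ref{it:divisor-dp} applied along a ladder finishes the job.

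Third, the claim you flag as ``the main obstacle'' is in fact a one-line computation, and your preferred mechanism for it (producing a divisor of non-positive fundamental degree) does not work --- blowing up a point on a $K$-trivial curve creates no such divisor. The correct argument, which is the one in the paper, is about curves: if~$C' \subset X'$ is a $K$-trivial curve through~$P$ and~$C \subset X$ is its strict transform, then
\begin{equation*}
K_X \cdot C = (f^*K_{X'} + (n-1)E)\cdot C = K_{X'}\cdot C' + (n-1)\,E\cdot C = (n-1)\,E \cdot C > 0,
\end{equation*}
since~$C$ meets~$E$; this contradicts nefness of~$-K_X = (n-1)A_X$. Your passing mention of ``a curve class on~$X$ violating positivity'' is the right instinct, but the positivity violated is nefness of~$A_X$ on curves, not the divisor inequality of Lemma~\ref{lem:criterion-divisor}\ref{it:positive-divisors-dp}, and the proposal as written does not pin this down.
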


\begin{proof}
We apply~\cite[Theorem~1.1]{Andreatta-Tasin}.
Let~$\rR$ be the extremal ray contracted by~$f$.
Since~$A_X$ is $f$-ample, we can choose a sufficiently ample divisor~$H$ on~$X'$ 
so that the divisor~$M = A_X + f^*H$ is ample.
Then
\begin{equation*}
(K_X + (n - 2)M) \cdot \rR = -A_X \cdot \rR +(n - 2)f^*H\cdot \rR = -A_X \cdot \rR < 0
\end{equation*}
because~$\rR$ is contracted by~$f$, so the assumptions of~\cite[Theorem~1.1]{Andreatta-Tasin} are satisfied,
and therefore~$f$ is a weighted blowup of a smooth point with weights~$(1,1,b,\dots,b)$, where~\mbox{$b > 0$}.
But as~$X$ is Gorenstein, we have~$b = 1$.

Let~$E \subset X$ be the exceptional divisor.
Then~$E \cong \PP^{n-1}$ and the first isomorphism in~\eqref{eq:coe-e} holds.
Moreover, by Proposition~\ref{prop:dP4:constr-i} we have~$A_X = f^*A_{X'} - E$;
restricting this equality to~$E$,
we obtain the second isomorphism in~\eqref{eq:coe-e}.

Now it only remains to show that~$X'$ is almost del Pezzo.
For this we consider a general linear surface section~$S \subset X$ and set~$S' \coloneqq f(S) \subset X'$.
Then~$S$ is a del Pezzo surface and the morphism~$f\vert_S \colon S \to S'$ is birational 
and contracts the line~$L = S \cap E$.
Therefore, $S'$ is a smooth del Pezzo surface of degree
\begin{equation*}
\dd(S') = \dd(S) + 1 = \dd(X) + 1,
\end{equation*}
(so that~$S$ is the blowup of a point on~$S'$) and~$S' \subset X'$ is a linear surface section.
If~$D \subset X'$ is a divisor disjoint from~$S'$, 
then~$f^{-1}(D) \subset X$ is a divisor disjoint from~$S$, which is impossible as~$X$ is almost del Pezzo.
Therefore, applying Lemma~\ref{lem:criterion-divisor}\ref{it:divisor-dp} several times, 
we conclude that~$X'$ is almost del Pezzo.

Finally, if~$P$ lies on a $K$-trivial curve~$C' \subset X'$ and~$C \subset X$ is its strict transform, then
\begin{equation*}
K_X \cdot C = (f^*K_{X'} + (n-1)E) \cdot C = K_{X'} \cdot C' + (n - 1)E \cdot C = (n - 1)E \cdot C > 0,
\end{equation*}
which contradicts to the nef property of~$-K_X = (n-1)A_X$.
\end{proof} 

Next, we describe non-birational $K$-negative extremal contractions. 
Recall Definition~\ref{def:dpb} of del Pezzo bundles.

\begin{lemma}
\label{lemma:ext-rays0a}
Let~$X$ be a $\QQ$-factorial almost del Pezzo variety, $\dim(X) \ge 3$ and~\mbox{$\rr(X)>1$}, 
and let~\mbox{$f \colon X\to Z$} be a $K$-negative extremal contraction to a lower-dimensional variety.
Then $\dim (Z)\le 2$ and 
\begin{enumerate}
\item 
\label{lemma:ext-rays0a2}
if $\dim (Z)=2$, then~$Z$ is a smooth del Pezzo surface
and there is a vector bundle~$\cE$ on~$Z$ with~$\rc_1(\cE) = K_Z$ such that~$X \cong \PP_Z(\cE)$;
moreover, if~$X$ is non-conical then~$\cE$ is a del Pezzo bundle on~$Z$;
\item 
\label{lemma:ext-rays0a1}
if $\dim (Z)=1$, then $Z\cong \PP^1$ and~$f \colon X \to \PP^1$ is a flat quadric bundle.
\end{enumerate}
\end{lemma}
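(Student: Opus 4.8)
The plan is to study the extremal contraction $f\colon X \to Z$ using the structure theory of $K$-negative extremal contractions from a $\QQ$-factorial variety, together with the ladder technique of Proposition~\ref{DP:|A|}\ref{DP:|A|2}. First I would bound $\dim(Z)$. Since $-K_X = (n-1)A_X$ is $f$-ample and $f$ is not birational, a general fiber $F$ of $f$ is a Fano variety with $-K_F = (n-1)A_X\vert_F$, and since $A_X$ is Cartier, the index of $F$ forces $\dim(F) \ge n - 2$; hence $\dim(Z) \le 2$. To identify $Z$ and the structure of $f$, I would restrict to a general linear surface section. By the ladder construction, cutting $X$ by $n - \dim(Z) - 1$ general members of $|A_X|$ (when $\dim(Z) = 2$) or $n-2$ members (when $\dim(Z)=1$) produces a chain of almost del Pezzo subvarieties on which $f$ restricts to an extremal contraction, ultimately reducing the geometry over a point of $Z$ to the well-understood surface case: an extremal fibration of a del Pezzo surface is either a $\PP^1$-bundle or a conic bundle structure.

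For part~\ref{lemma:ext-rays0a2}, suppose $\dim(Z) = 2$. The general fiber $F$ then has dimension $n-2$ and satisfies $-K_F = (n-1)A_X\vert_F$ with $A_X\vert_F$ Cartier and ample, so $F \cong \PP^{n-2}$ and $A_X\vert_F = \cO(1)$, i.e.\ $f$ is a $\PP^{n-2}$-fibration. Because $X$ is $\QQ$-factorial with $\rr(X) > 1$ and $f$ is extremal, the relative Picard number is $1$, and I would argue that $f$ is equidimensional: any jump in fiber dimension would produce a divisor contracted to a point or curve, contradicting extremality together with the $\PP^{n-2}$ generic fiber. Equidimensionality of a fibration with $\PP^{n-2}$ fibers over a smooth base makes $X$ a projective bundle $\PP_Z(\cE)$ for a rank $n-1$ vector bundle $\cE$; here I would invoke the standard criterion (a flat family of projective spaces over a smooth base is Zariskilocally trivial). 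To see $Z$ is a smooth del Pezzo surface, I would cut down by a ladder to the surface section $S = X_2$, over which $f\vert_S \colon S \to Z$ is a $\PP^0$-bundle, i.e.\ an isomorphism onto a del Pezzo surface, identifying $Z$ with a smooth del Pezzo surface. The first Chern class computation $\rc_1(\cE) = K_Z$ follows from the relative canonical bundle formula $K_{X} = f^*(K_Z + \det\cE) - (n-1)\zeta$ combined with $K_X = -(n-1)A_X$ and $A_X = \zeta - \tfrac{1}{n-1}f^*(\cdots)$; comparing, one reads off $\det\cE = K_Z$. Finally, if $X$ is non-conical then $X_{\can}$ is non-conical by definition, so by Definition~\ref{def:dpb} the bundle $\cE$ is a del Pezzo bundle.

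For part~\ref{lemma:ext-rays0a1}, suppose $\dim(Z) = 1$; then $Z$ is a smooth rational curve, and since $X$ is covered by $-K_X$-positive curves dominating $Z$ and $X$ is rationally connected of Fano type, $Z \cong \PP^1$. The general fiber $F$ now has dimension $n-1$ with $-K_F = (n-1)A_X\vert_F$, $A_X\vert_F$ ample Cartier; a Fano $(n-1)$-fold of index $\ge n-1$ is either $\PP^{n-1}$ or a quadric $Q^{n-1}$, and the $\PP^{n-1}$ case is excluded because it would force relative index $n$, incompatible with $A_X\vert_F = \cO(1)$ and $-K_F = (n-1)A_X\vert_F$. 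Hence $F$ is a quadric and $f$ is a quadric bundle. For flatness I would use that $f$ is a contraction to a smooth curve with Cohen--Macaulay total space (terminal singularities are Cohen--Macaulay), so every fiber has the expected dimension $n-1$ and $f$ is flat.

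\emph{Main obstacle.} The hard part will be establishing the equidimensionality of $f$ in the surface-base case and ruling out degenerate fibers, since a priori an extremal contraction may contract a divisor to a curve or have jumping fiber dimension; controlling this, and then upgrading an equidimensional $\PP^{n-2}$-fibration to an honest projective bundle $\PP_Z(\cE)$, is the technical heart of the argument. The ladder reduction to the classical surface case is the key device that makes this manageable, but verifying that the restriction of $f$ to a general linear section remains extremal (so that relative Picard number $1$ is preserved down the ladder) requires care.
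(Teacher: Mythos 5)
The dimension bound and the identification of the general fibers via the index argument match the paper, but the heart of part~(i) --- upgrading the fibration to $X \cong \PP_Z(\cE)$ --- rests on a criterion that is both unavailable and false as stated. Equidimensionality together with general fiber $\PP^{n-2}$ does \emph{not} tell you that every fiber is a reduced projective space: special fibers could a priori be arbitrary $(n-2)$-dimensional degenerations (recall $X$ is only terminal, not smooth), so you do not have a ``flat family of projective spaces'' to which any local-triviality criterion could apply. Moreover, even for an honest family of projective spaces, Zariski-local triviality fails in general: such a fibration is a Severi--Brauer scheme, and the obstruction to its being a projectivized vector bundle is a Brauer class (it vanishes here because a del Pezzo surface is rational, but you neither prove nor invoke this --- and you also use smoothness of $Z$ before you establish it). The paper's mechanism is different and never needs to know what the special fibers look like: extremality plus $\QQ$-factoriality give equidimensionality, whence $f_*\cO_X(A_X)$ is reflexive and hence locally free on the smooth surface $Z$; one then proves the evaluation map $f^*f_*\cO_X(A_X) \to \cO_X(A_X)$ is \emph{surjective} --- easy when $\dd(X)\ge 2$, but requiring a genuinely delicate argument at the base point of $|A_X|$ when $\dd(X)=1$, a case your proposal never sees --- which yields a morphism $X \to \PP_Z(\cE)$ that is an isomorphism on the general fiber, hence birational, hence an isomorphism because $f$ is extremal. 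This construction is the missing technical core, and it is also what makes part~(ii) complete: there the same pushforward argument embeds $X$ as a relative-degree-$2$ divisor in $\PP_{\PP^1}(\cF)$, which is what ``flat quadric bundle'' means for \emph{all} fibers; your version only identifies the general fiber as a quadric and then asserts flatness.

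A further concrete error: you claim that the restriction $f\vert_S\colon S \to Z$ to a general linear surface section is a ``$\PP^0$-bundle, i.e.\ an isomorphism''. It is not. Since $S$ meets the general fiber in a single point, $f\vert_S$ is birational, but it contracts the curves $S\cap F$ over the finitely many fibers $F$ met by $S$ along lines; in fact $S \cong \Bl_{z_1,\dots,z_k}(Z)$ with $k = \rc_2(\cE) > 0$, so $\dd(S) = \dd(X) < K_Z^2$ and $f\vert_S$ is a nontrivial blowdown. The correct statement, and the one the paper uses to identify $Z$, is that $f\vert_S$ is a birational morphism all of whose contracted curves are $K_S$-negative (by adjunction, $-K_S\cdot C = A_X\cdot C > 0$ for $f$-contracted curves), hence an iterated contraction of $(-1)$-curves, which is what shows $Z$ is a smooth del Pezzo surface.
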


\begin{proof}
Let~$F$ be the geometric general fiber~of~$f$. 
By adjunction we have
\begin{equation}
\label{eq:kf}
-K_F=(n-1)A_X\vert_F,
\end{equation} 
hence~$F$ is a Fano variety of positive dimension with Gorenstein terminal singularities. 
Applying~\cite[Theorem~3.1.14]{IP99} we conclude that~$\dim(F) \ge n - 2$, hence~$\dim(Z) \le 2$. 

\ref{lemma:ext-rays0a2}
Let $\dim(Z) = 2$. 
Since~$\dim(F) = n - 2$, it follows from~\eqref{eq:kf}, ampleness of~$-K_F$, and~\cite[Theorem~3.1.14]{IP99} 
that~$F \cong \PP^{n-2}$ and~$\cO_X(A_X)\vert_F \cong \cO_{\PP^{n-2}}(1)$.

Let~$S \subset X$ be a general linear surface section.
It follows from the description of~$F$ that
the morphism~$f\vert_S \colon S\to Z$ is birational,
and since~$S$ is a smooth del Pezzo surface,
$f\vert_S$ is an iterated contraction of~$(-1)$-curves, 
hence~$Z$ is a smooth del Pezzo surface as well.

Since~$f$ is extremal and~$X$ is $\QQ$-factorial, all fibers of~$f$ have dimension~$n - 2$,
therefore the sheaf~$f_*\cO_X(A_X)$ is reflexive by~\cite[Corollary~1.7]{Har80},
and since~$Z$ is a smooth surface, it is locally free by~\cite[Corollary~1.4]{Har80}.
Denote
\begin{equation*}
\cE \coloneqq (f_*\cO_X(A_X))^\vee.
\end{equation*}
We claim that the natural morphism
\begin{equation}
\label{eq:fs-fs-oxa}
f^*\cE^\vee = f^*f_*\cO_X(A_X) \longrightarrow \cO_X(A_X)
\end{equation} 
is surjective.
If~$\dd(X) \ge 2$ this is obvious, because the composition
\begin{equation*}
H^0(X,\cO_X(A_X)) \otimes \cO_X 
= H^0(Z, f_*\cO_X(A_X)) \otimes \cO_X
\longrightarrow f^*f_*\cO_X(A_X) \longrightarrow \cO_X(A_X)
\end{equation*}
coincides with the natural epimorphism~$H^0(X,\cO_X(A_X)) \otimes \cO_X \to \cO_X(A_X)$, 
hence the last arrow is surjective. 
So assume~$\dd(X) = 1$.
Then the same argument shows that~\eqref{eq:fs-fs-oxa} 
is surjective away from the base point~$x_0 \in X$ of~$|A_X|$,
so it remains to check the surjectivity at~$x_0$.
Consider the restriction morphism~$\cO_X(A_X) \to \cO_{x_0}$; we will check that its pushforward
\begin{equation}
\label{eq:fs-coxa-coz0}
f_*\cO_X(A_X) \longrightarrow \cO_{f(x_0)}
\end{equation}
is still surjective.
For this we take a general linear surface section~$S \subset X$ (then~$x_0 \in S$)
and consider the composition~$\cO_X(A_X) \to \cO_S(A_X\vert_S) = \cO_S(-K_S) \to \cO_{x_0}$, and its pushforward
\begin{equation*}
f_*\cO_X(A_X) \longrightarrow f_*\cO_S(-K_S) \longrightarrow \cO_{f(x_0)}.
\end{equation*}
Here the first arrow is surjective because~$R^{>0}f_*\cO_X(-k A_X) = 0$ for~$0 \le k \le n - 3$ 
by Kawamata--Vieweg vanishing~\cite[Theorem~1-2-5, Remark~1-2.6]{KMM},
and the second arrow is surjective, because~$f\vert_S \colon S \to Z$ is a birational morphism of del Pezzo surfaces,
hence it is an isomorphism near the base point~$x_0 \in S$ of~$|-K_S|$,
and the surjectivity of~\eqref{eq:fs-coxa-coz0} follows.
Now consider the commutative diagram
\begin{equation*}
\xymatrix{
f^*f_*\cO_X(A_X) \ar[r] \ar[d] & 
\cO_X(A_X) \ar[d]
\\
f^*\cO_{f(x_0)} \ar[r] &
\cO_{x_0}.
}
\end{equation*}
Its left vertical arrow is surjective because~\eqref{eq:fs-coxa-coz0} is, 
its bottom arrow is surjective for obvious reasons,
and its right vertical arrow is an isomorphism at~$x_0$.
Therefore, its top arrow is surjective at~$x_0$.

Now the surjectivity of~\eqref{eq:fs-fs-oxa} implies that the morphism~$f$ factors as the composition 
\begin{equation*}
X \xrightarrow{\ f'\ } 
X' \coloneqq \PP_Z(\cE) \xrightarrow{\ f''\ } 
Z,
\end{equation*}
where~$f'$ is a regular morphism and~$f''$ is the projection.
The restriction of~$f'$ to~$F$ is an isomorphism onto the general fiber of~$f''$, hence~$f'$ is birational,
and since~$f = f'' \circ f'$ is extremal, $f'$ is an isomorphism.

The standard formula for the canonical class of a projective bundle gives
\begin{equation*}
(1-n)A_X = K_X = K_{\PP_Z(\cE)} = f^*(K_Z - \rc_1(\cE)) + (1-n)A_X,
\end{equation*}
hence~$\rc_1(\cE) = K_Z$.
Finally, if~$X$ is non-conical, we conclude that~$\cE$ is a del Pezzo bundle.

\ref{lemma:ext-rays0a1}
Let~$\dim(Z) = 1$. 
Since~$\dim(F) = n - 1$, 
it follows from~\eqref{eq:kf}, ampleness of~$-K_F$, and~\cite[Theorem~3.1.14]{IP99} 
that~$F$ is a quadric in~$\PP^n$ and~$\cO_X(A_X)\vert_F \cong \cO_{\PP^{n}}(1)\vert_F$. 

Let~$S \subset X$ be a general linear surface section.
It follows from the description of~$F$ that
the morphism~$f\vert_S \colon S \to Z$ is dominant,
hence~$Z$ is unirational and normal, hence~$Z \cong \PP^1$.
Furthermore, the sheaf~$f_*\cO_X(A_X)$ on~$\PP^1$ is torsion free, hence locally free,
and defining~$\cF$ as its dual and arguing as in part~\ref{lemma:ext-rays0a2}
we prove that~$f$ factors as 
\begin{equation*}
X \xrightarrow{\ f'\ } 
X' \hookrightarrow 
\PP_Z(\cF) \xrightarrow{\ f''\ } 
Z.
\end{equation*}
Now the restriction of~$f'$ to~$F$ is the natural embedding~$F \subset \PP^n$,
hence the image~$X'$ of~$f'$ is a divisor in~$\PP_Z(\cF)$ of relative degree~$2$,
and since~$f$ is extremal, $f'$ is an isomorphism onto this divisor.
Finally, $X$ is irreducible, hence flat over~$Z = \PP^1$.
\end{proof} 

Summarizing the results of Lemma~\ref{lemma:ext-rays0} and Lemma~\ref{lemma:ext-rays0a}
we obtain the main result of this section:

\begin{proposition}
\label{propo:ext-rays}
Let~X be a $\QQ$-factorial almost del Pezzo variety such that~\mbox{$\rr(X)>1$}
and let~$f \colon X\to Z$ be an extremal $K$-negative contraction.
Then 
\begin{enumerate}
\item
\label{prop:ext-rays1}
either $Z$ is a $\QQ$-factorial almost del Pezzo variety 
and~$f \colon X\to Z$ is the blowup of a smooth point on~$Z$ not lying on a $K$-trivial curve,

\item
\label{prop:ext-rays2}
or~$Z$ is a smooth del Pezzo surface and~$X \cong \PP_Z(\cE)$, 
where~$\cE$ is a vector bundle on~$Z$ with~$\rk(\cE) \ge 2$ and~$\rc_1(\cE) = K_Z$;
moreover, if~$X$ is non-conical then~$\cE$ is a del Pezzo bundle,

\item
\label{prop:ext-rays3}
or~$Z\cong\PP^1$ and $X \to Z$ is a flat quadric bundle.
\end{enumerate}
\end{proposition}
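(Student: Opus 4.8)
The plan is to split into two cases according to whether the contraction~$f$ is birational and to invoke the two preceding lemmas directly; no geometric input beyond them is needed. First I would record the elementary observation that, since~$f \colon X \to Z$ is a contraction (connected fibers, $Z$ normal), $f$ is birational precisely when~$\dim(Z) = \dim(X)$: if~$\dim(Z) = \dim(X)$ the general fiber is finite and connected, hence a single reduced point, so~$f$ is birational; whereas if~$\dim(Z) < \dim(X)$ the fibers are positive-dimensional and~$f$ is of fiber type. Thus the dichotomy \emph{birational} versus \emph{fiber type} coincides with~$\dim(Z) = \dim(X)$ versus~$\dim(Z) < \dim(X)$, and these two possibilities are exhaustive.

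In the birational case I would apply Lemma~\ref{lemma:ext-rays0}, whose hypotheses --- namely that~$X$ is a $\QQ$-factorial almost del Pezzo variety and that~$f$ is a $K$-negative extremal birational contraction --- are exactly those assumed here. Its conclusion states that~$Z$ is a $\QQ$-factorial almost del Pezzo variety and that~$f$ is the blowup of a smooth point, and the final sentence of that lemma supplies precisely the assertion that the blown-up point~$P = f(E)$ does not lie on a $K$-trivial curve. This yields case~\ref{prop:ext-rays1}. In the fiber-type case we have~$\dim(Z) < \dim(X)$, so~$f$ is a $K$-negative extremal contraction to a lower-dimensional variety, and the hypotheses of Lemma~\ref{lemma:ext-rays0a} hold (here~$\rr(X) > 1$ is assumed and~$\dim(X) \ge 3$ is part of the standing setup). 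That lemma forces~$\dim(Z) \le 2$ and produces exactly the two remaining alternatives: for~$\dim(Z) = 2$ it gives~$X \cong \PP_Z(\cE)$ over a smooth del Pezzo surface~$Z$ with~$\rc_1(\cE) = K_Z$ and~$\rk(\cE) \ge 2$, with~$\cE$ a del Pezzo bundle when~$X$ is non-conical, which is case~\ref{prop:ext-rays2}; and for~$\dim(Z) = 1$ it gives~$Z \cong \PP^1$ together with a flat quadric bundle structure, which is case~\ref{prop:ext-rays3}.

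Since the substantive work has already been carried out in Lemmas~\ref{lemma:ext-rays0} and~\ref{lemma:ext-rays0a}, the proposition is essentially a repackaging of the preceding analysis, and I do not anticipate any real obstacle. The only genuine point to verify is the equivalence of \emph{non-birational} with \emph{lower-dimensional target} recorded in the first paragraph; the remaining care needed is purely bookkeeping --- confirming that the standing hypotheses ($\QQ$-factoriality, the almost del Pezzo property, the $K$-negativity and extremality of~$f$, and~$\rr(X) > 1$) match the hypotheses invoked in each of the two lemmas so that the three cases assemble into a complete and mutually exclusive list.
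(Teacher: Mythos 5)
Your proposal is correct and matches the paper exactly: the paper states this proposition with no separate argument, introducing it by the phrase ``Summarizing the results of Lemma~\ref{lemma:ext-rays0} and Lemma~\ref{lemma:ext-rays0a} we obtain...'', which is precisely your case split into birational contractions (Lemma~\ref{lemma:ext-rays0}) and contractions to a lower-dimensional target (Lemma~\ref{lemma:ext-rays0a}). Your added observation that birationality is equivalent to $\dim(Z)=\dim(X)$, together with noting that $\rk(\cE)\ge 2$ follows from $\dim(X)\ge 3$ since the fibers are $\PP^{n-2}$, is the only bookkeeping the paper leaves implicit.
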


We conclude this section with a few useful Minimal Model Program results.

\begin{corollary}
\label{cor:targets}
Let~$X$ be an almost del Pezzo variety.
If~$f \colon X \to Z$ is any contraction 
then either~$f$ is birational and~$Z$ is almost del Pezzo, 
or~$Z$ is a del Pezzo surface, or~$\PP^1$, or a point.
\end{corollary}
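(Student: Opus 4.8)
The plan is to reduce to the $\QQ$-factorial case and then induct on $\rr(X)$, feeding on the classification of extremal $K$-negative contractions in Proposition~\ref{propo:ext-rays}. First I would pass to a $\QQ$-factorialization $\hat\xi\colon\hX\to X$ (which exists and is almost del Pezzo by the remark after Definition~\ref{def:q-factorialization}) and replace $f$ by $f\circ\hat\xi\colon\hX\to Z$. This is again a contraction --- proper, surjective, and with connected fibers, since both $f$ and $\hat\xi$ have connected fibers --- it has the same target $Z$, and it is birational if and only if $f$ is. Hence it suffices to treat the case where $X$ is $\QQ$-factorial (the case $n=2$ being the classical description of contractions of del Pezzo surfaces, so I assume $n\ge 3$).

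I would then induct on $\rr(X)$. For $\rr(X)=1$ the cone $\NE(X)$ is a single ray, so a nontrivial $f$ either is an isomorphism, whence $Z\cong X$ is almost del Pezzo, or contracts the unique ray, forcing $Z$ to be a point. For the inductive step, set $N_f\coloneqq\NE(X/Z)$ and recall that $-K_X=(n-1)A_X$ is globally nef with $A_X=\xi^*A_{X_\can}$ semiample; the extremal face of $\NE(X)$ on which $A_X$ vanishes is exactly the face contracted by the small anticanonical morphism $\xi$. The dichotomy is whether $N_f$ is contained in this face or not.

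In the first case $-K_X$ is $f$-trivial, so $f$ is crepant. It cannot contract a divisor $E$: the $f$-fibers sweeping out $E$ would be $\xi$-contracted curves (as $A_X$ is $f$-trivial), yet $A_X\vert_E$ is big because $\xi(E)$ is a divisor in $X_\can$, a contradiction with the smallness of $\xi$. Thus $f$ is small birational, hence a pseudoisomorphism, and arguing as in Lemma~\ref{lem:adp-dp} and Lemma~\ref{lem:dp-weak-dp} I would descend $A_X$ to a Cartier class $A_Z\coloneqq f_*A_X$ with $A_X=f^*A_Z$, check that $A_Z$ is nef and big with $A_Z^n=\dd(X)$, that $-K_Z=(n-1)A_Z$, and that $Z\to X_\can$ is small, concluding that $Z$ is almost del Pezzo. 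In the second case some $f$-contracted curve $C$ has $A_X\cdot C>0$, hence $K_X\cdot C<0$, so $N_f$ contains a $K$-negative extremal ray $R$; contracting it yields an extremal $K$-negative contraction $g\colon X\to X_1$ through which $f$ factors as $f=h\circ g$, with $h$ again a contraction since $h_*\cO_{X_1}=f_*\cO_X=\cO_Z$. By Proposition~\ref{propo:ext-rays} either $X_1$ is a del Pezzo surface or $\PP^1$ --- and then $Z$, as a contraction of such, is a del Pezzo surface, $\PP^1$, or a point --- or $g$ is the blowup of a smooth point, so $X_1$ is a $\QQ$-factorial almost del Pezzo variety with $\rr(X_1)=\rr(X)-1$, and the inductive hypothesis applied to $h$ finishes the argument.

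I expect the main obstacle to be the crepant case: establishing that a $K$-trivial (hence small birational) contraction produces an almost del Pezzo target, i.e.\ verifying that $A_X$ descends to a \emph{Cartier} class on the possibly non-$\QQ$-factorial $Z$ and that the anticanonical morphism of $Z$ remains small. The remaining $K$-negative case is a routine induction driven by Proposition~\ref{propo:ext-rays}, together with the elementary fact that a contraction of a del Pezzo surface is again a del Pezzo surface, $\PP^1$, or a point.
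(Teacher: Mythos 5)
Your proposal is correct and is essentially the paper's own proof in disguise: the induction on $\rr(X)$ driven by the dichotomy on $N_f=\NE(X/Z)$ is exactly the $K$-MMP over $Z$ that the paper runs after $\QQ$-factorializing (by Lemma~\ref{lemma:ext-rays0} a $K$-negative birational extremal contraction is a point blowup, so there are no flips and that relative MMP is precisely your chain of blowdowns), and your Case 1 is the paper's terminal case ``$K_{X'}$ nef over $Z$'', your Case 2 its MMP step and Mori-fiber-space case, both resolved by Proposition~\ref{propo:ext-rays}.

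Two loose ends should be closed. First, in Case 1 your argument only excludes the contraction of a divisor, but you then conclude ``$f$ is small birational''; you must also exclude $\dim Z<\dim X$, which the same idea gives at once: if $f$ were of fiber type, every point of $X$ would lie on an $f$-contracted, hence $A_X$-trivial, hence $\xi$-contracted curve, so $\Exc(\xi)=X$, contradicting smallness of $\xi$. Second, the ``main obstacle'' you flag (Cartier-ness of $A_Z=f_*A_X$) dissolves if you run the descent in the opposite direction, as the paper does: since $f_*\cO_X=\cO_Z$ and every $f$-contracted curve is $\xi$-contracted, the rigidity lemma factors the anticanonical morphism as $\xi=g\circ f$ with $g\colon Z\to X_\can$ a morphism; $g$ is small birational because $\Exc(g)\subseteq f(\Exc(\xi))$ has codimension at least~$2$, so Lemma~\ref{lem:dp-adp} applies verbatim to $g$ and yields that $Z$ is almost del Pezzo with $A_Z=g^*A_{X_\can}$ Cartier and $f^*A_Z=A_X$; nothing about $f_*A_X$ needs to be checked separately.
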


\begin{proof}
Let~$\xi \colon \hX \to X$ be a $\QQ$-factorialization;
$\hX$
is a $\QQ$-factorial almost del Pezzo variety.
Running the Minimal Model Program on~$\hX$ over~$Z$,
i.e., contracting $K$-negative extremal rays over~$Z$,
we obtain a birational map~$\hX \dashrightarrow X'$ over~$Z$
such that~$X'$ is an almost del Pezzo variety 
and we have one of the following two situations (see~\cite[\S2.14]{KM:book}):

\renewcommand\labelenumi{\rm (\Alph{enumi})}
\renewcommand\theenumi{\rm (\Alph{enumi})}
\begin{enumerate}
\item
There exists a non-birational $K$-negative extremal contraction~$X' \to Z'$ over~$Z$.
\item 
The divisor~$K_{X'}$ is nef over~$Z$.
\end{enumerate}
In the first case~$Z'$ is a del Pezzo surface, or~$\PP^1$, or a point by Proposition~\ref{propo:ext-rays}, 
and hence the same holds for $Z$.
In the second case, since~$X'$ is almost del Pezzo, the divisor~$-K_{X'}$ is nef, 
and since~$K_{X'}$ is nef over~$Z$, it is numerically trivial over~$Z$, 
hence the map~$f' \colon X' \to Z$ is $K$-trivial.
Therefore, the curves contracted by~$f'$ are $K_{X'}$-trivial;
in other words, they form a subset of the set of curves contracted by the anticanonical morphism~$X' \to X'_\can$.
Thus, the anticanonical morphism factors through~$f'$, hence
$Z$ is an almost del Pezzo variety with~$Z_\can = X'_\can$;
in particular, $Z$ is birational to~$X$.
\end{proof} 

\begin{lemma}
\label{lem:d-mmp}
Let~$X$ be a del Pezzo variety.
For any class~$D \in \Cl(X) \otimes \RR$ there exists a $\QQ$-factorialization~$\xi \colon \hX \to X$ 
such that every $(\xi^{-1})_*D$-negative extremal ray is $K$-negative.
In particular, we have the following alternative:
\renewcommand\labelenumi{\rm (\Alph{enumi})}
\renewcommand\theenumi{\rm (\Alph{enumi})}
\begin{enumerate}
\item
\label{cone:case1}
There exists a $(\xi^{-1})_*D$-negative $K$-negative extremal contraction $f \colon \hX\to Z$;
moreover, if~$D$ is pseudoeffective then~$f$ is birational.
\item 
\label{cone:case2}
The divisor~$(\xi^{-1})_*D$ is nef.
\end{enumerate}
\end{lemma}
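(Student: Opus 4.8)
The plan is to reduce the statement to a relative minimal model program over~$X$. Fix any $\QQ$-factorialization $\xi_0 \colon \hX_0 \to X$; it exists by~\cite[Corollary~1.4.3]{BCHM} and, by Lemma~\ref{lem:dp-adp}, $\hX_0$ is a $\QQ$-factorial almost del Pezzo variety with $\hX_{0,\can} \cong X$. Set $D_0 \coloneqq (\xi_0^{-1})_*D \in \Cl(\hX_0)\otimes\RR$. The key observation, exactly as in the proof of Lemma~\ref{lem:adp-dp}, is that $\xi_0$ is crepant, so $K_{\hX_0} = \xi_0^*K_X$ is numerically trivial over~$X$; equivalently, every curve contracted by $\xi_0$ is $K$-trivial and every extremal ray of $\NE(\hX_0/X)$ is $K$-trivial.

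Next I would run the $D_0$-minimal model program over~$X$. Since $\hX_0$ is $\QQ$-factorial of Fano type (as noted in the proof of Lemma~\ref{lemma:repant-models}), it is a Mori dream space and this program terminates. I claim each step is a flop over~$X$. Indeed, at each stage the structure morphism $\hX_i \to X$ is small, being obtained from the small morphism $\xi_0$ by a sequence of flops; so the contraction $\phi$ of a $D_0$-negative extremal ray over~$X$ cannot be divisorial, for if $\phi$ contracted a divisor $E$ then $E$ would map to a subvariety of dimension $\le n-2$ under $\hX_i \to X$, contradicting smallness. Nor can $\phi$ be of fiber type, since $\hX_i \to X$ is birational. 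Hence $\phi$ is a small contraction, and it is $K$-trivial because every relative extremal ray over~$X$ is $K$-trivial, so the step is a flop. The program therefore moves through $\QQ$-factorializations of~$X$, of which there are finitely many by Lemma~\ref{lemma:repant-models}, and terminates at a $\QQ$-factorialization $\xi \colon \hX \to X$ on which $D_{\hX} \coloneqq (\xi^{-1})_*D$ is nef over~$X$. (Equivalently, the relative nef cones of these finitely many models tile the entire relative divisor space, and $D_0$ lands in one of them.)

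This $\hX$ is the required model. Nefness of $D_{\hX}$ over~$X$ means $D_{\hX}\cdot C \ge 0$ for every curve $C$ contracted by $\xi$, that is, for every $K$-trivial curve; and since $\hX$ is almost del Pezzo, $-K_{\hX} = (n-1)A_{\hX}$ is nef, so there are no $K$-positive curves. Consequently any $D_{\hX}$-negative curve is $K$-negative, and in particular every $D_{\hX}$-negative extremal ray of $\NE(\hX)$ is $K$-negative, which is the main assertion. The alternative follows immediately: if $D_{\hX}$ is nef we are in case~\ref{cone:case2}; otherwise there is a $D_{\hX}$-negative extremal ray $R$, which is $K$-negative, so the contraction theorem yields a $K$-negative (and $D_{\hX}$-negative) extremal contraction $f \colon \hX \to Z$, giving case~\ref{cone:case1}.

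For the final clause, assume $D$ is pseudoeffective. Since $\xi$ is small and birational it identifies the effective, hence the pseudoeffective, cones of $\hX$ and $X$, so $D_{\hX}$ is pseudoeffective as well. If the contraction $f$ in case~\ref{cone:case1} were of fiber type, the curves in the class $R$ would sweep out $\hX$; but a pseudoeffective divisor has nonnegative degree on a general member of a covering family of curves, contradicting $D_{\hX}\cdot R < 0$. Hence $f$ is birational. The main obstacle is the termination of the relative flopping program together with the verification that only flops occur; both rest on the smallness of $\hX_i \to X$ and on the Fano-type (Mori dream space) property of $\hX_0$ already invoked in Lemma~\ref{lemma:repant-models}.
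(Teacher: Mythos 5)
Your proof is correct and is essentially the paper's own argument: the paper runs a $\tD$-MMP in the category of FT varieties, flopping $\tD$-negative $K$-trivial extremal rays until every remaining $\tD$-negative ray is $K$-negative, which is exactly your relative $D$-MMP over~$X$, since the $K$-trivial extremal rays are precisely the extremal rays of~$\NE(\hX/X)$ (as $-K_{\hX}$ is the pullback of an ample class from~$X$). The only cosmetic differences are that the paper cites the FT-variety MMP of Prokhorov--Shokurov for termination and Lemma~\ref{lemma:repant-models} to identify the end product as a $\QQ$-factorialization, where you instead use the Mori dream space property together with your smallness-over-$X$ argument to see that every step is a flop.
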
 

\begin{proof} 
Let~$\tX \to X$ be any $\QQ$-factorialization and let~$\tD$
be the strict transform of~$D$.
Note that~$\tX$ is an FT variety \cite[Lemma-Definition~2.6]{P-Sh:JAG},
therefore we can run $\tD$-MMP on~$\tX$ in the category of FT 
varieties \cite[Corollary~2.7,\ Lemma~2.8]{P-Sh:JAG}.
At each step we flop a~$\tD$-negative $K$-trivial extremal ray;
clearly this does not spoil the nef and big properties of the anticanonical class,
hence the resulting variety is still almost del Pezzo.
After a finite number of such flops we obtain a $\QQ$-factorial crepant model~$\hX$ of~$X$ 
such that every $\hD$-negative extremal ray is $K$-negative, where~$\hD$ is the strict transform of~$\tD$.
By Lemma~\ref{lemma:repant-models} the variety~$\hX$ is a $\QQ$-factorialization of~$X$,
hence~$\hD = (\xi^{-1})_*D$, where~$\xi \colon \hX \to X$ is the anticanonical morphism.
Now obviously we are in the case~\ref{cone:case1} (if there are~$\hD$-negative curves) 
or otherwise in the case~\ref{cone:case2}.
\end{proof}   

\section{Lattice structure of the class group}
\label{sec:lattice}

Recall the bilinear form~\eqref{eq:product} defined on the class group~$\Cl(X)$ of any almost del Pezzo variety.
In this section we investigate its properties.

\subsection{Restriction morphisms}

In this subsection we discuss the restriction morphisms between the class groups of a ladder of (almost) del Pezzo varieties.

\begin{lemma}
\label{lem:product-divisor}
For any almost del Pezzo variety~$X$
the bilinear form~\eqref{eq:product} is well defined.
Moreover, if~$i \colon Y \hookrightarrow X$ is a general fundamental divisor
and~\mbox{$i^* \colon \Cl(X) \to \Cl(Y)$} is the induced morphism then
\begin{equation*}
\langle D_1, D_2 \rangle = \langle i^*D_1,\, i^*D_2\rangle
\end{equation*}
for any~$D_1,D_2 \in \Cl(X)$. 
\end{lemma}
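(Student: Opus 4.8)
The plan is to reduce every intersection number appearing in~\eqref{eq:product} to an intersection number on a smooth surface, where the product of two Weil divisor classes is unambiguous, and then to deduce the restriction compatibility from the projection formula.

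First I would establish well-definedness. Since~$A_X$ is nef and big, the base point free theorem makes~$|mA_X|$ base point free for~$m \gg 0$; choosing~$n-2$ general members~$H_1, \dots, H_{n-2} \in |mA_X|$ and intersecting them, Bertini's theorem together with terminality of~$X$ (which gives~$\codim(\Sing(X)) \ge 3$, so that a general surface section avoids~$\Sing(X)$, exactly as in the proof of Lemma~\ref{lem:s-p-smooth}) produces a smooth irreducible surface~$S$ contained in the smooth locus of~$X$; for~$n = 2$ there are no cuts and~$S = X$ is already a smooth terminal surface. Since~$S$ is smooth, every Weil divisor class on~$X$ restricts to a Cartier divisor class on~$S$, giving a homomorphism~$\rho_S \colon \Cl(X) \to \Cl(S)$, and I would set~$\langle D_1, D_2 \rangle \coloneqq m^{-(n-2)}\,\rho_S(D_1) \cdot_S \rho_S(D_2)$. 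To see that this agrees with~\eqref{eq:product} and is independent of all choices, I would represent~$D_1$ and~$D_2$ by Weil divisors meeting~$S$ and each other properly inside the smooth locus and observe that~$\rho_S(D_1) \cdot_S \rho_S(D_2)$ is then the degree of the proper intersection~$(mA_X)^{n-2} \cdot D_1 \cdot D_2$, which is a genuine intersection number of the Cartier classes~$H_i$ against the cycles~$[D_1]$ and~$[D_2]$ and hence depends neither on the representatives nor on the choice of the~$H_i$; multilinearity in~$m$ supplies the normalization.

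For the ``Moreover'' statement I would invoke the projection formula. By Proposition~\ref{DP:|A|}\ref{DP:|A|2} a general fundamental divisor~$Y \in |A_X|$ is an almost del Pezzo variety of dimension~$n - 1$ with~$A_Y = A_X\vert_Y = i^*A_X$, so the form on~$Y$ reads~$\langle i^*D_1,\, i^*D_2 \rangle_Y = A_Y^{n-3} \cdot i^*D_1 \cdot i^*D_2$. Choosing representatives of~$D_1$ and~$D_2$ that meet the general divisor~$Y$ properly, the projection formula identifies this with~$A_X^{n-3} \cdot [Y] \cdot D_1 \cdot D_2$, and since~$[Y]$ is linearly equivalent to~$A_X$ this equals~$A_X^{n-2} \cdot D_1 \cdot D_2 = \langle D_1, D_2 \rangle_X$. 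Iterating this descent along a ladder~$S = X_2 \subset \dots \subset X_n = X$ recovers the surface computation of the first part, so the two descriptions are consistent.

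The main obstacle I anticipate is the rigorous bookkeeping of genericity inside the singular ambient variety~$X$: I must guarantee simultaneously that the surface section~$S$ (respectively the fundamental divisor~$Y$) is smooth and lies in the smooth locus, and that fixed representatives of~$D_1$ and~$D_2$ can be moved so as to meet~$S$ (respectively~$Y$) and one another properly. This is what ensures that the surface intersection number genuinely computes the formal product~\eqref{eq:product} and is insensitive to the choices made; everything else reduces to the projection formula and to deformation invariance of intersection numbers.
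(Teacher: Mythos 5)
Your skeleton is the same as the paper's: reduce the pairing~\eqref{eq:product} to an honest intersection number on a general surface section lying in the smooth locus, and deduce the restriction formula from the projection formula applied to the Cartier divisor~$Y \in |A_X|$ (the paper works with~$|A_X|$ itself, whose base locus is empty or a single smooth point by Proposition~\ref{DP:|A|}, where you pass to~$|mA_X|$, $m \gg 0$ --- a harmless variation). However, your well-definedness step contains a genuine gap: you propose to represent~$D_1$ and~$D_2$ by divisors ``meeting~$S$ and each other properly inside the smooth locus'', and such representatives need not exist, because the mutual intersection of two divisors cannot in general be pushed off~$\Sing(X)$. A counterexample is supplied by the paper itself: take~$X = X_{5,2,5} \subset \Gr(2,5)$, the quintic del Pezzo fivefold whose singular locus is a plane~$\PP^2$ (Theorem~\ref{thm:quintic-detailed}), and take~$D_1 = D_2 = A_X$. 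Every Weil divisor linearly equivalent to~$A_X$ is Cartier, and~$\cO_X(A_X)$ restricts to~$\cO_{\PP^2}(1)$ on the plane; hence the support of every representative either contains the plane or cuts a nonempty curve on it (a degree-one divisor class cannot have empty or zero-dimensional support), and since any two curves in~$\PP^2$ intersect, the supports of any two representatives always meet along~$\Sing(X)$. As a result, your invariance argument --- reading~$(mA_X)^{n-2}\cdot D_1 \cdot D_2$ as ``a genuine intersection number of the Cartier classes~$H_i$ against the cycles~$[D_1]$ and~$[D_2]$'' --- rests on a product of two non-Cartier cycles, which intersection theory on a singular variety does not define.

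The repair is precisely the content of the paper's one-sentence proof: never intersect two Weil classes directly. For your surface computation you do not need~$D_1 \cap D_2$ to avoid~$\Sing(X)$; you only need the finite set~$S \cap D_1 \cap D_2$ to lie in the smooth locus, and this is automatic because your general~$S$ is disjoint from~$\Sing(X)$ (as~$\codim \Sing(X) \ge 3$). Invariance under all choices then follows by arranging that at every stage exactly one of the two objects being intersected is Cartier: cut by the members of~$|mA_X|$ (or~$|A_X|$) one at a time, restrict the Weil classes to each such Cartier divisor (this is well defined on~$\Cl$ precisely because the divisor is Cartier and general), and apply the projection formula at each cut until you land on a surface where~$\Cl = \Pic$ and the form is classical. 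Your ``Moreover'' paragraph already implements this discipline correctly for the restriction statement; running the same induction for well-definedness, instead of attempting to move~$D_1$ and~$D_2$ into general position with respect to each other, closes the gap and reproduces the paper's argument.
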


\begin{proof}
All results follow from the projection formula because the base locus of~$|A_X|$ is empty or zero-dimensional 
and~$Y \subset X$ is a Cartier divisor.
\end{proof} 

\begin{lemma}
\label{lem:pseudo-clx}
If~$\psi \colon X' \dashrightarrow X''$ is a pseudoisomorphism of almost del Pezzo varieties 
then the map~$\psi_* \colon \Cl(X') \to \Cl(X'')$ is an isomorphism compatible with the bilinear form~\eqref{eq:product}.
\end{lemma}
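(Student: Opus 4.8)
The plan is to show that a pseudoisomorphism $\psi \colon X' \dashrightarrow X''$ induces an isomorphism $\psi_* \colon \Cl(X') \to \Cl(X'')$ of abelian groups, and then to verify that this isomorphism preserves the bilinear form~\eqref{eq:product}. First I would recall that, by definition of a pseudoisomorphism, there are closed subsets $Z' \subset X'$ and $Z'' \subset X''$ of codimension at least $2$ such that $\psi$ restricts to an isomorphism $X' \setminus Z' \xrightarrow{\ \sim\ } X'' \setminus Z''$. Since the class group of Weil divisors of a normal variety is unaffected by removing a closed subset of codimension $\ge 2$, the restriction maps $\Cl(X') \to \Cl(X' \setminus Z')$ and $\Cl(X'') \to \Cl(X'' \setminus Z'')$ are isomorphisms. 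Composing these with the isomorphism $\Cl(X' \setminus Z') \cong \Cl(X'' \setminus Z'')$ induced by the geometric isomorphism $\psi$ gives the desired group isomorphism $\psi_* \colon \Cl(X') \to \Cl(X'')$. I would note that this $\psi_*$ is precisely the strict-transform map on divisor classes and is compatible with linear equivalence, which is exactly the property already invoked in the proof of Lemma~\ref{lem:dp-weak-dp}.

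The main point that requires care is compatibility with the bilinear form, since the form~\eqref{eq:product} is defined via an intersection number $A_X^{n-2} \cdot D_1 \cdot D_2$ that involves the fundamental class $A$, and intersection numbers are \emph{not} a priori preserved by birational maps that alter divisors in codimension one. The key observation that saves the day is that $\psi$ is an isomorphism in codimension one, so it does not contract or extract any divisors; moreover $\psi_*$ sends $K_{X'}$ to $K_{X''}$ and hence (using~\eqref{eq:kx-a}) sends $A_{X'}$ to $A_{X''}$. To compute the intersection numbers I would reduce to the anticanonical models: by Lemma~\ref{lem:dp-weak-dp} there is an isomorphism $\varphi \colon X'_\can \xrightarrow{\ \sim\ } X''_\can$ compatible with the anticanonical morphisms $\xi' \colon X' \to X'_\can$ and $\xi'' \colon X'' \to X''_\can$, and by Lemma~\ref{lem:adp-dp} we have $A_{X'} = (\xi')^* A_{X'_\can}$ and $A_{X''} = (\xi'')^* A_{X''_\can}$. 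Since the anticanonical morphisms are small, the projection formula lets me rewrite each product $A^{n-2} \cdot D_1 \cdot D_2$ as the corresponding product computed downstairs on $X'_\can$ (respectively $X''_\can$) with the pushed-forward divisor classes, and these downstairs products agree because they are matched by the isomorphism $\varphi$.

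Concretely, the computation I have in mind is
\begin{equation*}
\langle D_1, D_2 \rangle_{X'} = A_{X'}^{n-2} \cdot D_1 \cdot D_2
= A_{X'_\can}^{n-2} \cdot (\xi'_* D_1) \cdot (\xi'_* D_2),
\end{equation*}
valid because $\xi'$ is a small morphism and $A_{X'} = (\xi')^* A_{X'_\can}$, so the projection formula applies and no exceptional correction terms appear; the same identity holds for $X''$. Since $\varphi$ identifies $A_{X'_\can}$ with $A_{X''_\can}$ and identifies $\xi'_* D_i$ with $\xi''_* (\psi_* D_i)$ (the latter because $\psi$ is compatible with the anticanonical morphisms), the two right-hand sides coincide, giving
\begin{equation*}
\langle D_1, D_2 \rangle_{X'} = \langle \psi_* D_1,\, \psi_* D_2 \rangle_{X''}.
\end{equation*}

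The step I expect to be the main obstacle is the rigorous justification of the projection-formula reduction to the anticanonical model when $D_1, D_2$ are merely Weil (not Cartier) divisor classes, since intersection products of Weil divisors on a singular variety require some care. The cleanest route is to reduce everything to a general linear surface section $S$ as in Lemma~\ref{lem:product-divisor}: restricting to $S$ turns the form~\eqref{eq:product} into the ordinary intersection pairing on the smooth del Pezzo surface $S' = \xi'(S)$ lying in the smooth locus of $X'_\can$, where Weil and Cartier divisors agree and all intersection numbers are unambiguous, and where the isomorphism $\varphi$ acts as an honest isomorphism of surfaces. This reduces the compatibility statement to the already-established group isomorphism $\psi_*$ together with the fact that $S$ can be chosen to avoid the indeterminacy loci, which is guaranteed because those loci have codimension $\ge 2$.
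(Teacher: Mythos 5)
Your proposal is correct and, in its final ``cleanest route'' form, coincides with the paper's own proof: the paper likewise uses Lemma~\ref{lem:dp-weak-dp} to identify $X'_\can \cong X''_\can$, takes a general linear surface section $S$ of this common anticanonical model (which is then a linear surface section of both $X'$ and $X''$), and applies Lemma~\ref{lem:product-divisor} to get $\langle D_1, D_2\rangle_{X'} = \langle D_1\vert_S, D_2\vert_S\rangle_S = \langle \psi_*D_1, \psi_*D_2\rangle_{X''}$. The only small refinement: the avoidance of the bad loci by $S$ is most cleanly justified by choosing $S$ downstairs on $X'_\can$, where the images of the exceptional loci of the small anticanonical morphisms have codimension at least $3$, rather than by the codimension-$2$ bound on the indeterminacy locus alone, since a surface section generically still meets a codimension-$2$ subset in points.
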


\begin{proof}
By Lemma~\ref{lem:dp-weak-dp} we have~$X'_\can \cong X''_\can$, 
and if~$S$ is a general linear surface section of that del Pezzo variety, 
then~$S$ is also a linear section of both~$X'$ and~$X''$.
Therefore, applying Lemma~\ref{lem:product-divisor} several times we obtain
\begin{equation*}
\langle D_1,\, D_2 \rangle_{X'} =
\langle D_1\vert_S,\, D_2\vert_S \rangle_{S} =
\langle \psi_*D_1,\, \psi_*D_2 \rangle_{X''} 
\end{equation*}
for any Weil divisor classes~$D_1,D_2 \in \Cl(X')$.
\end{proof} 

The following result provides a step towards Theorem~\ref{thm:intro-clx}.
In the case $\dd(X)\ge 2$ its first part follows from~\cite{Ravindra-Srinivas},
but we provide a uniform simple proof.

\begin{proposition}
\label{prop:class-divisor}
Let~$X$ be a del Pezzo variety of dimension~$n \ge 3$.
Consider a ladder
\begin{equation*}
S = X_2 \xhookrightarrow{\ i_2\ } X_3 \xhookrightarrow{\ i_3\ } \cdots \xhookrightarrow{\ i_{n-2}\ } X_{n-1} \xhookrightarrow{\ i_{n-1}\ } X_n = X
\end{equation*}
of linear sections \textup(where~$X_2 = S$ is a del Pezzo surface\textup) and the chain of linear maps
\begin{equation*}
\Cl(X) = \Cl(X_n) \xrightarrow{\ i_{n-1}^*\ } \Cl(X_{n-1}) \xrightarrow{\ i_{n-2}^*\ } \dots 
\xrightarrow{\ i_3^*\ } \Cl(X_3) \xrightarrow{\ i_2^*\ } \Cl(X_2) = \Cl(S).
\end{equation*}
The maps~$i_k^*$ are 
isomorphisms
for~$k \ge 3$, while~$i_2^*$ is an embedding.
In particular, if
\begin{equation*}
\Xi(X) \coloneqq \Cl(X)^\perp \subset K_S^\perp \subset \Cl(S)
\end{equation*}
is the orthogonal complement of the image of the composition map~$\Cl(X) \to \Cl(S)$ then
\begin{equation*}
\Xi(X_n) = \Xi(X_{n-1}) = \dots = \Xi(X_3)
\end{equation*}
is a negative definite sublattice of rank~$10 - \dd(X) - \rr(X)$ in~$K_S^\perp \subset \Cl(S)$.
Moreover,
\begin{equation}
\label{eq:clx-xix}
\Cl(X) = \Xi(X)^\perp
\end{equation}
is a non-degenerate lattice of signature~$(1,\rr(X) - 1)$.
\end{proposition}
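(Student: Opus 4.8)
The plan is to reduce the whole statement to the lattice theory of the del Pezzo surface $S$ sitting at the bottom of the ladder. First I would establish the asserted behaviour of the restriction maps, namely that $i_k^*$ is an isomorphism for $k \ge 3$ and that $i_2^*$ is a primitive embedding; everything else is then lattice bookkeeping. Indeed, granting this, the equalities $\Xi(X_n) = \dots = \Xi(X_3)$ are immediate: by Lemma~\ref{lem:product-divisor} each $i_k^*$ preserves the form~\eqref{eq:product}, so the images $i^*\Cl(X_j) \subseteq \Cl(S)$ coincide for all $j \ge 3$, and hence so do their orthogonal complements.

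The heart of the argument is a Grothendieck--Lefschetz theorem for divisor class groups. For $\dd(X) \ge 2$ this is~\cite{Ravindra-Srinivas}, but I would give a uniform argument covering $\dd(X) = 1$ as well. Fix a step $X_j \subset X_{j+1}$ with $j \ge 3$, where $X_j \in |A_{X_{j+1}}|$ is a general (hence normal, terminal, Cohen--Macaulay) fundamental divisor whose normal bundle is $\cO(A)|_{X_j}$. Since $X_j$ is a del Pezzo variety of dimension $j \ge 3$, Lemma~\ref{lem:vanishings} yields
\begin{equation*}
H^1\big(X_j,\, \cO_{X_j}(-kA_{X_j})\big) = H^2\big(X_j,\, \cO_{X_j}(-kA_{X_j})\big) = 0 \qquad (k \ge 1),
\end{equation*}
the $H^2$-vanishing being exactly where $j \ge 3$ enters. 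These are the lifting and obstruction groups controlling the extension of reflexive rank-one sheaves across the formal neighbourhood of $X_j$ in $X_{j+1}$: the vanishing of $H^2$ lets one extend any class order by order, that of $H^1$ makes the extension unique, and Grothendieck's existence theorem (applicable as $A$ is ample, so $X_j$ is an ample Cartier divisor) algebraises the resulting formal sheaf. This gives surjectivity of $i_k^*$, while the same $H^1$-vanishing applied to the trivial class gives injectivity. Throughout one works with reflexive sheaves on the smooth loci, which is harmless since the singular loci have codimension~$\ge 3$.

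For the bottom step $S \subset X_3$ only half of this survives, which is precisely why $i_2^*$ is merely an embedding. On the del Pezzo surface one still has $H^1(S, \cO_S(-kA_S)) = H^1(S, \cO_S(kK_S)) = 0$ for $k \ge 1$ by Kodaira vanishing (as $-K_S$ is ample), giving injectivity of $i_2^*$; but $H^2(S, \cO_S(-kA_S))$ need not vanish, so surjectivity fails. I would moreover check that the image $M \coloneqq i^*\Cl(X) \subseteq \Cl(S)$ is primitive: the cokernel of the restriction $\Pic(\widehat{X}_3) \to \Pic(S)$ from the formal completion is a subquotient of $\prod_{k \ge 1} H^2(S, \cO_S(-kA_S))$, a $\QQ$-vector space and hence torsion free in characteristic~$0$, so together with formal GAGA this shows $\Cl(S)/M$ is torsion free and $M$ is saturated.

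It remains to read off the numerical assertions from the structure of $\Cl(S) = \Pic(S)$. Compatibility of fundamental classes along the ladder gives $i^*A_X = A_S = -K_S$, so $M \ni -K_S$ and therefore $\Xi(X) = M^\perp \subseteq K_S^\perp$. For a del Pezzo surface $\Pic(S)$ is non-degenerate of rank $10 - \dd(X)$ and, by the Hodge index theorem, of signature $(1, 9 - \dd(X))$, with $K_S^\perp$ negative definite of rank $9 - \dd(X)$. As the ambient form is non-degenerate, $\rank \Xi(X) = \rank M^\perp = (10 - \dd(X)) - \rank M = 10 - \dd(X) - \rr(X)$ (using $\rank M = \rr(X)$), and $\Xi(X)$ is negative definite, being contained in $K_S^\perp$. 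Finally primitivity gives $\Cl(X) \cong M = \Xi(X)^\perp$, which is non-degenerate since $\Xi(X)$ is, and comparing signatures in the orthogonal decomposition $\Cl(S) \otimes \QQ = (\Xi(X) \otimes \QQ) \perp (\Xi(X)^\perp \otimes \QQ)$ forces the signature $(1, \rr(X) - 1)$. The one delicate point, which I regard as the main obstacle, is running the formal-functions and algebraization argument of the second paragraph for reflexive rather than locally free sheaves on the singular varieties $X_j$, making sure the governing cohomology is genuinely that of $\cO_{X_j}(-kA_{X_j})$ on all of $X_j$ rather than only on its smooth locus.
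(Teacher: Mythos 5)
Your strategy---a Grothendieck--Lefschetz/formal-completion argument run step by step along the ladder---is genuinely different from the paper's proof, which instead identifies $\Cl(X)$ with the monodromy invariants $\Pic(S)^{\uppi_1(U,u_0)}$ of the family of all surface sections and compares consecutive rungs of the ladder via the Hamm--L\^e Lefschetz theorem for fundamental groups of quasi-projective varieties \cite{HL73}. Unfortunately, your version has a genuine gap, and it is exactly the point you flag in your last sentence and then set aside. Since $X_j$ is singular, $\Cl(X_j) = \Pic(U_j)$ with $U_j = X_j \setminus \Sing(X_j)$, so the infinitesimal lifting of a class along the thickenings of $X_j$ in $X_{j+1}$ is governed by $H^1(U_j,\cO(-kA))$ and $H^2(U_j,\cO(-kA))$, \emph{not} by the cohomology of $\cO_{X_j}(-kA)$ on all of $X_j$. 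The $H^1$'s do agree, because terminal implies rational implies Cohen--Macaulay and $\codim Z \ge 3$ for $Z = \Sing(X_j)$, so $H^i_Z = 0$ for $i \le 2$; hence your injectivity claims are fine. But for the obstruction space the local cohomology sequence gives
$H^2(U_j,\cO(-kA)) \cong \Ker\big(H^3_Z(\cO(-kA)) \to H^3(X_j,\cO(-kA))\big)$,
and whenever $\codim Z = 3$ --- e.g.\ already for a nodal del Pezzo threefold $X_3$, which genuinely occurs in these ladders (the quintics $X_{5,2,3}$, $X_{5,3,3}$, $X_{5,4,3}$ of \S\ref{sec:schubert}) --- the group $H^3_Z$ is infinite-dimensional while $H^3(X_j,\cO(-kA))$ is finite-dimensional, so the obstruction space is nonzero (indeed infinite-dimensional) and the order-by-order extension argument yields nothing. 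A second, related problem: Grothendieck's existence theorem algebraizes formal sheaves on completions of \emph{proper} schemes, whereas your formal classes live on the completion of the non-proper smooth locus, so the algebraization step is also unproved; the same issue infects your saturation argument, which needs surjectivity of $\Cl(X_3) \to \Pic(\widehat{X}_3)$.

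These are not removable technicalities: they are precisely the content of the theorem of \cite{Ravindra-Srinivas} that you invoke, which besides being a substantial argument requires the linear system to be base-point free (so it does not cover $\dd(X)=1$, the very case your ``uniform'' proof is supposed to handle), and whose surjectivity statement holds only for a \emph{very general} member, proved by a Noether--Lefschetz-type countability argument rather than by pure formal deformation theory. This is exactly why the paper abandons this route: the isomorphism $\Cl(X) \cong \Pic(S)^{\uppi_1(U,u_0)}$ gives surjectivity of $i_k^*$ for $k \ge 3$ from surjectivity of fundamental groups, and gives saturation of $i^*\Cl(X) \subset \Cl(S)$ for free (invariants of a group acting on a finitely generated free abelian group form a saturated subgroup), replacing your torsion-free-cokernel computation. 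Your concluding lattice bookkeeping (rank count, negative definiteness of $\Xi(X)$ inside $K_S^\perp$ by Hodge index, the signature of $\Xi(X)^\perp$) agrees with the paper and is correct, but it rests on the isomorphism and saturation statements that your argument does not establish.
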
 

As the proof given below shows, the sublattice~$\Xi(X) \subset \Cl(S)$ is generated 
by the vanishing cycles of a Lefschetz pencil of hyperplane sections of~$X_3$,
and since such a pencil must have singular fibers, $\Xi(X) \ne 0$.

\begin{proof}
Let~$C \subset S$ be a general anticanonical divisor; this is a smooth elliptic curve.
Let~$\tX \coloneqq \Bl_C(X)$ be the blowup and let
\begin{equation*}
\rho\colon \tX \longrightarrow \PP^{n-2}
\end{equation*}
be the morphism induced by the linear system~$|A_X - C|$.
Then all fibers of~$\rho$ are irreducible surfaces (because~$C$ is irreducible) 
and the general fiber is a del Pezzo surface of degree~$\dd(X)$.
Let~$U \subset \PP^{n-2}$ be the complement of the locus of singular fibers of~$\rho$.
Then after restriction we obtain a family
\begin{equation*}
\rho_U \colon \tX_U \coloneqq \rho^{-1}(U) \longrightarrow U
\end{equation*}
of smooth del Pezzo surfaces.
Let~$\Pic_{\tX_U/U}$ denote the \'etale sheafification of the relative Picard group.
Note that~$S$ is one of the fibers of~$\rho_U$; we denote by~$u_0 \in U$ the point such that~$\tX_{u_0} = S$.
Then by~\cite[Corollary~2.3]{K22} we have an isomorphism
\begin{equation}
\label{eq:relative-pic}
\Pic_{\tX_U/U}(U) \cong \Pic(S)^{\uppi_1(U,u_0)},
\end{equation}
where~$\uppi_1(U,u_0)$ is the \'etale fundamental group of~$U$ 
acting on~$\Pic(S)$ by monodromy (see~\cite[\S2.1]{K22}).

Now consider the commutative diagram with exact rows
\begin{equation*}
\xymatrix{
0 \ar[r] & 
\Pic(\PP^{n-1}) \ar[d] \ar[r]^-{\rho^*} &
\Cl(\tX) \ar[r] \ar[d] &
\Cl(X) \ar[r] \ar[d] &
0
\\
0 \ar[r] & 
\Pic(U) \ar[r]^-{\rho_U^*} & 
\Pic(\tX_U) \ar[r] & 
\Pic_{\tX_U/U}(U) \ar[r] &
0.
}
\end{equation*}
Here the top row is obtained from the direct sum decomposition
\begin{equation*}
\Cl(\tX) = \Cl(X) \oplus \ZZ E,
\end{equation*}
where~$E \subset \tX$ is the exceptional divisor of the blowup~$\tX \to X$, 
because the pullback to~$\tX$ of the hyperplane class of~$\PP^{n-1}$ is equal to the class~$A_X - E$
(thus, the second arrow is identical on the first summand~$\Cl(X)$ 
and takes the generator~$E$ of the second summand to~$A_X$).
Furthermore, the bottom row is the sequence from~\cite[Proposition~2.5]{Liedtke} 
(it is exact on the right because any point of~$C$ gives a section for~$\tX_U/U$).
Finally, the vertical arrows are the restriction maps.
Clearly, the left and middle vertical arrows in the diagram are surjective 
with kernels generated by the divisorial irreducible components of~$\PP^{n-1} \setminus U$ and of~$\tX \setminus \tX_U$, respectively,
hence~$\rho^*$ induces an isomorphism of the kernels.
It follows that the right vertical arrow is an isomorphism.
Combining this with~\eqref{eq:relative-pic}, we conclude that the restriction map induces an isomorphism
\begin{equation}
\Cl(X) \cong \Pic(S)^{\uppi_1(U,u_0)}.
\end{equation} 
In particular, $\Cl(X)$ is a saturated subgroup in~$\Pic(S) = \Cl(S)$, 
hence~$\Cl(X) = (\Cl(X)^\perp)^\perp$,
and so~\eqref{eq:clx-xix} holds.

Now assume~$n \ge 4$ and consider the general fundamental divisor~$i_{n-1} \colon X_{n-1} \hookrightarrow X$ 
containing~$S$, hence also~$C$.
Let~$H = \rho(\Bl_C(X_{n-1})) \subset \PP^{n-1}$ be the corresponding hyperplane
and set~$U_H \coloneqq U \cap H$.
The natural morphism~$\uppi_1(U_H,u_0) \to \uppi_1(U,u_0)$ is surjective by~\cite[Th\'eor\`eme~0.1.1]{HL73}.
Hence
\begin{equation*}
\Cl(X) \cong \Pic(S)^{\uppi_1(U,u_0)} = \Pic(S)^{\uppi_1(U_H,u_0)} \cong \Cl(X_{n-1})
\end{equation*}
and the composition of the isomorphisms is given by~$i_{n-1}^*$. 

The equality of the orthogonals~$\Xi(X_i)$ follows immediately.
The rank of~$\Xi(X)$ equals
\begin{equation*}
\rr(S) - \rr(X) = (10 - \dd(S)) - \rr(X) = 10 - \dd(X) - \rr(X),
\end{equation*}
where the first equality follows from the standard relation between~$\rr(S)$ and~$\dd(S)$ for a del Pezzo surface,
and the second follows from the equality~$\dd(S) = \dd(X)$.
Since the class~$A_S = -K_S \in \Cl(S)$ is equal to the image of~$A_X \in \Cl(X)$, it follows that~$\Xi(X) \subset K_S^\perp$,
hence~$\Xi(X)$ is negative definite by the Hodge index theorem.
Therefore~$\Cl(X) = \Xi(X)^\perp$ is non-degenerate and has signature~$(1,\rr(X) - 1)$.
\end{proof}

\begin{corollary}
\label{cor:clx-free}
The class group~$\Cl(X)$ is a free abelian group;
in particular, the fundamental class~$A_X$ is canonically defined by~\eqref{eq:kx-a}.
\end{corollary}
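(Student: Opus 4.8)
The plan is to deduce the freeness directly from the structure of~$\Cl(X)$ established in Proposition~\ref{prop:class-divisor}, thereby reducing to the classical fact that the Picard group of a smooth del Pezzo surface is a finitely generated free abelian group. The key observation is that that proposition does not merely compute the rank and signature of~$\Cl(X)$: it realizes~$\Cl(X)$, via the composite restriction map, as the sublattice~$\Xi(X)^\perp$ sitting \emph{inside}~$\Cl(S)$, where~$S$ is a general linear surface section.

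For~$n = \dim(X) \ge 3$ I would proceed as follows. Fix a ladder as in Proposition~\ref{prop:class-divisor} with general linear surface section~$S = X_2$; by construction~$S$ is a smooth del Pezzo surface, so~$\Cl(S) = \Pic(S)$ is free of finite rank. Proposition~\ref{prop:class-divisor} identifies~$\Cl(X)$ with~$\Xi(X)^\perp \subset \Cl(S)$, so in particular~$\Cl(X)$ is (isomorphic to) a subgroup of a free abelian group, hence itself free. The case~$n = 2$ is not covered by Proposition~\ref{prop:class-divisor}, so I would dispatch it separately: a surface with terminal singularities is smooth, so here~$X$ is itself a smooth del Pezzo surface and~$\Cl(X) = \Pic(X)$ is free.

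For the ``in particular'' clause, freeness gives torsion-freeness, which forces uniqueness of the fundamental class: if~$A_X'$ were another Cartier class satisfying~\eqref{eq:kx-a}, then~$(n-1)(A_X - A_X') = 0$ in the torsion-free group~$\Cl(X)$, so~$A_X = A_X'$. I expect no genuine obstacle here, since essentially all the work is already contained in Proposition~\ref{prop:class-divisor}; the only points needing care are treating the surface case~$n = 2$ by hand and invoking the elementary structure theorem that a subgroup of a free abelian group is free.
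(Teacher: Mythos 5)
Your proposal is correct and is essentially the paper's own argument: the corollary is stated immediately after Proposition~\ref{prop:class-divisor} precisely because that proposition embeds~$\Cl(X)$ (via the restriction~$i_2^*$) as a subgroup of the free abelian group~$\Cl(S) = \Pic(S)$ of a smooth del Pezzo surface, whence freeness, and torsion-freeness then forces uniqueness of~$A_X$ exactly as you say. Your explicit handling of the case~$n=2$ (terminal surface singularities are smooth, so~$X$ is itself a smooth del Pezzo surface) is a point the paper leaves implicit, and is a welcome addition.
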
 

We will need the following simple observation.

\begin{lemma}
\label{lem:clx-xix-blowup}
If~$\sigma \colon \tX \to X$ is the blowup of a smooth point such that~$\tX$ is almost del Pezzo 
then the general surface linear section~$\tS \subset \tX$ is the blowup of a general surface linear section~$S \subset X$
and the blowup morphism~$\sigma_S \colon \tS \to S$ 
induces an equality of sublattices
\begin{equation*}
\Xi(\tX) = \sigma_S^*(\Xi(X)) \subset \Cl(\tS).
\end{equation*}
\end{lemma}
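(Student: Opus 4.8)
The plan is to reduce both assertions to an explicit computation inside the intersection lattice of the del Pezzo surface~$\tS$, using that $\Xi(X)$ and $\Xi(\tX)$ are by definition (Proposition~\ref{prop:class-divisor}) the orthogonal complements of the images of~$\Cl(X)$ and~$\Cl(\tX)$ in~$\Cl(S)$ and~$\Cl(\tS)$, respectively.

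First I would pin down~$\tS$; denote by~$P \in X$ the center of~$\sigma$. Let~$S \subset X$ be a general linear surface section through~$P$; it is a smooth del Pezzo surface by Lemma~\ref{lem:s-p-smooth}, and the proof of Proposition~\ref{prop:dP4:constr-i} shows that its strict transform $\tS = \Bl_P(S)$ is a linear surface section of~$\tX$. Since~$\sigma$ identifies the fundamental system $|A_{\tX}| = |\sigma^*A_X - E|$ with the subsystem of $|A_X|$ of divisors through~$P$, a general linear surface section of~$\tX$ is precisely the strict transform of a general linear surface section of~$X$ through~$P$; this proves the first claim and fixes a compatible pair~$(S,\tS)$ together with the blowup morphism~$\sigma_S \colon \tS \to S$ whose exceptional $(-1)$-curve is the line~$e \coloneqq E \cap \tS$ appearing in the proof of Proposition~\ref{prop:dP4:constr-i}.

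Then I would compute the restriction map $\Cl(\tX) \to \Cl(\tS)$. By Proposition~\ref{prop:dP4:constr-i} one has $\Cl(\tX) = \sigma^*\Cl(X) \oplus \ZZ E$, and the commutative square $\sigma \circ i_{\tS} = i_S \circ \sigma_S$ gives $(\sigma^*D)\vert_{\tS} = \sigma_S^*(D\vert_S)$ for $D \in \Cl(X)$, while $E\vert_{\tS} = e$. Hence, writing $V \subset \Cl(S)$ for the image of~$\Cl(X)$, the image of~$\Cl(\tX)$ in~$\Cl(\tS)$ equals $\sigma_S^*V \oplus \ZZ e$.

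Finally I would take orthogonals with respect to the intersection form on~$\Cl(\tS)$, which satisfies $\sigma_S^*D_1 \cdot \sigma_S^*D_2 = D_1 \cdot D_2$, $\;\sigma_S^*D \cdot e = 0$, and $e \cdot e = -1$. Writing a class of~$\Cl(\tS)$ as $\sigma_S^*D + ae$, orthogonality to~$e$ forces $a = 0$, and then orthogonality to~$\sigma_S^*V$ amounts to $D \in V^\perp = \Xi(X)$; therefore $\Xi(\tX) = (\sigma_S^*V \oplus \ZZ e)^\perp = \sigma_S^*(\Xi(X))$. The computation itself is immediate; the only points demanding care are the genericity statement for~$\tS$ and the identification $E\vert_{\tS} = e$, both of which are supplied by the proof of Proposition~\ref{prop:dP4:constr-i}.
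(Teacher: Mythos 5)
Your proposal is correct and follows essentially the same route as the paper's own proof: identify $\tS$ as $\Bl_P(S)$ via $A_{\tX} = \sigma^*A_X - E$, use the decompositions $\Cl(\tX) = \sigma^*\Cl(X) \oplus \ZZ E$ and $\Cl(\tS) = \sigma_S^*\Cl(S) \oplus \ZZ E_S$ together with the compatibility $E\vert_{\tS} = E_S$, and then compute the orthogonal complement of $\sigma_S^*\,i^*\Cl(X) \oplus \ZZ E_S$ in the blown-up lattice. The only cosmetic difference is that you spell out the orthogonality computation coordinate-wise (writing a class as $\sigma_S^*D + aE_S$), whereas the paper states it as the identity $(\sigma_S^*\,i^*\Cl(X))^\perp \cap E_S^\perp = \sigma_S^*\bigl(i^*\Cl(X)^\perp\bigr)$; these are the same argument.
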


\begin{proof}
Let~$\ti \colon \tS \hookrightarrow \tX$ be the embedding.
Since~$A_{\tX} = \sigma^*A_X - E$ (see Proposition~\ref{prop:dP4:constr-i}), 
the surface~$\tS$ is the blowup of a linear surface section~$i \colon S \hookrightarrow X$,
and there is a commutative diagram
\begin{equation*}
\xymatrix{
\Cl(\tX) \ar[r]^{\ti^*} &
\Cl(\tS)
\\
\Cl(X) \ar[r]^{i^*} \ar[u]^{\sigma^*} &
\Cl(S), \ar[u]_{\sigma_S^*}
}
\end{equation*}
where~$\sigma_S \colon \tS \to S$ is the blowup.
Moreover, we have
\begin{equation*}
\Cl(\tX) = \sigma^*\Cl(X) \oplus \ZZ\, E,
\qquad 
\Cl(\tS) = \sigma_S^*\Cl(S) \oplus \ZZ\, E_S,
\end{equation*}
where~$E \subset \tX$ and~$E_S \subset \tS$ are the exceptional divisors of the blowups and~$\ti^*E = E_S$.
Therefore $\ti^*\Cl(\tX) = \ti^*\, \sigma^*\Cl(X) \oplus \ZZ\, \ti^*E = \sigma_S^*\, i^*\Cl(X) \oplus \ZZ\, E_S$ and
\begin{equation*}
\Xi(\tX) = 
\ti^*\Cl(\tX)^\perp =
(\sigma_S^*\, i^*\Cl(X))^\perp \cap E_S^\perp =
\sigma_S^*(i^*\Cl(X)^\perp) = 
\sigma_S^*\, \Xi(X),
\end{equation*}
hence the claim.
\end{proof} 

We will need the following useful observation.

\begin{lemma}
\label{lem:ad=1}
If~$X$ is an almost del Pezzo variety and~$D \subset X$ is an effective nontrivial divisor 
then we have~$\langle A_X, D \rangle \ge 1$.
Moreover, if~$\langle A_X, D \rangle = 1$ then~$\langle D, D \rangle \ge -1$.
\end{lemma}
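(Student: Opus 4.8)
The plan is to prove both inequalities by passing to a general linear surface section, where the intersection form becomes the ordinary intersection product on a smooth del Pezzo surface, and then invoking the classification of effective divisors there. By Lemma~\ref{lem:product-divisor}, if $i \colon S \hookrightarrow X$ is a general linear surface section then $\langle A_X, D \rangle = \langle A_S, i^*D \rangle = -K_S \cdot i^*D$ and $\langle D, D \rangle = (i^*D)^2$, so it suffices to establish the corresponding statements on $S$. I must first argue that $i^*D$ is a nonzero effective class on $S$: since $D$ is an effective nontrivial divisor and $A_X$ is nef and big with $\Phi_{|mA_X|}$ small, the restriction $D \cap S$ is effective, and it is nonzero because $A_X^{n-1} \cdot D > 0$ (part of the almost del Pezzo condition, cf.\ Lemma~\ref{lem:criterion-divisor}\ref{it:positive-divisors-dp}), which rewrites as $\langle A_X, D \rangle = -K_S \cdot i^*D > 0$ on the surface.

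This already gives the first claim: for any nonzero effective divisor class $\Delta$ on a smooth del Pezzo surface $S$ one has $-K_S \cdot \Delta \ge 1$, since $-K_S$ is ample and integrality forces the positive integer $-K_S \cdot \Delta$ to be at least $1$. So with $\Delta = i^*D$ we conclude $\langle A_X, D \rangle \ge 1$.

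For the second claim, I would work on $S$ and assume $-K_S \cdot \Delta = 1$ for the nonzero effective class $\Delta = i^*D$; the goal is $\Delta^2 \ge -1$. The natural tool is adjunction: decompose $\Delta$ into irreducible components and apply the genus formula $2p_a(\Gamma) - 2 = \Gamma^2 + K_S \cdot \Gamma$ to the relevant component. The condition $-K_S \cdot \Delta = 1$ means $\Delta$ meets the ample anticanonical class minimally, so $\Delta$ should behave like a single $(-1)$-curve or a line: one expects exactly one irreducible component $\Gamma$ with $-K_S \cdot \Gamma = 1$ and all others $K_S$-trivial, but since $-K_S$ is ample there are no nonzero $K_S$-trivial effective classes, so $\Delta = \Gamma$ is irreducible with $-K_S \cdot \Gamma = 1$. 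Then adjunction gives $\Gamma^2 = 2p_a(\Gamma) - 2 - K_S\cdot\Gamma = 2p_a(\Gamma) - 1 \ge -1$ since $p_a(\Gamma) \ge 0$, which is exactly the desired bound $\langle D, D \rangle \ge -1$.

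The main obstacle I anticipate is the justification that $\Delta = i^*D$ is genuinely irreducible (or at least reduced and irreducible as a curve class) under the hypothesis $-K_S \cdot \Delta = 1$, so that the adjunction formula applies cleanly; the key point is the ampleness of $-K_S$ forcing every effective $K_S$-trivial class to vanish, which rules out extra components and pins $\Delta$ down to a single irreducible curve. A secondary subtlety is ensuring the passage to a general surface section is legitimate for an arbitrary (possibly non-Cartier) effective Weil divisor $D$; here I rely on Lemma~\ref{lem:product-divisor}, whose proof rests on the projection formula and the fact that $|A_X|$ has at most zero-dimensional base locus while $S$ avoids $\Sing(X)$, so $i^*D$ is a well-defined effective class on the smooth surface $S$.
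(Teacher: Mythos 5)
Your proof is correct and follows essentially the same route as the paper's: the first inequality comes from positivity of $\langle A_X, D\rangle$ (smallness of the anticanonical morphism, i.e.\ ampleness downstairs) together with integrality, and the second is obtained by restricting $D$ to a general linear surface section via Lemma~\ref{lem:product-divisor}. The only difference is the final step, where the paper quotes the classification of anticanonical-degree-one curves on a del Pezzo surface (lines, or anticanonical divisors when the degree is $1$), while you force irreducibility of $D \cap S$ from ampleness of $-K_S$ and conclude by adjunction that $(D \cap S)^2 = 2p_a(D\cap S) - 1 \ge -1$ --- a marginally more self-contained way to get exactly the needed bound.
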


\begin{proof}
For del Pezzo varieties the first follows from ampleness of~$A_X$, 
and in the almost del Pezzo case from the fact that the morphism~$X \to X_\can$ does not contract divisors.

To prove the second, let~$S \subset X$ be a general linear surface section.
Then the divisor~\mbox{$D_S \coloneqq D \cap S$} is a curve on~$S$ of anticanonical degree~$1$.
If~$\dd(S) = \dd(X) \ge 2$, it is a line, hence~$D_S^2 = -1$,
and if~$\dd(S) = \dd(X) = 1$, it is a line or an anticanonical divisor, hence~$D_S^2 = -1$ or~$D_S^2 = 1$.
In all cases the result follows.
\end{proof} 

\subsection{Special classes} 

Let~$\bLambda$ be a lattice of signature~$(1,r)$ endowed with a positive characteristic element~$\ba \in \bLambda$, 
i.e., such that~$\langle \ba, \ba \rangle > 0$ and~$\langle \ba, \x \rangle \equiv \langle \x, \x \rangle \bmod 2$
for all~$\x\in \bLambda$.
Then
\begin{equation*}
\ba^\perp \coloneqq \{ \x \in \bLambda \mid \langle \ba, \x \rangle = 0 \}
\end{equation*}
is a negative definite even lattice of rank~$r$.

For each integer~$s \in \ZZ$ we define the set 
\begin{equation}
\label{def:theta}
\Theta_s(\bLambda,\ba) \coloneqq \{ \x \in \bLambda \mid 
\langle \ba, \x \rangle = s,\ 
\langle \x, \x \rangle = s - 2 \}.
\end{equation}
Note that elements of~$\Theta_0(\bLambda, \ba)$ are nothing but the roots of~$\ba^\perp$, so we write
\begin{equation}
\label{def:delta}
\Delta(\bLambda, \ba) \coloneqq \Theta_0(\bLambda, \ba).
\end{equation}
We also introduce the following terminology.

\begin{definition}
\label{def:exP1P2}
We say that an element~$\x \in \bLambda$ is
\begin{itemize}
\item 
\emph{exceptional}, if~$\x \in \Theta_1(\bLambda, \ba)$;
\item 
\emph{a $\PP^1$-class}, if~$\x \in \Theta_2(\bLambda, \ba)$;
\item 
\emph{a $\PP^2$-class}, if~$\x \in \Theta_3(\bLambda, \ba)$
and~$\x$ does not lie in a linear span of~$\ba$ and an exceptional class~$\repsilon \in \Theta_1(\bLambda,\ba)$.
We write~$\Theta_3^\circ(\bLambda, \ba)$ for the set of~$\PP^2$-classes.
\end{itemize}
\end{definition}

\begin{remark}
It is easy to check that the second part of the definition of $\PP^2$-class 
is void unless~$\langle \ba, \ba \rangle = 1$ and~$\x = \ba + 2\e$ for an appropriate exceptional class~$\e$.
The geometric counterpart of this will be explained in Lemma~\ref{lem:2dP}.
\end{remark} 

\begin{remark}
\label{rem:d=1}
In the case where~$\langle \ba, \ba \rangle = 1$ there exists a natural bijection
\begin{equation*}
\Delta(\bLambda, \ba) \overset{\cong}\longrightarrow \Theta_1(\bLambda, \ba),
\qquad 
\balpha \longmapsto \balpha + \ba.
\end{equation*}
Similarly, the maps~$\x \mapsto \x - s\,\ba$ are isomorphisms of~$\Theta_s(\bLambda, \ba)$, $s \in \{2,3\}$,
onto the sets of elements in~$\ba^\perp$ with square~$-4$ and~$-8$, respectively,
and~$\PP^2$-classes correspond to elements not proportional to roots;
in particular, all these sets are finite.
\end{remark}

We will apply all these notions to the lattice~$\bLambda = \Cl(X)$ with~$\ba = A_X$
(note that the class~$A_X \in \Cl(X)$ is characteristic by Riemann--Roch theorem).
In~\S\ref{sec:cones} we will explain the geometric meaning of~$\PP^1$- and~$\PP^2$-classes 
and in this section we restrict our attention to exceptional classes.
We start with the well known case of del Pezzo surfaces.

\begin{lemma}
\label{lem:surface-exceptional-class}
Let~$(X,A_X)$ be a del Pezzo surface and let~$E \in \Cl(X)$ be an exceptional class.
Then~$\dim H^0(X,\cO_X(E)) = 1$ and
\begin{equation}
\label{eq:h0-h1-vanishing}
H^0(X, \cO_X(E - kA_X)) = H^1(X, \cO_X(E - kA_X)) = 0
\qquad 
\text{for $k \ge 1$}.
\end{equation}
\end{lemma}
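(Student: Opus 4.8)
The plan is to reinterpret the abstract condition in classical surface terms and then combine Riemann--Roch with a short induction. Since $\dim(X) = 2$ we have $A_X^{n-2} = A_X^0 = 1$, so the form $\langle\,,\,\rangle$ is the ordinary intersection pairing, and $A_X = -K_X$ by~\eqref{eq:kx-a}; an exceptional class $E \in \Theta_1(\Cl(X), A_X)$ thus satisfies $A_X \cdot E = 1$ and $E^2 = -1$. For the first assertion I would compute by Riemann--Roch (using $\chi(\cO_X) = 1$, which follows from Lemma~\ref{lem:vanishings}) that $\chi(\cO_X(E)) = 1 + \tfrac12 E\cdot(E - K_X) = 1 + \tfrac12(E^2 + A_X\cdot E) = 1$. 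Serre duality gives $h^2(E) = h^0(K_X - E) = 0$, since $(K_X - E)\cdot A_X = -\dd(X) - 1 < 0$ and $A_X$ is ample; hence $h^0(E) \ge \chi(\cO_X(E)) = 1$, so $E$ is effective. Because $A_X$ is ample with $A_X \cdot E = 1$, the class $E$ is represented by a single reduced irreducible curve, and since $E^2 = -1 < 0$ this curve is rigid: any effective divisor in $|E|$ meets $E$ negatively, hence contains it, and the residual divisor is effective and linearly equivalent to $0$, so trivial. This yields $\dim H^0(X,\cO_X(E)) = 1$.

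For the vanishing of $H^0(X,\cO_X(E - kA_X))$ with $k \ge 1$ I would simply intersect with the ample class: $(E - kA_X)\cdot A_X = 1 - k\,\dd(X) \le 0$. When this number is negative the divisor is not effective. The only boundary case is $k = \dd(X) = 1$, where $(E - A_X)\cdot A_X = 0$ forces any effective representative to be numerically trivial, hence equal to $0$, i.e.\ $E = A_X$; this is impossible since $E^2 = -1 \neq 1 = A_X^2$. Therefore $H^0(X,\cO_X(E - kA_X)) = 0$ for all $k \ge 1$.

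The cohomology group $H^1$ is where the real work lies. By Serre duality $H^1(X,\cO_X(E - kA_X)) \cong H^1(X,\cO_X((k-1)A_X - E))^\vee$, so with $m = k - 1 \ge 0$ it suffices to prove $H^1(X,\cO_X(mA_X - E)) = 0$ for all $m \ge 0$, which I would establish by induction on $m$. The base case $m = 0$ follows from the structure sequence $0 \to \cO_X(-E) \to \cO_X \to \cO_E \to 0$ of the curve $E \cong \PP^1$, together with $H^0(\cO_X) \xrightarrow{\ \sim\ } H^0(\cO_E)$ and $H^1(\cO_X) = 0$. For the inductive step I would restrict to a general member $Y \in |A_X|$ (a smooth elliptic curve, by Bertini and Proposition~\ref{DP:|A|}\ref{DP:|A|3a} at the base point when $\dd(X) = 1$) via $0 \to \cO_X((m-1)A_X - E) \to \cO_X(mA_X - E) \to \cO_Y((mA_X - E)\vert_Y) \to 0$, reducing the claim to the inductive hypothesis and the vanishing $H^1(Y,(mA_X - E)\vert_Y) = 0$, which holds because $\deg(mA_X - E)\vert_Y = m\,\dd(X) - 1 > 0$ on the elliptic curve $Y$. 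The single place this argument breaks down — and the one genuine obstacle — is the degenerate case $m = \dd(X) = 1$, where the degree on $Y$ drops to $0$; I would dispose of it directly by Riemann--Roch, checking $\chi(\cO_X(A_X - E)) = 0$ while $h^0(A_X - E) = 0$ (from the boundary case above) and $h^2(A_X - E) = h^0(E - 2A_X) = 0$, which forces $H^1(X,\cO_X(A_X - E)) = 0$ and completes the induction.
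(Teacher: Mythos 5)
Your proof is correct, and the first half of it (Riemann--Roch giving $\chi(\cO_X(E))=1$, vanishing of $h^2$ by intersecting with the ample class, irreducibility from $A_X\cdot E=1$, rigidity from $E^2=-1$, and the intersection-theoretic vanishing of $H^0(X,\cO_X(E-kA_X))$) coincides with the paper's argument almost verbatim. Where you genuinely diverge is the $H^1$ vanishing. The paper restricts to the exceptional curve $E$ itself: from the sequence $0 \to \cO_X(kA_X - E) \to \cO_X(kA_X) \to \cO_E(k) \to 0$ and the vanishing $H^1(X,\cO_X(kA_X))=0$ of Lemma~\ref{lem:vanishings}, it reduces everything to surjectivity of the restriction maps $H^0(X,\cO_X(kA_X)) \to H^0(E,\cO_E(k))$, which is checked for $k\in\{0,1\}$ and then propagated to all $k$ using that the homogeneous coordinate ring of the line $E\cong\PP^1$ is generated in degree one. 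You instead induct on the twist $m=k-1$, restricting to a general anticanonical elliptic curve $Y\in|A_X|$ and using that a line bundle of positive degree on $Y$ has no $H^1$; the base case is the structure sequence of $E$, and the single degenerate case $m=\dd(X)=1$ (where the degree on $Y$ drops to zero) is killed by a direct Riemann--Roch computation. Both routes must deal with the same delicate spot in degree one: the paper hides it in the phrase ``easily follows for $k\in\{0,1\}$'' (which for $\dd(X)=1$ amounts to injectivity of $H^0(X,\cO_X(A_X))\to H^0(E,\cO_E(1))$, i.e.\ to $H^0(X,\cO_X(A_X-E))=0$), while you make it explicit via $\chi(\cO_X(A_X-E))=0$. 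What the paper's route buys is brevity and uniformity in $k$; what yours buys is that it needs no statement about restriction maps to $E$, and it is structurally the same induction-by-restriction-to-a-fundamental-divisor that the paper itself runs one dimension up in Proposition~\ref{prop:E-eff}, so your argument would splice seamlessly into that higher-dimensional induction. One small point worth making explicit in your write-up: the smoothness of the general member $Y\in|A_X|$ (including at the base point when $\dd(X)=1$) uses that a terminal surface is smooth together with Proposition~\ref{DP:|A|}, and its connectedness follows from $H^1(X,\cO_X(-A_X))=0$; you gesture at this but do not spell it out.
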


\begin{proof}
We have~$H^2(X,\cO_X(E)) = H^0(X,\cO_X(-A_X-E))^\vee = 0$ by Lemma~\ref{lem:ad=1} and Serre duality, 
therefore Riemann--Roch theorem gives
\begin{equation*}
\dim H^0(X,\cO_X(E)) - \dim H^1(X,\cO_X(E)) = (E^2 + A_X\cdot E)/2 + 1 = 1,
\end{equation*}
hence~$E$ is effective.
Since~$A_X \cdot E = 1$, it is irreducible, and since~$E^2 = -1$, it is rigid; thus~$\dim H^0(X,\cO_X(E)) = 1$. 

Since~$\langle A_X, E - kA_X \rangle < \langle A_X, E \rangle = 1$ the vanishing of~$H^0(X, \cO_X(E - kA_X))$ 
follows from Lemma~\ref{lem:ad=1}.
To prove the vanishing of~$H^1(X, \cO_X(E - kA_X))$ by Serre duality 
it is enough to show that~$H^1(X,\cO_X(kA_X - E)) = 0$ for~$k \ge 0$.
For this we consider the standard exact sequence
\begin{equation*}
0 \longrightarrow \cO_X(kA_X - E) \longrightarrow \cO_X(kA_X) \longrightarrow \cO_E(k) \longrightarrow 0.
\end{equation*}
The vanishing of~$H^1(X,\cO_X(kA_X))$ (proved in Lemma~\ref{lem:vanishings}) shows that we need 
to check the surjectivity of the restriction map~$H^0(X,\cO_X(kA_X)) \to H^0(E,\cO_E(k))$.
Since~$E$ is a line, the surjectivity easily follows for~$k \in \{0,1\}$, 
and for higher~$k$ it follows from the fact that the algebra~$\oplus H^0(E,\cO_E(k))$ is generated by the first component.
\end{proof} 

The following proposition extends the above property to higher dimensions.

\begin{proposition}
\label{prop:E-eff}
Let~$X$ be an almost del Pezzo variety of dimension~$n \ge 2$.
If~\mbox{$E \in \Cl(X)$} is an exceptional class, then $\dim(H^0(X,\cO_X(E))) = 1$.
\end{proposition}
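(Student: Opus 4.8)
The plan is to reduce the statement to the surface case, which is already established in Lemma~\ref{lem:surface-exceptional-class}, by means of the ladder construction and the lattice-theoretic machinery of Proposition~\ref{prop:class-divisor}. The key point is that an exceptional class on $X$ restricts to an exceptional class on a general linear surface section, and that the cohomology of $\cO_X(E)$ can be reconstructed from that of the restriction.

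First I would pass to a general fundamental divisor. Let $i \colon Y \hookrightarrow X$ be a general fundamental divisor, so that $Y$ is again almost del Pezzo by Proposition~\ref{DP:|A|}\ref{DP:|A|2} (after replacing $X$ by its anticanonical model $X_\can$, which does not change $\Cl(X)$, the bilinear form, nor the spaces $H^0(X,\cO_X(E))$ by Lemma~\ref{lem:dp-weak-dp}). By Proposition~\ref{prop:class-divisor} the restriction map $i^* \colon \Cl(X) \to \Cl(Y)$ is an isomorphism compatible with the bilinear form, hence $i^*E$ is again an exceptional class (it has $\langle A_Y, i^*E\rangle = 1$ and $\langle i^*E, i^*E\rangle = -1$). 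The decisive cohomological input is the short exact sequence
\begin{equation*}
0 \longrightarrow \cO_X(E - A_X) \longrightarrow \cO_X(E) \longrightarrow \cO_Y(i^*E) \longrightarrow 0,
\end{equation*}
obtained by tensoring $0 \to \cO_X(-A_X) \to \cO_X \to \cO_Y \to 0$ with $\cO_X(E)$. I would show that $H^0(X,\cO_X(E - A_X)) = 0$: since $\langle A_X, E - A_X\rangle = 1 - \dd(X) \le 0$, any effective representative would contradict Lemma~\ref{lem:ad=1} (or, if $E - A_X$ is trivial, force $\dd(X) = 1$ and $\langle E,E\rangle = \dd(X) = 1 \ne -1$, a contradiction). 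Thus $H^0(X,\cO_X(E)) \hookrightarrow H^0(Y,\cO_Y(i^*E))$.

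Descending the ladder by induction on $n$, the base case $n = 2$ being Lemma~\ref{lem:surface-exceptional-class}, I obtain $H^0(X,\cO_X(E)) \hookrightarrow H^0(S,\cO_S(E\vert_S)) = k$ for a general linear surface section $S$, so $\dim H^0(X,\cO_X(E)) \le 1$. For the reverse inequality I would produce a nonzero section: it suffices to lift the unique section of $\cO_S(E\vert_S)$ up the ladder, which requires the restriction maps $H^0(X,\cO_X(E)) \to H^0(Y,\cO_Y(i^*E))$ to be \emph{surjective}. This surjectivity follows once $H^1(X,\cO_X(E - A_X)) = 0$, which I expect to be the main obstacle, since $E - A_X$ is neither a multiple of $A_X$ nor nef in general, so Lemma~\ref{lem:vanishings} does not apply directly. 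The plan to overcome this is again inductive: writing the analogous sequence and using Serre duality together with the vanishing $H^i(X,\cO_X(kA_X)) = 0$ for $0 < i < n$ from Lemma~\ref{lem:vanishings}, I would reduce the vanishing of $H^1$ on $X$ to the corresponding vanishing on the surface $S$, where it is precisely the second assertion $H^1(S,\cO_S(E\vert_S - kA_S)) = 0$ of Lemma~\ref{lem:surface-exceptional-class}. Carrying both the $H^0$-dimension bound and the requisite $H^1$-vanishing together through the induction closes the argument.
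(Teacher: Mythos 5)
Your overall strategy is the same as the paper's: induct on the dimension along a ladder, use the restriction sequences
\begin{equation*}
0 \longrightarrow \cO_X(E - (k+1)A_X) \longrightarrow \cO_X(E - kA_X) \longrightarrow \cO_Y(E - kA_X) \longrightarrow 0,
\end{equation*}
take Lemma~\ref{lem:surface-exceptional-class} as the base case, and carry the vanishing of $H^0$ and $H^1$ of $\cO_X(E - kA_X)$ for $k \ge 1$ through the induction alongside the computation of $H^0(X,\cO_X(E))$. Your proof of the inequality $\dim H^0(X,\cO_X(E)) \le 1$ is complete, and deriving $H^0(X,\cO_X(E - A_X)) = 0$ directly from Lemma~\ref{lem:ad=1} is a legitimate (slightly more economical) variant of what the paper does; the initial reduction to $X_\can$ is also sound, since the anticanonical morphism is small and preserves class groups, the pairing, and spaces of sections of divisorial sheaves.

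The genuine gap is in the step you yourself flag as the main obstacle, namely $H^1(X,\cO_X(E - A_X)) = 0$, and the tools you name there would not close it. Running your ``analogous sequence'' with the inductive hypothesis $H^0(Y,\cO_Y(E - kA_X)) = H^1(Y,\cO_Y(E - kA_X)) = 0$ for $k \ge 1$ only shows that the groups $H^1(X,\cO_X(E - kA_X))$, $k \ge 1$, are all isomorphic to one another; by itself this is consistent with them being constant and nonzero. What is missing is an input ``at infinity'': the paper concludes precisely by Serre vanishing, i.e.\ $H^i(X,\cO_X(E - kA_X)) = 0$ for $k \gg 0$ (an Enriques--Severi--Zariski-type vanishing for the reflexive sheaf $\cO_X(E)$ twisted by $-kA_X$, which is clean once you have passed to $X_\can$ where $A_X$ is ample). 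Your proposed substitute --- Serre duality combined with $H^i(X,\cO_X(kA_X)) = 0$ from Lemma~\ref{lem:vanishings} --- does not do this job: duality turns $H^1(X,\cO_X(E - kA_X))$ into an $H^{n-1}$ of a sheaf of the form $\cO_X(mA_X - E)$, which is not a multiple of $A_X$, so Lemma~\ref{lem:vanishings} never applies; and a literal reduction ``to the corresponding vanishing on the surface $S$'' is not available, because the surface proof of $H^1(S,\cO_S(E - kA_S)) = 0$ in Lemma~\ref{lem:surface-exceptional-class} restricts to the \emph{effective} $(-1)$-curve $E \subset S$, while effectivity of $E$ on $X$ is exactly what the proposition is establishing --- using it higher up the ladder would be circular. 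Once you insert the $k \gg 0$ vanishing, your induction closes exactly as in the paper.
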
 

\begin{proof}
First, we show by induction on~$n$ that~\eqref{eq:h0-h1-vanishing} holds.
The base of the induction, $n = 2$, where~$X$ is a del Pezzo surface, is proved in Lemma~\ref{lem:surface-exceptional-class}.
Now assume~$n \ge 3$.
To prove the vanishing of~$H^i(X, \cO_X(E-kA_X))$ we choose a general fundamental divisor~$Y \in |A_X|$ 
and consider the long exact sequence of cohomology
associated with the exact sequence of sheaves
\begin{equation}
\label{eq:xy-sequence}
0 \longrightarrow \cO_X(E - (k+1)A_X) \longrightarrow \cO_X(E - kA_X) 
\longrightarrow \cO_Y(E - kA_X) \longrightarrow 0.
\end{equation}
By induction we have~$H^0(Y, \cO_Y(E - kA_X)) = H^1(Y, \cO_Y(E - kA_X)) = 0$, hence
\begin{equation*}
H^i(X, \cO_X(E - (k+1)A_X)) = H^i(X, \cO_X(E - kA_X))
\end{equation*}
for~$i \in \{0,1\}$ and all~$k \ge 1$.
On the other hand, for~$k \gg 0$ we have~$H^i(X, \cO_X(E - kA_X)) = 0$ by Serre vanishing.
Therefore, the same holds for any~$k \ge 1$.

Now we compute~$\dim H^0(X,\cO_X(E))$, again by induction on~$n$.
The base of induction is again given by Lemma~\ref{lem:surface-exceptional-class}, 
and for the step we use the exact sequence
\begin{equation*}
H^0(X, \cO_X(E - A_X)) \longrightarrow H^0(X, \cO_X(E)) \longrightarrow 
H^0(Y, \cO_Y(E)) \longrightarrow H^1(X, \cO_X(E - A_X)) 
\end{equation*}
obtained from~\eqref{eq:xy-sequence} with~$k = 0$.
Its first and last terms are zero by the first part of the proposition,
and so we conclude that~$H^0(X, \cO_X(E)) = H^0(Y, \cO_Y(E))$.
\end{proof}

\begin{proposition}
\label{prop:exceptional-contraction}
Let~$X$ be a del Pezzo variety and let~$E_1,\dots,E_k \in \Cl(X)$ be a
collection of pairwise orthogonal exceptional divisor classes, i.e.,
\begin{equation*}
\langle E_i,\, E_j\rangle=0\quad \text{for}\quad 1 \le i < j \le k.
\end{equation*}
Then there exists an almost del Pezzo variety~$\hX_0$ 
and a collection~$P_1,\dots,P_k \in \hX_0$ of smooth points 
such that~$\Bl_{P_1,\dots,P_k}(\hX_0)$ is almost del Pezzo and~$X \cong (\Bl_{P_1,\dots,P_k}(\hX_0))_\can$.

In particular, if~$\Cl(X)$ contains an exceptional class, $X$ is imprimitive.
\end{proposition}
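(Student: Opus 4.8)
The plan is to induct on $k$, the case $k = 0$ being vacuous. For the inductive step I would first pass to a good birational model, contract a single one of the classes, and then feed the rest into the inductive hypothesis. Since each $E_i$ is effective with a one-dimensional space of sections (Proposition~\ref{prop:E-eff}), the class $D := E_1 + \dots + E_k$ is effective; applying Lemma~\ref{lem:d-mmp} to $D$ produces a $\QQ$-factorialization $\xi \colon \hX \to X$ on which every $\hat D$-negative extremal ray is $K$-negative, where $\hat D := (\xi^{-1})_*D = \sum \hat E_i$. Restricting to a general linear surface section $S$ and iterating Lemma~\ref{lem:product-divisor} gives $\langle \hat D, \hat D\rangle = (\hat D|_S)^2 = -k < 0$; as a nef class on a surface has non-negative self-intersection, $\hat D$ is not nef, so we fall into the first alternative of Lemma~\ref{lem:d-mmp} and obtain a $\hat D$-negative, $K$-negative, birational extremal contraction $f \colon \hX \to X'$. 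By Lemma~\ref{lemma:ext-rays0}, $X'$ is a $\QQ$-factorial almost del Pezzo variety, $f$ is the blowup of a smooth point $P$ lying on no $K$-trivial curve, and its exceptional divisor $E_f \cong \PP^{n-1}$ represents an exceptional class.

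Next I would identify which class is contracted. Writing $\hat E_i = f^*G_i + a_i E_f$ in $\Cl(\hX) = f^*\Cl(X') \oplus \ZZ E_f$ with $G_i := f_*\hat E_i$ effective, the blowup formula $A_{\hX} = f^*A_{X'} - E_f$ yields $\langle G_i, G_i\rangle - a_i^2 = -1$ and $\langle A_{X'}, G_i\rangle + a_i = 1$. Since $f$ contracts a $\hat D$-negative ray one gets $\sum a_i > 0$; on the other hand $a_i \le 1$ because $G_i$ is effective (Lemma~\ref{lem:ad=1}), and whenever $\hat E_i \neq E_f$ the two exceptional classes have distinct prime representatives sharing no component — again by Lemma~\ref{lem:ad=1} — so $\langle \hat E_i, E_f\rangle \ge 0$ and $a_i \le 0$. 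Hence there is exactly one index $i_0$ with $\hat E_{i_0} = E_f$, and for $i \ne i_0$ the orthogonality $\langle \hat E_i, \hat E_{i_0}\rangle = 0$ forces $a_i = 0$, so $\hat E_i = f^*G_i$ with $\{G_i\}_{i \ne i_0}$ pairwise orthogonal exceptional classes on $X'$. Passing to the del Pezzo variety $X'' := X'_\can$, which identifies class groups and the form (Lemma~\ref{lem:dp-adp}), the inductive hypothesis applied to the $k-1$ classes $\bar G_i$ produces a $\QQ$-factorial almost del Pezzo variety $\hX_0$ and distinct smooth points $\{Q_i\}_{i \ne i_0}$ with $W := \Bl_{\{Q_i\}}(\hX_0)$ almost del Pezzo and $X'' \cong W_\can$.

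The remaining step — reintroducing $P$ as one more, genuinely distinct, point of $\hX_0$ — is the main obstacle. Because $P$ lies on no $K$-trivial curve, the small morphism $g \colon X' \to X''$ is an isomorphism near $P$, so $\bar P := g(P)$ is a smooth point with $g^{-1}(\bar P) = \{P\}$; and since $X'$ and $W$ are crepant models of $X''$ connected by flops (Lemma~\ref{lemma:repant-models}), which are isomorphisms away from $K$-trivial curves, $P$ corresponds to a smooth point $P_W \in W$. Lifting these flops (they avoid $P$) gives a pseudoisomorphism $\hX = \Bl_P(X') \dashrightarrow \tilde W := \Bl_{P_W}(W)$, whence $\tilde W$ is almost del Pezzo with $\tilde W_\can = \hX_\can = X$. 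The crucial claim is that $P_W$ lies on no exceptional divisor $E_{Q_i}$ of $W \to \hX_0$: if it did, the strict transform of $E_{Q_i}$ in $\tilde W$ would be a nontrivial effective divisor of $A_{\tilde W}$-degree $\langle A_W, E_{Q_i}\rangle - 1 = 0$, contradicting Lemma~\ref{lem:ad=1}. Thus $P_W$ descends to a smooth point $Q_{i_0} \in \hX_0$ distinct from the others, $\tilde W = \Bl_{Q_1,\dots,Q_k}(\hX_0)$, and $X \cong (\Bl_{Q_1,\dots,Q_k}(\hX_0))_\can$, completing the induction.

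I expect the heart of the difficulty to be precisely this last paragraph: tracking the blown-up point across the several birational models ($X'$, $X''$, $W$) and ensuring the $k$ centers remain distinct rather than infinitely near. The mechanism that makes it work is that the almost del Pezzo property, through the degree inequality of Lemma~\ref{lem:ad=1}, forbids the degenerate configurations in which an exceptional divisor would acquire $A$-degree zero. Finally, the displayed ``in particular'' is the case $k = 1$: an exceptional class exhibits $X$ as pseudoisomorphic to the $\QQ$-factorial almost del Pezzo variety $\hX = \Bl_P(\hX_0)$, which carries the $K$-negative birational extremal contraction $f$, so $X$ is imprimitive by Definition~\ref{def:imprimitive}.
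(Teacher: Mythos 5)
Your setup and your first two steps are sound and run on the same engine as the paper's proof: Lemma~\ref{lem:d-mmp} produces the good $\QQ$-factorialization, non-nefness of $\hD$ forces a $\hD$-negative $K$-negative extremal ray, and Lemma~\ref{lemma:ext-rays0} identifies the contraction as the blowup of a smooth point not lying on a $K$-trivial curve; your lattice computation pinning down the contracted class (exactly one $\hE_{i_0}=E_f$, the remaining classes pulled back from $X'$) is correct, as is the degree-zero computation excluding $P_W$ from the exceptional divisors of $W\to\hX_0$. The genuine gap is the word ``whence'' in: ``gives a pseudoisomorphism $\hX=\Bl_P(X')\dashrightarrow \tilde W:=\Bl_{P_W}(W)$, whence $\tilde W$ is almost del Pezzo.'' Being pseudoisomorphic to an almost del Pezzo variety does not, as a general principle, make a terminal variety with $-K=(n-1)A$ almost del Pezzo: nefness of $A$ is precisely what an arbitrary small birational modification can destroy (this is why the paper must single out, inside the movable cone, those models whose nef chamber contains $A$). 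So the very property demanded by the statement you are proving --- that $\Bl_{P_1,\dots,P_k}(\hX_0)$ \emph{is} almost del Pezzo --- is asserted rather than proved; note also that your subsequent exclusion of $P_W$ from the $E_{Q_i}$ applies Lemma~\ref{lem:ad=1} \emph{on} $\tilde W$, so it silently presupposes this unproved claim.

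The gap is fixable, but an argument is required: the chain $X'\dashrightarrow W$ from Lemma~\ref{lemma:repant-models} is a composition of flops of $K$-trivial small contractions over $X''$, each avoiding the preimage of $\bar P$ (this needs the remark that every intermediate model is a $\QQ$-factorialization of $X''$ and that $X''$ is $\QQ$-factorial near $\bar P$, so all of them are isomorphisms over a neighborhood of $\bar P$); after blowing up the corresponding points, each step lifts to a flop of the lifted $K$-trivial small contraction, and since $A$ descends to the base of a $K$-trivial contraction, such flops preserve nefness and bigness of $A$, hence the almost del Pezzo property --- exactly the observation made inside the proof of Lemma~\ref{lem:d-mmp}. (Alternatively one can invoke the chamber description of Proposition~\ref{prop:cone}, whose proof is independent of this proposition.) Two further pieces of bookkeeping: Lemma~\ref{lemma:repant-models} needs $W$ to be $\QQ$-factorial, whereas your inductive hypothesis, i.e.\ the proposition itself, does not supply $\QQ$-factoriality of $\hX_0$, so the induction must be strengthened to carry it along. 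It is worth contrasting this with the paper's proof, which is organized precisely to make all of these issues evaporate: it strengthens the inductive claim to ``if every $\hD$-negative extremal ray is $K$-negative and $\hD=\sum\hE_i$ is a sum of pairwise orthogonal effective exceptional classes, then $\hX\cong\Bl_{P_1,\dots,P_k}(\hX_0)$ with the $\hE_i$ equal to the exceptional divisors,'' and verifies that both hypotheses descend along the contraction $f$ itself (a $K$-trivial $f_*\hD$-negative curve downstairs would miss the blown-up point and lift to a $K$-trivial $\hD$-negative curve upstairs, contradicting the first hypothesis). The induction then lives on a single chain of honest blow-down morphisms, every intermediate variety is almost del Pezzo directly by Lemma~\ref{lemma:ext-rays0}, and no point ever has to be transported across a flop.
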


\begin{proof}
By Proposition~\ref{prop:E-eff} we can assume that each class~$E_i$ is effective and irreducible.
Let $D \coloneqq \sum E_i$ and let~$\xi \colon \hX \to X$ be a $\QQ$-factorialization provided by Lemma~\ref{lem:d-mmp},
set~$\hD \coloneqq (\xi^{-1})_*D$ and~$\hE_i \coloneqq (\xi^{-1})_*E_i$.
Then the pair~$(\hX,\hD)$ has the following properties:
\begin{enumerate}
\item 
\label{en:d-negative-rays}
every~$\hD$-negative extremal ray is also~$K$-negative, and
\item 
\label{en:d-sum}
$\hD = \sum \hE_i$, where~$\hE_i$ are pairwise orthogonal effective exceptional divisor classes.
\end{enumerate}
Indeed, the first is ensured by Lemma~\ref{lem:d-mmp}, 
and the second follows from the assumption about the~$E_i$ and Lemma~\ref{lem:pseudo-clx}.
We show that these properties imply~$\hX \cong \Bl_{P_1,\dots,P_k}(\hX_0)$, 
where~$\hX_0$ is almost del Pezzo, $P_1,\dots,P_k \in\hX_0$ are smooth points, 
and~$\hE_i$ are the exceptional divisors of the blowup.

To prove this we use induction on~$k$.
If~$k = 0$ there is nothing to prove, so assume~$k > 0$.
The class~$\hD$ cannot be nef because by~\ref{en:d-sum} we have
\begin{equation*}
\langle \hD, \hE_i \rangle = \langle \hE_i, \hE_i \rangle = -1,
\end{equation*}
therefore there is a $\hD$-negative extremal ray~$\rR$. 
This ray cannot be $K$-trivial by~\ref{en:d-negative-rays}, 
hence the corresponding contraction~$f \colon \hX \to \hX'$ is an extremal $K$-negative contraction.
Since~\mbox{$\hD \cdot \rR < 0$} and~$\hD$ is effective, the map~$f$ is birational, 
so Lemma~\ref{lemma:ext-rays0} implies that~$\hX'$ is a $\QQ$-factorial almost del Pezzo variety, 
$f$ is the blowup of a smooth point~$P'_1 \in \hX'$,
and the exceptional divisor of~$f$ is a component of~$\hD$, say~$\hE_1$.
Note that~$\hE_1$ is Cartier and~$A_{\hX}$ is ample on~$\hE_1$ by~\eqref{eq:coe-e}, 
hence~\ref{en:d-sum} implies that~$\hE_1 \cap \hE_i = \varnothing$ for~$i \ge 2$.
Therefore, $\hE'_i = f_*(\hE_i)$, $2 \le i \le k$, 
are pairwise orthogonal exceptional divisor classes 
(see Lemma~\ref{lem:contractions-class}\ref{it:class-blowup} below for this simple computation).

On the other hand, if~$\hD' \coloneqq f_*\hD = \sum_{i=2}^k \hE'_i$
and~$C \subset \hX'$ is a $K$-trivial $\hD'$-negative curve then~$P'_1 \not\in C$,
hence~$f^{-1}(C)$ is a $K$-trivial $\hD$-negative curve, which contradicts~\ref{en:d-negative-rays}.
This shows that the pair~$(\hX',\hD')$ satisfies the properties~\ref{en:d-negative-rays} and~\ref{en:d-sum}
and by induction we have~$\hX' \cong \Bl_{P_2,\dots,P_k}(\hX_0)$.
It remains to note that~$P'_1$ does not lie on an exceptional divisor of this blowup
by Proposition~\ref{prop:dP4:constr-i}\ref{prop:dP4:constr-ia}, 
hence~$\hX \cong \Bl_{P'_1}(\Bl_{P_2,\dots,P_k}(\hX_0)) \cong \Bl_{P_1,P_2,\dots,P_k}(\hX_0)$,
where~$P_1$ is the image of~$P'_1$ in~$\hX_0$.
\end{proof} 

\subsection{Extremal contractions}

Recall from Proposition~\ref{propo:ext-rays} the classification of extremal contractions of almost del Pezzo varieties.

\begin{lemma}
\label{lem:contractions-class}
Let~$(X,A_X)$ be a $\QQ$-factorial almost del Pezzo variety 
and let~$f \colon X \to Z$ be a $K$-negative extremal contraction.
\begin{enumerate}
\item 
\label{it:class-blowup}
If~$f$ is birational, i.e., the blowup of a smooth point on~$Z$
with exceptional divisor~$E$ 
then~\mbox{$\Cl(X) = f^*\Cl(Z) \oplus \ZZ E$} and for any~$D,D_1,D_2 \in \Cl(Z)$ one has
\begin{equation*}
\langle f^*D_1, f^*D_2 \rangle = \langle D_1, D_2 \rangle,
\qquad 
\langle f^*D, E \rangle = 0,
\qquad 
\langle E, E \rangle = -1.
\end{equation*}

\item 
\label{it:class-pn-2}
If~$f$ is a $\PP^{n-2}$-bundle over a smooth surface~$Z$
then~$\Cl(X) = f^*\Cl(Z) \oplus \ZZ A_X$ and for any~$D,D_1,D_2 \in \Cl(Z)$ one has
\begin{equation*}
\langle f^*D_1, f^*D_2 \rangle = D_1 \cdot D_2,
\qquad 
\langle f^*D, A_X \rangle = -K_Z \cdot D.
\end{equation*}

\item 
\label{it:class-quadric}
If~$f$ is a flat quadric bundle over~$Z = \PP^1$ and~$F \in \Cl(Z)$ is the class of a fiber 
then~$\Cl(Z) = \ZZ F \oplus \ZZ A_X$ and
\begin{equation*}
\langle F, F \rangle = 0,
\qquad 
\langle A_X, F \rangle = 2.
\end{equation*}
\end{enumerate}
\end{lemma}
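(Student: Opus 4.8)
The plan is to handle the three cases in parallel, deriving the direct-sum description of $\Cl(X)$ from the structural results already at hand and computing every intersection number by restriction to a general linear surface section. The key device is Lemma~\ref{lem:product-divisor}: fixing a general linear surface section $i\colon S\hookrightarrow X$, it identifies $\langle D_1,D_2\rangle$ with the ordinary intersection product $i^*D_1\cdot i^*D_2$ on the smooth del Pezzo surface $S$, so it suffices to track $i^*$ of the relevant classes and compute on $S$. Throughout I will use that $i^*A_X = A_X|_S = -K_S$.

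In case~\ref{it:class-blowup} the splitting $\Cl(X)=f^*\Cl(Z)\oplus\ZZ E$ is exactly Proposition~\ref{prop:dP4:constr-i}. By Lemma~\ref{lem:clx-xix-blowup} the section $S$ is the blowup $\Bl_P(S')$ of a general linear surface section $S'\subset Z$, and $f|_S$ factors as the contraction $\sigma_S\colon S\to S'$ of the exceptional curve $E_S=S\cap E$ followed by $S'\hookrightarrow Z$; thus $i^*E=E_S$ and $i^*f^*D=\sigma_S^*(D|_{S'})$. The three asserted equalities then reduce to the standard intersection numbers on a blown-up surface, namely $\sigma_S^*\alpha\cdot\sigma_S^*\beta=\alpha\cdot\beta$, $\sigma_S^*\alpha\cdot E_S=0$, and $E_S^2=-1$. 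In case~\ref{it:class-pn-2} the variety $X=\PP_Z(\cE)$ is smooth, so $\Cl(X)=\Pic(X)=f^*\Pic(Z)\oplus\ZZ A_X$ by the projective bundle formula, $A_X$ restricting to $\cO(1)$ on the fibers. Here $g\coloneqq f|_S\colon S\to Z$ is birational by Lemma~\ref{lemma:ext-rays0a}, hence a composition of blowdowns with $K_S=g^*K_Z+\sum_j E_j$; since $i^*f^*D=g^*D$ and each $E_j$ is $g$-exceptional, I obtain $\langle f^*D_1,f^*D_2\rangle=g^*D_1\cdot g^*D_2=D_1\cdot D_2$ and $\langle f^*D,A_X\rangle=-K_S\cdot g^*D=-K_Z\cdot D$, the exceptional contributions dropping out.

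In case~\ref{it:class-quadric} the two intersection numbers are immediate: writing $F=f^*[\mathrm{pt}]$ one has $\langle F,F\rangle=A_X^{n-2}\cdot F^2=0$ since distinct fibers are disjoint, and $\langle A_X,F\rangle=A_X^{n-1}\cdot F=(A_X|_{F_0})^{n-1}=2$, the degree of the $(n-1)$-dimensional quadric fiber $F_0$ embedded by $A_X|_{F_0}=\cO(1)$. The delicate part, and the main obstacle, is the integral splitting $\Cl(X)=\ZZ F\oplus\ZZ A_X$ rather than a merely rational one. For this I would restrict to a general (smooth) fiber $F_0$: extremality of $f$ forces the relative Picard number to be one, which both rules out reducible fibers (so the fibral classes are exactly $\ZZ F$, with $F$ primitive) and, crucially when $n=3$ so that $F_0$ is a quadric surface of Picard rank two, forces the monodromy over $\PP^1$ to exchange the two rulings, so that the monodromy-invariant part of $\Pic(F_0)$ is the rank-one lattice generated by $A_X|_{F_0}$. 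Combining the fibral kernel with this invariant image, and using torsion-freeness of $\Cl(X)$ from Corollary~\ref{cor:clx-free}, yields the asserted basis $\{F,A_X\}$.
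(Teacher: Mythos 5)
Your proposal is correct in its overall architecture, and for parts \ref{it:class-blowup} and \ref{it:class-pn-2} it takes a genuinely different route from the paper. The paper computes directly on $X$: part \ref{it:class-blowup} is deduced from the blowup formula, the projection formula, the identity $A_X = f^*A_Z - E$ and the isomorphisms~\eqref{eq:coe-e} of Lemma~\ref{lemma:ext-rays0}, while part \ref{it:class-pn-2} is proved by restricting to $X_D = f^{-1}(D) \cong \PP_D(\cE\vert_D)$ for a smooth curve $D \subset Z$, which uses $\rc_1(\cE) = K_Z$. You instead funnel everything through a general linear surface section via Lemma~\ref{lem:product-divisor} (plus Lemma~\ref{lem:clx-xix-blowup} in case \ref{it:class-blowup}), so that all displayed equalities become textbook intersection numbers on a blown-up del Pezzo surface; a pleasant byproduct is that in case \ref{it:class-pn-2} you never need $\rc_1(\cE) = K_Z$, since $-K_S \cdot g^*D = -K_Z \cdot D$ already follows from $g$ being a composition of point blowups. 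The direct-sum decompositions you invoke (Proposition~\ref{prop:dP4:constr-i} in case \ref{it:class-blowup}, the projective bundle formula in case \ref{it:class-pn-2}) are exactly what the paper uses, tacitly in the second case. In part \ref{it:class-quadric} you do strictly more than the paper, which declares both the splitting and the intersection numbers obvious: you actually prove the integral splitting $\Cl(X) = \ZZ F \oplus \ZZ A_X$, and your monodromy argument for $n = 3$ is the same mechanism the paper deploys in the proof of Proposition~\ref{prop:quadric-over-p1} (a trivial discriminant double cover would force relative Picard rank~$2$).

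There is, however, one genuine soft spot in that part \ref{it:class-quadric} argument. You assert that extremality ``rules out reducible fibers (so the fibral classes are exactly $\ZZ F$, with $F$ primitive).'' Extremality together with $\QQ$-factoriality does force every fiber to be irreducible as a set, but that is not enough for the parenthetical conclusion: a flat quadric bundle can a priori have an irreducible but non-reduced fiber, namely a rank-one quadric $2H$ (a double hyperplane). In that case the group of vertical divisor classes is $\ZZ H$, the class $F = 2H$ is not primitive, and the asserted splitting fails. What excludes such fibers is terminality, not extremality: writing the local equation as $q = x_0^2 + t\,r(x) + O(t^2)$, every point of the locus $\{t = x_0 = r = 0\}$ is a singular point of $X$, and this locus has dimension at least $n-2$, whereas terminal singularities occur in codimension at least $3$. (A variant of the same computation shows the general fiber is a smooth quadric, which you used implicitly when restricting to ``a general (smooth) fiber''; this is also how the paper argues in the proof of Lemma~\ref{lem:quadric-bundles}.) With this one additional observation, excluding rank-one fibers by terminality and rank-two fibers by your extremality argument, your proof of \ref{it:class-quadric} is complete.
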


\begin{proof}
Part~\ref{it:class-blowup} follows easily from the blowup formula, projection formula (note that the blowup point on~$Z$ is smooth),
the equality~$A_X = f^*A_Z - E$ (see Proposition~\ref{prop:dP4:constr-i})
and isomorphisms~\eqref{eq:coe-e}. 

The first formula in part~\ref{it:class-pn-2} is clear.
Further, $X \cong \PP_Z(\cE)$ and~$\rc_1(\cE) = K_Z$ by Proposition~\ref{propo:ext-rays},
so if~$D$ is a smooth curve 
and~$X_D \coloneqq f^{-1}(D) \cong \PP_D(\cE\vert_D)$ then~$A_X\vert_{X_D}$ 
is the relative hyperplane class of~$X_D$, and
the intersection theory implies 
\begin{equation*}
\langle f^*D, A_X \rangle = 
A_X^{n-1} \cdot f^*D =
(A_X\vert_{X_D})^{n-1} =
\deg(\rc_1(\cE^\vee)\vert_D) = -K_Z \cdot D.
\end{equation*}
Since classes of smooth curves generate~$\Cl(Z)$, the same formula holds for any~$D$.

Part~\ref{it:class-quadric} is again obvious because~$F^2 = 0$ 
and the restriction of~$A_X$ to any fiber~$Q^{n-1}$ of~$f$ is the hyperplane class.
\end{proof}

\begin{corollary}
\label{cor:imprimitivity}
An almost del Pezzo variety~$X$ is imprimitive if and only if~$\Cl(X)$ contains an exceptional class.
\end{corollary}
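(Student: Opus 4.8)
The plan is to prove Corollary~\ref{cor:imprimitivity} by combining two facts already established in the excerpt: Proposition~\ref{prop:exceptional-contraction}, which gives one direction, and Definition~\ref{def:imprimitive} together with Lemma~\ref{lemma:ext-rays0} and Lemma~\ref{lem:pseudo-clx}, which give the other. The statement is an ``if and only if,'' so I would split it accordingly. The ``if'' direction is essentially immediate: Proposition~\ref{prop:exceptional-contraction} asserts precisely that if $\Cl(X)$ contains an exceptional class, then $X$ is imprimitive. So the content of that direction is just a citation.

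For the ``only if'' direction, suppose $X$ is imprimitive. By Definition~\ref{def:imprimitive} there is a $\QQ$-factorial almost del Pezzo variety $X'$ pseudoisomorphic to $X$ that admits a $K$-negative birational extremal contraction $f \colon X' \to X''$. By Lemma~\ref{lemma:ext-rays0}, $f$ is the blowup of a smooth point with exceptional divisor $E \cong \PP^{n-1}$, and by Lemma~\ref{lem:contractions-class}\ref{it:class-blowup} the class $E \in \Cl(X')$ satisfies $\langle E, E \rangle = -1$. Moreover, $\langle A_{X'}, E \rangle = 1$: this follows from $\cO_E(A_{X'}\vert_E) \cong \cO_{\PP^{n-1}}(1)$ in~\eqref{eq:coe-e}, since $\langle A_{X'}, E\rangle = A_{X'}^{n-1}\cdot E = (A_{X'}\vert_E)^{n-1} = 1$. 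Hence $E$ is an exceptional class in $\Cl(X')$ in the sense of Definition~\ref{def:exP1P2}, i.e.\ $E \in \Theta_1(\Cl(X'), A_{X'})$. Finally, the pseudoisomorphism $\psi \colon X' \dashrightarrow X$ induces an isomorphism $\psi_* \colon \Cl(X') \to \Cl(X)$ compatible with the bilinear form~\eqref{eq:product} by Lemma~\ref{lem:pseudo-clx}, and it takes $A_{X'}$ to $A_X$ since both correspond to the common fundamental class of $X'_\can \cong X_\can$. Therefore $\psi_*E$ is an exceptional class in $\Cl(X)$, which is what we wanted.

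The proof is short and the individual steps are routine once the machinery is in place; there is no single hard obstacle, but the point requiring the most care is the verification that being imprimitive genuinely produces an exceptional \emph{class} on $X$ itself rather than merely on the auxiliary model $X'$. This is precisely why the compatibility of $\psi_*$ with the form~\eqref{eq:product} (Lemma~\ref{lem:pseudo-clx}) and with the fundamental class is essential: without it, the exceptional class $E$ on $X'$ would not transport to $X$. I would therefore structure the argument so that this transport step is stated explicitly, and I would make sure to note that $\psi_*$ preserves the pairing values $\langle A_{X'}, E\rangle = 1$ and $\langle E, E\rangle = -1$ that define membership in $\Theta_1$. With these observations the equivalence follows immediately, and the corollary is proved.
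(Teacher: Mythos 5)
Your proof is correct and takes essentially the same route as the paper: the ``if'' direction is the citation of Proposition~\ref{prop:exceptional-contraction}, and the ``only if'' direction extracts the exceptional class from the blowup structure via Lemma~\ref{lemma:ext-rays0} and Lemma~\ref{lem:contractions-class}\ref{it:class-blowup}. The paper's two-line proof leaves the transport along the pseudoisomorphism implicit; your explicit use of Lemma~\ref{lem:pseudo-clx} (and the check~$\langle A_{X'},E\rangle=1$) just spells out what the paper takes for granted.
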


\begin{proof}
If~$X$ is imprimitive we apply Lemma~\ref{lem:contractions-class}\ref{it:class-blowup},
and if~$X$ contains an exceptional class we apply Proposition~\ref{prop:exceptional-contraction}.
\end{proof}

\begin{corollary}
\label{cor:dP4:constr2}
If~$X = \PP_Z(\cE)$ is a primitive almost del Pezzo variety 
then~\mbox{$Z \cong \PP^2$} or~\mbox{$Z \cong \PP^1 \times \PP^1$}.
In particular, $\rr(X) = 2$ or~$\rr(X) = 3$.
\end{corollary}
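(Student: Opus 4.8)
The plan is to combine the explicit description of $\Cl(X)$ for a projective bundle (Lemma~\ref{lem:contractions-class}\ref{it:class-pn-2}) with the characterization of imprimitivity by exceptional classes (Corollary~\ref{cor:imprimitivity}). The projection $f\colon X = \PP_Z(\cE) \to Z$ is a $K$-negative extremal contraction: its fibers are projective spaces $\PP^{n-2}$, so the relative Picard rank is $1$, and $-K_X = (n-1)A_X$ restricts to a positive multiple of the relative hyperplane class on each fiber. Since $\rr(X) = \rr(Z) + 1 \ge 2$, this is exactly case~\ref{prop:ext-rays2} of Proposition~\ref{propo:ext-rays}, so $Z$ is a smooth del Pezzo surface. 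It then suffices to show that $Z$ contains no $(-1)$-curve, because a smooth del Pezzo surface with no $(-1)$-curve is minimal, hence isomorphic to $\PP^2$ or $\PP^1 \times \PP^1$.

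To carry this out I would argue by contradiction. Suppose $\ell \subset Z$ is a $(-1)$-curve, so that $\ell^2 = -1$ and, by adjunction on the del Pezzo surface $Z$, $-K_Z \cdot \ell = 1$. I would set $E \coloneqq f^*\ell \in \Cl(X)$ and compute its intersection numbers with the formulas of Lemma~\ref{lem:contractions-class}\ref{it:class-pn-2}: from $\langle f^*D_1, f^*D_2 \rangle = D_1 \cdot D_2$ one gets $\langle E, E \rangle = \ell^2 = -1$, and from $\langle f^*D, A_X \rangle = -K_Z \cdot D$ one gets $\langle A_X, E \rangle = -K_Z \cdot \ell = 1$. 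By the definition of $\Theta_1(\Cl(X), A_X)$ these two equalities say precisely that $E$ is an exceptional class.

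Having produced an exceptional class in $\Cl(X)$, Corollary~\ref{cor:imprimitivity} forces $X$ to be imprimitive, contradicting the hypothesis. Hence $Z$ carries no $(-1)$-curve, and therefore $Z \cong \PP^2$ or $Z \cong \PP^1 \times \PP^1$. The final assertion is then immediate: the decomposition $\Cl(X) = f^*\Cl(Z) \oplus \ZZ A_X$ gives $\rr(X) = \rr(Z) + 1$, which equals $2$ for $Z \cong \PP^2$ and $3$ for $Z \cong \PP^1 \times \PP^1$.

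I do not anticipate a genuine obstacle: the whole argument is the transport, through the class-group isomorphism, of the elementary surface fact that a non-minimal del Pezzo surface contains a $(-1)$-curve. The two points that need a line of justification are (a) checking that the bundle projection $f$ really lands in case~\ref{prop:ext-rays2} of Proposition~\ref{propo:ext-rays}, so that the intersection formulas of Lemma~\ref{lem:contractions-class}\ref{it:class-pn-2} are applicable, and (b) invoking the classification of minimal smooth del Pezzo surfaces to pass from the absence of $(-1)$-curves to the two listed cases.
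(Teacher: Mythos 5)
Your proposal is correct and follows essentially the same route as the paper: apply Proposition~\ref{propo:ext-rays}\ref{prop:ext-rays2} to see that $Z$ is a smooth del Pezzo surface, then note that a $(-1)$-curve $D \subset Z$ would make $f^*D$ an exceptional class by the intersection formulas of Lemma~\ref{lem:contractions-class}\ref{it:class-pn-2}, contradicting primitivity via Corollary~\ref{cor:imprimitivity}. The only cosmetic difference is that you spell out the verification that $f$ is a $K$-negative extremal contraction and the final rank count, which the paper leaves implicit.
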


\begin{proof}
By Proposition~\ref{propo:ext-rays}\ref{prop:ext-rays2} $Z$ is a del Pezzo surface, 
so it remains to show that~$Z$ is minimal.
Indeed, if it is not, $Z$ contains a $(-1)$-curve~$D \subset Z$, so that~$K_Z \cdot D = D^2 = -1$;
then by Lemma~\ref{lem:contractions-class}\ref{it:class-pn-2} the class~$E \coloneqq f^*D \in \Cl(X)$ is exceptional,
hence~$X$ is imprimitive by Corollary~\ref{cor:imprimitivity}.
\end{proof} 

\begin{corollary}
\label{cor:primitive-rank}
If~$X$ is a primitive almost del Pezzo variety then $\rr(X) \le 3$.
\end{corollary}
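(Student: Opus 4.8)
The plan is to reduce the statement to the classification of extremal contractions in Proposition~\ref{propo:ext-rays}, applied to a $\QQ$-factorial model of $X$. If $\rr(X) = 1$ there is nothing to prove, so I would assume $\rr(X) \ge 2$ throughout.

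First I would pass to a $\QQ$-factorialization $\xi \colon \hX \to X$, which exists because $X$ is of Fano type, hence klt (see the discussion following Definition~\ref{def:q-factorialization}). By construction $\hX$ is a $\QQ$-factorial almost del Pezzo variety and $\xi$ is small, hence a pseudoisomorphism; in particular $\rr(\hX) = \rr(X) \ge 2$ by Lemma~\ref{lem:pseudo-clx}. Since primitivity is formulated in Definition~\ref{def:imprimitive} purely in terms of the pseudoisomorphism class (and pseudoisomorphism is transitive), $\hX$ is again primitive. Thus it suffices to bound $\rr(\hX)$.

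Next I would produce a $K$-negative extremal contraction of $\hX$. As $\hX$ is almost del Pezzo, the class $-K_{\hX} = (n-1)A_{\hX}$ is nef and big; in particular $K_{\hX}$ is not numerically trivial, so that $\NE(\hX) \cap K_{\hX}^\perp$ is a \emph{proper} face of the cone $\NE(\hX)$. Because $\hX$ is of Fano type, the cone theorem applies and $\NE(\hX)$ is rational polyhedral with its $K_{\hX}$-negative part generated by finitely many extremal rays; since that part is nonempty, there is a $K$-negative extremal ray which can be contracted, yielding an extremal $K$-negative contraction $f \colon \hX \to Z$. Now primitivity of $\hX$ rules out, via Definition~\ref{def:imprimitive}, that $f$ is birational, so by Proposition~\ref{propo:ext-rays} one of the two non-birational cases occurs. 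If $f$ is a flat quadric bundle over $Z \cong \PP^1$, then $\Cl(\hX) = \ZZ F \oplus \ZZ A_{\hX}$ by Lemma~\ref{lem:contractions-class}\ref{it:class-quadric}, so $\rr(X) = \rr(\hX) = 2$. Otherwise $\hX \cong \PP_Z(\cE)$ for a del Pezzo surface $Z$, and since $\hX$ is primitive Corollary~\ref{cor:dP4:constr2} gives $Z \cong \PP^2$ or $Z \cong \PP^1 \times \PP^1$, whence $\rr(X) = \rr(\hX) \in \{2,3\}$. In every case $\rr(X) \le 3$.

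The only delicate point I expect is the existence of the $K$-negative extremal contraction: one must verify that the purely $K$-trivial subcone of $\NE(\hX)$ is a proper face — this is exactly the bigness of $-K_{\hX}$ — and invoke the cone and contraction theorems in the Fano-type (klt) setting rather than in the smooth case. Everything else is a direct appeal to the already-established structure results.
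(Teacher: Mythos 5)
Your proof is correct and takes essentially the same route as the paper: pass to a $\QQ$-factorial crepant model, produce a $K$-negative extremal contraction, use primitivity (which is invariant under pseudoisomorphism) to exclude the birational case of Proposition~\ref{propo:ext-rays}, and conclude via Lemma~\ref{lem:contractions-class}\ref{it:class-quadric} in the quadric bundle case and Corollary~\ref{cor:dP4:constr2} in the projective bundle case. The only difference is one of detail: the paper simply asserts that an appropriate crepant model admits such a contraction, whereas you justify its existence via the cone and contraction theorems using bigness of $-K_{\hX}$, which is exactly the right justification.
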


\begin{proof}
If~$\rr(X) = 1$ there is nothing to prove, so assume~$\rr(X) \ge 2$.
Then an appropriate crepant model~$\hX$ of~$X$
has a $K$-negative extremal contraction~$f \colon \hX \to Z$.
By Proposition~\ref{propo:ext-rays} either~$\hX \cong \PP_Z(\cE)$, and then~$\rr(X) \le 3$ by Corollary~\ref{cor:dP4:constr2},
or~$f$ is a quadric bundle over~$\PP^1$, and then~$\rr(X) = 2$ since~$f$ is extremal.
\end{proof}  

The next proposition describes the lattice structure of~$\Cl(X)$, where~$X$ is a projective bundle over~$\PP^1 \times \PP^1$.
We will see in Proposition~\ref{prop:bundle-p1p1} that any such~$X$ has~$\dd(X) \le 6$.

\begin{lemma}
\label{lem:lattice-p-p1p1}
Let~$f \colon X \to \PP^1 \times \PP^1$ be a $K$-negative extremal contraction 
of a $\QQ$-factorial almost del Pezzo variety~$X$ of degree~$d = \dd(X) \le 6$.
Let~$F_1,F_2 \in \Cl(X)$ be the pullbacks to~$X$ of the rulings of~$\PP^1 \times \PP^1$.
The bilinear form~\eqref{eq:product} in the basis~$F_1,\, F_2,\, A$ of~$\Cl(X)$ has the matrix
\begin{equation*}
\begin{pmatrix}
0 & 1 & 2 \\
1 & 0 & 2 \\
2 & 2 & d
\end{pmatrix}.
\end{equation*}
The variety~$X$ is primitive for~$d \in \{2,4,6\}$ and imprimitive for~$d \in \{1,3,5\}$.
\end{lemma}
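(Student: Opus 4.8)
The plan is to obtain the first two assertions directly from the structural results of~\S\ref{ss:contractions}, and then to reduce the primitivity dichotomy to a short arithmetic analysis of exceptional classes using the resulting lattice description of~$\Cl(X)$.

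First I would observe that since the target~$Z = \PP^1 \times \PP^1$ is a surface, Lemma~\ref{lemma:ext-rays0a}\ref{lemma:ext-rays0a2} (equivalently Proposition~\ref{propo:ext-rays}\ref{prop:ext-rays2}) presents~$f$ as a $\PP^{n-2}$-bundle~$X \cong \PP_Z(\cE)$ with~$\rc_1(\cE) = K_Z$. Then Lemma~\ref{lem:contractions-class}\ref{it:class-pn-2} gives the decomposition~$\Cl(X) = f^*\Cl(Z) \oplus \ZZ A$, and combined with~$\Cl(\PP^1 \times \PP^1) = \ZZ h_1 \oplus \ZZ h_2$ this shows that~$F_1 = f^*h_1$,~$F_2 = f^*h_2$,~$A$ is indeed a basis. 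The same lemma evaluates the form: $\langle F_i, F_j \rangle = h_i \cdot h_j$, so~$\langle F_1, F_1 \rangle = \langle F_2, F_2 \rangle = 0$ and~$\langle F_1, F_2 \rangle = 1$; while~$\langle F_i, A \rangle = -K_Z \cdot h_i = 2$ using~$-K_Z = 2h_1 + 2h_2$; and~$\langle A, A \rangle = \dd(X) = d$. This reproduces the asserted Gram matrix exactly.

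For the primitivity statement I would invoke Corollary~\ref{cor:imprimitivity}: $X$ is imprimitive if and only if~$\Cl(X)$ contains an exceptional class, that is (Definition~\ref{def:exP1P2}) a class~$E$ with~$\langle A, E \rangle = 1$ and~$\langle E, E \rangle = -1$. Writing~$E = xF_1 + yF_2 + zA$ and substituting the Gram matrix, these two conditions become
\[
2x + 2y + dz = 1, \qquad 2xy + 4xz + 4yz + dz^2 = -1.
\]
The crucial point is a parity observation on the first equation: its left-hand side equals~$2(x+y) + dz$, so any integral solution forces~$dz$ to be odd. Hence when~$d$ is even no exceptional class exists and~$X$ is primitive, which settles~$d \in \{2,4,6\}$.

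When~$d$ is odd I would simply exhibit an exceptional class. Setting~$z = 1$, the system reduces to~$x + y = (1-d)/2$ and~$xy = (d-3)/2$, and one checks at once that~$(x,y,z) = (1,-1,1)$, $(0,-1,1)$, $(-1,-1,1)$ solve it for~$d = 1, 3, 5$, yielding the exceptional classes~$E = F_1 - F_2 + A$, $E = A - F_2$, and~$E = A - F_1 - F_2$; thus~$X$ is imprimitive for~$d \in \{1,3,5\}$. No step is a genuine obstacle here: all the geometric content is packaged in Lemma~\ref{lem:contractions-class}\ref{it:class-pn-2} and Corollary~\ref{cor:imprimitivity}, and the only subtlety is the parity argument together with keeping the generators~$h_1,h_2$ and the sign of~$K_Z$ consistent so that the entries~$\langle F_i, A \rangle = 2$ come out correctly.
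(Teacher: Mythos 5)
Your proof is correct and follows essentially the same route as the paper: the Gram matrix via Lemma~\ref{lem:contractions-class}\ref{it:class-pn-2}, the reduction of (im)primitivity to existence of an exceptional class via Corollary~\ref{cor:imprimitivity}, and the same two Diophantine conditions with the same parity argument ruling out even~$d$. The only cosmetic difference is that the paper packages the conditions into the single relation~$4(x_1+x_2)^2 - 2dx_1x_2 = 1+d$ and lists all solutions, whereas you exhibit one explicit exceptional class per odd~$d$, which suffices.
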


\begin{proof}
To compute the matrix of the lattice we apply Lemma~\ref{lem:contractions-class}\ref{it:class-pn-2}.
For the second part note that by Corollary~\ref{cor:imprimitivity}
imprimitivity is equivalent to the existence of an exceptional class.
The definition of exceptionality for an element~$x_1F_1 + x_2F_2 + yA \in \Cl(X)$ reads as
\begin{equation*}
2x_1 + 2x_2 + dy = 1
\qquad 
2x_1x_2 + 4x_1y + 4x_2y + dy^2 = -1.
\end{equation*}
Squaring the first and subtracting the second multiplied by~$d$, we obtain
\begin{equation*}
4(x_1 + x_2)^2 - 2dx_1x_2 = 1 + d.
\end{equation*}
Clearly, $d$ must be odd, and for~$1 \le d \le 6$ the integer solutions are
\begin{itemize}
\item 
$d = 1$: $E_\pm = A \pm (F_1 - F_2)$;
\item 
$d = 3$: $E_1 = A - F_1$, $E_2 = A - F_2$;
\item 
$d = 5$: $E = A - F_1 - F_2$.
\end{itemize}
In particular, for~$d \in \{2,4,6\}$ the variety~$X$ is primitive.
\end{proof} 

A similar computation describes~$\Cl(X)$ when~$X$ is a projective bundle over~$\PP^2$.
We will see in Proposition~\ref{prop:bundle-p2} that any such~$X$ has~$\dd(X) \le 7$. 

\begin{lemma}
\label{lem:lattice-p-p2}
Let~$f \colon X \to \PP^2$ be a $K$-negative extremal contraction 
of a $\QQ$-factorial almost del Pezzo variety of degree~$d = \dd(X)\le 7$.
Let~$G \in \Cl(X)$ be the pullback to~$X$ of the line class of~$\PP^2$.
The bilinear form~\eqref{eq:product} in the basis~$G,\, A$ of~$\Cl(X)$ has the matrix
\begin{equation*}
\begin{pmatrix}
1 & 3 \\
3 & d
\end{pmatrix}
.
\end{equation*}
The variety~$X$ is imprimitive for~$d \in \{4,7\}$ and is primitive otherwise.
\end{lemma}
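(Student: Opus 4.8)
The plan is to proceed exactly as in the proof of Lemma~\ref{lem:lattice-p-p1p1}, replacing the base~$\PP^1 \times \PP^1$ by~$\PP^2$. By Proposition~\ref{propo:ext-rays}\ref{prop:ext-rays2} we have~$X \cong \PP_{\PP^2}(\cE)$ for a vector bundle~$\cE$ with~$\rc_1(\cE) = K_{\PP^2}$, so the lattice structure on~$\Cl(X) = f^*\Cl(\PP^2) \oplus \ZZ A$ can be read off directly from Lemma~\ref{lem:contractions-class}\ref{it:class-pn-2}. Writing~$G = f^*H$ for the pullback of the line class~$H$ of~$\PP^2$, the first formula there gives~$\langle G, G \rangle = H^2 = 1$, the second gives~$\langle G, A \rangle = -K_{\PP^2} \cdot H = 3$ (since~$-K_{\PP^2} = 3H$), and~$\langle A, A \rangle = \dd(X) = d$ by the definition of the form. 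This produces the asserted matrix, whose determinant~$d - 9 < 0$ is a reassuring check: it matches the signature~$(1,1)$ of~$\Cl(X)$ guaranteed by Proposition~\ref{prop:class-divisor}.

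For the primitivity statement I would invoke Corollary~\ref{cor:imprimitivity}: imprimitivity of~$X$ is equivalent to the existence of an exceptional class in~$\Cl(X)$. Writing a general class as~$xG + yA$ and imposing the two defining conditions of exceptionality through the matrix above yields the system
\begin{equation*}
3x + dy = 1, \qquad x^2 + 6xy + dy^2 = -1.
\end{equation*}
Following the elimination used in Lemma~\ref{lem:lattice-p-p1p1}, I would square the first equation and subtract~$d$ times the second; the~$6dxy$ and~$d^2y^2$ terms cancel and one is left with the single Diophantine condition
\begin{equation*}
(9 - d)\,x^2 = d + 1.
\end{equation*}

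What remains is a finite check over~$1 \le d \le 7$, where~$9 - d > 0$ throughout. An integer~$x$ exists precisely when~$(d+1)/(9-d)$ is a perfect square, which occurs only for~$d = 4$ (giving~$x^2 = 1$) and~$d = 7$ (giving~$x^2 = 4$); for~$d \in \{1,2,3,5,6\}$ the equation has no integer solution, so~$\Cl(X)$ contains no exceptional class and~$X$ is primitive. For the two exceptional degrees I would then exhibit the class explicitly: for~$d = 4$ the class~$E = A - G$ satisfies~$\langle A, E \rangle = 1$ and~$\langle E, E \rangle = -1$, and for~$d = 7$ the class~$E = A - 2G$ does the same. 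The one point requiring a moment's care --- the only real \emph{obstacle} in an otherwise routine argument --- is that integrality of~$x$ does not by itself guarantee an exceptional class: one must check that the companion equation~$3x + dy = 1$ produces an integer~$y$, which forces the sign of~$x$ (negative in both cases) and is the reason the positive root is discarded.
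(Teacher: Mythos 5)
Your proof is correct and follows essentially the same route as the paper: the matrix is read off from Lemma~\ref{lem:contractions-class}\ref{it:class-pn-2}, and primitivity is decided via Corollary~\ref{cor:imprimitivity} by eliminating~$y$ from the exceptionality equations to obtain~$(9-d)x^2 = d+1$ and checking the finitely many values~$1 \le d \le 7$, with the same exceptional classes~$E = A - G$ and~$E = A - 2G$ in degrees~$4$ and~$7$. Your closing remark --- that integrality of~$x$ alone does not suffice and one must verify that~$y = (1-3x)/d$ is an integer, which forces the negative root --- is a worthwhile precision that the paper leaves implicit.
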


\begin{proof}
The proof is similar to that of Lemma~\ref{lem:lattice-p-p1p1}.
Now the equations of exceptionality for an element~$xG + yA \in \Cl(X)$ are
\begin{equation*}
3x + dy = 1,
\qquad 
x^2 + 6xy + dy^2 = -1.
\end{equation*}
Squaring the first and subtracting the second multiplied by~$d$, we obtain
\begin{equation*}
(9 - d)x^2 = 1 + d.
\end{equation*}
The only integer solutions with~$1 \le d \le 7$ are
\begin{itemize}
\item 
$d = 4$: $E = A - G$ and
\item 
$d = 7$: $E = A -2G$.
\end{itemize}
Therefore, for~$d \in \{1,2,3,5,6\}$ the variety~$X$ is primitive.
\end{proof}

Next, we describe~$\Cl(X)$ when~$X$ is a quadratic fibration over~$\PP^1$.
We will see in Lemma~\ref{lem:quadric-bundles} that any such~$X$ has~$\dd(X) \le 6$.

\begin{lemma}
\label{lem:lattice-q-p2}
Let~$f \colon X \to \PP^1$ be a $K$-negative extremal contraction 
of a $\QQ$-factorial almost del Pezzo variety of degree~$d = \dd(X) \le 6$.
Let~$F \in \Cl(X)$ be the class of a fiber.
The bilinear form~\eqref{eq:product} in the basis~$F,\, A$ of~$\Cl(X)$ has the matrix
\begin{equation*}
\begin{pmatrix}
0 & 2 \\
2 & d
\end{pmatrix}
.
\end{equation*}
The variety~$X$ is imprimitive for~$d = 3$ and is primitive otherwise.

Moreover, $\Cl(X)$ contains a $\PP^1$-class~$F' \ne F$ 
if and only if~$d \in \{1,2,4\}$.
\end{lemma}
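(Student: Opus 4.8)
The plan is to follow closely the template of the proofs of Lemma~\ref{lem:lattice-p-p1p1} and Lemma~\ref{lem:lattice-p-p2}, reducing each assertion to an elementary Diophantine analysis in the rank-$2$ lattice $\Cl(X) = \ZZ F \oplus \ZZ A$. The Gram matrix is immediate: Lemma~\ref{lem:contractions-class}\ref{it:class-quadric} gives $\langle F, F \rangle = 0$ and $\langle A, F \rangle = 2$, while $\langle A, A \rangle = \dd(X) = d$ by definition of the degree, and this produces the stated matrix.

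For the primitivity statement I would invoke Corollary~\ref{cor:imprimitivity}, reducing imprimitivity to the existence of an exceptional class. Writing such a class as $xF + yA$ with $x,y \in \ZZ$, the two defining conditions $\langle A,\, xF + yA\rangle = 1$ and $\langle xF + yA,\, xF + yA\rangle = -1$ read
\[
2x + dy = 1, \qquad 4xy + dy^2 = -1.
\]
As in the earlier lemmas, squaring the first equation and subtracting $d$ times the second eliminates the mixed term and yields $4x^2 = 1 + d$. For $1 \le d \le 6$ the right-hand side is a multiple of $4$ that is a perfect square only when $d = 3$, forcing $x = \pm 1$; substituting back into the linear equation, only $x = -1$, $y = 1$ is admissible, giving the exceptional class $E = A - F$. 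Hence $X$ is imprimitive precisely when $d = 3$.

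For the final assertion I would similarly parametrize a $\PP^1$-class as $xF + yA$; by Definition~\ref{def:exP1P2} the conditions $\langle A,\, xF+yA\rangle = 2$ and $\langle xF+yA,\, xF+yA\rangle = 0$ become
\[
2x + dy = 2, \qquad 4xy + dy^2 = 0.
\]
The key observation is that the quadratic equation factors as $y(4x + dy) = 0$. The branch $y = 0$ forces $x = 1$ and merely recovers $F' = F$; the branch $4x + dy = 0$, combined with the linear equation, forces $x = -1$ and $y = 4/d$. This is integral exactly when $d \mid 4$, that is $d \in \{1,2,4\}$, producing the second $\PP^1$-class $F' = 4A - F$, $2A - F$, $A - F$, respectively, and no new $\PP^1$-class otherwise.

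All of these computations are entirely routine, so I do not expect a serious obstacle; the only point demanding care is that squaring the linear relation in the exceptional-class analysis can introduce an extraneous root (here $x = +1$ for $d = 3$), so one must confirm that the surviving candidate genuinely solves the original pair of equations rather than just the derived consequence $4x^2 = 1 + d$. The analogous verification in the $\PP^1$-class case is automatic because there the factorization $y(4x+dy)=0$ is an exact equivalence.
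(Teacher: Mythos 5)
Your proposal is correct and follows essentially the same route as the paper's own proof: the same Gram matrix read off from Lemma~\ref{lem:contractions-class}\ref{it:class-quadric}, the same Diophantine systems for exceptional and $\PP^1$-classes, the same elimination yielding $4x^2 = 1 + d$ (hence $d=3$ only), and the same conclusion that the second $\PP^1$-class is $F' = \frac{4}{d}A - F$, integral exactly for $d \in \{1,2,4\}$. The only cosmetic differences are your explicit factorization $y(4x+dy)=0$ and the check that the surviving root of $4x^2=1+d$ actually solves the original system, both of which the paper leaves implicit.
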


\begin{proof}
The proof is similar to that of Lemma~\ref{lem:lattice-p-p1p1}.
Now the equations of exceptionality for an element~$xF + yA \in \Cl(X)$ are
\begin{equation*}
2x + dy = 1,
\qquad 
4xy + dy^2 = -1.
\end{equation*}
Squaring the first and subtracting the second multiplied by~$d$, we obtain
\begin{equation*}
4x^2 = 1 + d.
\end{equation*}
The only integer solution with~$1 \le d \le 6$ is
\begin{itemize}
\item 
$d = 3$: $E = A - F$.
\end{itemize}
Therefore, for~$d \in \{1,2,4,5,6\}$ the variety~$X$ is primitive.

If~$F' = xF + yA$ is a $\PP^1$-class then 
\begin{equation*}
2x + dy = 2,
\qquad 
4xy + dy^2 = 0.
\end{equation*}
The only solution distinct from~$F$ is~$F' = \frac{4}{d}A - F$; it is integral if an only if~$d \in \{1,2,4\}$.
\end{proof}

Finally, we will need the following similar computation.

\begin{lemma}
\label{lem:another-blowup}
Let~$X$ be a del Pezzo variety with~$\dd(X) \ge 2$ and~$\rr(X) = 1$.
Let~\mbox{$\tX = \Bl_P(X)$} be its blowup at a point $P\in X$ with exceptional divisor~$E$
such that $\tX$ is an almost del Pezzo variety.
Then~$E$ is the only exceptional class in~$\Cl(\tX)$ if and only if a crepant model of~$\tX$
admits a non-birational extremal contraction if and only if~$\dd(X) \ge 4$.
\end{lemma}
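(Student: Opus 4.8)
The plan is to reduce the whole statement to a lattice computation in $\Cl(\tX)$ and then translate the existence of a non-birational contraction into a statement about the extremal rays of the effective cone.

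First I would record the basic structure. Since $\rr(X)=1$ and $A_X$ is Cartier and generates $\Cl(X)\otimes\QQ$, the variety $X$ is $\QQ$-factorial, hence so is its blowup $\tX$ at the smooth point $P$; by Proposition~\ref{prop:dP4:constr-i} we have $\Cl(\tX)=\ZZ\,\sigma^*A_X\oplus\ZZ E$ and $\rr(\tX)=2$, with fundamental class $A_{\tX}=\sigma^*A_X-E$. By Lemma~\ref{lem:contractions-class}\ref{it:class-blowup} the Gram matrix of the form~\eqref{eq:product} in the basis $(\sigma^*A_X,E)$ is $\operatorname{diag}(d,-1)$, where $d=\dd(X)$. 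Writing an exceptional class as $\x=p\,\sigma^*A_X+qE$, the conditions $\langle A_{\tX},\x\rangle=1$ and $\langle\x,\x\rangle=-1$ from Definition~\ref{def:exP1P2} become $pd+q=1$ and $p^2d-q^2=-1$; eliminating $q$ gives $pd(p+2-pd)=0$, so either $p=0$ (the class $E$) or $p(d-1)=2$. The latter has an integral solution iff $d-1\mid 2$, i.e.\ $d\in\{2,3\}$, producing the one extra exceptional class $2A_{\tX}-E$ (for $d=2$) or $A_{\tX}-E$ (for $d=3$). Thus $E$ is the only exceptional class precisely when $d\ge 4$, which is the equivalence of the first and third conditions.

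For the middle condition I would analyze $\Eff(\tX)$, a two-dimensional rational polyhedral cone since $\tX$ is a Mori dream space. The class $E$ is a rigid irreducible effective divisor with $\langle E,E\rangle=-1$ (Proposition~\ref{prop:E-eff}), so it spans an extremal ray. When $d\le 3$ the extra exceptional class $\x$ is likewise a rigid irreducible divisor with $\langle\x,\x\rangle=-1$, hence also spans an extremal ray; as there are only two extremal rays in rank $2$, we get $\Eff(\tX)=\RR_{\ge 0}E+\RR_{\ge 0}\x$, with both rays spanned by fixed divisors of negative square (one checks $\sigma^*A_X$ and $A_{\tX}$ are positive combinations, confirming they lie inside). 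Now a non-birational $K$-negative extremal contraction $f\colon\hX\to Z$ on any crepant model would, by Proposition~\ref{propo:ext-rays}, pull back an ample class from $Z$ to a nef, non-big class on $\hX$, i.e.\ to an extremal ray of $\Eff(\hX)$ that is nef rather than a fixed divisor of negative square; since $\Eff$ and the form are invariant under pseudoisomorphism (Lemma~\ref{lem:pseudo-clx}), this contradicts $\Eff=\RR_{\ge 0}E+\RR_{\ge 0}\x$. Hence for $d\le 3$ no crepant model admits such a contraction, so the middle condition fails.

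Conversely, for $d\ge 4$ I would produce the contraction via Lemma~\ref{lem:d-mmp}. Choosing a class $D$ just outside $\Eff(\tX)$ across the second extremal ray (the one not equal to $\RR_{\ge 0}E$, which exists since $\Eff$ is strictly convex), the divisor $D$ is not pseudoeffective, so the nef alternative of Lemma~\ref{lem:d-mmp} is excluded and we obtain a crepant model $\hX$ with a $K$-negative extremal contraction $f$. If $f$ were birational, Lemma~\ref{lemma:ext-rays0} would present its exceptional divisor, via~\eqref{eq:coe-e}, as an exceptional class of $\Cl(\hX)\cong\Cl(\tX)$ lying on the boundary ray opposite to $E$, giving a second exceptional class and contradicting the first condition. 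Hence $f$ is of fiber type, establishing the middle condition and completing the equivalence of the second and third conditions. The main obstacle is the third paragraph: making precise the dichotomy ``extremal ray of $\Eff$ is a fixed divisor of negative square versus a nef fiber-type class,'' and verifying that when $d\le 3$ the extra exceptional class genuinely spans the second extremal ray of $\Eff$ rather than sitting in its interior, so that no fiber-type boundary ray can occur on any crepant model.
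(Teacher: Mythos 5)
Your first two paragraphs are, up to a change of basis, exactly the paper's own proof of the equivalence of the first and third conditions: the paper writes the Gram matrix in the basis $(E,A_{\tX})$, solves the same pair of Diophantine equations, and finds the same extra exceptional classes $2A_{\tX}-E$ (for $d=2$) and $A_{\tX}-E$ (for $d=3$). Your third paragraph actually does more than the paper, which never spells out why no crepant model carries a fiber-type contraction when $d\in\{2,3\}$; your argument --- both extremal rays of the two-dimensional cone $\Eff(\tX)$ are spanned by the classes $E$ and $\x$ of square $-1$, while $f^*A_Z$ would be a nef, non-big, hence boundary, class of square $0$ or $1$ by Lemma~\ref{lem:contractions-class} --- is correct, provided you invoke Lemma~\ref{rem:extrmal:Eff} for the extremality of $E$ and $\x$ (it lives in \S\ref{sec:cones}, after the present lemma, but its proof does not use it, so there is no circularity), and provided you apply Lemma~\ref{lem:d-mmp} to $\tX_\can$ rather than to $\tX$, identifying its $\QQ$-factorializations with the $\QQ$-factorial crepant models of $\tX$ via Lemma~\ref{lemma:repant-models}.

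The genuine gap is in your fourth paragraph, and it is not the issue you flag at the end (your third paragraph is fine). If the contraction $f$ produced by Lemma~\ref{lem:d-mmp} were birational, then by Lemma~\ref{lemma:ext-rays0} and~\eqref{eq:coe-e} its exceptional divisor represents an exceptional class; but since for $d\ge 4$ the class $E$ is the \emph{unique} exceptional class, that class is $E$ itself and lies on the ray $\RR_{\ge 0}E$ --- your assertion that it lies ``on the boundary ray opposite to $E$'' is unsupported, and without it there is no contradiction: a priori $f$ could simply be a flopped copy of the blow-down of $E$. The gap closes with a short computation. Since the big class $A_{\tX}$ lies in the interior of $\Eff(\tX)$, the second extremal ray is spanned by a class $\x_2=c(A_{\tX}-sE)$ with $c,s>0$; a line $\ell$ in the exceptional divisor of a blow-down whose exceptional class is $E$ satisfies $A_{\tX}\cdot\ell=1$ and $E\cdot\ell=-1$ by~\eqref{eq:coe-e}, hence
$D\cdot\ell=\x_2\cdot\ell-\epsilon\, E\cdot\ell=c(1+s)+\epsilon>0$,
contradicting the $D$-negativity of $f$; therefore $f$ is of fiber type. (To be fair, the paper is terse at exactly the same spot: it asserts that the second $K$-negative contraction, if birational, would yield an exceptional class \emph{distinct} from $E$ without explaining the distinctness; the computation above, or the observation that such an $f$ would have supporting nef class $f^*A_Z=A_{\tX}+E=\sigma^*A_X$ and hence define the same ray as $\sigma$, is what is needed in either write-up.)
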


\begin{proof}
Let~$E$ be the exceptional divisor of the blowup~$\sigma \colon \tX \to X$.
By Lemma~\ref{lem:contractions-class}\ref{it:class-blowup} 
the classes~$E$ and~$A_{\tX} = \sigma^*A_X - E$ form a basis of~$\Cl(\tX)$, 
and in this basis the matrix of the bilinear form is
\begin{equation*}
\begin{pmatrix}
-1 & 1\hphantom{-1} \\
\hphantom{-}1 & d - 1
\end{pmatrix},
\end{equation*}
where~$d = \dd(X)$.
Since~$\rr(\tX) = 2$, a $\QQ$-factorial crepant model of~$\tX$ 
must have a second $K$-negative extremal contraction, 
and if it is birational, $\Cl(\tX)$ must have an exceptional class distinct from~$E$.
If this class is~$xE + yA_{\tX}$, the conditions of exceptionality are
\begin{equation*}
x + (d - 1)y = 1,
\qquad 
-x^2 + 2xy + (d - 1)y^2 = -1.
\end{equation*}
Squaring the first and subtracting the second multiplied by~$d - 1$, we obtain
\begin{equation*}
dx^2 = d,
\end{equation*}
hence~$x = \pm 1$. 
If~$x = 1$ then~$y = 0$, and this solution corresponds to the original exceptional class~$E$.
If~$x = -1$ then~$y = 2/(d - 1)$ is integral if and only if~$d \in \{2,3\}$.
Thus, for~$d = \dd(X) \ge 4$, the other extremal contraction must be non-birational.
\end{proof} 

We refer to~\S\ref{ss:r2} for a description of the other extremal contractions. 

\section{Del Pezzo vector bundles and quadric bundles}
\label{sec:dpb} 

In this section we investigate del Pezzo bundles (see Definition~\ref{def:dpb}) and almost del Pezzo varieties
that have a structure of a $\PP^{n-2}$-bundle or flat quadric bundle.
We always assume~$n \ge 3$.

\subsection{Del Pezzo varieties and del Pezzo bundles}

First, we prove the basic properties of del Pezzo vector bundles.
We will say that a vector bundle~$\cF$ on a variety~$Z$ is \emph{almost globally generated}
if the cokernel of the natural morphism~$H^0(Z,\cF) \otimes \cO_Z \to \cF$ is the structure sheaf of a point. 

\begin{lemma}
\label{lem:dpv-dpb}
If~$Z$ is a surface and~$\cE$ is a del Pezzo bundle on~$Z$,
so that~$X = \PP_Z(\cE)$ is a non-conical almost del Pezzo variety,
then~$Z$ is a smooth del Pezzo surface and
\begin{equation}
\label{eq:dpb-inequality}
\dim(X) - 1 = \rk(\cE) \le \rc_2(\cE) = K_Z^2 - \dd(X);
\end{equation} 
in particular, $2 \le \rc_2(\cE) \le K_Z^2 - 1$.
Moreover, 
$H^0(Z, \cE) = H^2(Z, \cE) = 0$, 
and
\begin{itemize}
\item 
if~$\rc_2(\cE) \le K_Z^2 - 2$ then~$\cE^\vee$ is globally generated;
\item 
if~$\rc_2(\cE) = K_Z^2 - 1$ then~$\cE^\vee$ is almost globally generated.
\end{itemize}
\end{lemma}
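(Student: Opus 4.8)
The plan is to read everything off the projective bundle $f\colon X=\PP_Z(\cE)\to Z$, using that $A_X$ is its relative hyperplane class (so $f_*\cO_X(A_X)=\cE^\vee$ in the convention of Lemma~\ref{lemma:ext-rays0a}) and that the non-conical hypothesis controls the degenerate behaviour of $A_X$. I would first dispose of the structural statements. Since $f$ is a non-birational contraction to a surface, Corollary~\ref{cor:targets} shows $Z$ is a del Pezzo surface, and a two-dimensional del Pezzo variety has terminal, hence smooth, points, so $Z$ is smooth. The equality $\rk(\cE)=\dim(X)-1$ is the dimension count for a projective bundle, and the standard computation of the degree of a projective bundle (the Grothendieck relation together with $\rc_1(\cE)=K_Z$) gives $\dd(X)=A_X^{\,n}=\int_Z(\rc_1(\cE)^2-\rc_2(\cE))=K_Z^2-\rc_2(\cE)$; the upper bound $\rc_2(\cE)\le K_Z^2-1$ is then immediate from $\dd(X)\ge 1$.

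The heart of the matter is the pair of vanishings $H^0(Z,\cE)=H^2(Z,\cE)=0$, and both start from the same remark: $A_X$ nef means $\cE^\vee$ is a nef bundle. For $H^0$, a nonzero section saturates to a sub-line-bundle $\cO_Z(D)\hookrightarrow\cE$ with $D\ge 0$; dualizing gives a quotient line bundle $\cO_Z(-D)$ of the nef bundle $\cE^\vee$, so $-D$ is nef and hence $D=0$. The resulting nowhere-vanishing section yields $\sigma\colon Z\to X$ with $A_X\vert_{\sigma(Z)}\equiv 0$, so $\xi$ contracts $\sigma(Z)$ to a point $\bv$; each fibre $\PP^{n-2}$ of $f$ meets $\sigma(Z)$ and is swept by lines through that point, and these lines (of $A_X$-degree $1$, hence not contracted) map to lines through $\bv$, forcing $X_\can$ to be swept by lines through $\bv$ — contradicting non-conicality. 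For $H^2(Z,\cE)\cong H^0(Z,\cE^\vee\otimes\cO_Z(K_Z))^\vee$, the same saturation applied to $\cO_Z(-K_Z)\hookrightarrow\cE^\vee$ produces a \emph{saturated} sub-line-bundle $\cO_Z(-K_Z)\subset\cE^\vee$ with nef quotient $\cQ$ of rank $n-2$ and $\rc_1(\cQ)=0$. Dualizing gives a corank-one subsheaf of $\cE$, hence a divisor $\PP(\cQ^\vee)\subset X$ on which $A_X$ restricts to a nef class whose top self-intersection is $\int_Z(\rc_1(\cQ)^2-\rc_2(\cQ))=-\rc_2(\cQ)\le 0$, using the nonnegativity of the second Chern class of a nef bundle. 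Thus $A_X$ is not big on this divisor, so the small morphism $\xi$ would contract it — a contradiction. I expect this step, and especially $H^2=0$, to be the main obstacle, since it is where non-conicality and the positivity of $\cE^\vee$ genuinely enter (split bundles with a trivial quotient show the vanishing fails without non-conicality).

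With both vanishings in hand the inequality $\rk(\cE)\le\rc_2(\cE)$ is formal: Riemann--Roch on $Z$ gives $\chi(Z,\cE)=\rk(\cE)-\rc_2(\cE)$, and since $h^0=h^2=0$ we get $\rc_2(\cE)=\rk(\cE)+h^1(Z,\cE)\ge\rk(\cE)\ge 2$, which completes~\eqref{eq:dpb-inequality}.

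Finally, for the generation statements I would analyze the evaluation of $\cE^\vee=f_*\cO_X(A_X)$ fibrewise. If $\dd(X)\ge 2$ then $|A_X|$ is base point free (pull back Proposition~\ref{DP:|A|} along $\xi$ from $X_\can$), so on each fibre $F_z\cong\PP^{n-2}$ the restriction of $|A_X|$ is a base-point-free subsystem of $|\cO_{F_z}(1)|$, which is possible only if it is the complete system; equivalently $H^0(Z,\cE^\vee)\to\cE^\vee\otimes k(z)$ is onto for every $z$, i.e.\ $\cE^\vee$ is globally generated. If $\dd(X)=1$ the base locus of $|A_X|$ is the single reduced point $x_0$, contained in one fibre $F_{z_0}$; the same argument gives surjectivity away from $z_0$, so the cokernel $\mathscr{C}$ of the evaluation is a skyscraper at $z_0$. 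Its kernel is a rank-one saturated subsheaf of a trivial bundle, hence a line bundle, and a Chern-class chase identifies it with $\cO_Z(K_Z)$ and yields $\operatorname{length}(\mathscr{C})=K_Z^2-\rc_2(\cE)=\dd(X)=1$, so $\mathscr{C}\cong k(z_0)$ and $\cE^\vee$ is almost globally generated. Throughout I would keep track of the convention $f_*\cO_X(A_X)=\cE^\vee$ to avoid sign errors in the Chern-class bookkeeping.
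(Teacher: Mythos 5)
Your overall strategy is viable and genuinely different from the paper's (the paper proves (almost) global generation of $\cE^\vee$ \emph{first}, directly from Proposition~\ref{DP:|A|}, and uses it both for $\rc_2(\cE)\ge 0$ and for the $H^0$-vanishing; you run nefness/saturation arguments for the two vanishings and prove global generation last). But there is one invalid step, and it sits exactly where you locate ``the heart of the matter'': in the $H^0$-vanishing you infer from $D=0$ that the section is \emph{nowhere vanishing}. That inference is false. A saturated sub-line-bundle $\cO_Z\hookrightarrow\cE$ (i.e.\ one with torsion-free quotient) can perfectly well be the image of a section vanishing on a nonempty finite set: on a smooth surface the quotient is then torsion-free but not locally free at those points, as in an extension
\begin{equation*}
0 \longrightarrow \cO_Z \longrightarrow \cE \longrightarrow \cI_W\otimes\det(\cE) \longrightarrow 0 ,
\qquad W\ne\varnothing \ \text{finite},
\end{equation*}
for a rank-$2$ bundle $\cE$. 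Saturation kills divisorial vanishing, not vanishing in codimension $2$. Consequently you do not get a section $\sigma\colon Z\to X$ of the projective bundle (sections correspond to sub-line-bundles with \emph{locally free} quotient), only a rational section defined off $W$, and your contraction-to-a-vertex argument does not run as written.

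The gap is repairable in two ways. (a) Geometrically: let $T$ be the closure of the image of the rational section $\sigma\colon Z\setminus W\dashrightarrow X$. On $Z\setminus W$ one has $\sigma^*\cO_X(A_X)\cong\cO$, so $\sigma(C)$ is $A_X$-trivial for every complete curve $C\subset Z\setminus W$; since two general points of $Z$ can be joined by an irreducible curve avoiding the finite set $W$, the whole of $T$ is contracted by $\xi$ to a single point $\bv$, and your sweeping-by-lines argument then proceeds verbatim. (b) Structurally: your final paragraph proving (almost) global generation of $\cE^\vee$ uses only Proposition~\ref{DP:|A|} (pulled back from $X_\can$) and the degree formula, so prove it \emph{before} the vanishings; then, as the paper does, compose the evaluation $H^0(Z,\cE^\vee)\otimes\cO_Z\to\cE^\vee$ with $s\colon\cE^\vee\to\cO_Z$: this is a generically surjective morphism of trivial bundles on a projective variety, hence given by constants, hence surjective everywhere, so $\cO_Z$ splits off $\cE$ and you obtain an honest section. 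Two smaller remarks. In the $H^2$ step you assert without argument that the saturation of $\cO_Z(-K_Z)\hookrightarrow\cE^\vee$ is $\cO_Z(-K_Z)$ itself; this does follow, but by the same nef-versus-effective argument, so say so. Also, that whole step can be done much more cheaply: a nonzero element of $H^0(Z,\cE^\vee(K_Z))$ is a nonzero section of $\cO_X(A_X+f^*K_Z)$, and $A_X^{n-1}\cdot(A_X+f^*K_Z)=\dd(X)-K_Z^2=-\rc_2(\cE)\le 0$ contradicts Lemma~\ref{lem:ad=1} directly (given $\rc_2(\cE)\ge 0$, which you may take from global generation of $\cE^\vee$ or from nefness plus Fulton--Lazarsfeld); this avoids both the Segre-class bookkeeping for the possibly non-locally-free quotient $\cQ$ and the appeal to positivity of Chern classes of nef bundles.
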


\begin{proof}
We proved in Lemma~\ref{lemma:ext-rays0a}\ref{lemma:ext-rays0a2} that~$Z$ is a smooth del Pezzo surface.
We also proved that~$A_X$ is a relative hyperplane class for~$\PP_Z(\cE)$.
Therefore, (almost) global generation of~$\cO_X(A_X)$ implies the analogous property of~$\cE^\vee$,
which in its turn implies the inequality~$\rc_2(\cE) = \rc_2(\cE^\vee) \ge 0$.
On the other hand, by Definition~\ref{def:dpb} we have~$\rc_1(\cE) = K_Z$.

Furthermore, multiplying the standard relation
\begin{equation*}
A_X^{n-1} + A_X^{n-2}\cdot f^*\rc_1(\cE) + A_X^{n-3} \cdot f^*\rc_2(\cE) = 0
\end{equation*}
in the Chow ring of~$X$ by~$A_X$ 
we obtain~$\dd(X) = A_X^n = \rc_1(\cE)^2 - \rc_2(\cE) = K_Z^2 - \rc_2(\cE)$, 
i.e., the last equality in~\eqref{eq:dpb-inequality}.

To prove that~$H^2(Z,\cE) = 0$ note that Lemma~\ref{lem:contractions-class}\ref{it:class-pn-2} implies
\begin{equation*}
A_X^{n-1} \cdot (A_X + f^*K_Z) = \dd(X) - K_Z^2 = - \rc_2(\cE) \le 0,
\end{equation*}
therefore the divisor class~$A_X + f^*K_Z$ 
(which is nontrivial, because~$\rk(\cE) \ge 2$)
cannot be effective by Lemma~\ref{lem:ad=1}.
Combining this with Serre duality we obtain the required vanishing
\begin{equation*}
H^2(Z,\cE) = H^0(Z,\cE^\vee(K_Z))^\vee = H^0(X,\cO_X(A_X + f^*K_Z))^\vee = 0.
\end{equation*}

To prove that~$H^0(Z,\cE) = 0$
we let by contradiction~$s$ be a nontrivial section of~$\cE$. 
Since~$\cE^\vee$ is (almost) globally generated, the composition
\begin{equation*}
H^0(Z, \cE^\vee) \otimes \cO_Z \longrightarrow \cE^\vee \xrightarrow{\ s\ } \cO_Z
\end{equation*}
is generically surjective,
and since this is a morphism of trivial vector bundles, 
it is surjective everywhere, and a copy of~$\cO_Z$ splits from~$\cE$,
i.e., $\cE \cong \cO_Z \oplus \cE'$.
It follows that the section of~$\PP_Z(\cE)$ corresponding to the trivial summand~$\cO_Z$ 
is contracted by the morphism~$\Phi_{|A_X|}$ to a point~$\bv \in X_\can$,
and furthermore, that~$X_\can$ is covered by lines through~$\bv$, so it is a cone.
This contradicts Definition~\ref{def:dpb} and proves the vanishing of~$H^0(Z,\cE)$.

Using the vanishing of~$H^0(Z,\cE)$ and~$H^2(Z,\cE)$ and Riemann--Roch we obtain
\begin{equation}
\label{eq:dim-h1}
0 \le \dim H^1(Z,\cE) = 
- \rk(\cE) + \rc_2(\cE),
\end{equation}
which gives the inequality in~\eqref{eq:dpb-inequality}.
As the first equality in~\eqref{eq:dpb-inequality} is obvious, $\dim(X) \ge 3$, and~$\dd(X) \ge 1$, 
we obtain~$2 \le \rc_2(\cE) \le K_Z^2 - 1$.
This completes the proof.
\end{proof}

\begin{corollary}
\label{cor:bundle-rdn}
If~$\cE$ is a del Pezzo bundle on a del Pezzo surface~$Z$ and~$X = \PP_Z(\cE)$ then
\begin{equation*}
\dd(X) + \rr(X) + n \le 12.
\end{equation*}
\end{corollary}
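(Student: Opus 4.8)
The plan is to express each of the three summands~$\dd(X)$, $\rr(X)$, and~$n$ in terms of the two invariants~$K_Z^2$ and~$\rc_2(\cE)$ attached to the del Pezzo bundle, and then to invoke the single substantive inequality~$\rk(\cE) \le \rc_2(\cE)$ already furnished by Lemma~\ref{lem:dpv-dpb}. Since all the real work is contained in that lemma, the corollary is essentially a bookkeeping consequence; the only genuinely new computation needed is that of~$\rr(X)$.

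First I would recall from Lemma~\ref{lem:dpv-dpb} that~$Z$ is a smooth del Pezzo surface and that the last equality in~\eqref{eq:dpb-inequality} reads~$\dd(X) = K_Z^2 - \rc_2(\cE)$, while by definition of a projective bundle we have~$n = \dim(X) = \rk(\cE) + 1$. This already handles two of the three summands.

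The key intermediate step is the computation of~$\rr(X)$. Because~$X = \PP_Z(\cE)$ is a projective bundle over the surface~$Z$, Lemma~\ref{lem:contractions-class}\ref{it:class-pn-2} gives the direct sum decomposition~$\Cl(X) = f^*\Cl(Z) \oplus \ZZ A_X$, so that~$\rr(X) = \rr(Z) + 1$; and since~$Z$ is a del Pezzo surface, the standard relation~$\dd(Z) + \rr(Z) = 10$ together with~$\dd(Z) = K_Z^2$ yields~$\rr(Z) = 10 - K_Z^2$, whence~$\rr(X) = 11 - K_Z^2$.

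It then remains to add the three expressions:
\begin{equation*}
\dd(X) + \rr(X) + n
= \big(K_Z^2 - \rc_2(\cE)\big) + \big(11 - K_Z^2\big) + \big(\rk(\cE) + 1\big)
= 12 + \rk(\cE) - \rc_2(\cE),
\end{equation*}
and the inequality~$\rk(\cE) \le \rc_2(\cE)$ from~\eqref{eq:dpb-inequality} immediately gives~$\dd(X) + \rr(X) + n \le 12$. I do not expect any real obstacle here: the only inequality that carries content is~$\rk(\cE) \le \rc_2(\cE)$, which is proved in Lemma~\ref{lem:dpv-dpb} via the vanishings~$H^0(Z,\cE) = H^2(Z,\cE) = 0$ and Riemann--Roch, so everything else reduces to assembling identities that are already in place.
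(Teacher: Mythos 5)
Your proposal is correct and follows essentially the same route as the paper: the paper's one-line proof combines~$K_Z^2 = 10 - \rr(Z) = 11 - \rr(X)$ with the inequality chain~\eqref{eq:dpb-inequality}, which is exactly the bookkeeping you carry out explicitly (your citation of Lemma~\ref{lem:contractions-class}\ref{it:class-pn-2} for~$\rr(X) = \rr(Z) + 1$ just makes visible a step the paper leaves implicit). No gaps.
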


\begin{proof}
This follows from~$K_Z^2 = 10 - \rr(Z) = 11 - \rr(X)$ combined with~\eqref{eq:dpb-inequality}.
\end{proof}

Recall that a del Pezzo bundle~$\cE$ is \emph{maximal} 
if there is no embedding~$\cE \hookrightarrow \cE'$
into another del Pezzo bundle~$\cE'$ with~$\cE'/\cE \cong \cO_Z$. 

\begin{lemma}
\label{lem:dpb-maximal}
For any del Pezzo bundle~$\cE$ on a surface~$Z$ there is a canonical exact sequence
\begin{equation}
\label{eq:ce-tce}
0 \longrightarrow \cE \longrightarrow \tcE \longrightarrow H^1(Z,\cE) \otimes \cO_Z \longrightarrow 0,
\end{equation}
where~$\tcE$ is a maximal del Pezzo bundle.
Moreover, $\PP_Z(\cE)$ is a linear section of~$\PP_Z(\tcE)$.
A del Pezzo bundle~$\cE$ is maximal if and only if~$\rk(\cE) = \rc_2(\cE)$ if and only if~$H^\bullet(Z,\cE) = 0$.
\end{lemma}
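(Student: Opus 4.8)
The plan is to realize $\tcE$ as the \emph{universal extension} of the trivial bundle $H^1(Z,\cE)\otimes\cO_Z$ by $\cE$. Since $\Ext^1(H^1(Z,\cE)\otimes\cO_Z,\,\cE)\cong H^1(Z,\cE)^\vee\otimes H^1(Z,\cE)=\Hom\big(H^1(Z,\cE),H^1(Z,\cE)\big)$, the identity endomorphism determines a canonical extension class, and the associated sequence is \eqref{eq:ce-tce}. The computational heart is that in its long exact cohomology sequence the connecting map $H^0(Z,H^1(Z,\cE)\otimes\cO_Z)=H^1(Z,\cE)\to H^1(Z,\cE)$ is, by construction, the identity and hence an isomorphism. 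Combining this with the vanishings $H^0(Z,\cE)=H^2(Z,\cE)=0$ from Lemma~\ref{lem:dpv-dpb} and with $H^1(Z,\cO_Z)=H^2(Z,\cO_Z)=0$ (as $Z$ is a rational surface), a short diagram chase forces $H^\bullet(Z,\tcE)=0$. From \eqref{eq:ce-tce} and \eqref{eq:dim-h1} I then read off $\rk(\tcE)=\rk(\cE)+\dim H^1(Z,\cE)=\rc_2(\cE)$, while $\rc_1(\tcE)=K_Z$ and $\rc_2(\tcE)=\rc_2(\cE)$ since $\tcE$ is an extension by a trivial bundle.

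Next I would produce the linear section and the almost del Pezzo property simultaneously. Dualizing \eqref{eq:ce-tce} gives a surjection $\tcE^\vee\twoheadrightarrow\cE^\vee$ with kernel $H^1(Z,\cE)^\vee\otimes\cO_Z$; in the convention of Lemma~\ref{lemma:ext-rays0a} (where $f_*\cO(A)=\cE^\vee$) this is exactly a closed embedding $\PP_Z(\cE)\hookrightarrow\PP_Z(\tcE)$ preserving $\cO(A)$, cut out by the $c:=\dim H^1(Z,\cE)=\rc_2(\cE)-\rk(\cE)$ fundamental-divisor sections in $H^1(Z,\cE)^\vee\subset H^0(Z,\tcE^\vee)$. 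Choosing a flag in $H^1(Z,\cE)$ refines this into a chain $\PP_Z(\cE)=\PP_Z(\cE^{(0)})\subset\cdots\subset\PP_Z(\cE^{(c)})=\PP_Z(\tcE)$ in which each $\cE^{(j+1)}/\cE^{(j)}\cong\cO_Z$, so that $\PP_Z(\cE^{(j)})$ is a member of $|A|$ on $\PP_Z(\cE^{(j+1)})$. To climb this ladder I apply Lemma~\ref{lem:criterion-divisor}\ref{it:divisor-dp}: the projective bundles $\PP_Z(\cE^{(j+1)})$ are smooth (so terminality away from the divisor is automatic), and the restriction $\Cl(\PP_Z(\cE^{(j+1)}))\to\Cl(\PP_Z(\cE^{(j)}))$ is injective—both groups being $\Cl(Z)\oplus\ZZ A$—whence no nontrivial effective divisor can be disjoint from $\PP_Z(\cE^{(j)})$. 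Starting from the hypothesis that $\cE=\cE^{(0)}$ is a del Pezzo bundle and inducting on $j$, I conclude $\PP_Z(\tcE)$ is almost del Pezzo, and $\PP_Z(\cE)$ is a codimension-$c$ linear section of it.

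The step I expect to be the main obstacle is verifying that $\PP_Z(\tcE)_\can$ is \emph{non-conical}, so that $\tcE$ is genuinely a del Pezzo bundle. The mechanism is the cone criterion hidden in the proof of Lemma~\ref{lem:dpv-dpb}: for a bundle of this shape a cone structure forces the directrix to furnish a trivial sub-line-bundle $\cO_Z\hookrightarrow\tcE$, i.e.\ a nonzero section of $\tcE$. Since I have shown $H^0(Z,\tcE)=0$, no such section exists, so $\PP_Z(\tcE)_\can$ cannot be a cone; thus $\tcE$ is a del Pezzo bundle (and, by the equivalence below, a maximal one). The delicate point is the rigorous passage from ``cone'' to ``honest trivial summand'' rather than merely a degree-zero subsheaf of $\cO(A)$; I would handle this exactly as in Lemma~\ref{lem:dpv-dpb}, using (generic) global generation of $\tcE^\vee$ to upgrade the contracted directrix to a splitting $\tcE\cong\cO_Z\oplus\tcE'$ and thereby derive the contradiction.

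It remains to prove the three-way equivalence. The identity $\rk(\cE)=\rc_2(\cE)$ is equivalent to $H^\bullet(Z,\cE)=0$ at once from \eqref{eq:dim-h1} together with $H^0(Z,\cE)=H^2(Z,\cE)=0$ (Lemma~\ref{lem:dpv-dpb}). If these hold and $\cE\hookrightarrow\cE'$ is an embedding of del Pezzo bundles with $\cE'/\cE\cong\cO_Z$, then $\rc_2(\cE')=\rc_2(\cE)=\rk(\cE)$ whereas $\rk(\cE')=\rk(\cE)+1$, contradicting $\rk(\cE')\le\rc_2(\cE')$ from \eqref{eq:dpb-inequality}; hence $\cE$ is maximal. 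Conversely, if $H^1(Z,\cE)\neq0$, the first stage $\cE^{(1)}$ of the chain above satisfies $\cE\hookrightarrow\cE^{(1)}$ with $\cE^{(1)}/\cE\cong\cO_Z$ and, since the chosen extension class is nonzero, $H^0(Z,\cE^{(1)})=0$, so $\cE^{(1)}$ is again a (non-conical) del Pezzo bundle by the criterion of the previous paragraph; thus $\cE$ is not maximal. This closes the loop $\rk(\cE)=\rc_2(\cE)\iff H^\bullet(Z,\cE)=0\iff\cE\text{ maximal}$ and completes the proof.
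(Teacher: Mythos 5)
Your proposal is, in substance, the paper's own proof. The universal extension defined by the identity element of $\Ext^1\big(H^1(Z,\cE)\otimes\cO_Z,\,\cE\big)\cong H^1(Z,\cE)^\vee\otimes H^1(Z,\cE)$, the identification of~$\PP_Z(\cE)$ as the linear section of~$\PP_Z(\tcE)$ cut out by the sections coming from the trivial quotient, the inductive use of Lemma~\ref{lem:criterion-divisor}\ref{it:divisor-dp} to get the almost del Pezzo property, and the derivation of the three-way equivalence from~\eqref{eq:dim-h1} and~\eqref{eq:dpb-inequality} all coincide with the paper's argument. Your two local variations are correct and, if anything, more explicit: you observe that the connecting homomorphism of~\eqref{eq:ce-tce} on $H^1(Z,\cE)$ is the identity (which yields $H^\bullet(Z,\tcE)=0$ directly, without first knowing $\tcE$ is a del Pezzo bundle), and you verify the ``no effective divisor disjoint from the section'' hypothesis by injectivity of the restriction map $\Cl(\PP_Z(\cE^{(j+1)}))\to\Cl(\PP_Z(\cE^{(j)}))$, where the paper instead argues fiberwise that an effective divisor meets the positive-dimensional linear space $\PP(\cE_z)\subset\PP(\tcE_z)$; both verifications are valid.

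The only substantive divergence is your third paragraph, on non-conicality of $\PP_Z(\tcE)_\can$, and there your argument does not close as written. You need the implication ``conical $\Rightarrow H^0(Z,\tcE)\neq 0$'', but the proof of Lemma~\ref{lem:dpv-dpb} that you invoke establishes only the converse: a nonzero section produces a trivial summand whose associated section of the projective bundle is contracted to a vertex. Reversing this is not formal. A correct route would be: let $\bv$ be a putative vertex and $V=\xi^{-1}(\bv)$; if $\bv\in\PP_Z(\cE)_\can$, then for any other point $x$ of $\PP_Z(\cE)_\can$ the line through $\bv$ and $x$ has $A$-degree~$1$ and meets each fundamental divisor cutting out $\PP_Z(\cE)_\can$ in two points, hence lies in all of them, so $\PP_Z(\cE)_\can$ is itself conical --- a contradiction; while if $\bv\notin\PP_Z(\cE)_\can$, the canonical trivialization of $\cO(A)$ along $V$ splits the extension pulled back to $V$, and one must still descend this splitting to $Z$ (e.g.\ by a trace argument when $V$ dominates $Z$) to manufacture the desired section of $\tcE$. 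None of this appears in your text, and ``global generation of $\tcE^\vee$'' by itself does not supply it. In fairness, the paper's proof is silent on exactly this point --- it passes from ``$\PP_Z(\tcE)$ is almost del Pezzo'' directly to ``$\tcE$ is a del Pezzo bundle'' --- so everything the paper explicitly proves, you prove too; but the paragraph you added to fill that hole is a sketch resting on a reversed citation, not a complete argument.
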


\begin{proof}
The extension~\eqref{eq:ce-tce} is given by the canonical element in
\begin{equation*}
\Ext^1 \big( H^1(Z,\cE) \otimes \cO_Z, \cE \big) \cong 
H^1(Z,\cE)^\vee \otimes \Ext^1(\cO_Z, \cE) \cong 
H^1(Z,\cE)^\vee \otimes H^1(Z,\cE).
\end{equation*}
Let~$\tX \coloneqq \PP_Z(\tcE)$.
Then the second arrow in the exact sequence~\eqref{eq:ce-tce} gives an element in
\begin{equation*}
\Hom(\tcE, H^1(Z,\cE) \otimes \cO_Z) \cong 
H^1(Z,\cE) \otimes H^0(Z,\tcE^\vee) \cong
\Hom \big(H^0(\tX, \cO_{\tX}(A_{\tX}))^\vee, H^1(Z,\cE) \big)
\end{equation*}
hence a linear subspace in~$\PP(H^0(\tX, \cO_{\tX}(A_{\tX}))^\vee)$.
It is clear that the corresponding linear section of~$\tX$ is~$X = \PP_Z(\cE)$.
If~$D \subset \PP_Z(\tcE)$ is a nontrivial effective divisor, 
$D \cap \PP(\tcE_z)$ contains a hyperplane for at least one point~$z \in Z$, 
while~$\PP(\cE_z) \subset \PP(\tcE_z)$ contains a line,
hence~$D \cap \PP_Z(\cE) \ne \varnothing$.
Therefore, applying several times Lemma~\ref{lem:criterion-divisor}\ref{it:divisor-dp} 
we conclude that~$\tX$ is an almost del Pezzo variety, 
hence~$\tcE$ is a del Pezzo bundle.

Looking at the cohomology exact sequence of~\eqref{eq:ce-tce} and using the cohomology vanishings of Lemma~\ref{lem:dpv-dpb},
we conclude that~$H^\bullet(Z, \tcE) = 0$.
Now the equality of~\eqref{eq:dim-h1} applied to~$\tcE$ implies that~$\tcE$ is maximal.
Conversely, for any maximal~$\tcE$ the equality of~\eqref{eq:dim-h1} 
combined with the cohomology vanishings of Lemma~\ref{lem:dpv-dpb} proves that~$H^\bullet(Z,\tcE) = 0$.
\end{proof} 

\subsection{Del Pezzo bundles and blowups}
\label{ss:dpb-blowups}

In this section we investigate the behavior of del Pezzo bundles under birational transformations of underlying surfaces.

\begin{proposition}
\label{prop:dpb}
Assume~$Z'$ is a del Pezzo surface and~$\sigma \colon Z' \to Z$ is the blowup of a point~$z \in Z$.
If~$\cE'$ is a del Pezzo bundle on~$Z'$ then~$\cE \coloneqq \sigma_*\cE'$ is a del Pezzo bundle on~$Z$ 
and there is an exact sequence
\begin{equation}
\label{eq:dpb-sequence}
0 \longrightarrow \sigma^*\cE \longrightarrow \cE' \longrightarrow \cO_L(-1) \longrightarrow 0,
\end{equation}
where~$L \subset Z'$ is the exceptional line of~$\sigma$. 
Moreover, there is a point~$P \in \PP_Z(\cE)$ and a pseudoisomorphism
\begin{equation*}
\chi \colon \Bl_P(\PP_Z(\cE)) \dashrightarrow \PP_{Z'}(\cE')
\end{equation*}
which is an isomorphism away from the subvarieties
\begin{equation*}
\Bl_P(\PP(\cE_z)) \subset \Bl_P(\PP_Z(\cE))
\qquad\text{and}\qquad 
L \times \PP^{n-3} \subset \PP_{Z'}(\cE'),
\end{equation*}
where the latter corresponds to a trivial subbundle in~$\cE'\vert_L$.
\end{proposition}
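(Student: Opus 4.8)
The plan is to pin down the restriction $\cE'|_L$, then construct $\cE=\sigma_*\cE'$ together with the exact sequence by an elementary modification, and finally realize the birational map between $\PP_Z(\cE)$ and $\PP_{Z'}(\cE')$ as an explicit flop after blowing up the point $P$.

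First I would compute $\cE'|_L$. Since $Z'$ is del Pezzo and $L$ is a $(-1)$-curve, $\deg(\cE'|_L)=\rc_1(\cE')\cdot L=K_{Z'}\cdot L=-1$. Writing $\cE'|_L\cong\bigoplus\cO_L(a_i)$ with $a_1\ge\cdots\ge a_{n-1}$, the sub-line-bundle $\cO_L(a_1)\hookrightarrow\cE'|_L$ gives a section of $\PP_L(\cE'|_L)\subset X'$ on which $A_{X'}$ has degree $-a_1$ (in the convention $\pi_*\cO_{X'}(A_{X'})=\cE'^\vee$ of Lemma~\ref{lemma:ext-rays0a}); nefness of $A_{X'}$ forces $a_1\le0$, so with $\sum a_i=-1$ we get $\cE'|_L\cong\cO_L^{\oplus(n-2)}\oplus\cO_L(-1)$. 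The quotient $\cE'\twoheadrightarrow\cO_L(-1)$ has a kernel $\cE''$, and restricting $0\to\cE''\to\cE'\to\iota_*\cO_L(-1)\to0$ to $L$, using $\Tor_1(\iota_*\cO_L(-1),\cO_L)\cong\cO_L(-1)\otimes N_{L/Z'}^\vee\cong\cO_L$, shows $\cE''|_L\cong\cO_L^{\oplus(n-1)}$ is trivial; hence $\cE''$ is a pullback from $Z$, i.e.\ $\cE''\cong\sigma^*\cE$ with $\cE=\sigma_*\cE'$ locally free, which is \eqref{eq:dpb-sequence}. Comparing first Chern classes with $K_{Z'}=\sigma^*K_Z+L$ gives $\rc_1(\cE)=K_Z$, and a Grothendieck--Riemann--Roch computation for $\iota_*\cO_L(-1)$ gives $\rc_2(\cE)=\rc_2(\cE')$, whence $\dd(\PP_Z(\cE))=\dd(\PP_{Z'}(\cE'))+1$. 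Finally, restricting $\sigma^*\cE\hookrightarrow\cE'$ to $L$ exhibits $\ker(\sigma^*\cE|_L\to\cE'|_L)\cong\cO_L$ as a constant line $\ell\subset\cE_z$, and the point $P=[\ell]\in\PP(\cE_z)\subset\PP_Z(\cE)=X$ is the point in the statement.

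Next I would build the birational map. The inclusion $\sigma^*\cE\hookrightarrow\cE'$, an isomorphism over $Z'\setminus L$, induces a birational map $q\colon\PP_{Z'}(\sigma^*\cE)=X\times_Z Z'\dashrightarrow X'$; over each point of $L$ it is the linear projection of $\PP(\cE_z)$ from $P$, so $q$ contracts the divisor $\PP(\cE_z)\times L$ lying over $L$ onto $L\times\PP^{n-3}=\PP_L(\cO_L^{\oplus(n-2)})\subset X'$ and is undefined along the section $\{P\}\times L$. Blowing up this curve produces a smooth $W$ on which $q$ resolves to a morphism; the strict transform of $\PP(\cE_z)\times L$ becomes $\Theta\cong\Bl_P(\PP(\cE_z))\times L$, which carries two $\PP^1$-fibrations. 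The plan is to verify that the two associated contractions of $W$ realize it simultaneously as the blowup of $X'$ along $L\times\PP^{n-3}$ and as the blowup of $\Bl_P(X)$ along the strict transform $\Bl_P(\PP(\cE_z))$ of the fibre $\PP(\cE_z)$, with $\Theta$ the common exceptional divisor. This exhibits $\chi\colon\Bl_P(X)\dashrightarrow X'$ as an isomorphism in codimension one, an isomorphism away from exactly the two codimension-two loci in the statement.

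Finally I would upgrade this to the del Pezzo statement. Both $W\to X'$ and $W\to\Bl_P(X)$ are blowups of smooth codimension-two centres, hence both have discrepancy $1$ along $\Theta$, so $\chi$ is crepant; moreover the flopped curves—the strict transforms of the lines through $P$ in $\PP(\cE_z)$, on which $A_{\Bl_P(X)}$ (equal to the pullback of $A_X$ minus the exceptional divisor $E$ by Proposition~\ref{prop:dP4:constr-i}) has degree $1-1=0$—are $K$-trivial. Therefore $A_{\Bl_P(X)}$ is pulled back from $X'_\can$ (Lemma~\ref{lem:adp-dp}), hence nef and big with $\Bl_P(X)\to X'_\can$ small, so $\Bl_P(X)$ is almost del Pezzo with $(\Bl_P(X))_\can\cong X'_\can$. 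Contracting the $K$-negative extremal divisor $E$ via Lemma~\ref{lemma:ext-rays0} then shows $X=\PP_Z(\cE)$ is almost del Pezzo; it is non-conical because a contracted section would yield a sub-line-bundle $\cO_Z\hookrightarrow\cE$, whereas $H^0(Z,\cE)=H^0(Z',\cE')=0$ by Lemma~\ref{lem:dpv-dpb} applied to $\cE'$. Hence $\cE$ is a del Pezzo bundle. I expect the main obstacle to be the middle step: checking precisely that the single blowup of $\{P\}\times L$ produces the symmetric flop diagram with the two stated centres, and that everything it modifies is genuinely of codimension two, since this requires a careful local analysis of the elementary transformation of the $\PP^{n-2}$-bundles along $L$ and the identification of the two rulings of $\Theta$.
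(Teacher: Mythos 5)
Your construction of $\cE$ and of the sequence~\eqref{eq:dpb-sequence} is essentially the paper's own argument: the paper deduces the splitting $\cE'\vert_L \cong \cO_L^{\oplus(n-2)}\oplus\cO_L(-1)$ from (almost) global generation of~$(\cE')^\vee$ (Lemma~\ref{lem:dpv-dpb}) rather than from nefness of~$A_{X'}$ on sections, but both are fine, and the $\Tor$ computation and the descent of the kernel to~$Z$ are identical. Likewise your~$W$, the divisor~$\Theta$ and its two rulings are exactly the paper's middle object: the paper takes it in the form $\Bl_{L\times\PP^{n-3}}(\PP_{Z'}(\cE'))$, which is simultaneously the blowup of $\PP_{Z'}(\sigma^*\cE)$ along the section $\tilde L=\{P\}\times L$. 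The step you single out as the main obstacle is closed in the paper with no local analysis: the scheme preimage of~$P$ in $\PP_{Z'}(\sigma^*\cE)$ is~$\tilde L$, so its preimage in~$W$ is the Cartier exceptional divisor of the second blowup, whence the induced map $W \to \Bl_P(\PP_Z(\cE))$ is regular by the universal property of blowing up; since this morphism has relative Picard number one and its exceptional divisor maps onto the smooth codimension-two subvariety $\Bl_P(\PP(\cE_z))$, it is the blowup of that subvariety by \cite[Lemma~2.5]{K18}. You should invoke (or prove) a statement of this kind instead of coordinates. Also, your assertion that $A_{\Bl_P(X)}$ is pulled back from $X'_\can$ needs the small observation that the fibers of $W\to\Bl_P(X)$ (the curves $\{pt\}\times L\subset\Theta$) map to the curves $L\times\{q\}\subset L\times\PP^{n-3}$, on which $A_{X'}$ is trivial, so that $W\to X'_\can$ factors through $\Bl_P(X)$ by rigidity.

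The one genuine gap is the non-conicality claim at the end. You need the implication ``$\PP_Z(\cE)_\can$ conical $\Rightarrow$ some section of $\PP_Z(\cE)\to Z$ is contracted,'' but Lemma~\ref{lem:dpv-dpb} proves only the converse (a trivial subbundle produces a cone). A priori the fiber of the anticanonical morphism over a vertex is just a connected $A$-trivial subvariety: it could be a multisection of~$f$, or have dimension at most one with the sweeping degree-one curves horizontal over~$Z$, and in neither case do you get $\cO_Z\hookrightarrow\cE$, so $H^0(Z,\cE)=0$ does not by itself exclude a cone. (To be fair, the paper's proof of this proposition never verifies the del Pezzo bundle property of~$\cE$ at all, so you are attempting more than the authors wrote down; but your argument as stated does not close.) A repair consistent with your setup: suppose $X_\can$ were a cone with vertex~$\bv$. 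If $\xi^{-1}(\bv)=\{P\}$, then $\xi$ is an isomorphism near~$P$ and the lines through~$\bv$ lift to an $(n-1)$-dimensional covering family of curves through~$P$ whose strict transforms are $A_{\Bl_P(X)}$-trivial, contradicting smallness of the anticanonical morphism of the almost del Pezzo variety $\Bl_P(X)$. Otherwise $P\notin\xi^{-1}(\bv)$ (by Lemma~\ref{lemma:ext-rays0} no $K$-trivial curve passes through~$P$), so the strict transform of $\xi^{-1}(\bv)$ in $\Bl_P(X)$ is still connected, $A$-trivial and disjoint from the exceptional divisor, and the generic sweeping lines keep degree one; hence $(\Bl_P(X))_\can\cong\PP_{Z'}(\cE')_\can$ would again be conical, contradicting the hypothesis that~$\cE'$ is a del Pezzo bundle.
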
 

\begin{proof}
Since~$\cE'$ is a del Pezzo bundle, its dual is (almost) globally generated by Lemma~\ref{lem:dpv-dpb},
hence the bundle~${\cE'}^\vee\vert_L$ is globally generated
(because an almost globally generated vector bundle on~$\PP^1$ is globally generated).
On the other hand, $\deg({\cE'}\vert_L) = K_{Z'} \cdot L = -1$, therefore 
\begin{equation}
\label{eq:cep-l}
\cE'\vert_L \cong \cO_L^{\oplus (n-2)} \oplus \cO_L(-1).
\end{equation}
Consider the epimorphism~$\cE' \twoheadrightarrow \cE'\vert_L \twoheadrightarrow \cO_L(-1)$ and let~$\cE''$ be its kernel, 
so that we have an exact sequence
\begin{equation}
\label{eq:cepp}
0 \longrightarrow \cE'' \longrightarrow \cE' \longrightarrow \cO_L(-1) \longrightarrow 0.
\end{equation}
Restricting it to~$L$ and taking into account that~$\Tor_1(\cO_L,\cO_L) \cong \cO_L(-L) \cong \cO_L(1)$,
we obtain an exact sequence
\begin{equation}
\label{eq:ceppl}
0 \longrightarrow \cO_L \longrightarrow \cE''\vert_L \longrightarrow \cE'\vert_L \longrightarrow \cO_L(-1) \longrightarrow 0,
\end{equation}
which implies that~$\cE''\vert_L \cong \cO_L^{\oplus (n-1)}$.
Therefore, $\cE'' \cong \sigma^*\cE$ for a vector bundle~$\cE$ on~$Z$,
so that~\eqref{eq:cepp} takes the form of~\eqref{eq:dpb-sequence},
and pushing forward~\eqref{eq:dpb-sequence} to~$Z$ we also obtain~$\cE \cong \sigma_*\cE'$.

Now consider the following diagram of birational maps
\begin{equation*}
\xymatrix@C=5em{
&
\Bl_{L \times \PP^{n-3}}(\PP_{Z'}(\cE')) \ar@{..>}[r]^{\hat\sigma} \ar[dl]_{\pi_1} \ar[dr]^{\pi_2} &
\Bl_P(\PP_Z(\cE)) \ar[dr]^{\rho}
\\
\PP_{Z'}(\cE') &&
\PP_{Z'}(\sigma^*\cE) \ar[r]^{\tilde\sigma} \ar@{-->}[ll] &
\PP_Z(\cE).
}
\end{equation*}
Here~$\pi_1$ is the blowup of the projectivization~$L \times \PP^{n-3}$ of the first summand in~\eqref{eq:cep-l}
and~$\pi_2$ is the blowup of the section~$\tilde{L} \subset \PP_L(\sigma^*\cE\vert_L)$ 
corresponding to the first arrow in~\eqref{eq:ceppl},
so that the dashed arrow at the bottom is the elementary transformation of projective bundles
induced by the morphism~$\sigma^*\cE \to \cE'$.
Furthermore, $\tilde\sigma$ is the blowup of the fiber~$\PP(\cE_z)\cong \PP^{n-2}$ of~$\PP_Z(\cE)$ over the point~$z$,
and~$\rho$ is the blowup of the point~$P \coloneqq \tilde\sigma(\tilde{L})$.
As the maps~$\pi_2$, $\tilde\sigma$, and~$\rho$ are birational, 
there is a birational map~$\hat\sigma$ that makes the right square commutative.

Note that the scheme preimage of the point~$P$ in~$\PP_{Z'}(\sigma^*\cE)$ 
is the curve~$\tilde{L} \subset \PP_L(\sigma^*\cE\vert_L)$,
and its preimage in~$\Bl_{L \times \PP^{n-3}}(\PP_{Z'}(\cE'))$ is the exceptional divisor of~$\pi_2$.
As this is a Cartier divisor, the universal property of blowup 
shows that the rational map~$\hat\sigma$ is in fact regular.
Note also that the strict transform of the exceptional divisor~$L \times \PP(\cE_z)$ of~$\tilde{\sigma}$
is equal to the exceptional divisor~$E_1$ of~$\pi_1$,
that its image in~$\Bl_P(\Bl_Z(\cE))$ is the smooth subvariety~$\Bl_P(\PP(\cE_z))$ of codimension~$2$
and the relative Picard number of~$\hat\sigma$ is~$1$.
Applying~\cite[Lemma~2.5]{K18} we conclude that~$\hat\sigma$ is the blowup of~$\Bl_P(\PP(\cE_z))\cong \Bl_P(\PP^{n-2})$.

Since both~$\pi_1$ and~$\hat\sigma$ are blowups of smooth subvarieties of codimension~$2$
with the same exceptional divisor~$E_1$, we conclude that the composition
\begin{equation*}
\chi \coloneqq \pi_1 \circ \hat\sigma^{-1} \colon \Bl_P(\PP_Z(\cE)) \dashrightarrow \PP_{Z'}(\cE')
\end{equation*}
is a pseudoisomorphism.
\end{proof} 

The converse to Proposition~\ref{prop:dpb} is also true.

\begin{lemma}
\label{lem:dpb-converse}
If~$X = \PP_Z(\cE) \xrightarrow{\ f\ } Z$ is an almost del Pezzo variety
and~$P \in \PP_Z(\cE)$ is a point such that~$X' = \Bl_P(X)$ is also almost del Pezzo 
then there is a pseudoisomorphism~\mbox{$X' \dashrightarrow \PP_{Z'}(\cE')$},
where~$\sigma \colon Z' \to Z$ is the blowup of the point~$z \coloneqq f(P) \in Z$
and~$\cE'$ is defined by the exact sequence~\eqref{eq:dpb-sequence}.
In particular, $\cE'$ is a del Pezzo bundle.
\end{lemma}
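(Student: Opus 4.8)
The plan is to reverse the construction of Proposition~\ref{prop:dpb}. Set $z \coloneqq f(P)$ and let $\sigma \colon Z' = \Bl_z(Z) \to Z$ be the blowup, with exceptional line $L$; recall $L^2 = -1$, so $N_{L/Z'} \cong \cO_L(-1)$. Since $X = \PP_Z(\cE)$ is almost del Pezzo, the canonical bundle formula gives $\rc_1(\cE) = K_Z$ (as in the proof of Lemma~\ref{lemma:ext-rays0a}), and under the convention $f_*\cO_X(A_X) = \cE^\vee$ the point $P$ in the fiber $\PP(\cE_z) = f^{-1}(z)$ determines a line $\ell_P \subset \cE_z$. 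A local-to-global computation using $N_{L/Z'} \cong \cO_L(-1)$ identifies
\[
\Ext^1_{Z'}(\cO_L(-1),\, \sigma^*\cE) \cong H^0(L,\, \cE_z \otimes \cO_L) = \cE_z,
\]
and I would take the extension class corresponding to $\ell_P$; up to isomorphism this defines the bundle $\cE'$ fitting into~\eqref{eq:dpb-sequence}. By construction $\rk(\cE') = n - 1 \ge 2$ and $\rc_1(\cE') = \sigma^*K_Z + [L] = K_{Z'}$, while restricting~\eqref{eq:dpb-sequence} to $L$ reproduces the splitting~\eqref{eq:cep-l}.

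With $\cE'$ in hand, I would run the birational diagram from the proof of Proposition~\ref{prop:dpb} verbatim: the base change $\tilde\sigma \colon \PP_{Z'}(\sigma^*\cE) \to \PP_Z(\cE)$ is the blowup of the fiber $\PP(\cE_z)$, the inclusion $\sigma^*\cE \hookrightarrow \cE'$ induces the elementary transformation $\PP_{Z'}(\sigma^*\cE) \dashrightarrow \PP_{Z'}(\cE')$, and $\tilde{L}$ is the constant section at $P$. Because the inputs to that diagram — the bundle $\sigma^*\cE$, its restriction to $L$, the section $\tilde{L}$, and the splitting~\eqref{eq:cep-l} — are precisely the ones just produced, the analysis there (blowups of smooth codimension-$2$ centers sharing the exceptional divisor $E_1$, together with~\cite[Lemma~2.5]{K18}) applies without change and yields a pseudoisomorphism
\[
\chi \colon \Bl_P(\PP_Z(\cE)) \dashrightarrow \PP_{Z'}(\cE')
\]
that is an isomorphism away from the two codimension-$2$ loci named in the statement; in particular $\chi$ is an isomorphism in codimension one.

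It remains to recognize $\cE'$ as a del Pezzo bundle. Here $W \coloneqq \PP_{Z'}(\cE')$ is smooth, so terminal and Gorenstein with $-K_W = (n-1)A_W$, it satisfies $\rc_1(\cE') = K_{Z'}$ and $\rk(\cE') \ge 2$, and by hypothesis the source $\Bl_P(\PP_Z(\cE)) = X'$ is almost del Pezzo. The delicate point is that $A_W$ need not be nef merely because $\chi$ identifies it, in codimension one, with the nef-and-big class $A_{X'}$: nef-ness is not preserved by pseudoisomorphisms. I would therefore establish that $W$ is almost del Pezzo directly, by the ladder argument of Proposition~\ref{prop:dpb} — a general linear surface section of $W$ is a del Pezzo surface and meets every effective divisor, so Lemma~\ref{lem:criterion-divisor}\ref{it:divisor-dp} applies repeatedly and, through Lemma~\ref{lem:criterion-divisor}\ref{it:positive-divisors-dp}, yields nef-ness and bigness of $A_W$ together with the positivity of $A_W^{n-1}\cdot D$. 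Once $W$ is almost del Pezzo, Lemma~\ref{lem:dp-weak-dp} gives $W_\can \cong X'_\can$, and checking that this anticanonical model is non-conical finishes the proof. I expect the main obstacle to be exactly this combination: re-proving nef-ness of $A_W$ on the new model $W$, and ruling out a cone structure on $X'_\can$, which should reduce to non-conicality of $X$ (equivalently to the vanishing $H^0(Z,\cE) = 0$).
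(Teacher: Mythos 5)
Your core argument coincides with the paper's: the paper defines $\cE'$ as $\Ker(\sigma^*\cE^\vee \to \cO_L)^\vee$, i.e., the elementary transformation along $L$ determined by the embedding $\cO_L \hookrightarrow \sigma^*\cE\vert_L$ corresponding to $P$, which is exactly the extension you isolate via $\Ext^1_{Z'}(\cO_L(-1),\sigma^*\cE)\cong \cE_z$, and it then invokes the diagram of Proposition~\ref{prop:dpb} verbatim, as you do. The two texts diverge only at the last claim, that $\cE'$ is a del Pezzo bundle: the paper asserts this in one line from the pseudoisomorphism, whereas you rightly note that nefness of the fundamental class is not preserved by an arbitrary pseudoisomorphism and propose to re-run a ladder argument on $W=\PP_{Z'}(\cE')$. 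There is, however, a shorter route that vindicates the paper's one-liner and which you miss: the pseudoisomorphism of Proposition~\ref{prop:dpb} is \emph{crepant}, since both $\pi_1$ and $\hat\sigma$ are blowups of smooth codimension-$2$ centers in smooth varieties with the \emph{same} exceptional divisor $E_1$, so $\pi_1^*K_W=\hat\sigma^*K_{X'}$ and hence $\pi_1^*A_W=\hat\sigma^*A_{X'}$; nefness, bigness, and the positivity $A_W^{n-1}\cdot D>0$ for every effective divisor $D$ (smallness, via Lemma~\ref{lem:criterion-divisor}\ref{it:positive-divisors-dp}) then transfer directly from $X'$ to $W$. Your direct route is workable but not as free as you suggest: Proposition~\ref{prop:dpb} contains no ladder argument (that argument lives in Lemmas~\ref{lem:dpb-maximal} and~\ref{lem:dpb-ideal}), and the statement that a general linear surface section of $W$ is a del Pezzo surface is itself something to prove, because general surface sections of $W$ and of $X'$ do meet the flopping loci; one must identify both with a blowup of $Z'$ along the degeneracy locus of the cut-down map $\cE'\to\cO_{Z'}^{\oplus(n-2)}$, or again use crepancy. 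Finally, you are right to flag non-conicality: it does not follow from the stated hypotheses alone, is not addressed by the paper's proof either, and genuinely requires non-conicality of $X_\can$ (available in every application of the lemma, where $\cE$ is a del Pezzo bundle).
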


\begin{proof}
Consider the variety~$X'' \coloneqq \PP_{Z'}(\sigma^*\cE)$.
If~$L \subset Z'$ is the exceptional line of~$\sigma$ then
\begin{equation*}
(\sigma^*\cE)\vert_L \cong
\cE_z \otimes \cO_L.
\end{equation*}
In particular, the point~$P \in f^{-1}(z) = \PP(\cE_z)$ corresponds to 
an embedding~$\cO_L \hookrightarrow \sigma^*\cE\vert_L$.
Now we can define~$\cE'$ as 
\begin{equation*}
\cE' \coloneqq \Ker(\sigma^*\cE^\vee \longrightarrow \cO_L)^\vee,
\end{equation*}
where the map is the dual of the above embedding;
this is the unique vector bundle that fits into exact sequence~\eqref{eq:dpb-sequence}
that after restriction to~$L$ gives the sequence~\eqref{eq:ceppl} (where~$\cE'' = \sigma^*\cE$)
with the first arrow equal to the above embedding.
Now the argument of Proposition~\ref{prop:dpb} provides a pseudoisomorphism~$\PP_{Z'}(\cE') \dashrightarrow \Bl_P(X)$,
hence~$\cE'$ is a del Pezzo bundle.
\end{proof} 

\subsection{Del Pezzo bundles and duality}
\label{ss:dpb-duality}

In the next lemma we give a useful explicit description of del Pezzo bundles.
In what follows we say that points~$z_1,\dots,z_d$ on a del Pezzo surface~$Z$ are in \emph{general position}
if either~$d < K^2_Z$ and~$\Bl_{z_1,\dots,z_d}(Z)$ is a del Pezzo surface,
or~$d = K^2_Z$ and~$\Bl_{z_1,\dots,z_d}(Z)$ is a rational elliptic surface with nef anticanonical class and no~$(-2)$-curves.
Note that if the set~$z_1,\dots,z_d$ is in general position the same is true for any its subset.

\begin{lemma}
\label{lem:dpb-ideal}
Let~$\cE$ be a del Pezzo bundle on a del Pezzo surface~$Z$.
Set~$X = \PP_Z(\cE)$, $n = \dim(X)$, and~$d = \dd(X)$.
Then there is an exact sequence
\begin{equation}
\label{eq:ce-ideal}
0 \longrightarrow \cE \longrightarrow \cO_Z^{\oplus n} \longrightarrow \cI_{z_1,\dots,z_d}(-K_Z) \longrightarrow 0,
\end{equation} 
where~$z_1,\dots,z_d \in Z$ are points in general position. 

Conversely, if~$z_1,\dots,z_d \in Z$ are points in general position
and~$\cE$ is defined by~\eqref{eq:ce-ideal} 
where the second arrow is induced by an $n$-dimensional subspace in~$H^0(Z, \cI_{z_1,\dots,z_d}(-K_Z))$
then~$\cE$ is a del Pezzo bundle and~$\dd(\PP_Z(\cE)) = d$.
Finally, the bundle~$\cE$ is maximal if and only if the middle term is~$H^0(Z, \cI_{z_1,\dots,z_d}(-K_Z)) \otimes \cO_Z$ 
and the map is the evaluation.
\end{lemma}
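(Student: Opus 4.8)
The plan is to prove the two implications separately, after first recording the numerical facts common to both — which make the degree assertion automatic — and to isolate the general position of $z_1,\dots,z_d$ as the crux. For \emph{any} exact sequence of the shape~\eqref{eq:ce-ideal} with $d$ points, the sheaf $\cE$ is the kernel of a surjection from a vector bundle onto a torsion-free sheaf on the smooth surface $Z$, hence is reflexive and therefore locally free of rank $n-1$. Its Chern classes are forced: since $\cI_{z_1,\dots,z_d}(-K_Z)$ has $\rc_1=-K_Z$ and $\rc_2=d$, one reads off $\rc_1(\cE)=K_Z$ and $\rc_2(\cE)=K_Z^2-d$, and the Grothendieck relation used in Lemma~\ref{lem:dpv-dpb} gives $\dd(\PP_Z(\cE))=\rc_1(\cE)^2-\rc_2(\cE)=d$ with no further hypotheses. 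Thus the degree claim is free and the numerics agree with~\eqref{eq:dpb-inequality}; note also that $\rc_2(\cE)\ge 2$ forces $d\le K_Z^2-2$, so the rational elliptic alternative in the definition of general position never occurs here.

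For the converse I start from $W\coloneqq\{z_1,\dots,z_d\}$ in general position and $\cE=\Ker\bigl(\cO_Z^{\oplus n}\twoheadrightarrow\cI_{z_1,\dots,z_d}(-K_Z)\bigr)$. Then $X\coloneqq\PP_Z(\cE)$ is smooth, hence terminal and Gorenstein, and the projective bundle formula with $\rc_1(\cE)=K_Z$ gives $-K_X=(n-1)A_X$ for the relative hyperplane class $A_X$. To invoke Lemma~\ref{lem:criterion-divisor}\ref{it:positive-divisors-dp} I would dualize~\eqref{eq:ce-ideal} into
\begin{equation*}
0\longrightarrow\cO_Z(K_Z)\longrightarrow\cO_Z^{\oplus n}\longrightarrow\cE^\vee\longrightarrow\cO_W\longrightarrow 0,
\end{equation*}
which shows $\cE^\vee$ is globally generated away from $W$, so $\cO_X(A_X)$ is globally generated off the fibres over $W$; together with $A_X^n=d>0$ this yields that $A_X$ is nef and big. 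The remaining condition $A_X^{n-1}\cdot D>0$ for effective nontrivial $D$ I would check on $\Cl(X)=f^*\Cl(Z)\oplus\ZZ A_X$: writing $D=aA_X+f^*\gamma$, Lemma~\ref{lem:contractions-class}\ref{it:class-pn-2} turns it into $ad-K_Z\cdot\gamma>0$, and the general position of $W$ is precisely what controls the effective cone so that this holds. Finally, taking $H^0$ of~\eqref{eq:ce-ideal} gives $H^0(Z,\cE)=0$, since the $n$ chosen sections are linearly independent in $H^0(Z,\cI_{z_1,\dots,z_d}(-K_Z))$; exactly as in Lemma~\ref{lem:dpv-dpb}, this vanishing forbids a trivial summand and hence an apex, so $X_\can$ is non-conical and $\cE$ is a del Pezzo bundle.

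For the forward direction I would produce~\eqref{eq:ce-ideal} from a general $n$-dimensional subspace $V\subset H^0(Z,\cE^\vee)$: its evaluation is a map $\cE\to V^\vee\otimes\cO_Z\cong\cO_Z^{\oplus n}$, and since $\cE^\vee$ is (almost) globally generated (Lemma~\ref{lem:dpv-dpb}) a Bertini-type argument shows the cokernel is torsion-free of rank one with determinant $\cO_Z(-K_Z)$, hence equal to $\cI_W(-K_Z)$; Thom--Porteous identifies $\mathrm{length}(W)=\rc_1(\cE^\vee)^2-\rc_2(\cE^\vee)=K_Z^2-\rc_2(\cE)=d$. The main obstacle is to show that $W$ lands in general position. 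I would argue by contradiction: a failure of general position produces a curve class $\gamma$ on $Z$ whose pullback $f^*\gamma$, combined with $A_X$ via Lemma~\ref{lem:contractions-class}\ref{it:class-pn-2} and Lemma~\ref{lem:ad=1}, yields an effective divisor violating $A_X^{n-1}\cdot D>0$, contradicting that $\cE$ is del Pezzo; structurally this is the assertion that general position of $W$ matches $\Bl_W(Z)$ being a del Pezzo surface of degree $\rc_2(\cE)$, the configuration dual to $\cE$ in the sense of Remark~\ref{rem:hpd}.

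It remains to treat maximality. By Lemma~\ref{lem:dpb-maximal}, $\cE$ is maximal if and only if $\rk(\cE)=\rc_2(\cE)=K_Z^2-d$, that is $n=K_Z^2-d+1$. Since the $d\le K_Z^2-2$ points in general position impose independent conditions on $|-K_Z|$, one has $h^0\bigl(Z,\cI_{z_1,\dots,z_d}(-K_Z)\bigr)=K_Z^2+1-d$, which equals $n$ exactly in this case; hence the $n$-dimensional middle term of~\eqref{eq:ce-ideal} is forced to be all of $H^0\bigl(Z,\cI_{z_1,\dots,z_d}(-K_Z)\bigr)\otimes\cO_Z$ with map the evaluation, proving the last assertion and matching the maximal sequence of Lemma~\ref{lem:dpb-maximal}.
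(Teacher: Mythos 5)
Your numerical bookkeeping (reflexivity of the kernel, the Chern class computation giving $\rc_1(\cE)=K_Z$, $\rc_2(\cE)=K_Z^2-d$, hence $\dd(\PP_Z(\cE))=d$) and your maximality argument (via $\rk(\cE)=\rc_2(\cE)$ and independent conditions, equivalent to the paper's $H^\bullet(Z,\cE)=0$ criterion from Lemma~\ref{lem:dpb-maximal}) are fine. But both central geometric steps are asserted rather than proved, and the assertions hide exactly the content of the lemma. In the converse direction you reduce to checking $A_X^{n-1}\cdot D>0$ for every effective nontrivial $D=aA_X+f^*\gamma$ and then say that general position ``controls the effective cone so that this holds.'' That is the statement to be proved, not a proof. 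Ampleness of $-K_Z$ only handles $a=0$ (where $\gamma$ is effective); for $a>0$ the class $\gamma$ need not be effective, and you would have to bound $h^0(X,\cO_X(aA_X+f^*\gamma))=h^0(Z,\Sym^a\cE^\vee\otimes\cO_Z(\gamma))$, i.e.\ determine $\Eff(X)$ --- no mechanism is offered. (Quoting Lemma~\ref{lem:contractions-class}\ref{it:class-pn-2} here is also formally circular, since that lemma is stated for almost del Pezzo varieties, though the intersection formulas themselves hold for any projective bundle.) The paper avoids criterion \ref{it:positive-divisors-dp} entirely: it uses general position to exhibit a surface $S=\Bl_{z_{d+1},\dots,z_{d+k}}(Z)$ inside $\PP_Z(\cE)$ as a complete intersection of $n-2$ fundamental divisors (the extra points being the base points of a general anticanonical pencil through $z_1,\dots,z_d$), shows $S$ is a del Pezzo surface because all $K_Z^2$ points together form the blowup locus of a rational elliptic surface with nef anticanonical class and no $(-2)$-curves, checks by a two-case analysis that every effective divisor meets $S$, and then applies Lemma~\ref{lem:criterion-divisor}\ref{it:divisor-dp}.

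The forward direction has the same kind of gap, and here your proposed contradiction cannot even be completed in principle. The violating object cannot be a pullback: for $\gamma$ effective one always has $A_X^{n-1}\cdot f^*\gamma=-K_Z\cdot\gamma>0$. What a failure of general position actually produces is an irreducible curve $\Gamma\subset Z$ with $-K_Z\cdot\Gamma\le\sum_i\mult_{z_i}\Gamma$; restricting \eqref{eq:ce-ideal} to $\Gamma$ then gives a line subbundle of $\cE\vert_\Gamma$ of degree $\ge 0$, i.e.\ a \emph{curve} (a section of $X$ over $\Gamma$) with $A_X$-degree $\le 0$, not a divisor of negative top self-intersection against $A_X$. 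When the inequality is strict this contradicts nefness of $A_X$; but in the equality case --- a $(-2)$-curve on $\Bl_{z_1,\dots,z_d}(Z)$, which general position must also exclude --- you only get an $A_X$-trivial curve, and such curves are perfectly compatible with $X$ being almost del Pezzo (they are contracted by the anticanonical morphism). So no positivity-based contradiction exists in the borderline case, and your argument cannot close. The paper's route is different: it takes $R\subset S$ to be the intersection of two general anticanonical divisors on a general linear surface section $S$ of $X$, lets the $n$ fundamental divisors cutting out $R$ define the map $\cE\to\cO_Z^{\oplus n}$ with degeneracy locus $W=f(R)$, and deduces general position of $W$ from the fact that $\Bl_R(S)=\Bl_{z_1,\dots,z_{d+k}}(Z)$ is a rational elliptic surface with nef anticanonical class and no $(-2)$-curves, so that every proper subset of those $K_Z^2$ points is in general position. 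Finally, a smaller point: your non-conicality step reverses Lemma~\ref{lem:dpv-dpb}, which derives $H^0(Z,\cE)=0$ \emph{from} non-conicality; the converse implication is not established there and would need a separate argument.
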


\begin{proof}
Recall from Proposition~\ref{DP:|A|} that~$|A_X|$ is almost base point free,
and if~$\dd(X) = 1$, every divisor in~$|A_X|$ is smooth at the base point of~$|A_X|$.
Consider a general linear surface section~$S \subset X$ and a general intersection~$R \subset S$ 
of two anticanonical divisors on~$S$.
Then~$S$ is a del Pezzo surface of degree~$d$ and~$R = \{x_1,\dots,x_d\}$ is a reduced scheme of length~$d$ 
such that the elliptic surface~$\hS \coloneqq \Bl_{x_1,\dots,x_d}(S)$ 
has nef anticanonical class and no~$(-2)$-curves.

Since~$H^0(X, \cO_X(A_X)) = H^0(Z,\cE^\vee) = \Hom(\cE, \cO_Z)$,
the fundamental divisors cutting out~$R$ in~$X$ correspond to a morphism~$\varphi\colon \cE \to \cO_Z^{\oplus n}$ of vector bundles
such that its degeneration scheme~$f(R) = \{z_1,\dots,z_d\} \subset Z$ is also reduced, where~$f \colon X \to Z$ is the projection.
Therefore, $\Coker(\varphi)$ is a torsion-free sheaf of rank~$1$ and its first Chern class equals~$- \rc_1(\cE) = - K_Z$,
so that we have an exact sequence of the form~\eqref{eq:ce-ideal}.

It remains to show that~$z_1,\dots,z_d$ are in general position.
For this note that the morphism~$f\vert_S \colon S \to Z$ is birational, hence~$S = \Bl_{z_{d+1},\dots,z_{d+k}}(Z)$,
where~$k = K_Z^2 - d$ and~$z_{d+1},\dots,z_{d+k} \in Z$ are in general position because~$S$ is del Pezzo.
Recall that
\begin{equation*}
\hS = 
\Bl_{x_1,\dots,x_d}(S) = 
\Bl_{x_1,\dots,x_d}(\Bl_{z_{d+1},\dots,z_{d+k}}(Z)) \cong
\Bl_{z_1,\dots,z_d,z_{d+1},\dots,z_{d+k}}(Z)
\end{equation*}
is an elliptic surface with nef anticanonical class and no~$(-2)$-curves,
hence any subset in~$z_1,\dots,z_d,z_{d+1},\dots,z_{d+k}$ 
of cardinality less than~$d + k$ is in general position;
in particular so is the set~$z_1,\dots,z_d$.

Conversely, let~$z_1,\dots,z_d \in Z$ be points in general position,
choose an $n$-dimensional generating space of global sections of~$\cI_{z_1,\dots,z_d}(-K_Z)$,
and let~$\cE$ be defined by~\eqref{eq:ce-ideal}.
Consider a general anticanonical pencil in~$Z$ corresponding to a $2$-dimensional subspace of this~$n$-dimensional space
and let~$z_{d+1},\dots,z_{d+k}$ be its extra base points.
It is easy to see that these points form the degeneracy locus of the corresponding composition
\begin{equation*}
\cE \longrightarrow \cO_Z^{\oplus n} \longrightarrow \cO_Z^{\oplus(n-2)},
\end{equation*}
therefore the surface~$S = \Bl_{z_{d+1},\dots,z_{d+k}}(Z)$ is a linear section of~$\PP_Z(\cE)$.
It remains to note that~$\Bl_{z_1,\dots,z_d,z_{d+1},\dots,z_{d+k}}(Z)$ 
is a rational elliptic surface with nef anticanonical class and no~$(-2)$-curves,
hence~$S$ is a del Pezzo surface.
If~$D \subset \PP_Z(\cE)$ is a nontrivial effective divisor, 
either~$D \cap \PP(\cE_z)$ contains a hyperplane for each point~$z \in Z$, 
while the fiber of~$S$ over~$z_i$ contains a line for~$d + 1 \le i \le d + k$,
or~$D$ is the preimage of a divisor in~$Z$.
In either case~$D \cap \PP_Z(\cE) \ne \varnothing$,
and hence~$\PP_Z(\cE)$ is almost del Pezzo by Lemma~\ref{lem:criterion-divisor}\ref{it:divisor-dp}.

Finally, maximality of~$\cE$ is equivalent to the vanishing of~$H^\bullet(Z,\cE)$ by Lemma~\ref{lem:dpb-maximal}, 
hence the last claim.
\end{proof}

Using this construction we easily deduce the following duality property.

\begin{corollary}
\label{cor:dpb-duality}
Let~$\cE$ be a del Pezzo bundle on a del Pezzo surface~$Z$ such that
\begin{equation*}
2 \le\rc_2(\cE) \le K_Z^2 - 2,
\end{equation*}
and hence~$\cE^\vee$ is globally generated.
Then the vector bundle~$\cE^\perp$ defined by the exact sequence
\begin{equation}
\label{eq:ceperp}
0 \longrightarrow \cE^\perp \longrightarrow H^0(Z,\cE^\vee) \otimes \cO_Z \longrightarrow \cE^\vee \longrightarrow 0
\end{equation}
is a maximal del Pezzo bundle with~$\rc_2(\cE^\perp) = K_Z^2 - \rc_2(\cE)$.
If~$\cE$ is maximal then~$(\cE^\perp)^\perp \cong \cE$.
\end{corollary}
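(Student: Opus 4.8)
The plan is to first record the numerical invariants of $\cE^\perp$, then identify it as a del Pezzo bundle via the presentation of Lemma~\ref{lem:dpb-ideal}, and finally deduce maximality and involutivity by dualizing. Throughout write $X=\PP_Z(\cE)$, $n=\dim(X)$, $d=\dd(X)$, so that $\rk(\cE)=n-1$ and $d=K_Z^2-\rc_2(\cE)$ by Lemma~\ref{lem:dpv-dpb}. Since $\rc_2(\cE)\le K_Z^2-2$, the bundle $\cE^\vee$ is globally generated (Lemma~\ref{lem:dpv-dpb}), hence the evaluation map in~\eqref{eq:ceperp} is surjective and $\cE^\perp$ is a vector bundle. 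Taking total Chern classes in~\eqref{eq:ceperp} and using $\rc_1(\cE^\vee)=-K_Z$ and $\rc_2(\cE^\vee)=\rc_2(\cE)$ gives $\rc_1(\cE^\perp)=K_Z$ and $\rc_2(\cE^\perp)=K_Z^2-\rc_2(\cE)$. For the rank, recall $\cE^\vee=f_*\cO_X(A_X)$, so $h^0(Z,\cE^\vee)=h^0(X,\cO_X(A_X))=d+n-1$ by Proposition~\ref{DP:|A|}\ref{DP:|A|1}, whence $\rk(\cE^\perp)=(d+n-1)-(n-1)=d=K_Z^2-\rc_2(\cE)$; in particular $\rk(\cE^\perp)=\rc_2(\cE^\perp)$. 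Finally, the long exact cohomology sequence of~\eqref{eq:ceperp}, the rationality of $Z$ (so $H^{>0}(Z,\cO_Z)=0$), and the vanishing $H^{>0}(X,\cO_X(A_X))=H^{>0}(Z,\cE^\vee)=0$ from Lemma~\ref{lem:vanishings} together yield $H^\bullet(Z,\cE^\perp)=0$.

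Next I would show that $\cE^\perp$ is a del Pezzo bundle by producing for it a presentation of the form~\eqref{eq:ce-ideal} and invoking the converse part of Lemma~\ref{lem:dpb-ideal}. Dualizing~\eqref{eq:ceperp} gives the exact sequence $0\to\cE\to H^0(Z,\cE^\vee)^\vee\otimes\cO_Z\to(\cE^\perp)^\vee\to 0$, so $(\cE^\perp)^\vee$ is a quotient of a trivial bundle and is therefore globally generated. Composing the inclusion $\cE^\perp\hookrightarrow H^0(Z,\cE^\vee)\otimes\cO_Z$ with a general linear projection onto a $(d+1)$-dimensional quotient of $H^0(Z,\cE^\vee)$, one checks by the snake lemma that the cokernel of the resulting map $\cE^\perp\to\cO_Z^{\oplus(d+1)}$ equals the cokernel of a general map $\cO_Z^{\oplus(n-2)}\to\cE^\vee$; since $\cE^\vee$ is globally generated of rank $n-1$, for a general such map this cokernel is a twisted ideal sheaf $\cI_W(-K_Z)$, where $W\subset Z$ is a reduced subscheme of length $\rc_2(\cE)$. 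This produces an exact sequence $0\to\cE^\perp\to\cO_Z^{\oplus(d+1)}\to\cI_W(-K_Z)\to 0$ of the shape required by Lemma~\ref{lem:dpb-ideal}. The main obstacle is to verify that $W$ lies in \emph{general position} on $Z$ in the sense of that lemma; I would establish this by the same rational-elliptic-surface argument used in the proof of Lemma~\ref{lem:dpb-ideal}, realizing $W$ together with the extra base points of a general anticanonical pencil among the $d+1$ sections as a point-set whose blowup is a rational elliptic surface with nef anticanonical class and no $(-2)$-curves --- equivalently, identifying $W$ with the complement, on a common del Pezzo surface section, of the $d$ points defining $\cE$. Granting this, the converse of Lemma~\ref{lem:dpb-ideal} shows that $\cE^\perp$ is a del Pezzo bundle; by the numerical equality $\rk(\cE^\perp)=\rc_2(\cE^\perp)$ obtained above and Lemma~\ref{lem:dpb-maximal}, it is moreover maximal, with $\rc_2(\cE^\perp)=K_Z^2-\rc_2(\cE)$ as claimed. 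Since $\rc_2(\cE)\ge 2$ we have $\rc_2(\cE^\perp)\le K_Z^2-2$, so $(\cE^\perp)^\vee$ is globally generated and the operation $\cE^\perp\mapsto(\cE^\perp)^\perp$ is defined.

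It remains to prove $(\cE^\perp)^\perp\cong\cE$ when $\cE$ is maximal. For maximal $\cE$ one has $H^\bullet(Z,\cE)=0$ by Lemma~\ref{lem:dpb-maximal}, so the cohomology of the dualized sequence $0\to\cE\to H^0(Z,\cE^\vee)^\vee\otimes\cO_Z\to(\cE^\perp)^\vee\to 0$ yields a canonical isomorphism $H^0(Z,(\cE^\perp)^\vee)\cong H^0(Z,\cE^\vee)^\vee$ under which the right-hand arrow becomes the evaluation morphism of $(\cE^\perp)^\vee$. Thus this sequence is precisely the sequence~\eqref{eq:ceperp} defining $(\cE^\perp)^\perp$, and comparing the two identifies $(\cE^\perp)^\perp$ with $\cE$. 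I expect the only genuinely delicate point in the whole argument to be the general-position claim for $W$ in the second paragraph; every other step is a Chern-class or cohomology computation resting directly on Lemmas~\ref{lem:dpv-dpb}, \ref{lem:dpb-maximal}, and~\ref{lem:dpb-ideal}.
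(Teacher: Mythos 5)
Your proposal is correct, but it reaches the statement by a route organized differently from---in fact, roughly dual to---the paper's. The paper never produces a presentation of~$\cE^\perp$ of the shape~\eqref{eq:ce-ideal}: it dualizes the presentation $0\to\cE\to\cO_Z^{\oplus n}\to\cI_{z_1,\dots,z_d}(-K_Z)\to 0$ of~$\cE$ supplied by the forward direction of Lemma~\ref{lem:dpb-ideal} and splices it with~\eqref{eq:ceperp}, obtaining a four-term sequence $0\to\cO_Z(K_Z)\to\cE^\perp\to\cO_Z^{\oplus(d-1)}\to\cO_{z_1,\dots,z_d}\to 0$; this exhibits $\Bl_{z_1,\dots,z_d}(Z)$, the blowup at the $d$ points defining~$\cE$ (already known to be in general position), as a linear surface section of~$\PP_Z(\cE^\perp)$, after which the paper reruns the divisor-intersection argument of Lemma~\ref{lem:dpb-ideal} via Lemma~\ref{lem:criterion-divisor}\ref{it:divisor-dp}, and reads off maximality from $H^1(Z,\cE^\perp)=0$ in~\eqref{eq:ceperp}. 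You instead manufacture, by a general projection plus the snake lemma, a presentation of~$\cE^\perp$ itself in the form~\eqref{eq:ce-ideal} and quote the converse direction of Lemma~\ref{lem:dpb-ideal} as a black box, certifying maximality by the criterion $\rk=\rc_2$ of Lemma~\ref{lem:dpb-maximal}; the treatments of $(\cE^\perp)^\perp\cong\cE$ are identical in both proofs. Two remarks on your version. First, the step you flag as the ``main obstacle'' is easier than you fear: since a matrix and its transpose drop rank on the same locus, your~$W$ is exactly the degeneracy locus of $\cE\to U^\vee\otimes\cO_Z\cong\cO_Z^{\oplus(n-2)}$, i.e., the set of points blown down by $f\vert_S\colon S\to Z$, where~$S$ is the general linear surface section of~$\PP_Z(\cE)$ cut out by~$U$; as $S\cong\Bl_W(Z)$ is a smooth del Pezzo surface and $|W|=\rc_2(\cE)<K_Z^2$, general position of~$W$ is immediate from the definition in~\S\ref{ss:dpb-duality}, with no need for the rational-elliptic-surface argument (which Lemma~\ref{lem:dpb-ideal} requires only for the harder claim about the images of the residual base points). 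Second, the same identification settles the point you left implicit, namely that the cokernel of your general map is $\cI_W(-K_Z)$ with~$W$ reduced: an injective map of vector bundles degenerating in codimension two has torsion-free cokernel, hence the cokernel is $\cI_{W'}(-K_Z)$ with~$W'$ supported on the $|W|$ distinct blown-down points and of total length $\rc_2(\cE)=|W|$, forcing $W'=W$. The trade-off between the two arguments: the paper's splicing avoids any genericity analysis of cokernels of general maps, while your route avoids repeating the almost-del-Pezzo verification by using the converse of Lemma~\ref{lem:dpb-ideal} wholesale.
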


\begin{proof}
By definition~$\rc_1(\cE^\perp) = K_Z$.
Dualizing the sequence~\eqref{eq:ce-ideal} we obtain
\begin{equation*}
0 \longrightarrow \cO_Z(K_Z) \longrightarrow \cO_Z^{\oplus n} \longrightarrow \cE^\vee \longrightarrow \cO_{z_1,\dots,z_d} \longrightarrow 0.
\end{equation*}
Combining it with sequence~\eqref{eq:ceperp} we obtain an exact sequence
\begin{equation*}
0 \longrightarrow \cO_Z(K_Z) \longrightarrow \cE^\perp 
\longrightarrow \cO_Z^{\oplus (d-1)} \longrightarrow \cO_{z_1,\dots,z_d} \longrightarrow 0.
\end{equation*}
Geometrically, this means that the linear section of~$\PP_Z(\cE^\perp)$ 
corresponding to the morphism~$\cE^\perp \to \cO_Z^{\oplus (d-1)}$ 
is isomorphic to the blowup of point~$z_1,\dots,z_d$ in~$Z$.
Since these points are in general position, the blowup is a del Pezzo surface.
Applying the argument from the proof of Lemma~\ref{lem:dpb-ideal}
we conclude that~$\PP_Z(\cE^\perp)$ is an almost del Pezzo variety, hence~$\cE^\perp$ is a del Pezzo bundle.
It is maximal because~\eqref{eq:ceperp} implies~$H^1(Z,\cE^\perp) = 0$,
and the relation between the second Chern classes also follows immediately from~\eqref{eq:ceperp}.

If~$\cE$ is maximal, $H^\bullet(Z,\cE) = 0$ by Lemma~\ref{lem:dpb-maximal}, 
hence the dual of~\eqref{eq:ceperp} 
\begin{equation*}
0 \longrightarrow \cE \longrightarrow H^0(Z, \cE^\vee)^\vee \otimes \cO_Z \longrightarrow (\cE^\perp)^\vee \longrightarrow 0
\end{equation*}
implies that~$H^0(Z, \cE^\vee)^\vee \cong H^0(Z, (\cE^\perp)^\vee)$, hence~$(\cE^\perp)^\perp \cong \cE$.
\end{proof}

The case where~$\rc_2(\cE) = K^2_Z - 1$ missing in Corollary~\ref{cor:dpb-duality} is discussed below.
Note that as~$\rc_2(\cE) \ge 2$ by Lemma~\ref{lem:dpv-dpb}, we may and will assume~$K_Z^2 \ge 3$.

Recall from the Introduction that for a smooth del Pezzo surface~$Z$ we denote by 
\begin{equation}
\label{eq:bhz}
\bH(Z) \coloneqq \{ (D,z) \in \PP(H^0(Z,\cO(-K_Z))) \times Z \mid z \in D \}
\end{equation}
the universal anticanonical divisor on~$Z$.
Note that the projection~$\bH(Z) \to Z$ is a projectivization of a vector bundle 
(because~$\cO(-K_Z)$ is globally generated), 
hence~$\bH(Z)$ is smooth 
and, moreover, if~$L \subset Z$ is a line, the subvariety
\begin{equation}
\label{eq:pil}
\Pi_L \coloneqq \{ (D,z) \in \bH(Z) \mid z \in L \} \subset \bH(Z)
\end{equation}
is a divisor in~$\bH(Z)$. 

\begin{proposition}
\label{prop:bl-pze-hblz}
Let~$\cE$ be a maximal del Pezzo bundle with~$\rc_2(\cE) = K_Z^2 - 1$ on a del Pezzo surface~$Z$. 
Let~$x_0 \in X \coloneqq \PP_Z(\cE)$ be the base point of~$|A_X|$ and let~$z_0 \in Z$ be its image.
Set~$S \coloneqq \Bl_{z_0}(Z)$ and let~$L_0 \subset S$ be the exceptional line over~$z_0$.
Then there is a commutative diagram
\begin{equation*}
\xymatrix{
E_0 \ar@{^{(}->}[r] &
\Bl_{x_0}(\PP_Z(\cE)) \ar@{-->}[rr]^\chi \ar[dr]_{\Phi_{|A|}} && 
\bH(S) \ar[dl]^{\operatorname{pr}} &
\Pi_{L_0} \ar@{_{(}->}[l]
\\
&& \PP(H^0(S, \cO(-K_{S})))
}
\end{equation*}
where~$\Phi_{|A|}$ is induced by the anticanonical map of~$\PP_Z(\cE)$, 
$\operatorname{pr}$ is the natural projection,
and~$\chi$ is a pseudoisomorphism. 
Moreover, the exceptional divisor~$E_0 \subset \Bl_{x_0}(\PP_Z(\cE))$
is mapped by~$\chi$ to the divisor~$\Pi_{L_0} \subset \bH(S)$, 
and the indeterminacy loci of~$\chi$ and~$\chi^{-1}$ are contracted 
to the linear subspace~$\PP(H^0(S,\cO(-K_{S}-L_0))) \subset \PP(H^0(S,\cO(-K_{S})))$ 
of codimension~$2$.
\end{proposition}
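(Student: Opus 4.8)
\subsection*{Proof plan}

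The plan is to realize $\bH(S)$ as a projective bundle over $S$ and to identify it with the elementary transformation of $X = \PP_Z(\cE)$ along the fiber over $z_0$, so that the bundle-theoretic construction of Proposition~\ref{prop:dpb} supplies the pseudoisomorphism~$\chi$. First I would record the numerics. Since $\cE$ is maximal with $\rc_2(\cE) = K_Z^2 - 1$, Lemma~\ref{lem:dpb-maximal} gives $\rk(\cE) = \rc_2(\cE) = K_Z^2 - 1$, hence $n = \dim(X) = K_Z^2$, and Lemma~\ref{lem:dpv-dpb} gives $\dd(X) = K_Z^2 - \rc_2(\cE) = 1$, so $|A_X|$ has a single base point $x_0$ by Proposition~\ref{DP:|A|}\ref{DP:|A|3a}. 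Lemma~\ref{lem:dpb-ideal} with $d = 1$ presents $\cE$ as
\begin{equation*}
0 \longrightarrow \cE \longrightarrow H^0(Z, \cI_{z_1}(-K_Z)) \otimes \cO_Z \xrightarrow{\ \mathrm{ev}\ } \cI_{z_1}(-K_Z) \longrightarrow 0,
\end{equation*}
and dualizing shows that $\cE^\vee$ fails to be globally generated exactly over $z_1$, the base point $x_0 \in \PP(\cE_{z_1})$ being the point where all sections of $\cE^\vee$ vanish; thus $z_1 = z_0$. Using $H^0(X, A_X) = H^0(Z, \cE^\vee)$ and the dual sequence I obtain a canonical isomorphism $H^0(X, A_X)^\vee \cong H^0(Z, \cI_{z_0}(-K_Z)) = H^0(S, \cO_S(-K_S))$, which identifies the target $\PP^{n-1}$ of $\Phi_{|A|}$ with the base $\PP(H^0(S,\cO(-K_S)))$ of $\bH(S)$.

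Next I would describe $\bH(S)$ intrinsically. Because $K_S^2 = K_Z^2 - 1 \ge 2$ the class $-K_S$ is globally generated, so the evaluation sequence
\begin{equation*}
0 \longrightarrow \mathcal{M} \longrightarrow H^0(S, \cO(-K_S)) \otimes \cO_S \longrightarrow \cO_S(-K_S) \longrightarrow 0
\end{equation*}
defines a vector bundle $\mathcal{M}$ with $\rk(\mathcal{M}) = K_S^2 = K_Z^2 - 1$, $\rc_1(\mathcal{M}) = K_S$ and $\rc_2(\mathcal{M}) = K_S^2$, and by construction $\bH(S) \cong \PP_S(\mathcal{M})$ with $\cO_S(-K_S)$ its relative hyperplane class and $\Pi_{L_0} = \PP_{L_0}(\mathcal{M}\vert_{L_0})$. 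Note that $\mathcal{M}$ plays the role of a degenerate del Pezzo bundle: it satisfies $\rc_1(\mathcal{M}) = K_S$ and $\rk(\mathcal{M}) = \rc_2(\mathcal{M})$, but $\dd(\PP_S(\mathcal{M})) = K_S^2 - \rc_2(\mathcal{M}) = 0$, reflecting the fact that $\bH(S)$ is an anticanonical (elliptic) fibration rather than an almost del Pezzo variety.

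The heart of the argument is to identify $\mathcal{M}$ with the elementary transform of $\cE$ determined by $x_0$. Pulling back the ideal sequence along $\sigma \colon S \to Z$, replacing the torsion-laden $\sigma^*(\cI_{z_0}(-K_Z))$ by its torsion-free quotient $\cO_S(-K_S)$, and using $H^0(Z,\cI_{z_0}(-K_Z)) = H^0(S,-K_S)$, I would identify the composite $H^0(S,-K_S) \otimes \cO_S \to \cO_S(-K_S)$ with the evaluation map of the previous paragraph; this exhibits $\sigma^*\cE \subset \mathcal{M}$ with quotient equal to the torsion of $\sigma^*(\cI_{z_0}(-K_Z))$, yielding an elementary transformation sequence
\begin{equation*}
0 \longrightarrow \sigma^*\cE \longrightarrow \mathcal{M} \longrightarrow \cO_{L_0}(-1) \longrightarrow 0 .
\end{equation*}
One must check that the sub-line-bundle $\cO_{L_0} \hookrightarrow \sigma^*\cE\vert_{L_0}$ governing this transformation is precisely the one cut out by $x_0 \in \PP(\cE_{z_0})$; this is exactly the datum defining $\cE'$ in Lemma~\ref{lem:dpb-converse}, so $\mathcal{M} \cong \cE'$. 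Feeding this into the blowup diagram from the proof of Proposition~\ref{prop:dpb} (which is purely a statement about projective bundles and elementary transformations, and hence applies even though $\Bl_{x_0}(X)$ is not itself almost del Pezzo) produces the pseudoisomorphism $\chi \colon \Bl_{x_0}(\PP_Z(\cE)) \dashrightarrow \PP_S(\mathcal{M}) = \bH(S)$, carrying the exceptional divisor $E_0$ to $\Pi_{L_0} = \PP_{L_0}(\mathcal{M}\vert_{L_0})$.

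Finally I would verify commutativity and locate the indeterminacy. Both $\Phi_{|A|}$ and $\operatorname{pr}$ are induced by the respective relative hyperplane systems $|\sigma^*A_X - E_0|$ and $|{-}K_S|$ restricted to the fibers, which correspond under $\chi$ via the identification of targets from the first step, giving commutativity. The indeterminacy loci of $\chi^{\pm 1}$ are the images of the two blown-up centers appearing in the diagram of Proposition~\ref{prop:dpb}, both of which lie over the locus in $\PP(H^0(S,-K_S))$ of anticanonical divisors singular along $L_0$; this locus is the linear subspace $\PP(H^0(S, \cO(-K_S - L_0)))$, of dimension $K_Z^2 - 3$ and hence of codimension~$2$. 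The main obstacle is the third step: making rigorous the identification of the base point $x_0$ with the correct elementary-transformation datum and computing the torsion quotient so as to pin down the twist $\cO_{L_0}(-1)$, and then tracking $E_0 \mapsto \Pi_{L_0}$ together with the two exceptional centers cleanly through the blowup diagram of Proposition~\ref{prop:dpb}.
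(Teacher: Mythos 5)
Your proposal is correct and follows essentially the same route as the paper's proof: both construct the elementary transform $\cE'$ of $\cE$ at $x_0$ (Lemma~\ref{lem:dpb-converse}/Proposition~\ref{prop:dpb}), identify $\PP_{S}(\cE')$ with $\bH(S)$ via the evaluation sequence $0 \to \cE' \to H^0(S,\cO(-K_S))\otimes\cO_S \to \cO_S(-K_S) \to 0$, and read off the indeterminacy loci from Proposition~\ref{prop:dpb}. The differences are cosmetic — you build the bundle $\mathcal{M}$ from $\bH(S)$ first and then match it with $\cE'$, whereas the paper proceeds in the opposite order, and the step you flag as the main obstacle ($\chi_*(E_0)=\Pi_{L_0}$) is settled in the paper simply by noting that $\chi$ is compatible with the projections to $Z$ and that $E_0$ and $\Pi_{L_0}$ are the unique divisors lying over $z_0 \in Z$.
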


\begin{proof}
Let~$\sigma \colon S \coloneqq \Bl_{z_0}(Z) \to Z$ be the blowup.
The argument of Lemma~\ref{lem:dpb-ideal} shows that~$z_0$ is in general position, 
hence~$S$ is a del Pezzo surface.
Applying Lemma~\ref{lem:dpb-converse} we find a vector bundle~$\cE'$ on~$S$ and a pseudoisomorphism
\begin{equation*}
\Bl_{x_0}(\PP_Z(\cE)) \dashrightarrow \PP_{S}(\cE')
\end{equation*}
(note that the left hand side is not a del Pezzo variety, 
but rather the blowup of a del Pezzo variety of degree~$1$ at its base point,
but the construction of the pseudoisomorphism in Proposition~\ref{prop:dpb} in this case works in the same way). 

On the other hand, combining the sequence~\eqref{eq:dpb-sequence} defining~$\cE'$ with~\eqref{eq:ce-ideal}
(or repeating the argument of Lemma~\ref{lem:dpb-ideal} for~$\PP_{S}(\cE')$) 
we obtain an exact sequence
\begin{equation*}
0 \longrightarrow \cE' \longrightarrow H^0(S, \cO_{S}(-K_{S})) \otimes \cO_{S} \longrightarrow \cO_{S}(-K_{S}) \longrightarrow 0.
\end{equation*}
It follows that~$\PP_{S}(\cE') \cong \bH(S)$.
Combining this isomorphism with the pseudoisomorphism constructed above, we obtain the pseudoisomorphism~$\chi$. 

Note that the morphism~$\Phi_{|A|}$ is given by the anticanonical class of~$\Bl_{x_0}(\PP_Z(\cE))$.
Since~$\chi$ is a pseudoisomorphism, it corresponds to the anticanonical class of~$\bH(S)$, 
which is equal to a multiple of the hyperplane class of~$\PP(H^0(S, \cO(-K_{S})))$.
Therefore, the diagram commutes.

Moreover, by Proposition~\ref{prop:dpb} the indeterminacy locus of~$\chi^{-1}$ 
is equal to the projectivization over the line~$L_0$ 
of the trivial subbundle in~$H^0(S,\cO(-K_{S}-L_0)) \otimes \cO_{L_0} \subset \cE'\vert_{L_0}$ 
of codimension~$2$,
hence its image under~$\operatorname{pr}$ is equal to~$\PP(H^0(S,\cO(-K_{S}-L_0)))$. 
Since~$\chi$ is a pseudoisomorphism, the image of the indeterminacy locus of~$\chi$ is the same.

Finally, $\chi$ is compatible with the projection to~$Z$,
and the divisors~$E_0$ and~$\Pi_{L_0}$ are the only divisors supported over~$z_0 \in Z$, 
hence~$\chi_*(E_0) = \Pi_{L_0}$.
\end{proof} 

\subsection{Classification of del Pezzo bundles}

In this section we classify maximal del Pezzo bundles on minimal del Pezzo surfaces 
and sketch a classification in other cases.

\begin{proposition}
\label{prop:bundle-p2}
Let~$\cE = \cE_{\PP^2,k}$ be a maximal del Pezzo bundle on~$\PP^2$ with~$\rc_2(\cE) = k$.
Then~$d = \dd(\PP_{\PP^2}(\cE_{\PP^2,k})) = 9 - k \le 7$ and
\begin{itemize}
\item 
if~$d = 7$ then~$\cE_{\PP^2,2} \cong \cO(-1) \oplus \cO(-2)$;
\item 
if~$d = 6$ then~$\cE_{\PP^2,3} \cong \cO(-1)^{\oplus 3}$;
\item 
if~$d = 5$ then~$\cE_{\PP^2,4} \cong \cO(-1)^{\oplus 2} \oplus \Omega(1)$;
\item 
if~$d = 4$ then~$\cE_{\PP^2,5} \cong \cO(-1) \oplus \Omega(1)^{\oplus 2}$;
\item 
if~$d = 3$ then~$\cE_{\PP^2,6} \cong \Omega(1)^{\oplus 3}$;
\item 
if~$d = 2$ there is a canonical exact sequence $0 \to \cE_{\PP^2,7} \to \cO^{\oplus 9} \to \cO(1) \oplus \cO(2) \to 0$;
\item 
if~$d = 1$ there is an exact sequence 
$0 \to \cE_{\PP^2,8} \to \cO^{\oplus 9} \oplus \cO(1) \to \cO(2) \oplus \cO(2) \to 0$.
\end{itemize}
In particular, for~$d \ge 2$ the bundle~$\cE_{\PP^2,9-d}$ is unique up to isomorphism,
and for~$d = 1$ the variety~$\PP_{\PP^2}(\cE_{\PP^2,9-d})$ is unique up to isomorphism.
\end{proposition}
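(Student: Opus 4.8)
The plan is to deduce everything from the description of maximal del Pezzo bundles as evaluation kernels in Lemma~\ref{lem:dpb-ideal}, combined with the duality involution of Corollary~\ref{cor:dpb-duality}. First, Lemma~\ref{lem:dpv-dpb} gives $\dd = K_{\PP^2}^2 - \rc_2(\cE) = 9 - k$ and $\rc_2(\cE)\ge 2$, whence $d\le 7$; maximality means $\rk(\cE) = \rc_2(\cE)$ by Lemma~\ref{lem:dpb-maximal}. By Lemma~\ref{lem:dpb-ideal} such an~$\cE$ is the kernel of the evaluation $H^0(\PP^2,\cI_Z(3))\otimes\cO\to\cI_Z(3)$ for a set~$Z$ of~$d$ points in general position (here $-K_{\PP^2}=\cO(3)$), and any such kernel is automatically a del Pezzo bundle. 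Since $d$ general points in~$\PP^2$ form a single $\PGL_3$-orbit as soon as $d\le 4$, the bundle is unique for $d\le 4$, i.e.\ for $k\ge 5$; the remaining cases $k\in\{2,3,4\}$ will be handled by duality.

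For $d\le 4$ I would resolve the ideal sheaf and combine with the evaluation sequence. Write the twisted Hilbert--Burch resolution $0\to\cF\to\cQ\to\cI_Z(3)\to 0$ with $\cF,\cQ$ sums of line bundles; lifting the evaluation to~$\cQ$ (possible as $H^1(\cF)=0$) and comparing kernels identifies~$\cE$ with $\Ker(\cO^{\oplus N}\oplus\cF\to\cQ)$, where $N=h^0(\cI_Z(3))$. The four resolutions are explicit: for the three coordinate points $\cI_Z=(x_0x_1,x_1x_2,x_2x_0)$, so $(\cF,\cQ)=(\cO^{\oplus 2},\cO(1)^{\oplus 3})$; for four general points $Z$ is a complete intersection of two conics, so $(\cF,\cQ)=(\cO(-1),\cO(1)^{\oplus 2})$; for two points $\cI_Z=(x_2,x_0x_1)$, so $(\cF,\cQ)=(\cO,\cO(1)\oplus\cO(2))$; and for one point the Koszul resolution gives $(\cF,\cQ)=(\cO(1),\cO(2)^{\oplus 2})$. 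In the cases $d=1,2$ the source $\cO^{\oplus N}\oplus\cF$ and the quotient~$\cQ$ are precisely the terms in the asserted sequences, so these drop out immediately. In the cases $d=3,4$ I use that~$\cE$ is maximal, hence $H^0(\cE)=H^1(\cE)=0$ by Lemma~\ref{lem:dpb-maximal}: this forces the map $\cO^{\oplus 9}\to\cO(1)^{\oplus 3}$ (for $d=3$), resp.\ the block $\cO^{\oplus 6}\to\cO(1)^{\oplus 2}$ (for $d=4$), to be an isomorphism on global sections, so up to an automorphism of the source it is the full evaluation, with kernel $\Omega(1)^{\oplus 3}$, resp.\ $\Omega(1)^{\oplus 2}$. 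This yields $\cE_{\PP^2,6}=\Omega(1)^{\oplus 3}$ at once, and for $d=4$ an extension $0\to\Omega(1)^{\oplus 2}\to\cE_{\PP^2,5}\to\cO(-1)\to 0$ that splits because $\Ext^1(\cO(-1),\Omega(1))=H^1(\PP^2,\Omega(2))=0$, giving $\cE_{\PP^2,5}=\cO(-1)\oplus\Omega(1)^{\oplus 2}$. All cohomology vanishings for $\cO(-1),\cO(-2),\Omega(1)$ used here follow from the Euler sequence.

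For $k\in\{2,3,4\}$ I would invoke the involution $\cE\mapsto\cE^\perp$ of Corollary~\ref{cor:dpb-duality}, which sends $\rc_2=k$ to $\rc_2=9-k\in\{5,6,7\}$ and satisfies $(\cE^\perp)^\perp\cong\cE$ on maximal bundles. At the level of its defining exact sequence, $\cdot^\perp$ distributes over direct sums and sends $\cO(-1)$ to $\Omega(1)$ and back, so $\cE_{\PP^2,5}^\perp=\cO(-1)^{\oplus 2}\oplus\Omega(1)=\cE_{\PP^2,4}$ and $\cE_{\PP^2,6}^\perp=\cO(-1)^{\oplus 3}=\cE_{\PP^2,3}$; dualizing the $d=2$ sequence (and using $H^\bullet(\cO(-1)\oplus\cO(-2))=0$ to recognize the evaluation) gives $\cE_{\PP^2,7}^\perp=\cO(-1)\oplus\cO(-2)=\cE_{\PP^2,2}$. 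Uniqueness for these follows formally: any maximal~$\cE$ with $\rc_2(\cE)=k\in\{2,3,4\}$ has $\cE^\perp$ maximal with $\rc_2(\cE^\perp)=9-k$, built by Lemma~\ref{lem:dpb-ideal} from $k\le 4$ points, hence unique by homogeneity, and $(\cE^\perp)^\perp\cong\cE$ then pins down~$\cE$. The Chern class $\rc_2=k$ is checked throughout from $\rc_1(\cE)=K_{\PP^2}$ and the rank via the total Chern class, and for $d=1$ the variety $\PP_{\PP^2}(\cE_{\PP^2,8})$ is unique because one point is unique up to $\Aut(\PP^2)$.

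The main obstacle is the explicit identification of the evaluation kernels with the stated bundles: one must resolve each ideal sheaf, track the combined map, and --- most delicately --- determine when the resulting extension splits (this is exactly where the vanishing $H^1(\PP^2,\Omega(2))=0$ enters) versus when it genuinely does not, namely the two nonsplit sequences in the cases $d=1,2$. The second subtlety is uniqueness for $d=5,6,7$, where the~$d$ points vary in positive-dimensional moduli; this is what forces the detour through the duality of Corollary~\ref{cor:dpb-duality}, which trades a moduli-laden configuration of many points for a rigid configuration of at most four.
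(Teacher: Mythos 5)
Your proof is correct and is essentially the paper's own argument: the identical resolutions of~$\cI_Z(3)$ combined with the evaluation sequence of Lemma~\ref{lem:dpb-ideal} for~$d\le 4$, followed by the duality of Corollary~\ref{cor:dpb-duality} (which indeed acts summand-wise, exchanging~$\cO(-1)$ and~$\Omega(1)$) for~$d\in\{5,6,7\}$, with uniqueness coming from the single $\PGL_3$-orbit of at most four general points and transported through~$(\cE^\perp)^\perp\cong\cE$. You are in fact slightly more explicit than the paper on the splitting $\Ext^1(\cO(-1),\Omega(1))=H^1(\PP^2,\Omega(2))=0$ in the $d=4$ case and on why uniqueness for~$d\ge 5$ must pass through duality, but these are refinements of, not departures from, the published proof.
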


\begin{proof}
By Lemma~\ref{lem:dpv-dpb} we have~$k \ge 2$ and~$d = K_{\PP^2}^2 - k = 9 - k \le 7$.

For~$d \le 4$ we consider the description of~$\cE$ provided by Lemma~\ref{lem:dpb-ideal} 
and combine it with the following resolutions (where we use the generality position property of~$z_1,\dots,z_d$)
\begin{align*}
0 \longrightarrow \cO(1) \longrightarrow \cO(2) \oplus \cO(2) \longrightarrow \cI_z(3) \longrightarrow 0,
\\
0 \longrightarrow \cO \longrightarrow \cO(1) \oplus \cO(2) \longrightarrow \cI_{z_1,z_2}(3) \longrightarrow 0,
\\
0 \longrightarrow \cO^{\oplus 2} \longrightarrow \cO(1)^{\oplus 3} \longrightarrow \cI_{z_1,z_2,z_3}(3) \longrightarrow 0,
\\
0 \longrightarrow \cO(-1) \longrightarrow \cO(1)^{\oplus 2} \longrightarrow \cI_{z_1,z_2,z_3,z_4}(3) \longrightarrow 0.
\end{align*}
Here the first, second, and fourth sequences are the Koszul resolutions, 
and the third follows easily by decomposing~$\cI_{z_1,z_2,z_3}(3)$ 
with respect to the exceptional collection~$(\cO,\cO(1),\cO(2))$ on~$\PP^2$.
Using the fact that~$\cO(1)$ is globally generated 
and the kernel of its evaluation morphism is~$\Omega(1)$ (by the Euler sequence),
we obtain the required descriptions.

For~$d \in \{5,6\}$ we deduce from Corollary~\ref{cor:dpb-duality} 
combined with the descriptions of~$\cE_{\PP^2,5}$ and~$\cE_{\PP^2,6}$ obtained above
the exact sequences
\begin{align*}
&
0 \longrightarrow \cE_{\PP^2,4} \longrightarrow \cO^{\oplus 9} \longrightarrow \cO(1) \oplus \cT(-1)^{\oplus 2} \longrightarrow 0,
\\
&
0 \longrightarrow \cE_{\PP^2,3} \longrightarrow \cO^{\oplus 9} \longrightarrow \cT(-1)^{\oplus 3} \longrightarrow 0,
\end{align*}
and combining them with the Euler sequence we also obtain the required descriptions.
Similarly, for~$d = 7$ the required description follows immediately from Corollary~\ref{cor:dpb-duality}.

The uniqueness of~$\cE_{\PP^2,9-d}$ for~$d \ge 2$ is obvious from the above construction.
Similarly, for~$d = 1$ the bundle~$\cE_{\PP^2,8}$ is determined by a point~$z \in \PP^2$,
hence the uniqueness of~$\PP_{\PP^2}(\cE_{\PP^2,8})$ 
follows from the transitivity of the~$\PGL_3$-action on~$\PP^2$.
\end{proof}

\begin{proposition}
\label{prop:bundle-p1p1}
Let~$\cE = \cE_{\PP^1 \times \PP^1,k}$ be a maximal del Pezzo bundle on the surface~$\PP^1 \times \PP^1$ with~\mbox{$\rc_2(\cE) = k$}.
Then~$d = \dd(\PP_{\PP^1 \times \PP^1}(\cE_{\PP^1 \times \PP^1,k})) = 8 - k \le 6$ and
\begin{itemize}
\item 
if~$d = 6$ then~$\cE_{\PP^1 \times \PP^1,2} \cong \cO(-1,-1)^{\oplus 2}$;
\item 
if~$d = 5$ then~$\cE_{\PP^1 \times \PP^1,3} \cong \cO(-1,-1) \oplus \cO(-1,0) \oplus \cO(0,-1)$;
\item 
if~$d = 4$ then~$\cE_{\PP^1 \times \PP^1,4} \cong \cO(-1,0)^{\oplus 2} \oplus \cO(0,-1)^{\oplus 2}$;
\item 
if~$d = 3$ then~$\cE_{\PP^1 \times \PP^1,5} \cong \cO(-1,0) \oplus \cO(0,-1) \oplus \Omega_{\PP^3}(1)\vert_{\PP^1 \times \PP^1}$;
\item 
if~$d = 2$ then~$\cE_{\PP^1 \times \PP^1,6} \cong \Omega_{\PP^3}(1)\vert_{\PP^1 \times \PP^1}^{\oplus 2}$;
\item 
if~$d = 1$ there is an exact sequence 
\begin{equation*}
0 \longrightarrow \cE_{\PP^1 \times \PP^1,7} \longrightarrow \cO^{\oplus 8} \oplus \cO(1,1) \longrightarrow \cO(1,2) \oplus \cO(2,1) \longrightarrow 0.
\end{equation*}
\end{itemize}
In particular, for~$d \ge 2$ the bundle~$\cE_{\PP^1 \times \PP^1,8-d}$ is unique up to isomorphism,
and for~$d = 1$ the variety~$\PP_{\PP^1 \times \PP^1}(\cE_{\PP^1 \times \PP^1,8-d})$ is unique up to isomorphism.
\end{proposition}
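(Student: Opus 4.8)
The plan is to follow the strategy of Proposition~\ref{prop:bundle-p2}, replacing $\PP^2$ by $Z = \PP^1 \times \PP^1$, for which $K_Z^2 = 8$ and $-K_Z = \cO(2,2)$. First I would invoke Lemma~\ref{lem:dpv-dpb} to get $\rc_2(\cE) = k$ with $2 \le k \le K_Z^2 - 1 = 7$ and $d = \dd(\PP_Z(\cE)) = K_Z^2 - k = 8 - k$, so that $1 \le d \le 6$; this also records (via Lemma~\ref{lem:dpb-maximal}) that $\cE$ is maximal if and only if $\rk(\cE) = \rc_2(\cE) = k$ if and only if $H^\bullet(Z,\cE) = 0$. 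The cases then split into the small-degree range $d \in \{1,2,3,4\}$, treated directly, and the range $d \in \{5,6\}$, obtained by duality.

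For the small-degree cases I would use Lemma~\ref{lem:dpb-ideal} to present the maximal bundle as $\cE = \ker\bigl(\cO_Z^{\oplus(9-d)} \to \cI_{z_1,\dots,z_d}(2,2)\bigr)$ for points $z_1,\dots,z_d$ in general position (here $9-d = h^0(\cI_{z_1,\dots,z_d}(2,2))$), and then combine this evaluation sequence with an explicit locally free resolution of the twisted ideal sheaf via a fibre product, exactly as in the proof of Proposition~\ref{prop:bundle-p2}. The resolutions I would use are the Koszul resolutions
\[
0 \to \cO(1,1) \to \cO(1,2)\oplus\cO(2,1) \to \cI_{z}(2,2) \to 0,
\qquad
0 \to \cO \to \cO(1,1)^{\oplus 2} \to \cI_{z_1,z_2}(2,2) \to 0
\]
(a point is cut out by a $(1,0)$- and a $(0,1)$-divisor, while two general points form the complete intersection of two $(1,1)$-divisors), together with the exceptional-collection resolutions
\[
0 \to \cO^{\oplus 2} \to \cO(1,0)\oplus\cO(0,1)\oplus\cO(1,1) \to \cI_{z_1,z_2,z_3}(2,2) \to 0
\]
and
\[
0 \to \cO^{\oplus 3} \to \cO(1,0)^{\oplus 2}\oplus\cO(0,1)^{\oplus 2} \to \cI_{z_1,\dots,z_4}(2,2) \to 0,
\]
which I would derive by decomposing the twisted ideal sheaves with respect to the exceptional collection $(\cO, \cO(1,0), \cO(0,1), \cO(1,1))$ on $\PP^1 \times \PP^1$ (checking ranks and Chern classes against $\rc_1 = (2,2)$, $\rc_2 = d$). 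Taking the fibre product of each resolution with the evaluation sequence yields a presentation $0 \to \cE \to \cO^{\oplus(9-d)} \oplus R \to M \to 0$; for $d \in \{2,3,4\}$ the term $R$ is free, so the middle term is $\cO^{\oplus 8}$, and since $\cE$ is maximal we have $H^\bullet(Z,\cE) = 0$, whence the cohomology sequence forces $\cO^{\oplus 8} = H^0(Z,M)\otimes\cO_Z$ with the map equal to the evaluation of $M$. Then $\cE$ is the syzygy bundle of $M$, and splitting $M$ into line-bundle summands and using the restricted Euler sequences $\cO(-1,0) = \ker(\cO^{\oplus 2}\to\cO(1,0))$, $\cO(0,-1) = \ker(\cO^{\oplus 2}\to\cO(0,1))$, and $\Omega_{\PP^3}(1)\vert_{\PP^1 \times \PP^1} = \ker(\cO^{\oplus 4}\to\cO(1,1))$ produces the asserted descriptions for $d = 4,3,2$; for $d = 1$ the term $R = \cO(1,1)$ survives and the combined sequence is the stated four-term sequence verbatim.

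For $d \in \{5,6\}$, that is $k \in \{3,2\}$, I would apply the duality of Corollary~\ref{cor:dpb-duality}: since $\cE^\perp$ is again a maximal del Pezzo bundle with $\rc_2(\cE^\perp) = K_Z^2 - \rc_2(\cE)$, the bundle with $\rc_2 = 3$ (resp.\ $2$) is the $\perp$-dual of the already-computed bundle with $\rc_2 = 5$ (resp.\ $6$). Concretely, dualizing the descriptions of $\cE_{\PP^1 \times \PP^1,5}$ and $\cE_{\PP^1 \times \PP^1,6}$ gives defining sequences $0 \to \cE^\perp \to \cO^{\oplus 8} \to \cE^\vee \to 0$ whose right-hand terms involve $\cO(1,0)$, $\cO(0,1)$ and $\cT_{\PP^3}(-1)\vert_{\PP^1 \times \PP^1}$, the dual of $\Omega_{\PP^3}(1)\vert_{\PP^1 \times \PP^1}$; feeding in the dual Euler sequence $0 \to \cO(-1,-1)\to\cO^{\oplus 4}\to\cT_{\PP^3}(-1)\vert_{\PP^1 \times \PP^1}\to 0$ collapses the kernels to $\cO(-1,-1)\oplus\cO(-1,0)\oplus\cO(0,-1)$ and $\cO(-1,-1)^{\oplus 2}$, as claimed. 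The self-dual value $k = 4$ ($d = 4$) is already covered by the direct computation and is consistent with $\cO(-1,0)^{\oplus 2}\oplus\cO(0,-1)^{\oplus 2}$ being $\perp$-self-dual.

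Finally, for uniqueness: when $d \ge 2$ the resulting descriptions are manifestly independent of the chosen points, so the bundle is determined up to isomorphism; when $d = 1$ the bundle depends on the base point $z \in \PP^1 \times \PP^1$, but transitivity of $\PGL_2 \times \PGL_2$ shows that $\PP_{\PP^1 \times \PP^1}(\cE_{\PP^1 \times \PP^1,7})$ is unique up to isomorphism. The main obstacle I anticipate is justifying the two exceptional-collection resolutions for three and four general points: these configurations are not complete intersections (unlike the four-point case on $\PP^2$), so one cannot simply write a Koszul complex and must instead verify, using the exceptional collection together with the general-position hypothesis of Lemma~\ref{lem:dpb-ideal}, that the displayed generators really generate the twisted ideal and that the syzygy module has the stated rank. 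Once the resolutions are established, the maximality vanishing $H^\bullet(Z,\cE) = 0$ makes the identification of the combined sequence with an evaluation morphism, and hence the final splitting, essentially automatic.
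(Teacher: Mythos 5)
Your overall architecture coincides with the paper's proof: Lemma~\ref{lem:dpv-dpb} for the numerics ($k\ge 2$, $d=8-k\le 6$), Lemma~\ref{lem:dpb-ideal} plus an auxiliary resolution of $\cI_{z_1,\dots,z_d}(2,2)$ for $d\le 4$, Corollary~\ref{cor:dpb-duality} together with the dual Euler sequence for $d\in\{5,6\}$, and the uniqueness argument of Proposition~\ref{prop:bundle-p2}; your resolutions for one, two and four points and your duality computation are literally the ones in the paper. The one place you diverge is $d=3$, and there your anticipated ``main obstacle'' rests on a false premise: three general points on $\PP^1\times\PP^1$ \emph{are} a complete intersection, namely of the unique $(1,1)$-curve through them with a general $(2,1)$-curve through them (note $(1,1)\cdot(2,1)=3$), and the paper simply writes the corresponding Koszul resolution $0\to\cO(-1,0)\to\cO(1,1)\oplus\cO(0,1)\to\cI_{z_1,z_2,z_3}(2,2)\to 0$. (On $\PP^1\times\PP^1$ it is the \emph{four}-point configuration that fails to be a complete intersection; you have the two cases backwards.) If you adopt this Koszul resolution, the kernel $\cO(-1,0)$ is not free, so your ``fibre product is $\cO^{\oplus 8}$, maximality forces evaluation'' step must be replaced: one gets an extension $0\to\Omega_{\PP^3}(1)\vert_{\PP^1\times\PP^1}\oplus\cO(0,-1)\to\cE\to\cO(-1,0)\to 0$, which splits because $\Ext^1\bigl(\cO(-1,0),\,\Omega_{\PP^3}(1)\vert_{\PP^1\times\PP^1}\oplus\cO(0,-1)\bigr)=H^1\bigl(\Omega_{\PP^3}(1)\vert_{\PP^1\times\PP^1}(1,0)\bigr)\oplus H^1(\cO(1,-1))=0$.

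Your alternative resolution $0\to\cO^{\oplus 2}\to\cO(1,0)\oplus\cO(0,1)\oplus\cO(1,1)\to\cI_{z_1,z_2,z_3}(2,2)\to 0$ is in fact correct for generic choices of the three sections, and it has the virtue that all your auxiliary kernels are free, so the maximality argument applies uniformly. But the verification you defer is strictly more than ``the generators generate and the syzygy module has the stated rank'': a rank-$2$ kernel with $\rc_1=0$, $\rc_2=0$, two sections and no higher cohomology need not be trivial, since $\cO(1,0)\oplus\cO(-1,0)$ has exactly the same invariants. The missing argument is this: the subsheaf $\cO(0,1)\oplus\cO(1,1)$ already surjects onto $\cI_{z_1,z_2,z_3}(2,2)$ with kernel $\cO(-1,0)$ (Koszul again), so the full kernel $K$ sits in an extension $0\to\cO(-1,0)\to K\to\cO(1,0)\to 0$; this extension splits if and only if the chosen $(1,2)$-curve contains the $(1,1)$-curve through the points as a component, so for a generic choice it is non-split, and the unique non-split extension in $\Ext^1(\cO(1,0),\cO(-1,0))\cong\CC$ is $\cO^{\oplus 2}$ (the pullback of the Euler sequence from the first factor). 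The same triviality issue is present in the four-point resolution, which you share with the paper (and which the paper also only asserts); there your flagged concern is apt and an analogous argument is needed.
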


\begin{proof}
By Lemma~\ref{lem:dpv-dpb} we have~$k \ge 2$ and~$d = K_{\PP^1 \times \PP^1}^2 - k = 8 - k \le 6$.

For~$d \le 4$ we consider the description of~$\cE$ provided by Lemma~\ref{lem:dpb-ideal} 
and combine it with the following resolutions (where we use the generality position property of~$z_1,\dots,z_d$)
\begin{align*}
0 \longrightarrow \cO(1,1) \longrightarrow \cO(1,2) \oplus \cO(2,1) \longrightarrow \cI_z(2,2) \longrightarrow 0,
\\
0 \longrightarrow \cO \longrightarrow \cO(1,1) \oplus \cO(1,1) \longrightarrow \cI_{z_1,z_2}(2,2) \longrightarrow 0,
\\
0 \longrightarrow \cO(-1,0) \longrightarrow \cO(1,1) \oplus \cO(0,1) 
\longrightarrow \cI_{z_1,z_2,z_3}(2,2) \longrightarrow 0,
\\
0 \longrightarrow \cO^{\oplus 3} \longrightarrow \cO(0,1)^{\oplus 2} \oplus \cO(1,0)^{\oplus 2} 
\longrightarrow \cI_{z_1,z_2,z_3,z_4}(2,2) \longrightarrow 0.
\end{align*}
Here the first three sequences are the Koszul resolutions, 
and the last sequence follows easily by decomposing of the sheaf~$\cI_{z_1,z_2,z_3,z_4}(2,2)$ 
with respect to the exceptional collection~$(\cO,\cO(0,1),\cO(1,0),\cO(1,1))$ on~$\PP^1 \times \PP^1$.
Using the fact that~$\cO(0,1)$, $\cO(1,0)$, $\cO(1,1)$ are globally generated 
and the kernels of their evaluation morphisms 
are~$\cO(0,-1)$, $\cO(-1,0)$, and~$\Omega_{\PP^2}(1)\vert_{\PP^1\times \PP^1}$
we obtain the required descriptions.

For~$d \in \{5,6\}$ we deduce from Corollary~\ref{cor:dpb-duality} combined with 
the descriptions of~$\cE_{\PP^1 \times \PP^1,5}$ and~$\cE_{\PP^1 \times \PP^1,6}$ 
obtained above the exact sequences
\begin{align*}
&
0 \longrightarrow 
\cE_{\PP^1 \times \PP^1,3} \longrightarrow 
\cO^{\oplus 8} \longrightarrow 
\cO(1,0) \oplus \cO(0,1) \oplus \cT_{\PP^3}(-1)\vert_{\PP^1 \times \PP^1} \longrightarrow 0,
\\
&
0 \longrightarrow 
\cE_{\PP^1 \times \PP^1,2} \longrightarrow 
\cO^{\oplus 8} \longrightarrow 
\cT_{\PP^3}(-1)\vert_{\PP^1 \times \PP^1}^{\oplus 2} \longrightarrow 0,
\end{align*}
and also obtain the required descriptions.

The uniqueness is proved in the same way as in Proposition~\ref{prop:bundle-p2}.
\end{proof}

\begin{remark}
\label{rem:dpb-other}
Using Proposition~\ref{prop:dpb} and the descriptions of del Pezzo bundles on~$\PP^2$ and~$\PP^1 \times \PP^1$,
one can also describe del Pezzo bundles on other del Pezzo surfaces~$Z$.
Here we give a description in the cases where~$d \ge K_Z^2/2\ge 2$, i.e., $\rc_2 \le K_Z^2/2$,
(note that the cases with~$2 \le d < K_Z^2/2$ 
can be obtained from these via the construction of Corollary~\ref{cor:dpb-duality}).
In the following table we use the description of~$Z$ as the blowup of~$\PP^2$ in~$9 - K_Z^2$ points
and denote by~$\h$ the pullback of the line class from~$\PP^2$ 
by~$\e_i$, $1 \le i \le 9 - K_Z^2$, the classes of the exceptional lines,
and write~$\Omega(\h)$ for the pullback of~$\Omega(1)$.
\begin{equation*}
\begin{array}{c|c|l}
Z & d & \text{description}
\\\hline
\FF_1 & 6 & \cE_{\FF_1,2} \cong \cO(-2\h+\e_1) \oplus \cO(-\h)
\\
\FF_1 & 5 & \cE_{\FF_1,3} \cong \cO(-\h+\e_1) \oplus \cO(-\h)^{\oplus 2}
\\
\FF_1 & 4 & \cE_{\FF_1,4} \cong \cO(-\h+\e_1) \oplus \cO(-\h) \oplus \Omega(\h)
\\\hline
Z_7 & 5 & \cE_{Z_7,2} \cong \cO(-2\h+\e_1+\e_2) \oplus \cO(-\h)
\\
Z_7 & 4 & \cE_{Z_7,3} \cong \cO(-\h+\e_1) \oplus \cO(-\h+\e_2) \oplus \cO(-\h)
\\\hline
Z_6 & 4 & \cE_{Z_6,2} \cong \cO(-2\h+\e_1+\e_2+\e_3) \oplus \cO(-\h)
\\
Z_6 & 3 & \cE_{Z_6,3} \cong \cO(-\h+\e_1) \oplus \cO(-\h+\e_2) \oplus \cO(-\h+\e_3)
\end{array}
\end{equation*}
Finally, for the quintic del Pezzo surface~$Z_5$ we have
\begin{equation*}
\cE_{Z_5,2} \cong \cU
\qquad\text{and}\qquad 
\cE_{Z_5,3} \cong \cU^\perp,
\end{equation*}
where~$\cU$ is the pullback of the tautological bundle from the embedding~$Z_5 \hookrightarrow \Gr(2,5)$,
and for the quartic del Pezzo surface~$Z_4$ we have a $1$-dimensional family of del Pezzo bundles
that can be described as 
\begin{equation*}
\cE_{Z_4,2} \cong \cS_{t}\vert_{Z_4},
\end{equation*}
where~$t$ runs through the punctured pencil of smooth quadrics passing through~$Z_4$
and~$\cS_t$ stands for the spinor bundle on the corresponding $3$-dimensional quadric.
\end{remark}

\begin{remark}
\label{rem:drb-d1}
By Lemma~\ref{lem:dpb-ideal} any maximal del Pezzo bundle~$\cE$ on a del Pezzo surface~$Z$ with~$\rc_2(\cE) = K_Z^2 - 1$
fits into the exact sequence
\begin{equation*}
0 \longrightarrow \cE \longrightarrow H^0(Z, \cI_z(-K_Z)) \otimes \cO_Z \longrightarrow \cI_z(-K_Z) \longrightarrow 0,
\end{equation*}
where~$z \in Z$ is a point in general position and the second arrow is the evaluation map.
Conversely, the point~$z \in Z$ is determined by the vector bundle~$\cE$ as the image of the base point 
of the almost del Pezzo variety~$X = \PP_Z(\cE)$ of degree~$1$ under the contraction~$X \to Z$.
\end{remark}  

\subsection{Quadric bundles}

In this section we discuss almost del Pezzo varieties~$X$ that have a structure of a flat quadric bundle~$f \colon X \to \PP^1$.
Recall from the proof of Lemma~\ref{lemma:ext-rays0a}\ref{lemma:ext-rays0a1} 
that any such~$X$ is a divisor of relative degree~$2$ in~$\PP_X(\cF)$,
where~$\cF \cong (f_*\cO_X(A_X))^\vee$.
Note also that every vector bundle over~$\PP^1$ splits, hence we can write
\begin{equation*}
\cF \cong \moplus_{i=0}^n \cO(-a_i).
\end{equation*}
Finally, note that the Picard group of~$\PP_{\PP^1}(\cF)$ is generated by the relative hyperplane class~$A$
and the class of a fiber~$F$, hence~$X$ is linearly equivalent to~$2A - kF$ for some~$k \in \ZZ$.

\begin{lemma}
\label{lem:quadric-bundles}
Let~$X \subset \PP_{\PP^1}(\oplus \cO(-a_i))$ be a divisor of type~$2A - kF$.
If~$(X,A\vert_X)$ is an almost del Pezzo variety of degree~$d = \dd(X)$ then~$d \le 6$.
Moreover,
\begin{enumerate}
\item 
\label{it:sum-ai}
$\sum a_i = d - 2$ and $k = d - 4$;
\item 
\label{it:ai-nonnegative}
all~$a_i$ are nonnegative, with the only exception of the case~$d = 1$ where~$a = (-1,0^n)$.
\end{enumerate}
\end{lemma}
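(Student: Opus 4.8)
The plan is to work on the total space $P := \PP_{\PP^1}(\cF)$, where $\cF = \bigoplus_{i=0}^n \cO(-a_i)$ has rank $n+1$, so $\dim P = n+1$ and $X \subset P$ is a divisor of class $2A - kF$; here $A$ is the relative hyperplane class (so that $\pi_*\cO_P(A) = \cF^\vee$ and $A|_X = A_X$) and $F$ is the fiber class. First I would record two intersection numbers on $P$: a fiber is $\PP^n$ with $A$ restricting to $\cO(1)$, so $A^n \cdot F = 1$; and since the tautological inclusion $\cO_P(-A) \hookrightarrow \pi^*\cF$ is a nowhere-vanishing section of $\pi^*\cF(A) = \bigoplus \cO_P(A - a_iF)$, its top Chern class vanishes, i.e. $\prod_{i=0}^n (A - a_iF) = 0$; reducing modulo $F^2 = 0$ this gives $A^{n+1} = s$, where $s := \sum a_i$. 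For part~(i) I would then combine the projective-bundle formula $K_P = -(n+1)A + (s-2)F$ with adjunction, $K_X = (K_P + X)|_X = \big((s-2-k)F - (n-1)A\big)|_X$. The almost del Pezzo hypothesis forces $K_X = -(n-1)A|_X$, and since $F|_X$ is a nonzero class this yields $k = s-2$. Intersecting $X = 2A - kF$ with $A^n$ gives $d = \dd(X) = 2A^{n+1} - kA^nF = 2s - k$, and together with $k = s-2$ this produces $s = d-2$ and $k = d-4$.

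For the bound $d \le 6$ I would use that $X$ is a variety over a field of characteristic~$0$, so by generic smoothness the general fiber of $X \to \PP^1$ is a smooth quadric of rank $n+1$; hence the quadratic form $q \in H^0(\PP^1, \Sym^2\cF^\vee(-k))$ cutting out $X$ satisfies $\det q \ne 0$. As $\det q$ is a section of $\cO(2s - (n+1)k)$, this forces $2s - (n+1)k \ge 0$, i.e. (after substituting part~(i)) $(1-n)d + 4n \ge 0$, so $d \le 4 + \tfrac{4}{n-1} \le 6$ for $n \ge 3$.

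Part~(ii) is where the real work lies, because one must play the nef property of $A_X$ against its smallness. Writing $a_0 = \min_i a_i$, the summand $\cO(-a_0)$ is a sub-line-bundle of $\cF$ and defines a section $C_0 \subset P$ with $A \cdot C_0 = a_0$; it lies on $X$ exactly when the diagonal entry $q_{00} \in H^0(\cO(2a_0 - k))$ vanishes. If $a_0 < 0$ and $2a_0 - k < 0$, then $q_{00} = 0$ automatically, so $C_0 \subset X$ and $A \cdot C_0 = a_0 < 0$ contradicts nefness; thus $a_0 < 0$ forces $2a_0 - k \ge 0$, i.e. $a_0 \ge (d-4)/2$, which with $a_0 \le -1$ gives $d \le 2$ and $a_0 = -1$. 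To finish I would invoke smallness through Lemma~\ref{lem:ad=1}: the class $A - a_nF$ (with $a_n = \max_i a_i$) is effective and nontrivial, since $f_*\cO_X(A - a_nF) = \bigoplus \cO(a_i - a_n)$ has a section, so $\langle A,\, A - a_nF \rangle = d - 2a_n \ge 1$, i.e. $a_n \le (d-1)/2$. For $d = 2$ this forces every $a_i \le 0$, and with $\sum a_i = 0$ all of them vanish, contradicting $a_0 = -1$; hence no $d \ge 2$ admits a negative $a_i$. For $d = 1$ it forces every $a_i \le 0$, and since the minimal one equals $-1$ while $\sum a_i = -1$, exactly one equals $-1$ and the rest vanish, giving $a = (-1, 0^n)$.

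The main obstacle is precisely the two-sided nature of part~(ii): nefness alone (via the section $C_0$) rules out a negative $a_i$ only when $d \ge 3$, whereas for $d \le 2$ the section $C_0$ can escape $X$, and one genuinely needs the smallness of $A_X$ — in the guise $\langle A, D\rangle > 0$ for every effective $D$ — applied to $A - a_nF$ to eliminate or pin down the negative summand. Getting the $\PP$-bundle sign conventions right, so that the \emph{most positive} sub-line-bundle of $\cF$ yields the \emph{most $A$-negative} section, is what makes the nef step deliver the correct inequality.
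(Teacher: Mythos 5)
Parts (i) and (ii) of your proposal are correct. In (i) you replace the paper's dimension count ($\dim|A_X| = \dd(X)+n-2$, Proposition~\ref{DP:|A|}) by adjunction on $\PP_{\PP^1}(\cF)$ plus the Grothendieck relation; this is a fine alternative (the only point to note is that $F\vert_X$ is non-torsion, e.g.\ because $A^{n-1}\cdot F\cdot X=2$). Part (ii) is essentially the paper's own argument: the section attached to the most negative summand lies on $X$ unless $2a_0-k\ge 0$, which together with nefness of $A\vert_X$ gives $d\le 2$, and Lemma~\ref{lem:ad=1} (applied by you to $A-a_nF$, by the paper to $A_X-F$) finishes; your route even recovers $a_0=-1$ without the $H^1$-vanishing step from Lemma~\ref{lem:vanishings}.

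The step $d\le 6$, however, is genuinely wrong as written. Generic smoothness applies to morphisms from smooth varieties, but $X$ is only terminal, and for $n\ge 4$ terminal singularities need not be isolated: $\Sing(X)$ can have dimension up to $n-3$ and can dominate $\PP^1$, so the general fiber of $X\to\PP^1$ need not be smooth and $\det q$ can vanish identically. Since the lemma does \emph{not} assume $X$ non-conical, this actually happens. Take the constant quadric bundle $Y=\PP^1\times\PP^1\times\PP^1\subset\PP_{\PP^1}(\cO(-1)^{\oplus 4})$, a divisor of type $2A-2F$ (so $a=(1,1,1,1)$, $k=2$, $d=6$), and let $X\subset\PP_{\PP^1}\big(\cO(-1)^{\oplus 4}\oplus\cO^{\oplus m}\big)$, $m\ge 1$, be cut out by the \emph{same} equation. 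Then $X$ is a flat quadric bundle of dimension $n=m+3$, every fiber of which is a cone; adjunction gives $K_X=(1-n)A\vert_X$; the class $A\vert_X$ is nef and big; $X$ is terminal, because along its singular locus --- the vertex locus $\PP^1\times\PP^{m-1}$, of codimension $3$ --- it is analytically the product of the $3$-fold node with a smooth factor; and the anticanonical morphism contracts only the $\PP^1$-direction of the vertex locus, hence is small, its image being the cone over $\PP^1\times\PP^1\times\PP^1$ (one of the conical del Pezzo varieties the lemma must allow). So $X$ is an almost del Pezzo quadric bundle with $d=6$ in every dimension $n\ge 4$, yet $\det q\equiv 0$ and your inequality $2\sum a_i-(n+1)k\ge 0$ fails: here it reads $8-2(m+4)=-2m<0$. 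Indeed your bound $d\le 4+\tfrac{4}{n-1}$ would force $d\le 5$ whenever $n\ge 4$, which this example contradicts.

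This failure is exactly why the paper first intersects $X$ with $n-3$ general fundamental divisors: the resulting threefold $X'$ is again almost del Pezzo, and a \emph{terminal threefold} has isolated singularities, so its general fiber avoids $\Sing(X')$ and is smooth by generic smoothness applied on the smooth locus (this also silently repairs your $n=3$ case, where "generic smoothness" alone is not quite enough either). The determinant argument is then run for the rank-$4$ form $\cF'\to{\cF'}^{\vee}(-k)$, where generic injectivity gives $2-d\le 14-3d$, i.e.\ $d\le 6$ --- a bound that is sharp in every dimension precisely because of the cones above. Your argument cannot be repaired without this (or an equivalent) reduction to dimension three.
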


\begin{proof}
\ref{it:sum-ai}
First, using Proposition~\ref{DP:|A|}\ref{DP:|A|1} we obtain
\begin{equation*}
d + n - 1 = \dim H^0(X,\cO_X(A_X)) = \dim H^0(\PP^1,\cF^\vee) = \sum a_i + n + 1,
\end{equation*}
which gives the equality~$\sum a_i = d - 2$. 
Furthermore, the relation~$A^{n+1} = (\sum a_i) A^n \cdot F$ in the Chow ring of~$\PP_{\PP^1}(\cF)$ implies
\begin{equation*}
d = A^n \cdot (2A - kF) = 2\sum a_i - k,
\end{equation*}
and therefore~$k = d - 4$.

\ref{it:ai-nonnegative}
First, $H^1(\PP^1,\cF^\vee) = H^1(X,\cO_X(A_X)) = 0$ by Lemma~\ref{lem:vanishings}, hence~$a_i \ge -1$ for all~$i$.
Assume~$a_j < 0$, i.e., $a_j = -1$ for some~$j$.
Let~$C = \PP_{\PP^1}(\cO(-a_j)) \subset \PP_{\PP^1}(\oplus \cO(-a_i))$ be the corresponding section.
Then
\begin{equation*}
A \cdot C = a_j = -1,
\qquad 
X \cdot C = (2A - kF) \cdot C = -2 - (d - 4) = 2 - d.
\end{equation*}
The nef property of~$A_X$ shows that~$C \not\subset X$, hence~$X \cdot C \ge 0$, hence~$d \le 2$.
On the other hand, for~$d \le 2$ we have 
\begin{equation*}
\langle A_X, A_X - F \rangle = d - 2 \le 0,
\end{equation*}
so Lemma~\ref{lem:ad=1} shows that the divisor class~$A_X - F$ cannot be effective, hence we have~$a_i \le 0$ for all~$i$.
As~$\sum a_i = d - 2$, the only possibility is~$d = 1$ and~$a = (-1,0^n)$.

It remains to prove that~$d \le 6$.
For this note that a general intersection of~$n - 3$ divisors in the linear system~$|A_X|$ 
is an almost del Pezzo threefold~$X'$.
Clearly, it can be represented as a divisor of type~$2A - kF$ in~$\PP_{\PP^1}(\cF')$, 
where~$\cF'$ is a vector bundle from an exact sequence
\begin{equation*}
0 \longrightarrow \cF' \longrightarrow \cF \longrightarrow \cO^{\oplus (n - 3)} \longrightarrow 0,
\end{equation*}
so that~$\rank(\cF') = 4$ and~$\deg(\cF') = \deg(\cF) = -\sum a_i = 2 - d$.
Since~$X'$ is terminal, it has at most isolated singular points, hence the fibers of~$X' \to \PP^1$ are generically smooth.
This means that the morphism~$\cF' \to {\cF'}^\vee(-k)$ induced by the divisor~$X \in |2A - kF|$
is generically injective, hence
\begin{equation*}
2 - d \le d -2 - 4k = 14 - 3d,
\end{equation*}
and the inequality~$d \le 6$ follows.
\end{proof}

In the next proposition we give a classification of almost del Pezzo quadric bundles.
A more explicit description of the case~$d = 5$ is given in Lemma~\ref{lem:schubert} and Lemma~\ref{lem:schubert-flop-r2}.

\begin{proposition}
\label{prop:quadric-over-p1}
Let~$X \subset \PP_{\PP^1}(\oplus \cO(-a_i))$ be a divisor of type~$2A - kF$.
If~$X$ is a non-conical almost del Pezzo variety with~$d = \dd(X)$ and $\rr(X) = 2$ then~$d \le 5$.
Moreover,
\begin{itemize}
\item 
if~$d = 5$ then~$\dim(X) \le 5$, $a = (0^{n-2},1^3)$, $k = 1$;
\item 
if~$d = 4$ then~$a = (0^{n-1},1^2)$, $k = 0$,
and~$X$ is a complete intersection 
in~$\PP^1 \times \PP^{n+2}$ of three divisors of bidegree~$(1,1)$, $(1,1)$, and~$(0,2)$;
\item 
if~$d = 3$ then~$a = (0^{n},1)$, $k = -1$,
and~$X$ is a complete intersection in~$\PP^1 \times \PP^{n+1}$ of two divisors of bidegree~$(1,2)$ and~$(1,1)$;
\item 
if~$d = 2$ then~$a = (0^{n+1})$, $k = -2$,
and~$X$ is a divisor in~$\PP^1 \times \PP^{n}$ of bidegree~$(2,2)$;
\item 
if~$d = 1$ then~$a =(-1, 0^{n})$, $k = -3$,
and~$X$ is a complete intersection in~$\PP^1 \times \PP^{2n}$ of~$n$ divisors of bidegree~$(1,1)$ 
and one divisor of bidegree~$(1,2)$.
\end{itemize}
\end{proposition}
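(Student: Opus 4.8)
The plan is to upgrade the numerical constraints of Lemma~\ref{lem:quadric-bundles} to the precise splitting type, then cut off the degree and dimension using the almost del Pezzo property, non-conicality and the assumption $\rr(X)=2$, and finally recognize $X$ as a complete intersection inside a product of projective spaces. Write $\cF=\moplus_{i=0}^{n}\cO(-a_i)$, so that $X\subset\PP_{\PP^1}(\cF)$ lies in $|2A_X-kF|$, where $F$ is the class of a fibre. Recall from Lemma~\ref{lem:quadric-bundles} that $d\le 6$, that $\sum a_i=d-2$, that $k=d-4$, and that all $a_i\ge 0$ except when $d=1$, where $a=(-1,0^n)$; recall also from Lemma~\ref{lem:lattice-q-p2} that $\langle A_X,A_X\rangle=d$, $\langle A_X,F\rangle=2$ and $\langle F,F\rangle=0$.

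First I would determine the splitting type for $d\le 5$ by proving $a_i\le 1$ for all $i$. If some $a_j\ge 2$, the summand $\cO(a_j-2)$ of $\cF^\vee(-2)$ provides a section of $\cO_{\PP_{\PP^1}(\cF)}(A_X-2F)$ whose divisor meets each fibre in a hyperplane and hence does not contain $X$; restricting it to $X$ shows that $A_X-2F$ is effective and nontrivial. But $\langle A_X,A_X-2F\rangle=d-4$ and $\langle A_X-2F,A_X-2F\rangle=d-8$: for $d\le 4$ the first number is $\le 0$, contradicting the inequality $\langle A_X,D\rangle\ge 1$ of Lemma~\ref{lem:ad=1}, while for $d=5$ the first number is $1$ and the second is $-3<-1$, contradicting the second inequality of the same lemma. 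Hence $a_i\in\{0,1\}$, and since $\sum a_i=d-2$ there are exactly $d-2$ ones; this yields $a=(0^{\,n+3-d},1^{\,d-2})$ together with $k=d-4$, which is the asserted value of $(a,k)$ for $d=2,3,4,5$ (the case $d=1$ being the exceptional one of Lemma~\ref{lem:quadric-bundles}).

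Next I would bound $d$ and $n$. For $d=6$ I would pass repeatedly to general fundamental divisors: this preserves non-conicality by Proposition~\ref{DP:|A|}\ref{DP:|A|2} and preserves $\dd$ and $\rr$ by Proposition~\ref{prop:class-divisor}, and reduces us to a quadric surface bundle $X'\to\PP^1$ of dimension $3$. Arguing exactly as in the proof of Lemma~\ref{lem:quadric-bundles}, terminality of $X'$ forces its fibres to be generically smooth (a singular fibre over a general point would, upon differentiating the identity $q=0$ along the vertex section, lie in $\Sing(X')$, contradicting isolatedness of the singularities); since for a threefold the discriminant of the quadric bundle has degree $2(d-2)-4(d-4)=12-2d=0$, it is a nonzero constant, so \emph{every} fibre is a smooth quadric surface. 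The two rulings of these fibres produce classes $R_1,R_2\in\Cl(X')$ with $R_1+R_2=A_{X'}$, independent from $F$, whence $\rr(X)=\rr(X')\ge 3$, contradicting the hypothesis. Therefore $d\le 5$. For $d=5$ the discriminant of the form $q\in H^0(\PP^1,\Sym^2\cF^\vee(-1))$ defining $X$ is a section of $\cO_{\PP^1}(\delta)$ with $\delta=2(d-2)-(n+1)(d-4)=5-n$, so it remains to show it is not identically zero. If it were, every fibre would be a singular quadric; but with $a=(0^{n-2},1^3)$ and $k=1$ the block of $q$ on the trivial subbundle $\cO^{\,n-2}$ vanishes and the block pairing $\cO^{\,n-2}$ with $\cO(-1)^3$ is a constant matrix $B$, so the fibre vertices form the projectivization of the fixed trivial subbundle $\Ker B^{\top}$. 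This is contracted by $\Phi_{|A_X|}$ to a linear subspace $\Pi$, and $X_\can$ is then the join of $\Pi$ with the nondegenerate quotient quadric bundle, i.e.\ a cone — contradicting non-conicality. Hence $\delta\ge 0$, that is $n\le 5$.

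Finally, for $d\le 4$ I would read off the complete intersection descriptions. The evaluation sequence of the globally generated bundle $\cF^\vee$ (or of $\cF^\vee(1)$ when $d=1$) has kernel a direct sum of copies of $\cO(-1)$, and the corresponding sections realize $|F|$ together with $|A_X|$ (respectively $|A_X+F|$ for $d=1$) as a closed embedding of $\PP_{\PP^1}(\cF)$ into $\PP^1\times\PP^{M}$ as a complete intersection of divisors of bidegree $(1,1)$, with $A_X$ and $F$ restricting to the two hyperplane classes. Cutting further by $X\in|2A_X-kF|$ and computing the bidegree from $k=d-4$ then gives $X$ as a complete intersection in $\PP^1\times\PP^{n+2}$ of type $(1,1),(1,1),(0,2)$ for $d=4$; in $\PP^1\times\PP^{n+1}$ of type $(1,1),(1,2)$ for $d=3$; a $(2,2)$-divisor in $\PP^1\times\PP^{n}$ for $d=2$; and a complete intersection in $\PP^1\times\PP^{2n}$ of $n$ divisors of bidegree $(1,1)$ and one of bidegree $(1,2)$ for $d=1$, as claimed. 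The step I expect to be most delicate is the conicality argument for $d=5$: proving that an everywhere-degenerate quadric bundle is a cone requires locating its radical subbundle inside the trivial part of $\cF$ and verifying that its projectivization is contracted to a linear subspace by the anticanonical map.
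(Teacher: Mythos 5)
Your proposal is correct and follows essentially the same route as the paper's proof: the same use of Lemma~\ref{lem:quadric-bundles} and of Lemma~\ref{lem:ad=1} applied to $A_X-2F$ to pin down the splitting type, the same reduction of the $d=6$ case to an everywhere non-degenerate quadric surface bundle over $\PP^1$ forcing $\rr\ge 3$, the same cone argument locating the radical of the quadratic form inside the trivial subbundle to get $n\le 5$ when $d=5$, and the same exact sequences realizing $\PP_{\PP^1}(\cF)$ as a complete intersection of $(1,1)$-divisors before cutting by $X$. The only differences are cosmetic: you phrase the $d=6$ contradiction via explicit ruling classes rather than triviality of the \'etale discriminant double cover, and you reach the $d=5$ dimension bound via the discriminant degree $5-n$ rather than by directly exhibiting a trivial summand in the kernel of $\cF\to\cF^\vee(-1)$ --- but the underlying computations coincide.
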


Note that for~$d \le 4$ the dimension~$\dim(X) = n$ may be arbitrary (greater than~$2$).

\begin{proof}
Recall from Lemma~\ref{lem:quadric-bundles} that~$d \le 6$.
Assume~$d = 6$.
Then the argument of Lemma~\ref{lem:quadric-bundles} shows that a general 3-dimensional linear section~$X'$ of~$X$
corresponds to a vector bundle~$\cF'$ of rank~$4$ and degree~$-4$ 
and a generically injective morphism~\mbox{$\cF' \to {\cF'}^\vee(-2)$}.
In this case the degree of the source and the target are the same, hence the morphism is an isomorphism,
and therefore~$X' \to \PP^1$ is an everywhere non-degenerate quadric surface bundle.
Therefore, its discriminant double covering is \'etale over~$\PP^1$, hence trivial, 
and hence the relative Picard number of~$X'$ over~$\PP^1$ is~2, so that~$\rr(X') = 3$.
But as we showed in Proposition~\ref{prop:class-divisor}, we have~$\rr(X') = \rr(X) = 2$.
This contradiction shows that~$d \le 5$.

Now let~$d \le 5$. 
Note that
\begin{equation*}
\langle A_X, A_X - 2F \rangle = d - 4 
\qquad\text{and}\qquad 
\langle A_X - 2F, A_X - 2F \rangle = d - 8,
\end{equation*}
hence by Lemma~\ref{lem:ad=1} the divisor class~$A_X - 2F$ cannot be effective.
This means that~$a_i \le 1$ for all~$i$.
Combining this with the bounds established in Lemma~\ref{lem:quadric-bundles}\ref{it:ai-nonnegative},
we conclude that
\begin{equation*}
a = 
\begin{cases}
(0^{n+3-d},1^{d-2}), & \text{if~$d \ge 2$ and}
\\
(-1, 0^{n}), & \text{if~$d = 1$.}
\end{cases}
\end{equation*} 

Let~$d = 5$, so that~$a = (0^{n-2},1^3)$.
If~$n \ge 6$ then the morphism
\begin{equation*}
\cO^{\oplus (n-2)} \oplus \cO(-1)^{\oplus 3} = \cF \longrightarrow \cF^\vee(-1) = \cO^{\oplus 3} \oplus \cO(-1)^{\oplus (n-2)}
\end{equation*}
contains a summand~$\cO$ in the kernel.
This means that there is a section of~$X \to \PP^1$ that consists of singular points of the fibers 
and which is contracted to a point~$\bv \in X_\can$ by the anticanonical morphism of~$X$.
Since any singular quadric is covered by lines passing through its singular point,
we conclude that~$X_\can$ is covered by lines passing through~$\bv$, hence~$X_\can$ is a cone.
Thus, $n \le 5$. 

Let~$2 \le d \le 4$.
To obtain a complete intersection description of~$X$ note that 
the vector bundle~$\cF \cong\cO^{\oplus (n + 3 - d)} \oplus \cO(-1)^{\oplus (d - 2)}$ fits into an exact sequence
\begin{equation*}
0 \longrightarrow \cF \longrightarrow \cO_{\PP^1}^{\oplus (n + d - 1)} \longrightarrow 
\cO_{\PP^1}(1)^{\oplus (d - 2)} \longrightarrow 0,
\end{equation*}
hence the projective bundle~$\PP_{\PP^1}(\cF)$ is a complete intersection in~$\PP^1 \times \PP^{n+d-2}$ 
of~$d - 2$ divisors of bidegree~$(1,1)$, and~$X$ is its intersection with an extra divisor of bidegree~$(-k,2)$.

In the last case~$d = 1$ 
we use the exact sequence
\begin{equation*}
0 \longrightarrow \cF(-1) \longrightarrow \cO_{\PP^1}^{\oplus (2n + 1)} \longrightarrow \cO_{\PP^1}(1)^{\oplus n} \longrightarrow 0
\end{equation*}
to describe~$\PP_{\PP^1}(\cF)$ and~$X$ as complete intersections.
\end{proof}

\begin{remark}
It is easy to show that the quadric bundles~$X \to \PP^1$ such that~$\dd(X) = 6$ and~$\rr(X) = 3$
are isomorphic to~$\FF_0 \times_{\PP^1} \FF_0 \cong \PP^1 \times \PP^1 \times \PP^1$,
or to~$\FF_1 \times_{\PP^1} \FF_1$.
\end{remark}

We conclude this section with a lemma explaining the relation 
of quadric bundle structures to~$\PP^{n-2}$-bundle structures in the case~$\dd(X) = 5$.

\begin{lemma}
\label{lem:schubert}
If~$f \colon X \to \PP^1$ is a quadric bundle such that~$X$ 
is a non-conical almost del Pezzo variety with~$\dd(X) = 5$ and~$\rr(X) = 2$
then there is a flop~$X \dashrightarrow \PP_{\PP^2}(\cE)$, 
where~$\cE$ is a del Pezzo bundle that fits into an exact sequence
\begin{equation*}
0 \longrightarrow \cE \longrightarrow \cO(-1)^{\oplus 2} \oplus \Omega(1) \longrightarrow \cO^{\oplus (5-n)} \longrightarrow 0.
\end{equation*}
\end{lemma}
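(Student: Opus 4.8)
The plan is to exhibit $X$ as one of the two $\QQ$-factorial crepant models of its anticanonical model $X_\can$ and to identify the other model with a projective bundle over $\PP^2$. We may assume $X$ is $\QQ$-factorial, so that $f$ is an extremal $K$-negative contraction as in Proposition~\ref{propo:ext-rays}\ref{prop:ext-rays3}; then Lemma~\ref{lem:lattice-q-p2} applies with $d=5$ and records the lattice data I will use throughout: $\Cl(X)=\ZZ F\oplus\ZZ A$ with Gram matrix $\left(\begin{smallmatrix}0&2\\2&5\end{smallmatrix}\right)$, the variety $X$ is primitive (because $d\neq 3$), and $F$ is the \emph{only} $\PP^1$-class in $\Cl(X)$ (because $d\notin\{1,2,4\}$). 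Moreover $\dim(X)=n\le 5$ by Proposition~\ref{prop:quadric-over-p1}. Passing to a $\QQ$-factorialization when needed does not affect any of this, since it is a pseudoisomorphism and hence preserves $\Cl(X)$, the form~\eqref{eq:product}, and $X_\can$ by Lemma~\ref{lem:pseudo-clx} and Lemma~\ref{lem:dp-weak-dp}.

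Next I would locate the second extremal contraction. Because $\rr(X)=2$, a $\QQ$-factorial crepant model has exactly two extremal rays, and running the MMP away from the ray of $f$ produces a crepant model $\hX$ of $X_\can$ (flop-equivalent to $X$ by Lemma~\ref{lemma:repant-models}) carrying a $K$-negative extremal contraction $g\colon\hX\to Z$ different from $f$. Since $X$, hence $\hX$, is primitive, $\Cl(\hX)$ contains no exceptional class (Corollary~\ref{cor:imprimitivity}), so $g$ is not birational. By Proposition~\ref{propo:ext-rays} the contraction $g$ is then either a flat quadric bundle over $\PP^1$ or a $\PP^{n-2}$-bundle over a del Pezzo surface $Z$. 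The first option is impossible: a fiber of such a quadric bundle would give a $\PP^1$-class of $\Cl(\hX)=\Cl(X)$ distinct from $F$ (as $g\neq f$), contradicting the uniqueness of $F$ above. Hence $g$ is a $\PP^{n-2}$-bundle and $\hX\cong\PP_Z(\cE)$ with $\rc_1(\cE)=K_Z$ by Proposition~\ref{propo:ext-rays}\ref{prop:ext-rays2}, and $\cE$ is a del Pezzo bundle because $\hX_\can=X_\can$ is non-conical. By Corollary~\ref{cor:dP4:constr2} either $Z=\PP^2$ or $Z=\PP^1\times\PP^1$, and the latter is excluded by Lemma~\ref{lem:lattice-p-p1p1}, which shows that a projective bundle over $\PP^1\times\PP^1$ of degree $5$ is imprimitive. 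Thus $Z=\PP^2$, and since both $X$ and $\PP_{\PP^2}(\cE)$ are $\QQ$-factorial crepant models of $X_\can$, Lemma~\ref{lemma:repant-models} yields the flop $X\dashrightarrow\PP_{\PP^2}(\cE)$ (a single flop, since $\rr(X)=2$ and $A$ lies on the nef boundary of both models).

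It remains to identify $\cE$. From $\dd(X)=K_{\PP^2}^2-\rc_2(\cE)=9-\rc_2(\cE)=5$ we get $\rc_2(\cE)=4$, while $\rk(\cE)=\dim(X)-1=n-1$. Let $\tcE$ be the maximalization from Lemma~\ref{lem:dpb-maximal}, giving an exact sequence $0\to\cE\to\tcE\to H^1(\PP^2,\cE)\otimes\cO\to 0$ with $\tcE$ maximal and $\rc_2(\tcE)=\rc_2(\cE)=4$. By Proposition~\ref{prop:bundle-p2} (the case $d=5$) the unique such bundle is $\tcE\cong\cE_{\PP^2,4}=\cO(-1)^{\oplus 2}\oplus\Omega(1)$, and by~\eqref{eq:dim-h1} we have $\dim H^1(\PP^2,\cE)=\rc_2(\cE)-\rk(\cE)=4-(n-1)=5-n$. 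Substituting these into the maximalization sequence yields exactly the asserted sequence $0\to\cE\to\cO(-1)^{\oplus 2}\oplus\Omega(1)\to\cO^{\oplus (5-n)}\to 0$.

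The main obstacle is the middle step: rigorously producing the second extremal contraction and ensuring the MMP terminates in a fibration rather than in a ``$K$ nef'' state. The clean way to handle this is to run the $D$-MMP of Lemma~\ref{lem:d-mmp} for the class $D=A-F$, which one computes to be the unique $\PP^2$-class in the sense of Definition~\ref{def:exP1P2} (it satisfies $\langle A,D\rangle=3$ and $\langle D,D\rangle=1$): on the resulting crepant model either $D$ is negative on a $K$-negative ray, producing $g$ directly, or $D$ becomes nef and big, hence semiample, and then defines a contraction whose $D$-trivial fibers are $K$-negative. In both cases one obtains the sought fibration $g$, after which the exclusion of the quadric-bundle and $\PP^1\times\PP^1$ alternatives and the computation of $\cE$ are routine applications of the lemmas already established in this section.
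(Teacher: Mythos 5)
Your proof is correct and follows essentially the same route as the paper's: Lemma~\ref{lem:lattice-q-p2} gives primitivity of~$X$ and uniqueness of the $\PP^1$-class~$F$, the two-ray game on a crepant model produces the second $K$-negative extremal contraction, the birational and quadric-bundle alternatives are excluded exactly as in the paper, and~$\cE$ is identified via Lemma~\ref{lem:dpb-maximal} and Proposition~\ref{prop:bundle-p2}. The only slip is inessential: $D = A - F$ is nef on the final model but never big (indeed~$D^n = 5 - 2n < 0$, as~$D$ is pulled back from~$\PP^2$), though semiampleness still follows from the base point free theorem because~$aD - K_X$ is nef and big for~$a > 0$.
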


\begin{proof}
The map~$f \colon X \to \PP^1$ is an extremal $K$-negative contraction.
Since~$\rr(X) = 2$, the variety~$X$ (possibly after a flop) has another $K$-negative extremal contraction.
By Lemma~\ref{lem:lattice-q-p2} the variety~$X$ is primitive, hence the other contraction cannot be birational.
On the other hand, the other contraction cannot be a quadric fibration over~$\PP^1$, 
because the $\PP^1$-class~$F$ given by the fiber of~$f$ is the unique $\PP^1$-class on~$X$.
Therefore, the other contraction must be a $\PP^{n-2}$-bundle over~$\PP^2$.
By Proposition~\ref{prop:bundle-p2} and Lemma~\ref{lem:dpb-maximal} 
the corresponding bundle~$\cE$ fits into the required exact sequence.
\end{proof}

In~\S\ref{sec:schubert} we will describe the flop explicitly (see Lemma~\ref{lem:schubert-flop-r2})
and show that the corresponding del Pezzo variety is a Schubert divisor in~$\Gr(2,5)$.    

\section{Classification of del Pezzo varieties}
\label{sec:proofs}

In this section we prove all the main results of the paper as stated in the Introduction.

\subsection{Reduction to primitive varieties}

We start with proving the reduction theorem. 

\begin{proof}[Proof of Theorem~\xref{thm:imprimitive}]
If~$X$ is primitive there is nothing to prove.
So, assume~$X$ is imprimitive. 
Consider the class group~$\Cl(X)$ and let~$E_1,\dots,E_k \in \Cl(X)$ 
be a maximal sequence of pairwise orthogonal exceptional classes, i.e.,
\begin{equation*}
\langle A_X, E_i \rangle = 1,
\qquad\text{and}\qquad
\langle E_i, E_j \rangle = -\updelta_{i,j},
\qquad
1 \le i \le j \le k.
\end{equation*}
By Proposition~\ref{prop:exceptional-contraction} there is a $\QQ$-factorialization~$\xi \colon \hX \to X$
such that~\mbox{$\hX \cong \Bl_{\hP_1,\dots,\hP_k}(\hX_0)$},
where~$\hX_0$ is a $\QQ$-factorial almost del Pezzo variety and~$\hP_1,\dots,\hP_k \in \hX_0$ are smooth points.
We set~$X_0 \coloneqq (\hX_0)_\can$.
Note that the points~$\hP_i$ do not lie on the $K$-trivial curves of~$\hX_0$ by Lemma~\ref{lemma:ext-rays0},
hence there is a commutative diagram
\begin{equation*}
\xymatrix@C=3em{
\hX \ar@{=}[r] \ar[d]_\xi &
\Bl_{\hP_1,\dots,\hP_k}(\hX_0) \ar[r]^-{\hat\sigma} \ar[d]^{\tilde\xi_0} &
\hX_0 \ar[d]^{\xi_0}
\\
X &
\Bl_{P_1,\dots,P_k}(X_0) \ar[r]^-\sigma &
X_0,
}
\end{equation*}
where~$\xi_0$ and~$\tilde\xi_0$ are small contractions, 
$P_i = \xi_0(\hP_i)$ are smooth points of~$X_0$, and~$\sigma$ and~$\hat\sigma$ are the blowups.
The exceptional divisors~$\hE_i$ of~$\hat\sigma$ are the preimages of the exceptional divisors~$E_i$ of~$\sigma$.
By Corollary~\ref{cor:targets} the blowup~$\Bl_{P_1,\dots,P_k}(X_0)$ is almost del Pezzo, 
and hence Lemma~\ref{lem:dp-weak-dp} implies
\begin{equation*}
(\Bl_{P_1,\dots,P_k}(X_0))_\can \cong
(\Bl_{\hP_1,\dots,\hP_k}(\hX_0))_\can = \hX_\can \cong X.
\end{equation*}

It remains to check that~$X_0$ is primitive.
For this just note that~$\Cl(X_0) \subset \Cl(X)$ 
is the orthogonal of the exceptional classes~$E_1,\dots,E_k$ by Lemma~\ref{lem:contractions-class}\ref{it:class-blowup}, 
so if~$X_0$ is imprimitive, Corollary~\ref{cor:imprimitivity} shows that there is an exceptional class~$E_0 \in \Cl(X_0)$,
which is orthogonal to~$E_1,\dots,E_k$ by Lemma~\ref{lem:contractions-class}\ref{it:class-blowup},
which contradicts to the maximality of this collection.
\end{proof}

\begin{remark}
It is not true in general that the primitive contraction~$X_0$ is uniquely determined by~$X$
(essentially because there may be different maximal sequences of pairwise orthogonal exceptional classes),
see for instance Corollary~\ref{cor:blowups}.
\end{remark} 

\subsection{Biregular classification}
\label{ss:proofs-bireg}

In this subsection we prove Theorem~\ref{cla:dP}.
We start with two lemmas.

\begin{lemma}
\label{lem:special-sextic}
The variety~$\hX_{6,3,3} \coloneqq \Bl_{P_1,P_2}(\PP^3)$ is almost del Pezzo of degree~$6$
and its anticanonical model is a singular hyperplane section~$X_{6,3,3} \subset \PP^2 \times \PP^2$.
\end{lemma}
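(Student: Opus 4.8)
The plan is to realize $\hX_{6,3,3}$ as a two-step blowup, verify the almost del Pezzo property through Proposition~\ref{prop:dP4:constr-i}, and then identify its anticanonical model explicitly by resolving the rational map given by the quadrics through the two points.

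First I would record that $\PP^3$ is a del Pezzo variety with $A_{\PP^3}=2H$ and $\dd(\PP^3)=8$. Since $A=2H$, no curve has $A$-degree $1$, so the family of lines through any point is empty; thus Proposition~\ref{prop:dP4:constr-i} applies and $\tX_1\coloneqq\Bl_{P_1}(\PP^3)$ is almost del Pezzo of degree $7$ with $A_{\tX_1}=\sigma_1^*(2H)-E_1$. As $\Bl_{P_1}(\PP^3)$ is a Fano threefold, $A_{\tX_1}$ is in fact ample, so $\tX_1$ is a del Pezzo variety and Proposition~\ref{prop:dP4:constr-i} may be applied once more. For the second blowup I would count the lines on $\tX_1$: writing $A_{\tX_1}=\sigma_1^*(2H)-E_1$, a smooth rational curve has $A$-degree $1$ only if it is a line in $E_1\cong\PP^2$ or the strict transform of a line of $\PP^3$ through $P_1$. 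Hence through a point $P_2\notin E_1$ passes only the strict transform of the line $P_1P_2$, a zero-dimensional family, so condition~\ref{prop:dP4:constr-i}\ref{prop:dP4:constr-ia} holds (condition~\ref{prop:dP4:constr-i}\ref{prop:dP4:constr-ib} is vacuous since $\dd>2$). Therefore $\hX_{6,3,3}=\Bl_{P_1,P_2}(\PP^3)$ is almost del Pezzo with $\dd=8-2=6$ and $\rr=3$. The same count shows that the strict transform $\tilde{L}$ of $L=P_1P_2$ is the unique curve with $A_{\hX}\cdot\tilde{L}=2-1-1=0$, so $\hX_{6,3,3}$ is not del Pezzo and its anticanonical morphism is the small contraction of $\tilde{L}$.

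Next I would identify the fundamental system. By Proposition~\ref{DP:|A|}\ref{DP:|A|1} one has $\dim|A_{\hX}|=\dd+n-2=7$, and $|A_{\hX}|$ is the transform of the linear system $|2H-P_1-P_2|$ of quadrics through $P_1,P_2$, whose base locus is exactly $\{P_1,P_2\}$ and is resolved by the two blowups, so $\Phi_{|A_{\hX}|}\colon\hX\to\PP^7$ is a morphism. Choosing coordinates with $P_1=[1:0:0:0]$ and $P_2=[0:1:0:0]$, the projections away from $P_1$ and from $P_2$ assemble into $\pi\colon\PP^3\dashrightarrow\PP^2\times\PP^2$, $[x_0:x_1:x_2:x_3]\mapsto([x_1:x_2:x_3],[x_0:x_2:x_3])$, whose image lies in the hyperplane section $W\coloneqq\{a_1b_2=a_2b_1\}\subset\PP^2\times\PP^2$. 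The key computation is that, on the Segre coordinates $z_{ij}=a_ib_j$, the variety $W$ lies in the coordinate hyperplane $\{z_{12}=z_{21}\}\cong\PP^7$, and pulling back the eight surviving coordinates along $\pi$ recovers precisely the eight monomials $x_0x_1,\,x_0x_2,\,x_0x_3,\,x_1x_2,\,x_1x_3,\,x_2^2,\,x_2x_3,\,x_3^2$ that span $|2H-P_1-P_2|$. Hence the composite $\hX\xrightarrow{\ \pi\ }W\hookrightarrow\PP^7$ agrees with $\Phi_{|A_{\hX}|}$.

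Finally, since $\dd(X_{\can})=6\ge 3$ the class $A_{X_{\can}}$ is very ample by Proposition~\ref{DP:|A|}\ref{DP:|A|4}, so $X_{\can}$ equals the image $\Phi_{|A_{\hX}|}(\hX)=W$; this identifies the anticanonical model with a hyperplane section of $\PP^2\times\PP^2\subset\PP^8$. It remains to see that $W$ is singular at the point $w_0=([1:0:0],[1:0:0])$, which is the image of $\tilde{L}$: in the chart $a_0=b_0=1$ the equation $a_1b_2-a_2b_1=0$ is a rank-$4$ quadric cone, so $w_0$ is an ordinary double point, giving the singular hyperplane section $X_{6,3,3}$. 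The one step requiring genuine care is the identification $X_{\can}\cong W$ in the previous paragraph, i.e.\ the coordinate bookkeeping matching the eight surviving Segre coordinates on $W$ with the eight quadrics through $P_1,P_2$; everything else is a direct application of Proposition~\ref{prop:dP4:constr-i} and Proposition~\ref{DP:|A|}.
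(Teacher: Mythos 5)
Your proof is correct and follows essentially the same route as the paper's: both realize the anticanonical map of $\Bl_{P_1,P_2}(\PP^3)$ as the pair of projections from $P_1$ and $P_2$, landing on the singular $(1,1)$-divisor $\{a_1b_2=a_2b_1\}\subset\PP^2\times\PP^2$ (the paper phrases this coordinate-freely via the bilinear form $\bar{V}_1\otimes\bar{V}_2\to\wedge^2(V/(L_1\oplus L_2))$, whose degeneracy point is exactly your $w_0$). Your extra bookkeeping --- the two-step application of Proposition~\ref{prop:dP4:constr-i} with the line count on $\Bl_{P_1}(\PP^3)$, the matching of the eight Segre coordinates with the quadrics through $P_1,P_2$, and the local equation exhibiting the ordinary double point --- just fills in details the paper leaves implicit.
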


\begin{proof}
Let~$V$ be a $4$-dimensional vector space, let~$L_1,L_2 \subset V$ be two distinct $1$-dimensional subspaces, 
set~$\bar{V}_i \coloneqq V/L_i$, and let~$p_i \colon V \to \bar{V}_i$ be the projections.
Consider the map 
\begin{equation*}
\PP(V) \dashrightarrow \PP(\bar{V}_1) \times \PP(\bar{V}_2) 
\qquad 
v \longmapsto (p_1(v),p_2(v)).
\end{equation*}
It extends to a regular 
morphism~$\xi \colon \hX_{6,3,3} \coloneqq \Bl_{P_1,P_2}(\PP(V)) \to \PP(\bar{V}_1) \times \PP(\bar{V}_2)$, 
where~$P_i$ are the points of~$\PP(V)$ corresponding to~$L_i$.
Since~$\hX_{6,3,3}$ is almost del Pezzo of degree~$6$ by Proposition~\ref{prop:dP4:constr-i} 
and the composition of~$\xi$ with the Segre embedding of~$\PP(\bar{V}_1) \times \PP(\bar{V}_2)$ 
is the anticanonical morphism of~$\hX_{6,3,3}$, the anticanonical model
\begin{equation*}
X_{6,3,3} \coloneqq \xi(\hX_{6,3,3}) \subset \PP(\bar{V}_1) \times \PP(\bar{V}_2) \cong \PP^2 \times \PP^2
\end{equation*}
is a sextic del Pezzo threefold.
By construction~$X_{6,3,3}$ is the hyperplane section corresponding to the bilinear form
\begin{equation*}
\bar{V}_1 \otimes \bar{V}_2 \longrightarrow 
(V / (L_1 \oplus L_2)) \otimes (V / (L_1 \oplus L_2)) \longrightarrow
\wedge^2(V / (L_1 \oplus L_2)),
\end{equation*}
which is obviously singular 
at the point~$(p_1(P_2),p_2(P_1)) \in \PP(\bar{V}_1) \times \PP(\bar{V}_2)$.
\end{proof}

\begin{remark}
\label{rem:x633-flop}
Lemma~\ref{lem:dpb-converse} and Remark~\ref{rem:dpb-other} provide a pseudoisomorphism
\begin{equation*}
\Bl_{P_1,P_2}(\PP^3) \cong \Bl_{P_2}(\PP_{\PP^2}(\cE_{\PP^2,2})) \dashrightarrow 
\PP_{\FF_1}(\cE_{\FF_1,2}) \cong \PP_{\FF_1}(\cO(-2\h + \e) \oplus \cO(-\h)) \cong \FF_1 \times_{\PP^1} \FF_1,
\end{equation*}
thus the fiber product~$\FF_1 \times_{\PP^1} \FF_1$ is another $\QQ$-factorialization of~$X_{6,3,3}$.
\end{remark}

\begin{lemma}
\label{lem:three-schuberts}
The varieties
\begin{equation}
\label{eq:hx5}
\begin{aligned}
\hX_{5,2,5} &\coloneqq \PP_{\PP^2}(\cO(-1)^{\oplus 2} \oplus \Omega(1)),
\quad \\
\hX_{5,3,4} &\coloneqq \Bl_P(\PP^2 \times \PP^2),
\quad \\
\hX_{5,4,3} &\coloneqq \Bl_P(\PP^1 \times \PP^1 \times \PP^1)
\end{aligned}
\end{equation} 
are almost del Pezzo of degree~$5$ and their anticanonical models are linear sections of~$\Gr(2,5)$
by~$1$, $2$, and~$3$ general Schubert divisors, respectively.
\end{lemma}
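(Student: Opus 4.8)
The plan is to treat the three varieties by explicit constructions, in each case realizing the anticanonical morphism as a resolution of a linear-algebraic parametrization of a Schubert section of $\Gr(2,5) = \Gr(2,V) \subset \PP(\wedge^2 V) = \PP^9$ with $\dim V = 5$. Recall that for a $3$-dimensional subspace $A \subset V$ the Schubert divisor $\Sigma_A = \{U \mid U \cap A \ne 0\}$ equals $\Gr(2,V) \cap H_A$, where $H_A$ is the hyperplane cut out by the decomposable form spanning $\wedge^2 A^\perp \subset \wedge^2 V^*$, and that $\Sigma_A$ is singular exactly along $\Gr(2,A) \cong \PP^2$. The degree and almost del Pezzo property are quick. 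For $\hX_{5,2,5} = \PP_{\PP^2}(\cE_{\PP^2,4})$ this is immediate, since $\cE_{\PP^2,4}$ is a del Pezzo bundle by Proposition~\ref{prop:bundle-p2}, so the variety is almost del Pezzo of degree $9 - \rc_2 = 9 - 4 = 5$ by Lemma~\ref{lem:dpv-dpb}. For $\hX_{5,3,4}$ and $\hX_{5,4,3}$ I would apply Proposition~\ref{prop:dP4:constr-i} to $\PP^2 \times \PP^2$ and $\PP^1 \times \PP^1 \times \PP^1$, which are del Pezzo of degree $6$ (Theorem~\ref{cla:dP}\ref{cla:dP6}; also a direct computation with $A = \cO(1,1)$ resp. $A = \cO(1,1,1)$) and homogeneous, so through any point the lines form a family of dimension $n-3$ and (as $\dd = 6 \ge 3$) the ramification condition is vacuous; hence the blowups are almost del Pezzo of degree $6 - 1 = 5$.

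For $\hX_{5,2,5}$ I would fix a general $A$ and consider the incidence variety $I_A = \{(U,[u]) \in \Gr(2,V) \times \PP(A) \mid u \in U\}$. Its projection to $\PP(A) \cong \PP^2$ exhibits it as the projectivization of the rank-$4$ bundle $(V \otimes \cO)/\cU_A \cong T_{\PP^2}(-1) \oplus \cO^{\oplus 2}$, where $\cU_A = \cO_{\PP(A)}(-1)$; twisting by $\cO(-1)$ and using $T_{\PP^2}(-2) \cong \Omega_{\PP^2}(1)$ identifies $I_A$ with $\PP_{\PP^2}(\cE_{\PP^2,4}) = \hX_{5,2,5}$. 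The other projection $(U,[u]) \mapsto U$ lands in $\Sigma_A$; from the tautological sequence $0 \to \cU_A \to \cU \to \cO_{I_A}(-1) \to 0$ one computes $\wedge^2 \cU \cong \cU_A \otimes \cO_{I_A}(-1)$, so the pullback of $\cO_{\Gr}(1) = (\wedge^2\cU)^\vee$ is exactly the fundamental class $A$, and the projection is $\Phi_{|A|}$. It is birational onto $\Sigma_A$ (a general $U$ meets $A$ in a unique line) and contracts only the $\PP^1$-bundle $\{U \subset A\}$ over $\Gr(2,A)$, a locus of codimension $2$; hence it is small, and $X_{5,2,5} = (\hX_{5,2,5})_{\can} = \Sigma_A$.

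For $\hX_{5,3,4}$ I would fix general $A_1, A_2$ and set $\ell = A_1 \cap A_2$. The map $\PP(A_1) \times \PP(A_2) \dashrightarrow \Gr(2,V)$, $([u_1],[u_2]) \mapsto [u_1 \wedge u_2]$, has image in $\Sigma_{A_1} \cap \Sigma_{A_2}$ and is given by the $8$ nonvanishing Plücker coordinates, i.e. by the sub-system of $|\cO(1,1)|$ of divisors through the single point $P = ([\ell],[\ell])$; this is precisely $|\sigma^*\cO(1,1) - E| = |A|$ on $\Bl_P(\PP^2 \times \PP^2) = \hX_{5,3,4}$. The inverse $U \mapsto (U \cap A_1,\, U \cap A_2)$ gives birationality, the exceptional $E \cong \PP^3$ maps isomorphically onto $\{\ell \subset U\} \cong \PP^3$, and the only contracted loci are the planes $\{U \subset A_i\}$, of codimension $2$; hence $X_{5,3,4} = \Sigma_{A_1} \cap \Sigma_{A_2}$. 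For $\hX_{5,4,3}$ I would fix general $A_1,A_2,A_3$, so that the $A_i^\perp \subset V^*$ are pairwise disjoint $2$-spaces spanning $V^*$ with a one-dimensional space of relations $\langle(\Psi_1,\Psi_2,\Psi_3)\rangle$. Sending $(\bar U_i) \in \prod_i \PP(V/A_i)$ to the common annihilator $U = \bigcap_i \pi_i^{-1}(\bar U_i)$ defines a map to $\Gr(2,V)$ whose Plücker coordinates are $\Phi_1 \wedge \Phi_2 \wedge \Phi_3$, with $\Phi_i \in A_i^\perp$ linear in the coordinates of the $i$-th factor; these span a $7$-dimensional subspace of $\wedge^3 V^* \cong \wedge^2 V$, so the map is given by the sub-system of $|\cO(1,1,1)|$ of divisors through the one point $P = ([\Psi_1],[\Psi_2],[\Psi_3])$, which is $|A|$ on $\Bl_P(\PP^1 \times \PP^1 \times \PP^1) = \hX_{5,4,3}$. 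The inverse $U \mapsto (\mathrm{im}(U \to V/A_i))_i$ gives birationality onto $\Sigma_{A_1} \cap \Sigma_{A_2} \cap \Sigma_{A_3}$, and checking that the contracted loci have codimension $\ge 2$ yields smallness and $X_{5,4,3} = \Sigma_{A_1} \cap \Sigma_{A_2} \cap \Sigma_{A_3}$.

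The genuinely delicate points, where I expect the real work to sit, are twofold. First, one must verify that each parametrizing rational map has base scheme a single reduced point, so that one blowup resolves it and the induced complete linear system is exactly $|A|$; this rests on the general-position hypotheses on the $A_i$, most substantially in the computation that $\dim\langle \Phi_1 \wedge \Phi_2 \wedge \Phi_3 \rangle = 7$ for $\hX_{5,4,3}$ (and in the twist bookkeeping $A_{\hX_{5,2,5}} = \pi^*\cO_{\Gr}(1)$ for the bundle case). Second, one must establish smallness, i.e. that the contracted loci are all of codimension $\ge 2$ — equivalently that the exceptional divisor of each blowup maps isomorphically onto a linear subspace of the Schubert section (sitting inside its singular locus) rather than being contracted. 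Once both are in place, each $\Phi_{|A|}$ is the small anticanonical morphism and its image is the asserted Schubert section, completing the proof.
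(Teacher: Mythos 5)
Your proposal is correct and follows essentially the same route as the paper: the same incidence/flag correspondence over $\PP(K)$ realizing $\hX_{5,2,5}$ as a small (Springer-type) resolution of a Schubert divisor, and the same wedge maps $([u_1],[u_2])\mapsto[u_1\wedge u_2]$ and its dual version, resolved by a single blowup at the distinguished point and identified with $\Phi_{|A|}$ via Proposition~\xref{prop:dP4:constr-i}. The only cosmetic differences are that you obtain the almost del Pezzo property of $\hX_{5,2,5}$ from the del Pezzo bundle classification (Proposition~\xref{prop:bundle-p2} with Lemma~\xref{lem:dpv-dpb}) instead of from smallness of the resolution, and that you spell out the base-scheme and complete-linear-system verifications which the paper compresses into the assertion that the rational maps extend to the blowups and give the anticanonical morphism.
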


\begin{proof}
Let~$V$ be a $5$-dimensional vector space.
Consider the diagram
\begin{equation*}
\xymatrix{
& 
\Fl(1,2;V) \ar[dl]_{p_1} \ar[dr]^{p_2}
\\
\PP(V) &&
\Gr(2,V),
}
\end{equation*}
where~$p_1$ is the projectivization of the twisted tangent bundle~$\cT_{\PP(V)}(-2)$
and~$p_2$ is the projectivization of the tautological bundle~$\cU$ on~$\Gr(2,V)$.
Let~$K \subset V$ be a $3$-dimensional subspace.
Passing to the preimage of the plane~$\PP(K) \subset \PP(V)$ we obtain the diagram
\begin{equation*}
\xymatrix{
& 
\Fl(1,2;V) \times_{\PP(V)} \PP(K) \ar[dl]_{f} \ar[dr]^{\xi}
\\
\PP(K) &&
X_{5,2,5},
}
\end{equation*}
where~$f$ is the projectivization of~$\cT_{\PP(V)}(-2) \vert_{\PP(K)} \cong \cO(-1)^{\oplus 2} \oplus \Omega(1)$
and~$\xi$ is a Springer resolution of the degeneracy locus~$X_{5,2,5} \subset \Gr(2,V)$ of the morphism
\begin{equation*}
\cU \hookrightarrow V \otimes \cO \longrightarrow V/K \otimes \cO.
\end{equation*}
Thus, $X_{5,2,5} \subset \Gr(2,V)$ is a Schubert divisor 
and~$\xi$ has nontrivial fibers (isomorphic to~$\PP^1$) over the plane~$\Gr(2,K)$,
hence~$\xi$ is a small morphism, 
$\hX_{5,2,5} \cong \PP_{\PP^2}(\cO(-1)^{\oplus 2} \oplus \Omega(1))$ is almost del Pezzo, 
and~$X_{5,2,5}$ is its anticanonical model. 

Now consider $3$-dimensional subspaces~$K_1,K_2 \subset V$ with~$\dim(K_1 \cap K_2) = 1$
and the map 
\begin{equation*}
\PP(K_1) \times \PP(K_2) \dashrightarrow \Gr(2,V),
\qquad 
(v_1,v_2) \longmapsto \langle v_1,v_2 \rangle.
\end{equation*}
It extends to a regular morphism~$\xi \colon \hX_{5,3,4} \coloneqq \Bl_P(\PP(K_1) \times \PP(K_2)) \to \Gr(2,V)$, 
where~$P$ is the point that corresponds to~$K_1 \cap K_2$.
Since~$\hX_{5,3,4}$ is almost del Pezzo of degree~$5$ by Proposition~\ref{prop:dP4:constr-i} 
and the composition of~$\xi$ with the Pl\"ucker embedding of~$\Gr(2,V)$ 
is the anticanonical morphism of~$\hX_{5,3,4}$, the image
\begin{equation*}
X_{5,3,4} \coloneqq \xi(\hX_{5,3,4}) \subset \Gr(2,V)
\end{equation*}
is a quintic del Pezzo fourfold.
By construction~$X_{5,3,4}$ is the intersection of the Schubert hyperplanes corresponding to~$K_1$ and~$K_2$.

Finally, consider three $3$-dimensional subspaces~$K_i \subset V$, $1 \le i \le 3$, such that
\begin{equation*}
\dim(K_1 \cap K_2) = 
\dim(K_1 \cap K_3) = 
\dim(K_2 \cap K_3) = 1
\qquad\text{and}\qquad 
K_1 \cap K_2 \cap K_3 = 0
\end{equation*} 
and consider the map
\begin{equation*}
\PP(K_1^\perp) \times \PP(K_2^\perp) \times \PP(K_3^\perp) \dashrightarrow \Gr(2,V),
\qquad 
(f_1,f_2,f_3) \longmapsto \langle f_1,f_2,f_3 \rangle^\perp.
\end{equation*}
It extends to a regular morphism~$\xi \colon \hX_{5,4,3} \coloneqq \Bl_P(\PP(K_1^\perp) \times \PP(K_2^\perp) \times \PP(K_3^\perp)) \to \Gr(2,V)$, 
where~$P$ is the point that corresponds to the unique $3$-dimensional subspace~$K_0 \subset V$ 
such that~$K_0^\perp \cap K_i^\perp \ne 0$ for~$1 \le i \le 3$.
Since $\hX_{5,4,3}$ is almost del Pezzo of degree~$5$ by Proposition~\ref{prop:dP4:constr-i} 
and the composition of~$\xi$ with the Pl\"ucker embedding of~$\Gr(2,V)$ 
is the anticanonical morphism of~$\hX_{5,4,3}$, the image
\begin{equation*}
X_{5,4,3} \coloneqq \xi(\hX_{5,4,3}) \subset \Gr(2,V)
\end{equation*}
is a quintic del Pezzo threefold.
By construction~$X_{5,4,3}$ is the intersection of the Schubert hyperplanes corresponding to~$K_1$, $K_2$, and~$K_3$.
\end{proof}

\begin{proof}[Proof of Theorem~\xref{cla:dP}]
We start with the cases where~$d \ge 6$.
First, assume~$\rr(X) = 1$.
By Proposition~\ref{prop:dP4:constr-i} if~$P \in X$ is a general point the blowup~$\tX \coloneqq \Bl_P(X)$ 
is an imprimitive almost del Pezzo variety with~\mbox{$\rr(\tX) = 2$} and~$\dd(\tX) = d - 1 \ge 5$.
Therefore, by Lemma~\ref{lem:another-blowup} there is a flop~$\tX \dashrightarrow \tX'$ such that~$\tX'$
admits a non-birational $K$-negative extremal contraction.
This contraction cannot be a quadric bundle over~$\PP^1$ by Lemma~\ref{lem:lattice-q-p2} because~$\tX$ is imprimitive,
therefore, Proposition~\ref{propo:ext-rays} implies that~$\tX' \cong \PP_Z(\cE)$
where~$Z$ is a del Pezzo surface with~$\rr(Z) = \rr(\tX') - 1 = 1$, i.e., $Z \cong \PP^2$.
Finally, by Lemma~\ref{lem:lattice-p-p2} the imprimitivity of~$\tX'$ implies that~$\dd(\tX') = 7$,
and Proposition~\ref{prop:bundle-p2} gives
\begin{equation*}
\tX' \cong 
\PP_{\PP^2}(\cO(-1) \oplus \cO(-2)) \cong \Bl_P(\PP^3).
\end{equation*}
But~$\Bl_P(\PP^3)$ is a smooth del Pezzo variety, hence it does not have small birational modifications,
hence~$\tX \cong \tX'$
and since by Lemma~\ref{lem:another-blowup} the exceptional class in~$\Cl(\tX)$ is unique, 
we conclude that~$X \cong \PP^3$ and~$\dd(X) = 8$.

Now assume that~$X$ is primitive, $\dd(X) \ge 6$, but~$\rr(X) \ge 2$.
Let~$\tX$ be a $\QQ$-factorialization of~$X$.
It must have a non-birational $K$-negative extremal contraction, hence by
Propositions~\ref{prop:bundle-p2}, Proposition~\ref{prop:bundle-p1p1}, and~Proposition~\ref{prop:quadric-over-p1},
the only possibilities are
\begin{align*}
\tX \cong X_{6,2,4} &\coloneqq \PP_{\PP^2}(\cO(-1)^{\oplus 3}) \cong \PP^2 \times \PP^2
\qquad\text{or}\qquad \\
\tX \cong X^*_{6,3,3} &\coloneqq \PP_{\PP^1 \times \PP^1}(\cO(-1,-1)^{\oplus 2}) \cong \PP^1 \times \PP^1 \times \PP^1,
\end{align*}
or~$\tX$ is a smooth hyperplane section of the first of these 
(because a singular hyperplane section of~$\PP^2 \times \PP^2$ is imprimitive by Lemma~\ref{lem:special-sextic}). 

Finally, assume~$\dd(X) \ge 6$ and~$X$ is imprimitive.
By Theorem~\ref{thm:imprimitive} there is a primitive del Pezzo variety~$X_0$ with~$\dd(X_0) = \dd(X) + k \ge 7$
such that~$X$ is the anticanonical model of a blowup of~$X_0$.
The above argument proves that~$X_0 \cong \PP^3$, hence~$k \le 2$,
and either~\mbox{$X \cong \Bl_P(\PP^3)$}, the del Pezzo variety of degree~7,
or~$X \cong \Bl_{P_1,P_2}(\PP^3)_\can$, 
which is a singular hyperplane section of~$\PP^2 \times \PP^2$ by Lemma~\ref{lem:special-sextic}.  

Next we consider the case where~$\dd(X) = 5$.
If~$X$ is primitive, we have~$\rr(X) \le 3$ by Corollary~\ref{cor:primitive-rank},
and if~$X$ is imprimitive, it is pseudoisomorphic to~$\Bl_{P}(X_0)$ for a del Pezzo variety of degree~$6$,
and as we have seen above~$\rr(X_0) \le 3$, hence~$\rr(X) \le 4$. 

First, assume~$\rr(X) = 1$.
If~$P \in X$ is a general point the blowup~\mbox{$\tX \coloneqq \Bl_P(X)$}
is an imprimitive almost del Pezzo variety (by Proposition~\ref{prop:dP4:constr-i})
with~\mbox{$\rr(\tX) = 2$} and~$\dd(\tX) = 4$,
hence it is pseudoisomorphic to an almost del Pezzo variety~$\tX'$ 
that admits another $K$-negative extremal contraction.
This contraction cannot be a projective bundle over~\mbox{$\PP^1 \times \PP^1$} because~$\rr(\tX) = 2$,
cannot be birational, because by Lemma~\ref{lem:another-blowup} the exceptional class in~$\Cl(\tX)$ is unique, 
and cannot be a quadric bundle over~$\PP^1$, because the latter is primitive by Lemma~\ref{lem:lattice-q-p2}.
Therefore, Proposition~\ref{propo:ext-rays} implies that~$\tX'$ must be a $\PP^{n-2}$-bundle over~$\PP^2$, 
and Proposition~\ref{prop:bundle-p2} together with Lemma~\ref{lem:dpb-maximal} imply that~$\tX'$ 
is a linear section of~\mbox{$\PP_{\PP^2}(\cO(-1) \oplus \Omega(1)^{\oplus 2})$}.
On the other hand, in~\cite{KP-Mu} it was shown that there is a flop
\begin{equation*}
\Bl_P(\Gr(2,5)) \dashrightarrow \PP_{\PP^2}(\cO(-1) \oplus \Omega(1)^{\oplus 2}).
\end{equation*}
Therefore, $\tX$ is pseudoisomorphic to a linear section of~$\Bl_P(\Gr(2,5))$, 
hence~$X$ is isomorphic to a linear section of~$\Gr(2,5)$.

Next, assume~$\rr(X) = 2$.
Then~$X$ cannot be imprimitive, because, as we have shown before, 
there are no del Pezzo varieties~$X_0$ with~$\dd(X_0) = 6$ and~$\rr(X_0) = 1$.
It also cannot be pseudoisomorphic to a projective bundle over~\mbox{$\PP^1 \times \PP^1$} because~$\rr(X) = 2$.
If it is a quadric bundle over~$\PP^1$, it is pseudoisomorphic to a $\PP^{n-2}$-bundle over~$\PP^2$ by Lemma~\ref{lem:schubert}.
Therefore, by Proposition~\ref{propo:ext-rays} and Lemma~\ref{lem:dpb-maximal} we may assume that~$X$ is 
a linear section of~$\PP_{\PP^2}(\cE)$, 
where~$\cE$ is a maximal del Pezzo bundle with~$\rc_2(\cE) = 4$.
By Proposition~\ref{prop:bundle-p2} and Lemma~\ref{lem:three-schuberts} 
we have~$\PP_{\PP^2}(\cE) = \hX_{5,2,5}$ and any linear section of its anticanonical image
is a linear section of~$\Gr(2,5)$.

Next, assume~$\rr(X) = 3$.
Then~$X$ must be imprimitive by Lemma~\ref{lem:lattice-p-p1p1}, 
hence it is pseudoisomorphic to~$\Bl_P(X_0)$, where~$\dd(X_0) = 6$ and~$\rr(X_0) = 2$.
As we showed above, the variety~$X_0$ must be isomorphic to a linear section of~$\PP^2 \times \PP^2$, 
hence~$X$ is isomorphic to a linear section of the anticanonical image of~$\Bl_P(\PP^2 \times \PP^2)$.
Applying Lemma~\ref{lem:three-schuberts} we see that~$X$ is a linear section of~$\Gr(2,5)$.

Finally, assume~$\rr(X) = 4$.
Again~$X$ must be imprimitive, and moreover, it must be pseudoisomorphic to~$\Bl_P(\PP^1 \times \PP^1 \times \PP^1)$.
Applying Lemma~\ref{lem:three-schuberts} we see that~$X$ is a linear section of~$\Gr(2,5)$. 

In the remaining cases~$\dd(X) \le 4$ the theorem is classical and follows from classification 
of del Pezzo surfaces of the corresponding degrees as complete intersections 
(see, e.g., \cite[\S2]{Isk:Fano1e}, \cite{Shin1989}, \cite{Fujita1990}).
\end{proof}

\subsection{ADE classification}
\label{ss:proofs}

In this subsection we explain the ADE classification of del Pezzo varieties
described in Theorems~\ref{thm:primitive-classification}, \ref{thm:intro-clx} and~\ref{thm:intro-bircla}.

\begin{proof}[Proof of Theorem~\xref{thm:primitive-classification}]
If~$\rr(X) = 1$ it is enough to show that~$\dd(X)$ is in the given list, which follows from Theorem~\ref{cla:dP}
because in the cases where~$\dd(X) \in \{6,7\}$ we have~$\rr(X) \ge 2$.

If~$\rr(X) \ge 2$, we deduce from Proposition~\ref{propo:ext-rays} and Corollary~\ref{cor:dP4:constr2} 
that~$X$ has an extremal $K$-negative contraction to~$\PP^1$, $\PP^2$, or~$\PP^1 \times \PP^1$, 
which is a quadric bundle in the first case,
and a $\PP^{n-2}$-bundle in the last two. 
For contractions to~$\PP^1\times\PP^1$ 
we apply Lemma~\ref{lem:dpb-maximal} and Proposition~\ref{prop:bundle-p1p1} to prove~$\dd(X) \le 6$,
and then Lemma~\ref{lem:lattice-p-p1p1} to exclude odd~$\dd(X)$.
Similarly, for contractions to~$\PP^2$ 
we apply Lemma~\ref{lem:dpb-maximal} and Proposition~\ref{prop:bundle-p2} to prove~\mbox{$\dd(X) \le 7$},
and then Lemma~\ref{lem:lattice-p-p2} to exclude~$\dd(X) = 7$.
Finally, for contractions to~$\PP^1$ we apply Proposition~\ref{prop:quadric-over-p1} to prove~$\dd(X) \le 5$,
and then apply Lemma~\ref{lem:schubert} and Lemma~\ref{lem:lattice-q-p2} 
to exclude the cases~$\dd(X) = 5$ and~$\dd(X) = 3$, respectively.
\end{proof}

\begin{proof}[Proof of Theorem~\xref{thm:intro-clx}]
The most important part
of the theorem has been proved in Proposition~\ref{prop:class-divisor} and Lemma~\ref{lem:clx-xix-blowup};
it only remains to describe the possible sublattices~\mbox{$\Xi(X) \subset \Cl(S)$}.
We start with the cases where~$X$ is primitive.

First, consider the case where~$\rr(X) = 1$ and~$X \ne \PP^3$ (i.e., $\dd(X) \le 5$ by Theorem~\ref{cla:dP}).
Then~$\Cl(X)$ is generated by~$A_X$, hence~$i^*\Cl(X)$ is generated by~$A_S = -K_S$, hence
\begin{equation*}
\Xi(X) = K_S^\perp \subset \Cl(S).
\end{equation*}
This lattice is well-known to be the root lattice of Dynkin type~$\rE_8$, $\rE_7$, $\rE_6$, $\rD_5$, or~$\rA_4$
(see, e.g., \cite[Theorem~25.4]{Manin:book:74}).
Similarly, if~$X = \PP^3$, then~$\Cl(X)$ is generated by~$\frac12A_X$, 
we have~$i^*(\frac12A_X) = \frac12A_S$, where~$S \cong \PP^1 \times \PP^1$, 
and its orthogonal is the root lattice of type~$\rA_1$.

Now consider the case where~$\rr(X) \ge 2$. 
Replacing $X$ with its $\QQ$-factorialization (it does not change~$\Cl(X)$ by Lemma~\ref{lem:pseudo-clx})
we may assume that~$X$ is a $\QQ$-factorial almost del Pezzo variety
with an extremal non-birational contraction as in Proposition~\ref{propo:ext-rays}.

First, assume that~$f \colon X = \PP_Z(\cE) \to Z$ is a projective bundle over a minimal del Pezzo surface~$Z$.
In this case, $S$ has a natural birational morphism~$f_S \colon S \to Z$ 
(the restriction of~$f$, see the proof of Lemma~\ref{lemma:ext-rays0a}\ref{lemma:ext-rays0a2}), 
and by Lemma~\ref{lem:contractions-class}\ref{it:class-pn-2} 
the image of~$\Cl(X)$ in~$\Cl(S)$ is generated by~$A_X$ and~$f_S^*\, \Cl(Z)$.
Note that~$f_S$ is the blowup of~$k =\dd(Z) - \dd(X)$ points.
If~$\e_1,\dots,\e_k \in \Cl(S)$ are the classes of the exceptional divisors of~$f_S$,
the orthogonal to~$f_S^*\, \Cl(Z)$ in~$\Cl(S)$ is the sublattice generated by~$\e_1,\dots,\e_k$,
and the orthogonal in~$\langle \e_1,\dots,\e_k \rangle$ to~\mbox{$K_S = f_S^*K_Z + \e_1 + \dots + \e_k$} is generated by the roots
\begin{equation*}
\balpha_1 = \e_1 - \e_2,
\quad 
\balpha_2 = \e_2 - \e_3,
\quad 
\dots,
\quad 
\balpha_{k-1} = \e_{k-1} - \e_k,
\end{equation*}
which obviously generate the root lattice of type~$\rA_{k-1}$.

Next, assume that~$f \colon X \to \PP^1$ is a quadric fibration with~$\rr(X) = 2$.
Then~$\dd(X) \le 5$ by Proposition~\ref{prop:quadric-over-p1}, and if~$\dd(X) = 5$ 
then~$X$ has a $\QQ$-factorial crepant model with a structure of~$\PP^{n-2}$-bundle over~$\PP^2$ by Lemma~\ref{lem:schubert}
which was already considered above.
So, we may assume that~$\dd(X) \le 4$.
Then the induced morphism~$f_S \colon S \to \PP^1$ is a conic bundle with~$4 \le k \le 7$ degenerate fibers.
We can choose one component~$\e_i$, $1 \le i \le k$, in each of the degenerate fibers 
in such a way that the contraction of these components is~$\bar{S} \cong \PP^1 \times \PP^1$,
so that the map~$f_S$ is the composition~$S \to \bar{S} \to \PP^1$, 
where the second map is the projection to the second factor.
Let~$\f_1$ and~$\f_2$ be the classes of the rulings of~$\bar{S}$.
By Lemma~\ref{lem:contractions-class}\ref{it:class-quadric} 
the group~$i^*\Cl(X)$ is generated by~$\f_2$ and~$K_S = -2\f_1 - 2\f_2 + \e_1 + \dots + \e_k$. 
The orthogonal of~$\f_2$ in~$\Cl(S)$ is the sublattice generated by~$\f_2$ and~$\e_1,\dots,\e_k$,
and the orthogonal in~$\langle \f_2,\e_1,\dots,\e_k \rangle$ to~$K_S$ is generated by
\begin{equation*}
\balpha_1 = \e_1 - \e_2,
\quad 
\balpha_2 = \e_2 - \e_3,
\quad 
\dots,
\quad 
\balpha_{k-1} = \e_{k-1} - \e_k,
\quad\text{and}\quad 
\balpha_k = \f_2 - \e_{k-1} - \e_k.
\end{equation*}
These roots obviously generate the root lattice of type~$\rD_k$.

Now, finally, if~$X$ is imprimitive, the theorem follows from a combination of Theorem~\ref{thm:imprimitive}
and Lemma~\ref{lem:clx-xix-blowup} (applied several times).
\end{proof}

\begin{proof}[Proof of Theorem~\xref{thm:intro-bircla}]
By Theorem~\ref{thm:imprimitive} we have
\begin{equation*}
X \cong (\Bl_{P_1,\dots,P_k}(X_0))_\can, 
\end{equation*}
where~$X_0$ is primitive, and by Lemma~\ref{lem:clx-xix-blowup} we have~$\Xi(X) = \Xi(X_0)$.

First, assume~$\Xi(X_0) = \rA_m$.
Then the proof of Theorem~\ref{thm:intro-clx} shows either~$\rr(X_0) = 1$ and~$\dd(X_0) \in \{5,8\}$
(i.e., $X_0 \cong \PP^3$, or~$X_0 \cong \Gr(2,5) \cap \PP^{n + 3}$ by Theorem~\ref{cla:dP}),
or a $\QQ$-factorialization~$\hX_0$ of~$X_0$ is isomorphic to~$\PP_Z(\cE)$, 
where~$Z = \PP^2$ or~$Z = \PP^1 \times \PP^1$ and~$\cE$ is a del Pezzo bundle.

Next, assume~$\Xi(X_0) = \rD_m$.
Then, again, the proof of Theorem~\ref{thm:intro-clx} shows that 
either~$\rr(X_0) = 1$ and~$\dd(X_0) = 4$, hence~$X_0$ is an intersection of two quadrics in~$\PP^{n+2}$ by Theorem~\ref{cla:dP},
or a $\QQ$-factorialization~$\hX_0$ of~$X_0$ is a quadric bundle over~$\PP^1$, 
hence~$\hX_0$ is a complete intersection of the specified type by Proposition~\ref{prop:quadric-over-p1}
(recall that~$\dd(X_0) \ne 3$ by Lemma~\ref{lem:lattice-q-p2}).

Finally, assume~$\Xi(X_0) = \rE_m$.
Then, the proof of Theorem~\ref{thm:intro-clx} shows that~$\rr(X_0) = 1$ and~$\dd(X_0) \le 3$, 
and the description of~$X_0$ as a hypersurface in a weighted projective space is established in Theorem~\ref{cla:dP}.
\end{proof}

\subsection{Type~$\rA$}

In this section we provide a detailed classification in type~$\rA$.

\begin{proof}[Proof of Theorem~\xref{thm:intro-dpa}]
By Theorem~\ref{thm:intro-bircla}\ref{it:bircla-am} we have
\begin{equation*}
X \cong (\Bl_{P_1,\dots,P_{r-k}}(\PP_{Z_0}(\cE_0)))_\can,
\end{equation*}
where~$Z_0$ is a minimal del Pezzo surface and~$\cE_0$ is a del Pezzo bundle on~$Z_0$.
Applying Lemma~\ref{lem:dpb-converse} several times we conclude that~$X \cong \PP_Z(\cE)_{\can}$ 
for a (non minimal) del Pezzo surface~$Z$ and a del Pezzo bundle~$\cE$.
The formula for~$\dd(X)$ is proved in Lemma~\ref{lem:dpv-dpb} 
and the formula for~$\rr(X)$ follows from~$\rr(Z) = 10 - K^2_Z$.
\end{proof} 

\begin{proof}[Proof of Theorem~\xref{thm:intro-dpb}]
This is a combination of Lemma~\ref{lem:dpv-dpb}, Lemma~\ref{lem:dpb-maximal},  
Proposition~\ref{prop:bundle-p2}, Proposition~\ref{prop:bundle-p1p1}, 
Remark~\ref{rem:dpb-other} and Remark~\ref{rem:drb-d1}.
\end{proof} 

\begin{proof}[Proof of Corollary~\xref{cor:drn}]
If~$X$ is of type~$\rD_m$ or~$\rE_m$, then~$\dd(X) + \rr(X) \le 6$ by Theorem~\ref{thm:intro-bircla},
so if~$\dd(X) + \rr(X) \ge 7$, then~$X$ is of type~$\rA_m$ 
and the required inequality follows from Theorem~\ref{thm:intro-dpa} and Corollary~\ref{cor:bundle-rdn} because~$n \ge 3$.
\end{proof}

\begin{proof}[Proof of Theorem~\xref{thm:intro-a-detailed}]
Let~$X$ be a maximal del Pezzo variety of type~$\rA$.
By Theorem~\ref{thm:intro-dpa} we have~$X \cong \PP_Z(\cE)_\can$ 
where~$Z$ is a del Pezzo surface and~$\cE$ is a del Pezzo bundle.
Since~$K_Z^2 = 11 - \rr(X)$, the surface~$Z$ is unique up to isomorphism for~$\rr(X) \in \{2,4,5,6\}$;
moreover if additionally~$\dd(X) \ge 2$, or~$\dd(X) = 1$ and~$\rr(X) \le 5$, 
a del Pezzo bundle~$\cE$ on~$Z$ is also unique by Theorem~\ref{thm:intro-dpb}.
We are left with the four cases listed in part~\ref{it:a-moduli}, and with the case~$\rr(X) = 3$.

Assume~$\dd(X) = 1$ and~$\rr(X) \ge 6$ (hence~$K_Z^2 \le 5$).
Since~$\PP_Z(\cE)$ is a crepant model of~$X$, and~$X$ has a finite number of crepant models,
the isomorphism class of~$X$ depends on as many parameters as pairs~$(Z,\cE)$ do.
Now the surface~$Z$ depends on~$10 - 2K^2_Z = 2\rr(X) - 12$ parameters, 
and by Theorem~\ref{thm:intro-dpb}\ref{it:almost-uniqueness} the bundle~$\cE$ depends on a general point~$z \in Z$
which gives other~$2$ parameters.
Thus, we have a family of del Pezzo varieties of dimension
\begin{equation*}
2\rr(X) - 12 + 2 = 2\rr(X) - 10 = 2\rr(X) - \dd(X) - 9.
\end{equation*}

Similarly, if~$\dd(X) = 2$ and~$\rr(X) = 7$ (hence~$K_Z^2 = 4$), 
the surface~$Z$ depends on~$2$ parameters and the bundle~$\cE$ depends on~$1$ parameter (see Remark~\ref{rem:dpb-other}).
Altogether, we have~$3$ parameters, and this number matches~$2\rr(X) - \dd(X) - 9$.

Now assume~$\rr(X) = 3$, hence~$K_Z^2 = 8$.
We have two del Pezzo surfaces of degree~$8$, one is~$\PP^1 \times \PP^1$ and the other is~$\FF_1$,
and by Theorem~\ref{thm:intro-dpb}
each of these surfaces~$Z$ has a unique (up to isomorphism of the projectivization)
del Pezzo bundle~$\cE_{Z,k}$ for each~$2 \le k \le 7$.
Note that~$\PP_{\FF_1}(\cE_{\FF_1,k})$ is always imprimitive by Corollary~\ref{cor:imprimitivity},
while~$\PP_{\PP^1 \times \PP^1}(\cE_{\PP^1 \times \PP^1,k})$ is primitive 
if and only if its degree is in~$\{2,4,6\}$ by Lemma~\ref{lem:lattice-p-p1p1}.
This gives the cases in part~\ref{it:a-pair} of the theorem.
It remains to show that
\begin{equation*}
\big(\PP_{\PP^1 \times \PP^1}(\cE_{\PP^1 \times \PP^1,k}) \big)_\can \cong
\big(\PP_{\FF_1}(\cE_{\FF_1,k})\big)_\can,
\qquad 
\text{for~$k \in \{3,5,7\}$.}
\end{equation*}
Since by Lemma~\ref{lem:lattice-p-p1p1} and Corollary~\ref{cor:imprimitivity} both sides are imprimitive,
using Lemma~\ref{lem:dpb-converse} we can write both of them as~$\big(\Bl_x(X_0)\big)_\can$,
where
\begin{equation*}
X_0 \coloneqq \PP_{\PP^2}(\cE_{\PP^2,k})_{\can}
\end{equation*}
and~$x \in X_0$
is a point in general position (i.e., such that the blowup of~$x$ is almost del Pezzo).
So, it is enough to show that such point is unique up to automorphisms of~$X_0$.

When~$k = 3$, we have~$X_0 \cong \PP^2 \times \PP^2$ by Proposition~\ref{prop:bundle-p2}, and the uniqueness is obvious.

When~$k = 5$, the variety~$X_0$ has degree~$4$, hence it is imprimitive by Lemma~\ref{lem:lattice-p-p2}, hence
\begin{equation*}
X_0 \cong \big(\Bl_{x'}(\Gr(2,5))\big)_\can
\end{equation*}
since~$\Gr(2,5)$ is the unique maximal del Pezzo variety of with~$d = 5$ and~$r = 1$.
If~$U$ and~$U'$ are the $2$-dimensional subspaces corresponding to the points~$x$ and~$x'$, 
the generality assumption for~$x$ means that~$U \cap U' = 0$.
Since~$\PGL_5$ acts transitively on such pairs~$(U,U')$, the uniqueness follows.

Finally, let~$k = 7$.
Using the description of~$\cE_{\PP^2,7}$ in Proposition~\ref{prop:bundle-p2} 
it is easy to see that~$X_0$ is isomorphic to the double covering of~$\PP^8$ 
ramified over the quartic 
\begin{equation*}
B = \left\{ 
\det
\left(\begin{smallmatrix}
0&x_{1}&x_{2}&x_{3}\\
x_{1}&x_{9}&x_{4}&x_{5}\\
x_{2}&x_{4}&x_{8}&x_{6}\\
x_{3}&x_{5}&x_{6}&x_{7}\\
\end{smallmatrix}\right) = 0
\right\} \subset \PP^8;
\end{equation*}
in other words, $B$ is the discriminant hypersurface in the space of quadrics in~$\PP^3$ 
passing through a fixed point~$p \in \PP^3$.
Since by Proposition~\ref{prop:dP4:constr-i} the point~$x$ cannot lie 
on the ramification divisor of the morphism~$X_0 \to \PP^8$,
its image is in~$\PP^8 \setminus B$, so it is enough to check that~$\Aut(X_0)$ 
acts transitively on~$\PP^8 \setminus B$, the space of smooth quadrics in~$\PP^3$ passing through~$p$.
But this is obvious because~$\Aut(X_0)$ contains the stabilizer of~$p$ in~$\PGL_4$,
and the group~$\PGL_4$ acts transitively on the set of pairs consisting of a smooth quadric with a point.

This completes the proof of the theorem.
\end{proof}  

We note the following funny consequence.
We denote by~$X_{d+1,2,9-d} = \PP_{\PP^2}(\cE_{\PP^2,8-d})$ 
and~$X^*_{d,3,9-d} = \PP_{\PP^1 \times \PP^1}(\cE_{\PP^1 \times \PP^1,8-d})$, as in Theorem~\ref{thm:intro-a-detailed}.    

\begin{corollary}
\label{cor:blowups}
For~$d \in \{2,4,6\}$, for any point~$P \in X^*_{d,3,9-d}$, and for any pair of points~\mbox{$Q_1,Q_2 \in X_{d+1,2,9-d}$} 
such that~$\Bl_P(X^*_{d,3,9-d})$ and~$\Bl_{Q_1,Q_2}(X_{d+1,2,9-d})$ are almost del Pezzo,
there is a pseudoisomorphism
\begin{equation*}
\Bl_P(X^*_{d,3,9-d}) \dashrightarrow \Bl_{Q_1,Q_2}(X_{d+1,2,9-d}).
\end{equation*}
\end{corollary}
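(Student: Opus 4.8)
The plan is to reduce both blowups, up to pseudoisomorphism, to a projective bundle over the degree-$7$ del Pezzo surface, and then to appeal to the uniqueness of maximal del Pezzo bundles. First I would record the invariants. By Proposition~\ref{prop:dP4:constr-i} (together with the equalities $\dd(X_\can)=\dd(X)$ and $\rr(X_\can)=\rr(X)$), the variety $\Bl_P(X^*_{d,3,9-d})$ has $\dd=d-1$, $\rr=4$, while $\Bl_{Q_1,Q_2}(X_{d+1,2,9-d})$ has $\dd=(d+1)-2=d-1$, $\rr=2+2=4$; both have dimension $9-d$. By Lemma~\ref{lem:clx-xix-blowup} the lattice $\Xi$ is unchanged under blowup, so both varieties have type $\rA_{7-d}$, consistent with $10-(d-1)-4=7-d$.

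Next I would apply Lemma~\ref{lem:dpb-converse} to the projective-bundle presentations. Starting from the $\QQ$-factorialization $\PP_{\PP^1\times\PP^1}(\cE_{\PP^1\times\PP^1,8-d})$ of $X^*_{d,3,9-d}$ and blowing up $P$, one application of Lemma~\ref{lem:dpb-converse} gives a pseudoisomorphism $\Bl_P(X^*_{d,3,9-d})\dashrightarrow\PP_{Z'}(\cF)$ with $Z'=\Bl_z(\PP^1\times\PP^1)$ and $\cF$ a del Pezzo bundle. Starting from $\PP_{\PP^2}(\cE_{\PP^2,8-d})$ and applying Lemma~\ref{lem:dpb-converse} twice gives $\Bl_{Q_1,Q_2}(X_{d+1,2,9-d})\dashrightarrow\PP_{Z''}(\cF')$ with $Z''=\Bl_{z_1,z_2}(\PP^2)$ and $\cF'$ a del Pezzo bundle. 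Because the blowups are almost del Pezzo, the centres on the base surfaces are in general position, so both $Z'$ and $Z''$ are the unique degree-$7$ del Pezzo surface, which I denote $Z_7$. Comparing degrees through $\dd(\PP_Z(\cE))=K_Z^2-\rc_2(\cE)$ shows that the second Chern class is preserved at each elementary modification, so $\rc_2(\cF)=\rc_2(\cF')=8-d$.

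It then remains to identify $\PP_{Z_7}(\cF)$ with $\PP_{Z_7}(\cF')$. The common dimension forces $\rk(\cF)=\rk(\cF')=8-d$, which equals $\rc_2(\cF)=\rc_2(\cF')$, so both bundles are maximal by Lemma~\ref{lem:dpb-maximal}. For $d\in\{4,6\}$ we have $\rc_2=8-d\le K_{Z_7}^2-2=5$, and since $K_{Z_7}^2=7\ne 4$, Theorem~\ref{thm:intro-dpb}\ref{it:uniqueness} yields $\cF\cong\cF'$; for $d=2$ we have $\rc_2=6=K_{Z_7}^2-1$, and since $K_{Z_7}^2=7\ge 6$, Theorem~\ref{thm:intro-dpb}\ref{it:almost-uniqueness} yields $\PP_{Z_7}(\cF)\cong\PP_{Z_7}(\cF')$. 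Composing the pseudoisomorphism $\Bl_P(X^*_{d,3,9-d})\dashrightarrow\PP_{Z_7}(\cF)$, this isomorphism, and the inverse of $\Bl_{Q_1,Q_2}(X_{d+1,2,9-d})\dashrightarrow\PP_{Z_7}(\cF')$ then produces the required pseudoisomorphism.

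The main obstacle I anticipate is the bookkeeping needed to run Lemma~\ref{lem:dpb-converse} on the anticanonical models rather than on the projective bundles themselves: one must check that a point whose blowup is almost del Pezzo lifts to the $\QQ$-factorialization (here the codimension $\ge 2$ of the contracted locus and the generality conditions of Proposition~\ref{prop:dP4:constr-i} enter) and that the resulting base surfaces are genuinely del Pezzo of degree $7$. For the final identification there is a clean alternative avoiding the bundle uniqueness: the del Pezzo bundle structure just produced shows that both anticanonical models are non-conical of type $\rA$, and since $\dd+\rr+\dim=(d-1)+4+(9-d)=12$ saturates the bound of Corollary~\ref{cor:drn}, neither can be a fundamental divisor in a larger non-conical del Pezzo variety (which would have the same $\dd,\rr$ but dimension one greater), so both are maximal; as $(d-1,4)\notin\{(2,3),(4,3),(6,3)\}\cup\{(1,6),(1,7),(1,8),(2,7)\}$, Theorem~\ref{thm:intro-a-detailed}\ref{it:a-single} shows they are isomorphic, whence the pseudoisomorphism follows from the criterion that almost del Pezzo varieties with isomorphic anticanonical models are pseudoisomorphic.
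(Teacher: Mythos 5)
Your proposal is correct, and it in fact contains two arguments. The ``clean alternative'' you sketch at the end is exactly the paper's proof: the paper simply notes that both blowups have anticanonical model equal to the maximal del Pezzo variety~$X_{d-1,4,9-d}$ (maximal because $\dd+\rr+\dim=(d-1)+4+(9-d)=12$ saturates the bound of Corollary~\ref{cor:drn}) and concludes from the uniqueness part of Theorem~\ref{thm:intro-a-detailed}\ref{it:a-single}, together with the fact that almost del Pezzo varieties with isomorphic anticanonical models are pseudoisomorphic. Your main route is genuinely different: instead of uniqueness of the maximal del Pezzo \emph{variety}, you reduce both sides via Lemma~\ref{lem:dpb-converse} to projective bundles over the degree-$7$ del Pezzo surface~$Z_7$ and invoke uniqueness of the maximal del Pezzo \emph{bundle} (Theorem~\ref{thm:intro-dpb}\ref{it:uniqueness} for $d\in\{4,6\}$ and Theorem~\ref{thm:intro-dpb}\ref{it:almost-uniqueness} for $d=2$); your numerology is right: $\rk(\cF)=\rc_2(\cF)=8-d$ forces maximality by Lemma~\ref{lem:dpb-maximal}, and the exceptional cases $K_Z^2=4$, resp.\ $K_Z^2<6$, are avoided since $K_{Z_7}^2=7$. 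What your route buys is an explicit common $\QQ$-factorial crepant model, namely $\PP_{Z_7}(\cF)$ with $\cF$ the maximal del Pezzo bundle with $\rc_2(\cF)=8-d$, rather than an abstract uniqueness statement; what it costs is the bookkeeping you flag, chiefly that in the second application of Lemma~\ref{lem:dpb-converse} on the $\PP^2$ side you must transport $Q_2$ through the flop produced by the first application and verify that the blowup of the transported point is still almost del Pezzo. That step is genuine but closes with the paper's own tools and at the paper's own level of rigor (the proofs of Theorem~\ref{thm:intro-dpa} and Theorem~\ref{thm:intro-a-detailed} iterate Lemma~\ref{lem:dpb-converse} in exactly this way): by Lemma~\ref{lemma:ext-rays0} the blown-up point lies on no $K$-trivial curve, while the indeterminacy loci of the pseudoisomorphism of Proposition~\ref{prop:dpb} are contracted by the anticanonical morphism and hence are covered by $K$-trivial curves, so the flop is an isomorphism near the point and commutes with the blowup.
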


\begin{proof}
Indeed, the anticanonical model of the left side is the maximal del Pezzo variety~$X_{d-1,4,9-d}$,
and the same is true for the right side.
Therefore, the existence of a pseudoisomorphism follows from the uniqueness of~$X_{d-1,4,9-d}$. 
\end{proof}  

\begin{remark}
In the case where~$d = 6$ a pseudoisomorphism between
\begin{equation*}
\Bl_P(\PP_{\PP^1 \times \PP^1}(\cE_{\PP^1 \times \PP^1,2})) = \Bl_{P}(\PP^1 \times \PP^1 \times \PP^1)
\quad\text{and}\quad 
\Bl_{Q_1,Q_2}(\PP_{\PP^2}(\cE_{\PP^2,2})) \cong \Bl_{Q_1,Q_2,Q_3}(\PP^3)
\end{equation*}
is well-known, see, e.g., \cite[Theorem~3.1]{KP:rho}.
\end{remark}     

\subsection{Del Pezzo varieties of degree~$1$ and del Pezzo surfaces}
\label{ss:degree-1}  

Recall the definitions of the discriminant hypersurface~\mbox{$\fD(X) \subset \PP(H^0(X,\cO_X(A_X)))^\vee$} 
of a del Pezzo variety~$X$ of degree~$1$,
as well as the universal anticanonical divisor~$\bH(S) \subset S \times \PP(H^0(S,\cO_X(-K_S)))$,
and the divisor~$\Pi_L \subset \bH(S)$ associated to a del Pezzo surface~$S$ and a line~$L \subset S$
(see the Introduction and~\S\ref{ss:dpb-duality}). 

\begin{proof}[Proof of Theorem~\xref{thm:dp1-4-cubic}]
Let~$X$ be a maximal del Pezzo variety of type~$\rA_{n-2}$ with~$\dd(X) = 1$.
Note that~$\dim(X) = n$.
By Theorem~\ref{thm:intro-dpa} we have
\begin{equation*}
X \cong \PP_{Z}(\cE)_\can,
\end{equation*}
where~$Z$ is a del Pezzo surface of degree~$K_Z^2 = n$ 
and~$\cE$ is a maximal del Pezzo bundle on~$Z$ with~$\rk(\cE) = \rc_2(\cE) = n - 1$.
Let~$x_0 \in \PP_Z(\cE)$ be the base point, let~$z_0 \in Z$ be its image, and let~$S \coloneqq \Bl_{z_0}(Z)$.
By Proposition~\ref{prop:bl-pze-hblz} we have a pseudoisomorphism
\begin{equation}
\label{eq:x-bh-pseudo}
\chi \colon\Bl_{x_0}(\PP_Z(\cE)) \dashrightarrow \bH(S)
\end{equation}
compatible with the natural elliptic fibration structures of these spaces 
induced by the anticanonical map of~$\PP_Z(\cE)$ 
and the projection~$\operatorname{pr} \colon \bH(S) \to \PP(H^0(S, \cO(-K_S)))$.
Furthermore, $\chi$ is an isomorphism over the complement of a linear subspace of codimension~$2$.
Therefore, the discriminant divisors for these elliptic fibrations agree,
and since the first is~$\fD(X)$ by definition, and the second is the projective dual of~$S$, 
we obtain an isomorphism
\begin{equation*}
\fD(X) \cong S^\vee,
\end{equation*}
which by projective duality implies~$S \cong \fD(X)^\vee$.  

Furthermore, the pseudoisomorphism~$\chi$ identifies the canonical divisors
\begin{equation*}
K_{\Bl_{x_0}(X)} = (1 - n)(A_X - E_0)
\qquad\text{and}\qquad 
K_{\bH(S)} = (1 - n)H,
\end{equation*}
where~$H$ is the pullback to~$\bH(S)$ of the hyperplane class of~$\PP(H^0(S, \cO(-K_S)))$.
Moreover, by Proposition~\ref{prop:bl-pze-hblz} the pseudoisomorphism~$\chi$ identifies the divisor~$E_0$ 
with the divisor~$\Pi_{L_0}$, where~$L_0$ is the line on~$S$ over~$z_0$, 
hence the divisor~\mbox{$A_X = (A_X - E_0) + E_0$} on~$\Bl_{x_0}(X)$ 
is identified by~$\chi$ with the divisor~$H + \Pi_{L_0}$.
Therefore, the corresponding graded algebras and their projective spectra are identified as well, i.e.,
\begin{multline}
\label{eq:x-xsl}
X \cong \Proj \left( \moplus_{m \ge 0} H^0(X, \cO_X(mA_X)) \right) 
\cong \Proj \left( \moplus_{m \ge 0} H^0(\Bl_{x_0}(X), \cO_{\Bl_{x_0}(X)}(mA_X)) \right) \\
\cong \Proj \left( \moplus_{m \ge 0} H^0(\bH(S), \cO_{\bH(S)}(m(H + \Pi_{L_0}))) \right) 
\eqqcolon X_{S,L_0},
\end{multline}
where we use the last equality above as the definition of~$X_{S,L_0}$.

It remains to check that the variety~$X_{S,L_0}$ does not depend on the choice of a line~$L_0$ on~$S$,
i.e., that~$X_{S,L} \cong X_{S,L'}$ for any pair of lines.
Note that if~$n \ge 6$ there is nothing to prove, 
because both~$X_{S,L}$ and~$X_{S,L'}$ are maximal del Pezzo varieties with the same~$d = 1$ and~$r \le 5$,
hence they are isomorphic by Theorem~\ref{thm:intro-a-detailed}.
So, we assume~$n \le 5$.

First, assume~$L$ and~$L'$ do not intersect.
Let~$S \to Z$, $S \to Z'$ and~$S \to Z_0$ be the contraction of~$L$, $L'$, and both~$L$ and~$L'$, respectively,
so that there are two points~$z_0,z'_0 \in Z_0$ in general position such that
\begin{equation*}
Z = \Bl_{z'_0}(Z_0),
\qquad 
Z' = \Bl_{z_0}(Z_0),
\qquad\text{and}\qquad 
S = \Bl_{z_0}(Z) = \Bl_{z_0,z'_0}(Z_0) = \Bl_{z'_0}(Z').
\end{equation*}
Then by Proposition~\ref{prop:dpb} and Lemma~\ref{lem:dpb-converse}
there is a maximal del Pezzo bundle~$\cE_0$ on~$Z_0$ 
with a pseudoisomorphism~$X \dashrightarrow \Bl_{x'_0}(X_0)$, 
where~$X_0 \coloneqq \PP_{Z_0}(\cE_0)$ is an almost del Pezzo variety of degree~$2$,
and a maximal del Pezzo bundle~$\cE'$ on~$Z'$ 
with a pseudoisomorphism~\mbox{$X' \coloneqq \PP_{Z'}(\cE') \dashrightarrow \Bl_{x_0}(X_0)$}.
Moreover, as~$x_0$ is the base point of~$X$ and~$X$ is pseudoisomorphic to~$\Bl_{x'_0}(X_0)$,
we have~$x_0 = \tau(x'_0)$, where~$\tau$ is the Geiser involution of~$X_0$, and
hence the pseudoisomorphisms
\begin{equation*}
\Bl_{x_0}(X) \dashleftarrow \Bl_{x_0,x'_0}(X_0) \dashrightarrow \Bl_{x'_0}(X')
\end{equation*}
swap the divisors~$E_0$ and~$E'_0$.
Combining this with the pseudoisomorphism~\eqref{eq:x-bh-pseudo} 
and a similar pseudoisomorphism for~$\Bl_{x'_0}(\PP_{Z'}(\cE'))$,
we obtain a pseudoautomorphism of~$\bH(S)$ that swaps the divisors~$\Pi_{L}$ and~$\Pi_{L'}$. 
Therefore, it induces an isomorphism of the graded algebras from~\eqref{eq:x-xsl}
and of their projective spectra,
i.e., an isomorphism~\mbox{$X_{S,L} \cong X_{S,L'}$}.

Finally, assume that~$L,L'$ is an arbitrary pair of distinct lines.
Since~$n \le 5$ and~$S$ is a del Pezzo surface of degree~$n - 1 \le 4$, 
we can find a line~$L''$ on~$S$ which intersects neither~$L$ nor~$L'$.
Then we have isomorphisms~$X_{S,L} \cong X_{S,L''}$, and similarly~$X_{S,L'} \cong X_{S,L''}$.
Composing these isomorphisms, we obtain~$X_{S,L} \cong X_{S,L'}$.
\end{proof} 

\section{Effective, moving, and ample cones}
\label{sec:cones}

In this section we describe the structure of the space~$\Cl(X) \otimes \RR$ associated with an almost del Pezzo variety~$X$.
This space contains three important cones:

\begin{itemize}
\item 
the \emph{effective cone} $\Eff(X)$ --- the closed convex cone generated by the classes of effective divisors; 
\item 
the \emph{moving cone} $\Mov(X)$ --- the closed convex cone generated by the classes of movable divisors,
i.e., divisors~$M$ such that
the linear system~$|M|$ is non-empty and has no fixed components;
\item 
the \emph{ample cone} $\Amp(X)$ --- the closed convex cone generated by the classes of ample divisors.
\end{itemize}
The first two cones~$\Eff(X)$ and~$\Mov(X)$ are invariant under pseudoisomorphisms:
\begin{equation*}
\Eff(X') = \psi_*\Eff(X)
\qquad\text{and}\qquad 
\Mov(X') = \psi_*\Mov(X)
\end{equation*}
for any pseudoisomorphism~$\psi \colon X \dashrightarrow X'$,
so they are the same for all crepant models of a given del Pezzo variety.
The third cone~$\Amp(X)$, on a contrary, depends on a crepant model (and determines it).
We will usually consider~$\Amp(X)$ only when~$X$ is $\QQ$-factorial; 
in this case it has full dimension and coincides with the cone of nef classes.
Clearly, 
\begin{equation*}
\Amp(X)\subset \Mov(X)\subset \Eff(X).
\end{equation*}
Note that a priori the elements of~$\Eff(X)$ or~$\Mov(X)$ could be represented by non-effective or even non-rational classes. 
Elements of~$\Eff(X)$ are called \emph{pseudoeffective}. 

\subsection{Description via special classes}

In this section we describe the cones~$\Eff(X)$ and~$\Mov(X)$ in terms of~$\PP^1$- and~$\PP^2$-classes in~$\Cl(X)$,
see Definition~\ref{def:exP1P2}.
Since this definition is given purely in terms of the canonical bilinear form on~$X$, 
we conclude from Proposition~\ref{prop:class-divisor} that~$D$ is a $\PP^s$-class 
if and only if~$D\vert_Y$ is a $\PP^s$-class for a general fundamental divisor~$Y \subset X$
if and only if~$D\vert_S$ is a $\PP^s$-class for a general linear surface section~$S \subset X$.

We start with an evident observation about $\PP^s$-classes on del Pezzo surfaces.

\begin{lemma}
\label{lem:2dP}
If~$X$ is a del Pezzo surface and~$D \in \Cl(X)$ is a $\PP^s$-class with~$s \in \{1,2\}$
then~$D$ is semiample and induces a morphism~$X \to \PP^s$ which is a conic bundle for~$s = 1$ and a blowup for~$s = 2$.
In particular, $D$ is not ample for~$\dd(X) \le 8$.
\end{lemma}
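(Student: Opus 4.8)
The plan is to prove the chain of properties ``$D$ effective $\Rightarrow$ $D$ nef $\Rightarrow$ $|D|$ base-point free'' and then to read off the morphism and its failure of ampleness from the numerics $\langle A_X, D\rangle = s+1$, $\langle D, D\rangle = s-1$ (recall $A_X = -K_X$). First I would check effectivity by Riemann--Roch: on the surface $X$ one gets $\chi(\cO_X(D)) = 1 + \tfrac12(\langle D,D\rangle + \langle A_X, D\rangle) = s+1$, while $H^2(X,\cO_X(D)) \cong H^0(X,\cO_X(K_X - D))^\vee = 0$ since $\langle A_X, K_X - D\rangle = -\dd(X) - (s+1) < 0$ makes $K_X - D$ non-effective by Lemma~\ref{lem:ad=1}. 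Hence $h^0(X,\cO_X(D)) \ge s+1 > 0$ and $D$ is effective.

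The key step is nefness. I would first reduce it to exceptional classes: if $C$ is an irreducible curve with $\langle D, C\rangle < 0$ then $C^2 < 0$ (an irreducible curve with $C^2 \ge 0$ is nef, and $D$ is effective, so $\langle D,C\rangle \ge 0$), and then adjunction together with ampleness of $-K_X$ forces $C$ to be a $(-1)$-curve, i.e. its class $\e$ satisfies $\langle A_X, \e\rangle = 1$, $\langle \e,\e\rangle = -1$. So it suffices to show $\langle D, \e\rangle \ge 0$ for every exceptional class $\e$. For this I would decompose $D = \tfrac{s+1}{\dd(X)}A_X + D_0$ and $\e = \tfrac{1}{\dd(X)}A_X + \e_0$ with $D_0,\e_0 \in A_X^\perp$, which is negative definite by the Hodge index theorem, and apply the Cauchy--Schwarz inequality to $D_0,\e_0$. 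A short computation then gives $\langle D,\e\rangle > -1$, hence $\ge 0$, in every case except $s=2$ and $\dd(X) = 1$, where one only obtains $\langle D,\e\rangle \ge -1$; and equality there forces $D - A_X - 2\e \in A_X^\perp$ to be isotropic, hence zero, i.e. $D = A_X + 2\e$, which is precisely the configuration excluded by the defining condition of a $\PP^2$-class. Thus $\langle D,\e\rangle \ge 0$ always, and $D$ is nef.

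With $D$ nef I would prove base-point freeness via Reider's theorem applied to $L \coloneqq D - K_X = D + A_X$, which is ample (nef plus ample) with $L^2 = \dd(X) + 3s + 1 \ge 5$. Ampleness of $L$ rules out the first Reider configuration ($L\cdot E = 0$), and the second ($L\cdot E = 1$, $E^2 = 0$) would force an effective $E$ with $\langle A_X, E\rangle = 1$ and $\langle E,E\rangle = 0$, impossible by the parity $\langle E,E\rangle + K_X\cdot E \equiv 0 \pmod 2$. Hence $|D| = |K_X + L|$ is base-point free, and Kawamata--Viehweg vanishing (applied to $K_X + L$) gives $h^0(X,\cO_X(D)) = s+1$ exactly. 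For $s=1$ the pencil $|D|$ defines a morphism $X \to \PP^1$ whose general fibre $F \in |D|$ has $F^2 = 0$, $\langle A_X, F\rangle = 2$, $p_a(F) = 0$; the same parity argument forbids reducible general fibres, so this is a conic bundle. For $s=2$ the net $|D|$ defines $X \to \PP^2$ of degree $\langle D,D\rangle = 1$, a birational morphism from a smooth surface, i.e. a blowup. Non-ampleness is then immediate: for $s=1$ because $\langle D,D\rangle = 0$, and for $s=2$ because $\dd(X) \le 8$ forces $X \ne \PP^2$, so the blowdown $X \to \PP^2$ contracts a curve on which $D$ has degree $0$.

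I expect the main obstacle to be the nefness step, and within it the borderline case $s=2$, $\dd(X)=1$: this is the unique situation where the Cauchy--Schwarz estimate is not strict, and it is exactly here that the otherwise mysterious extra clause in the definition of a $\PP^2$-class (not lying in the span of $A_X$ and an exceptional class) becomes indispensable. Everything after nefness is routine surface geometry (Reider, adjunction, and the degree of the associated morphism).
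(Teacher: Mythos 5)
Your proof is correct, but there is nothing in the paper to compare it against line by line: Lemma~\ref{lem:2dP} is introduced there as ``an evident observation'' and given no proof at all. The justification the authors presumably had in mind is the classical structure theory of del Pezzo surfaces: writing $X$ as a blowup of $\PP^2$ (or as $\PP^1\times\PP^1$), a $\PP^1$-class is, up to the Weyl group action on $K_X^\perp$, the conic class $\h - \e_1$ and a $\PP^2$-class is the line class $\h$, for which the statement is classical. Your argument is a legitimate self-contained replacement, and it correctly isolates where the actual content lies, namely nefness of $D$.

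On that key step: the reduction to $(-1)$-curves is right, the Cauchy--Schwarz estimate in the negative definite lattice $A_X^\perp$ does give $\langle D,\e\rangle > -1$ in all cases except $s=2$, $\dd(X)=1$, and your analysis of the equality case (forcing $D = A_X + 2\e$) lands exactly on the configuration excluded by the last clause of Definition~\ref{def:exP1P2}; this matches the paper's remark following that definition, which states that the clause is void unless $\langle \ba,\ba\rangle = 1$ and $\x = \ba + 2\e$. After nefness, Reider's theorem combined with the fact that $A_X$ is a characteristic element gives freeness of $|D|$ itself, which is slightly stronger than the semiampleness asserted in the lemma; the remaining identifications are routine. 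Two cosmetic loose ends you should spell out: for $s=2$, that $D^2 = 1 > 0$ forces the image of $\Phi_{|D|}$ to be two-dimensional (so the degree count applies), and for $s=1$, that the parity argument is applied via Stein factorization to rule out disconnected and non-reduced fibres, not literally ``reducible general fibres'' (reducible special fibres are of course allowed in a conic bundle). Both are one-line fixes. Alternatively, once nefness is known, semiampleness also follows from the base point free theorem exactly as in the paper's higher-dimensional Lemma~\ref{lemma:P1-P2:f}; Reider buys you control of $|D|$ itself rather than of $|mD|$ for $m \gg 0$.
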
 

The following results generalize this to higher dimensions.

\begin{lemma}
\label{lemma:P1-P2:f}
Let~$X$ be an almost del Pezzo variety and let $D\in \Cl(X)$ be a $\PP^s$-class with~$s \in \{1,2\}$.
If~$D$ is nef then the linear system~$|D|$ is base point free and defines a contraction~$X\to \PP^s$.
\end{lemma}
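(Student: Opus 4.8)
The plan is to reduce everything to the surface case already settled in Lemma~\ref{lem:2dP} and then spread the resulting contraction over $X$ by means of the base-point-free theorem. Since the notion of a $\PP^s$-class and the bilinear form~\eqref{eq:product} depend only on $\Cl(X)$ and are preserved under pseudoisomorphisms (Lemma~\ref{lem:pseudo-clx}), I would first replace $X$ by the $\QQ$-factorialization $\xi\colon \hX\to X$ provided by Lemma~\ref{lem:d-mmp}: since $\xi$ is crepant, a $\hD$-negative ($K$-negative) extremal ray cannot be $\xi$-contracted, so it would produce a $D$-negative curve on $X$; as $D$ is nef no such ray exists, and hence the strict transform $\hD$ is nef. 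Thus I may assume $X$ is $\QQ$-factorial and $D$ is $\QQ$-Cartier; because $\xi$ is small, $H^0(X,\cO_X(D))=H^0(\hX,\cO_{\hX}(\hD))$, so base point freeness and the induced morphism transfer back to the original $X$. The base case $n=2$ is exactly Lemma~\ref{lem:2dP}, so assume $n\ge 3$.

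Next I would observe that $aD-K_X=aD+(n-1)A_X$ is nef and big for every $a>0$, being the sum of the nef class $aD$ and the nef and big class $(n-1)A_X$. Hence the base-point-free theorem \cite[Theorem~3.3]{KM:book} shows that $D$ is semiample: it defines a contraction $c\colon X\to W$ onto a normal projective variety with $c_*\cO_X=\cO_W$ and $D=c^*L$ for an ample Cartier divisor $L$ on $W$. Everything now reduces to identifying $(W,L)$ with $(\PP^s,\cO_{\PP^s}(1))$.

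For this I would pass to a general linear surface section $S\hookrightarrow X$, a smooth del Pezzo surface with $\dd(S)=\dd(X)$ (Lemma~\ref{lem:s-p-smooth}), on which $D\vert_S$ is again a $\PP^s$-class by Proposition~\ref{prop:class-divisor}. By Lemma~\ref{lem:2dP} the semiample class $D\vert_S$ defines the conic bundle ($s=1$) or the blow-down ($s=2$) $\pi\colon S\to\PP^s$ with $D\vert_S=\pi^*\cO_{\PP^s}(1)$. Since also $D\vert_S=(c\vert_S)^*L$, uniqueness of the contraction attached to a semiample class identifies $c\vert_S$ with $\pi$ followed by a finite morphism $\PP^s\to W$; in particular $c(S)\cong\PP^s$, so $\dim W\ge s$. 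For the reverse inequality I would compute the dimension of a general fibre $F$ of $c$: because $A_X$ is big its restriction to a general fibre is big, so the $n-2$ general members of $|A_X|$ cutting out $S$ meet $F$ in the expected dimension $\dim F-(n-2)$; on the other hand $F\cap S$ is a general fibre of $\pi$, of dimension $2-s$. This gives $\dim F=n-s$ and $\dim W=s$. Then $c(S)$ is a closed subvariety of full dimension in the irreducible $W$, so $W=c(S)$, the finite map $\PP^s\to W$ has degree $1$, and normality of $W$ forces $W\cong\PP^s$, $L\cong\cO_{\PP^s}(1)$.

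Granting this, the conclusion is immediate: $\cO_X(D)=c^*\cO_{\PP^s}(1)$ is a globally generated line bundle, with $H^0(X,\cO_X(D))=H^0(\PP^s,\cO_{\PP^s}(1))$ by the projection formula; hence $|D|$ is base point free and $c=\Phi_{|D|}\colon X\to\PP^s$ is the asserted contraction. I expect the fibre-dimension count to be the main obstacle: the delicate point is justifying that a general surface section meets a general fibre of $c$ in the expected dimension, i.e.\ that $|A_X|$ is not composed with the fibration $c$ — precisely where the bigness of $A_X$ (forcing $A_X\vert_F$ to be big on a general fibre $F$) must be used carefully. The remaining inputs — the surface computation of Lemma~\ref{lem:2dP} and the uniqueness of the semiample contraction — are routine, and this route has the pleasant feature of avoiding any separate analysis of base points along $\Sing(X)$ and of the exceptional $\PP^2$-class degeneration in degree $1$.
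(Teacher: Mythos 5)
Your first step coincides with the paper's: the base point free theorem (using that $aD-K_X=aD+(n-1)A_X$ is nef and big) yields the contraction $c\colon X\to W$ with $D=c^*L$, $L$ ample. But where the paper then argues structurally — Corollary~\ref{cor:targets} forces $W$ to be birational to $X$, a del Pezzo surface, $\PP^1$, or a point, and Lemma~\ref{lem:2dP} eliminates cases — you try to pin down $W$ by a dimension count on a general surface section $S$, and that count has a genuine gap. The two sides of your equation $\dim F-(n-2)=2-s$ are computed on the same fibre only if $c(S)$ is dense in $W$. If ``general fibre'' means the fibre over a general point of $W$, then the right-hand side fails whenever $\dim W\ge 3$: since $\dim c(S)=s\le 2$, such a fibre is \emph{disjoint} from $S$, so $F\cap S$ is empty, not a fibre of $\pi$. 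If instead $F$ is the fibre over a general point of $c(S)$, then the left-hand side fails: such fibres are not general, they may lie inside the negative part $E$ of a Kodaira decomposition $A_X\sim_{\QQ}E+\delta$ (indeed inside $\Exc(X\to X_\can)$), and bigness of $A_X\vert_F$ — the pivot of your count — can genuinely fail there. So nothing in your argument excludes $\dim W\ge 3$; in particular the case where $c$ is birational (so $D$ is nef and big) is never addressed, even though for a $\PP^2$-class the lattice data $\langle A_X,D\rangle=3$, $\langle D,D\rangle=1$ is, as far as your argument goes, consistent with it. The point you flagged as delicate (bigness of $A_X$ on fibres over general points of $W$) is in fact the unproblematic part; the gap is elsewhere.

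To close it you need an input of the same strength as Corollary~\ref{cor:targets}. The paper's own mechanism is adjunction plus the index bound: for non-birational $c$ a general fibre $F$ satisfies $-K_F=(n-1)A_X\vert_F$, and \cite[Theorem~3.1.14]{IP99} (used exactly this way in Lemma~\ref{lemma:ext-rays0a}) forces $\dim F\ge n-2$, i.e.\ $\dim W\le 2$; the birational case is excluded as in the paper's proof, since an ample $\PP^s$-class on an almost del Pezzo variety restricts to an ample $\PP^s$-class on a del Pezzo surface of degree at most $8$, contradicting Lemma~\ref{lem:2dP}. Alternatively, a purely numerical patch: the intersection numbers $a_k\coloneqq A_X^{n-k}\cdot D^k$ are non-negative, integral, log-concave by the Khovanskii--Teissier inequalities, and positive for $k\le\dim W$; since $a_1=s+1$ and $a_2=s-1$, log-concavity gives $a_3\le (s-1)^2/(s+1)<1$, hence $a_3=0$ and $\dim W\le 2$. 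Once $\dim W\le 2$ is known, your remaining steps (the Stein factorization comparison with $\pi$, the computation $\deg(g)\cdot\deg_L c(S)=1$, and Zariski's main theorem) are correct and the proof closes; without such an ingredient it is incomplete exactly where the paper invokes Corollary~\ref{cor:targets}. (A side remark: your opening reduction via Lemma~\ref{lem:d-mmp} is harmless but unnecessary — the hypothesis that $D$ is nef already presupposes it is $\QQ$-Cartier, and then its pullback to any $\QQ$-factorialization is nef for free.)
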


\begin{proof}
By the base point free theorem \cite[Theorem~3.3]{KM:book} the class~$D$ is semiample
and defines a contraction~$f \colon X\to Z$ 
such that~$D=f^*D_Z$ for an ample divisor class~$D_Z$.
Consider the alternatives for~$Z$ given by Corollary~\ref{cor:targets}.

If~$Z$ is an almost del Pezzo variety birational to~$X$ then~$D_Z$ is also a $\PP^s$-class 
by Lemma~\ref{lem:pseudo-clx} and~Lemma~\ref{lem:contractions-class},
hence the restriction of~$D_Z$ to a general linear surface section~$S_Z \subset Z$ 
is an ample $\PP^s$-class, which is impossible by Lemma~\ref{lem:2dP}.
Similarly, If~$Z$ is a del Pezzo surface, $D_Z$ is again a $\PP^s$-class by Lemma~\ref{lem:contractions-class},
so Lemma~\ref{lem:2dP} shows that~$Z = \PP^2$ and~$s = 2$.
Finally, if~$Z = \PP^1$ then~$D$ is a $\PP^1$-class.
\end{proof}

\begin{proposition}
\label{prop:P1-P2}
Let~$X$ be a del Pezzo variety and let $D\in \Cl(X)$ be a 
$\PP^{s}$-class, \mbox{$s \in \{1,2\}$}.
Then there exists a $\QQ$-factorialization~$\xi \colon \hX\to X$ 
such that the strict transform~\mbox{$(\xi^{-1})_*D$} is nef and defines a contraction~$\hX \to \PP^s$.
\end{proposition}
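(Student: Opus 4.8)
The plan is to produce the $\QQ$-factorialization via the $D$-directed MMP of Lemma~\ref{lem:d-mmp} and then invoke Lemma~\ref{lemma:P1-P2:f}. Concretely, applying Lemma~\ref{lem:d-mmp} to the class $D$ yields a $\QQ$-factorialization $\xi\colon \hX\to X$ such that, writing $\hD\coloneqq(\xi^{-1})_*D$, every $\hD$-negative extremal ray is $K$-negative; moreover by Lemma~\ref{lem:pseudo-clx} the class $\hD$ is again a $\PP^s$-class. We are then in one of two cases: either $\hD$ is nef, in which case Lemma~\ref{lemma:P1-P2:f} immediately gives the base point free linear system $|\hD|$ and the contraction $\hX\to\PP^s$, or there is a $\hD$-negative $K$-negative extremal contraction $f\colon \hX\to Z$, which we must rule out. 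The whole argument therefore hinges on two inputs: that $D$ is pseudoeffective (so that the leftover contraction $f$ is forced to be birational), and a direct obstruction to a birational $\hD$-negative contraction.

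First I would establish that a $\PP^s$-class is effective, hence pseudoeffective; this is the analogue for $\PP^s$-classes of the effectivity of exceptional classes proved in Proposition~\ref{prop:E-eff}, and I would follow the same scheme. The base case is the surface case: if $S$ is a del Pezzo surface then a $\PP^s$-class on $S$ is effective by Lemma~\ref{lem:2dP}. For the inductive step along a ladder $S=X_2\subset\dots\subset X_n=X$ I would lift a section from a general fundamental divisor $Y\in|A_X|$ using the sequence
\begin{equation*}
0\longrightarrow \cO_X(D-A_X)\longrightarrow \cO_X(D)\longrightarrow \cO_Y(D\vert_Y)\longrightarrow 0,
\end{equation*}
which reduces effectivity of $D$ to the vanishing $H^1(X,\cO_X(D-A_X))=0$ together with effectivity of the $\PP^s$-class $D\vert_Y$ on $Y$ (which is a $\PP^s$-class by Proposition~\ref{prop:class-divisor}). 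The required vanishing is obtained, exactly as in Proposition~\ref{prop:E-eff}, from the stronger statement $H^1(X,\cO_X(D-jA_X))=0$ for all $j\ge1$, proved by descending induction on $j$ (Serre vanishing for $j\gg0$) and the analogous vanishing on $Y$. The base of this vanishing induction is again a surface computation: for a $\PP^s$-class on a del Pezzo surface $S$ one checks $H^1(S,\cO_S(D-jA_S))=0$ for $j\ge1$, e.g.\ by pushing forward along the morphism $S\to\PP^s$ attached to $D$ (for $s=2$ a composition of point blow-downs, where the projection formula and $R^{>0}g_*=0$ reduce it to vanishing on $\PP^2$), or by Riemann--Roch combined with Lemma~\ref{lem:ad=1}.

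With pseudoeffectivity in hand I would return to the MMP dichotomy. In the remaining case $f\colon\hX\to Z$ is a $\hD$-negative $K$-negative extremal contraction; since $\hD$ (equivalently $D$) is pseudoeffective --- and $\Eff$ is a pseudoisomorphism invariant --- the ``moreover'' clause of Lemma~\ref{lem:d-mmp} forces $f$ to be birational, i.e.\ by Lemma~\ref{lemma:ext-rays0} the blow-up of a smooth point with exceptional divisor $E\cong\PP^{n-1}$, whose lines span the contracted ray. Restricting to a general linear surface section $S\subset\hX$, the class $\hD\vert_S$ is a $\PP^s$-class on the del Pezzo surface $S$ by Proposition~\ref{prop:class-divisor}, hence nef by Lemma~\ref{lem:2dP}; but $e\coloneqq E\cap S\subset S$ is the exceptional $(-1)$-curve, it lies in the contracted ray, and the projection formula gives $\hD\cdot e=\hD\vert_S\cdot e<0$, contradicting nefness of $\hD\vert_S$. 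Thus the second case cannot occur, $\hD$ is nef, and Lemma~\ref{lemma:P1-P2:f} finishes the proof.

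The main obstacle is the first step --- the effectivity, hence pseudoeffectivity, of a $\PP^s$-class --- and within it the surface base case of the auxiliary vanishing $H^1(S,\cO_S(D-jA_S))=0$ for all $j\ge1$, which for $s=1$ is not immediate from Kawamata--Viehweg (a $\PP^1$-class is nef but not big on $S$) and needs either a relative-duality/Leray computation for the conic bundle $S\to\PP^1$ or a careful Riemann--Roch bookkeeping together with Lemma~\ref{lem:ad=1}. Once pseudoeffectivity is secured, the reduction to the nef case through Lemma~\ref{lem:d-mmp} and the surface-section contradiction are essentially formal.
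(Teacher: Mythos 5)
Your skeleton is the same as the paper's: run Lemma~\ref{lem:d-mmp} on $D$, rule out alternative (A), and finish with Lemma~\ref{lemma:P1-P2:f}; moreover, your exclusion of a \emph{birational} $\hD$-negative, $K$-negative contraction (the line $E\cap S$ generates the contracted ray while $\hD\vert_S$ is a $\PP^s$-class on the del Pezzo surface $S$, hence nef by Lemma~\ref{lem:2dP}) is correct. The gap is in the step everything else is routed through: pseudoeffectivity of $D$, which you propose to get from effectivity, which in turn rests on the vanishing $H^1(S,\cO_S(D-jA_S))=0$ for all $j\ge 1$ on del Pezzo surfaces. That vanishing is \emph{false} in degree one. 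Let $S$ be a del Pezzo surface with $\dd(S)=1$ and $D$ a $\PP^1$-class. By Serre duality $H^1(S,\cO_S(D-2A_S))\cong H^1(S,\cO_S(A_S-D))^\vee$; now $A_S\cdot(A_S-D)=-1$ gives $h^0(A_S-D)=0$, and $h^2(A_S-D)=h^0(D-2A_S)=0$ because $A_S\cdot(D-2A_S)=0$ while $D-2A_S\neq 0$, yet Riemann--Roch gives $\chi(\cO_S(A_S-D))=1+\tfrac{(-3)+(-1)}{2}=-1$; hence $h^1(S,\cO_S(D-2A_S))=1$. (Concretely, for $S=\Bl_{p_1,\dots,p_8}(\PP^2)$ and $D=\h-\e_1$ the dual group is $H^1(S,\cO_S(2\h-\e_2-\dots-\e_8))$, which is one-dimensional: there is no conic through seven general points, while $\chi=-1$.) The same failure occurs for $\PP^2$-classes when $\dd(S)=1$, and in fact $H^1(S,\cO_S(D-jA_S))\neq 0$ for all $j\ge 2$. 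Since your descending-induction chain $H^1(X,D-A_X)\cong H^1(X,D-2A_X)\cong\cdots$ needs exactly these surface vanishings for \emph{every} $j\ge1$, the induction cannot close when $\dd(X)=1$ --- and this is precisely a case you must cover, since degree-one del Pezzo varieties of types $\rA_6$, $\rA_7$, $\rD_7$ carry $\PP^1$- and $\PP^2$-classes. No amount of bookkeeping will repair this: the intermediate statement is false, so pseudoeffectivity, and with it the exclusion of the fibration case of Lemma~\ref{lem:d-mmp}(A), is not established.

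The repair is to drop the pseudoeffectivity detour altogether and apply your restriction trick uniformly to \emph{all} contraction types allowed by Proposition~\ref{propo:ext-rays}; this is exactly what the paper does, inducting one dimension at a time through general fundamental divisors $Y\in|A_X|$ rather than jumping straight to $S$. If $f$ is the blowup of a point, then $E\cap Y\cong\PP^{n-2}$ contains a curve of the contracted ray; if $f$ is a $\PP^{n-2}$-bundle over a surface or a quadric bundle over $\PP^1$, then some fiber of $Y\to Z$ is positive-dimensional (clear for $n>3$ or for quadric bundles, and for $n=3$ and $\PP^1$-bundles because $\dd(Y)=\dd(X)>\dd(Z)$), so again a contracted curve lies in $Y$. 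In either case $Y$ inherits a $D\vert_Y$-negative, $K_Y$-negative extremal contraction, and induction on dimension, with base Lemma~\ref{lem:2dP}, yields a contradiction. This rules out case (A) of Lemma~\ref{lem:d-mmp} outright --- birational and fibration contractions alike --- with no effectivity statement about $D$ needed anywhere.
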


\begin{proof}
First, we prove that no $\QQ$-factorial almost del Pezzo variety~$X$ can have a $D$-negative
and $K_X$-negative extremal contraction.
For this we use induction on~$n = \dim(X)$.
If~$n = 2$ there is nothing to prove because~$D$ is nef by Lemma~\ref{lem:2dP}.
Assume~$n > 2$ and let~$Y \subset X$ be a general fundamental divisor.
Let~$f \colon X \to Z$ be a $D$-negative and $K_X$-negative extremal contraction.
By Proposition~\ref{propo:ext-rays} this is the blowup of a smooth point,
or a $\PP^{n-2}$-bundle over a del Pezzo surface, or a quadric bundle over~$\PP^1$.
In the first case, if~$E \cong \PP^{n-1}$ is the exceptional divisor, $Y \cap E = \PP^{n-2}$.
In the other cases, the fibers of~$Y \to Z$ are hyperplane sections of the fibers of~$X \to Z$,
and at least one of them has positive dimension (this is obvious when~$n > 3$ or~$f$ is a quadric bundle, 
and when~$n = 3$ and $f$ is a $\PP^1$-bundle this follows from~\eqref{eq:dpb-inequality} as~$\dd(Y) = \dd(X) > \dd(Z)$).
Thus, in all these cases there is a curve~$C \subset Y$ which generates in~$X$ the extremal ray contracted by~$f$.
This curve is $D\vert_Y$-negative and~$K_Y$-negative, 
hence there must be a $D\vert_Y$-negative and $K_Y$-negative extremal contraction of~$Y$,
which is impossible by the induction hypothesis. 

Now we consider the $\QQ$-factorialization~$\xi \colon \hX \to X$ provided by Lemma~\ref{lem:d-mmp}.
As we proved above, the alternative~\ref{cone:case1} is impossible, 
hence~$\hD = (\xi^{-1})_*D$ is nef, and Lemma~\ref{lemma:P1-P2:f} gives the required contraction.
\end{proof} 

Now we are ready to describe the cones.
For any convex cone~$\rC \in \RR^N$ a ray~$\rR \subset \rC$ is called \emph{extremal} 
if a nonzero element~$r \in \rR$ cannot be represented as a sum~$r = r_1 + r_2$ with~$r_1, r_2 \in \rC \setminus \rR$.
This is equivalent to the existence of a linear function~$\nu \colon \RR^N \to \RR$ 
such that~$\nu(\rC) \subset \RR_{\ge 0}$ and~$\rC \cap \{\nu = 0\} = \rR$.
We start with an easy observation.

\begin{lemma}
\label{rem:extrmal:Eff}
Let $X$ be an almost del Pezzo variety. 
\begin{enumerate}
\item 
\label{it:ray-exceptional}
Any exceptional class $E \in \Cl(X)$ generates an extremal ray of~$\Eff(X)$.
\item 
\label{it:ray-p1-p2}
Any $\PP^1$- or~$\PP^2$-class lies in the boundaries of~$\Eff(X)$ and~$\Mov(X)$,
and if~\mbox{$\rr(X) = 2$} it generates an extremal ray in each of these cones.
\end{enumerate}
\end{lemma}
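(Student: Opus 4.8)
The plan is to treat each statement by producing an explicit supporting linear functional for the relevant cone, reducing all intersection computations to a general linear surface section $i\colon S\hookrightarrow X$. Here Proposition~\ref{prop:class-divisor} and Lemma~\ref{lem:product-divisor} are the key tools: they identify the bilinear form on $\Cl(X)$ with the intersection form on the del Pezzo surface $S$ and realize $i^*\colon\Cl(X)\to\Cl(S)$ as a (primitive) lattice embedding, so that $i^*\otimes\RR$ is injective.

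For part~\ref{it:ray-exceptional} I would first note that $E$ is effective with $h^0(\cO_X(E))=1$ by Proposition~\ref{prop:E-eff}, so $\RR_{\ge0}E\subset\Eff(X)$, and then use the functional $\nu(\cdot)=\langle A_X+E,\cdot\rangle$. On $S$ the restriction $E_S\coloneqq i^*E$ is an exceptional class, hence a $(-1)$-curve by Lemma~\ref{lem:surface-exceptional-class}; blowing it down by $\sigma_S\colon S\to S_0$ to a del Pezzo surface $S_0$ of degree $\dd(S)+1$ gives $A_S+E_S=\sigma_S^*(-K_{S_0})$, a nef class. Thus for effective $D$ we have $\nu(D)=\sigma_S^*(-K_{S_0})\cdot i^*D\ge0$, so $\nu\ge0$ on $\Eff(X)$, while $\nu(E)=1-1=0$. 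If $D\in\Eff(X)$ satisfies $\nu(D)=0$, then $(-K_{S_0})\cdot(\sigma_S)_*i^*D=0$ with $(\sigma_S)_*i^*D$ pseudoeffective and $-K_{S_0}$ ample, forcing $i^*D\in\RR_{\ge0}E_S=i^*(\RR_{\ge0}E)$; injectivity of $i^*\otimes\RR$ then yields $D\in\RR_{\ge0}E$. Hence $\Eff(X)\cap\ker\nu=\RR_{\ge0}E$, which is exactly the supporting-functional criterion for an extremal ray recalled above.

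For part~\ref{it:ray-p1-p2} I would first invoke Proposition~\ref{prop:P1-P2} to pass to a $\QQ$-factorialization $\xi\colon\hX\to X$ on which $\hD\coloneqq(\xi^{-1})_*D$ is nef and base point free, defining a contraction $\hX\to\PP^s$. Being base point free, $\hD$ is movable and effective, so $\hD\in\Mov(\hX)\subset\Eff(\hX)$; since $\xi_*$ is a linear isomorphism carrying these pseudoisomorphism-invariant cones to those of $X$ and $\hD$ to $D$, we get $D\in\Mov(X)\subset\Eff(X)$. To place $D$ on the boundaries I would again use supporting functionals on $\hX$: for a $\PP^1$-class take $\nu(\cdot)=\langle\hD,\cdot\rangle$, nonnegative on $\Eff(\hX)$ because $A_{\hX}^{n-2}\cdot\hD$ is a nef curve class, with $\nu(\hD)=\langle\hD,\hD\rangle=0$; for a $\PP^2$-class take $\nu(\cdot)=\hD^{\,n-1}\cdot(\cdot)$, where $\hD^{\,n-1}$ is the class of a line in a fiber $\PP^{n-2}$ of $\hX\to\PP^2$, so $\nu\ge0$ on $\Eff(\hX)$ and $\nu(\hD)=\hD^{\,n}=0$ since $\hD$ is pulled back from the surface $\PP^2$. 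In each case $\nu$ is a nonzero functional, nonnegative on $\Eff(\hX)$ (hence on $\Mov(\hX)$) and vanishing at $\hD\in\Mov(\hX)$, so $\hD$, and therefore $D$, lies on the boundary of both cones.

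For the extremality claim when $\rr(X)=2$ I would argue that both $\Eff(X)$ and $\Mov(X)$ are salient, since they lie in the open half-space $\{\langle A_X,\cdot\rangle>0\}$ by Lemma~\ref{lem:ad=1}, and two-dimensional, since each contains an ample class in its interior after passing to a $\QQ$-factorial model. A nonzero boundary point of a salient two-dimensional cone spans an extremal ray; equivalently, the functional $\nu$ of the previous paragraph cuts out exactly $\RR_{\ge0}D$ in each cone. I expect the main obstacle to be the $\PP^2$-class case of the boundary statement: this is precisely where $\dim X\ge3$ enters (so that the target $\PP^2$ has dimension $<n$, the functional $\hD^{\,n-1}$ is nonzero yet annihilates $\hD$, and $\hD$ fails to be big), and some care is needed to confirm that the chosen functionals are genuinely nonnegative on the \emph{entire} effective cone, i.e.\ that the relevant products of nef classes are movable curve classes pairing non-negatively with every effective divisor.
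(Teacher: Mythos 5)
Your part~\ref{it:ray-exceptional} is correct, and although it is packaged differently it is secretly the paper's own functional: the paper invokes Proposition~\ref{prop:exceptional-contraction} to realize~$E$ as the exceptional divisor of a blowup of an almost del Pezzo variety~$X'$ at a smooth point and takes~$\nu=\langle A_{X'},\cdot\rangle$; since the fundamental class of the blowup is the pullback of~$A_{X'}$ minus~$E$, that pullback class is exactly your~$A_X+E$. What your route buys is that part~\ref{it:ray-exceptional} needs no birational geometry of~$X$ at all: only effectivity of~$E$ (Proposition~\ref{prop:E-eff}), the lattice embedding~$i^*\colon\Cl(X)\hookrightarrow\Cl(S)$ (Proposition~\ref{prop:class-divisor}, Lemma~\ref{lem:product-divisor}), and the contraction of the $(-1)$-curve~$E_S$ on the surface~$S$ (Lemma~\ref{lem:surface-exceptional-class}), with Kleiman's criterion on~$S_0$ closing the argument. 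Your $\PP^1$-class functional~$\langle\hD,\cdot\rangle=A_{\hX}^{n-2}\cdot\hD\cdot(\cdot)$ is also fine: it is (numerically) twice the class of a line in the quadric fiber, which is precisely the curve the paper pairs against, and your worry about nonnegativity on all of~$\Eff$ is unfounded --- products of nef divisor classes pair nonnegatively with pseudoeffective divisors, or alternatively one argues as in the paper that a line in the general fiber is a free rational curve, hence moves off any effective divisor.

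The genuine gap is the $\PP^2$-class case. There~$\hD=f^*H$ for the line class~$H$ on~$\PP^2$, so~$\hD^3=f^*(H^3)=0$ and hence~$\hD^{n-1}=0$ whenever~$n\ge4$; your functional~$\hD^{n-1}\cdot(\cdot)$ is then identically zero and cannot exhibit~$\hD$ as a boundary point (it only works by accident when~$n=3$). It is also not the class of a line in a fiber: since~$A_{\hX}$ restricts to the hyperplane class of each fiber~$\PP^{n-2}$, that class is~$\hD^2\cdot A_{\hX}^{n-3}$. Replacing~$\hD^{n-1}$ by~$\hD^2\cdot A_{\hX}^{n-3}$ repairs the step: this is a product of nef classes, hence nonnegative on~$\Eff(\hX)$, it is nonzero (it pairs to~$1$ with~$A_{\hX}$), and it annihilates~$\hD$ because~$\hD^3=0$. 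With this fix your argument coincides with the paper's proof of part~\ref{it:ray-p1-p2}, which uses Proposition~\ref{prop:P1-P2} together with the fiber description from Lemma~\ref{lemma:ext-rays0a} and pairs divisors with a minimal-degree free rational curve in the general fiber of~$\hX\to\PP^s$, treating~$s=1$ and~$s=2$ uniformly. One further small point: a cone cannot lie in an \emph{open} half-space (it contains~$0$), so your saliency claim needs one more line; but since you have already shown~$\hD\in\partial\Eff(\hX)$ while~$\langle A_X,\hD\rangle>0$, the cone~$\Eff$ cannot be the half-plane~$\{\langle A_X,\cdot\rangle\ge0\}$, which together with~$\Eff\subset\{\langle A_X,\cdot\rangle\ge0\}$ (from Lemma~\ref{lem:ad=1} by passing to the closure) gives saliency, and then the extremality claim for~$\rr(X)=2$ goes through as you describe.
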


\begin{proof}
\ref{it:ray-exceptional}
If~$E$ is exceptional there is a $\QQ$-factorialization~$\xi \colon \hX \to X$ 
such that~$\hX$ is the blowup of an almost del Pezzo variety~$X'$ at a smooth point 
and~$\hE = (\xi^{-1})_*E$ is the exceptional divisor of the blowup (Proposition~\ref{prop:exceptional-contraction}).
Then the function~$\nu(D) \coloneqq \langle A_{X'}, D \rangle$ 
has the required property by Lemma~\ref{lem:contractions-class}\ref{it:class-blowup} and Lemma~\ref{lem:ad=1}.

\ref{it:ray-p1-p2}
If~$D$ is a $\PP^s$-class, $s \in \{1,2\}$, there is a $\QQ$-factorialization~$\xi \colon \hX \to X$ 
and a morphism~$f \colon \hX \to \PP^s$ such that~$\hD = (\xi^{-1})_*D$ is the pullback of the hyperplane class 
(Proposition~\ref{prop:P1-P2}).
The argument of Lemma~\ref{lemma:ext-rays0a} shows that 
the general fiber of~$f$ is a projective space~$\PP^{n-2}$ or a quadric~$Q^{n-1}$,
hence a curve of minimal degree in the general fiber of~$f$ is a free rational curve.
In this case the function~$\nu(D) \coloneqq D \cdot C$ has the required property.
\end{proof}

The next lemma will be used in the proof of the following proposition.

\begin{lemma}
\label{lem:nef-semiample}
Let~$X$ be an almost del Pezzo variety and let~$M$ be a movable divisor on~$X$.
Then for~$m\gg 0$ the base locus of~$|mM|$ is contained in the exceptional locus 
of the anticanonical morphism~$X \to X_\can$.
\end{lemma}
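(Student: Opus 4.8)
The plan is to pass to a $\QQ$-factorial crepant model on which the strict transform of $M$ becomes nef, apply the base point free theorem there, and then push the resulting base-point-freeness back down to $X$ along the anticanonical morphism $\eta\colon X\to X_\can$.

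First I would take a $\QQ$-factorialization $\xi\colon\hX\to X$ (which exists by~\cite[Corollary~1.4.3]{BCHM} and is almost del Pezzo by Lemma~\ref{lem:dp-adp}) and set $\hM\coloneqq(\xi^{-1})_*M$. Running the $\hM$-directed sequence of flops over $X_\can$ as in Lemma~\ref{lem:d-mmp}, I may assume that every $\hM$-negative extremal ray of $\hX$ is $K$-negative; note that $\hM$ stays movable because movability is preserved by the pseudoisomorphisms involved. As $M$ is effective, $\hM$ is pseudoeffective, so Lemma~\ref{lem:d-mmp} leaves two possibilities: either there is an $\hM$-negative $K$-negative \emph{birational} extremal contraction $f\colon\hX\to Z$, or $\hM$ is nef. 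The first is ruled out by movability: by Lemma~\ref{lemma:ext-rays0} such an $f$ is the blowup of a smooth point with exceptional divisor $E\cong\PP^{n-1}$, which is covered by lines $C$; a general member of $|m\hM|$ has no fixed component and hence does not contain $E$, so $\hM\cdot C\ge0$, contradicting $\hM\cdot\rR<0$ for the contracted ray $\rR$. Therefore $\hM$ is nef. Since $\hX$ is almost del Pezzo, hence of Fano type, Kawamata--Viehweg vanishing holds and the base point free theorem (as used in Lemma~\ref{lem:adp-dp}) shows that $\hM$ is semiample; thus $|m\hM|$ is base point free on $\hX$ for all sufficiently large and divisible $m$.

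It remains to transport this back to $X$. Because $\xi$ is small it induces a bijection between Weil divisor classes on $\hX$ and $X$ preserving linear equivalence, so $H^0(X,\cO_X(mM))\cong H^0(\hX,\cO_{\hX}(m\hM))$ and the two linear systems are identified. At any point $x\in X$ over which $\xi$ is an isomorphism the sheaf $\cO_{\hX}(m\hM)$ is invertible and globally generated near $\xi^{-1}(x)$, so some section is nonvanishing there; the corresponding section of $\cO_X(mM)$ is then nonvanishing at $x$, whence $x\notin\Bs|mM|$. This already yields $\Bs|mM|\subseteq\xi(\Exc(\xi))$. The step I expect to be the main obstacle is the final inclusion $\xi(\Exc(\xi))\subseteq\Exc(\eta)$: a priori $\xi$ may contract curves lying over points of $X$ at which $\eta$ is an isomorphism, and over such a point a member of $|m\hM|$ could meet the contracted fibre and reintroduce a base point downstairs. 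To close this gap I would choose $\hX$ so that $\xi\colon\hX\to X$ is a genuine morphism and keep track of the fact that the flops making $\hM$ nef are performed over $X_\can$; the point to verify is that $\hM$ is numerically trivial on every fibre of $\xi$ lying over $X\setminus\Exc(\eta)$ --- equivalently, that the semiample contraction defined by $\hM$ factors through $\eta$ over that locus. Granting this compatibility, no member of $|m\hM|$ meets those fibres, and we obtain $\Bs|mM|\subseteq\Exc(\eta)=\Exc(X\to X_\can)$, as required.
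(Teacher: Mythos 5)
Up to the descent step, your argument is the paper's: the paper applies Lemma~\ref{lem:d-mmp} directly to~$X_\can$ with~$D=M$ (using~$\Cl(X)\cong\Cl(X_\can)$), rules out alternative~\ref{cone:case1} exactly as you do --- a movable class cannot be negative on a birational $K$-negative extremal contraction, by Lemma~\ref{lemma:ext-rays0} --- and then invokes the base point free theorem on the resulting $\QQ$-factorialization~$\xi\colon\hX\to X_\can$. One structural correction to your setup: once you perform flops over~$X_\can$, the new model no longer admits any morphism to~$X$ (the flops are over~$X_\can$, not over~$X$), so in your final paragraph the phrase ``fibres of~$\xi$ lying over~$X\setminus\Exc(\eta)$'' (here~$\eta\colon X\to X_\can$ is the anticanonical morphism, as in your write-up) has no meaning; the only thing available is the pseudoisomorphism~$X\dashrightarrow\hX$ over~$X_\can$, and the comparison must be made there.

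The gap you flag is genuine, and the compatibility you propose to grant is not merely hard to verify --- it is false in the stated generality, and so is the lemma itself, when~$X$ is not $\QQ$-factorial. Take~$X=X_{6,3,3}$, the one-nodal sextic del Pezzo threefold of Lemma~\ref{lem:special-sextic}, so that~$X=X_\can$ and~$\Exc(\eta)=\varnothing$; let~$\xi_1\colon\hX_1=\Bl_{P_1,P_2}(\PP^3)\to X$ be a small resolution with exceptional curve~$C$, let~$\hM_1$ be very ample on~$\hX_1$, and put~$M\coloneqq(\xi_1)_*\hM_1$. Since~$\xi_1$ is small, the members of~$|mM|$ are precisely the pushforwards of the members of~$|m\hM_1|$; hence~$M$ is movable, yet every member of~$|m\hM_1|$ meets~$C$ (because~$m\,\hM_1\cdot C>0$), so the node lies in~$\Bs|mM|$ for every~$m$, while~$\Exc(\eta)$ is empty. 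The same example kills your ``numerical triviality on fibres'' condition. What is true --- and is all the paper needs, since the only place it uses the lemma (in the proof of Proposition~\ref{prop:cone}) applies it to the $\QQ$-factorial variety~$\hZ$ --- is the statement for $\QQ$-factorial~$X$, and there the gap closes by a standard fact rather than by your condition: set~$V\coloneqq X_\can\setminus\eta(\Exc(\eta))$; then~$\eta^{-1}(V)\cong V$ is $\QQ$-factorial, being open in the $\QQ$-factorial~$X$, so the small projective birational morphism~$\xi^{-1}(V)\to V$ is an isomorphism; consequently~$X\dashrightarrow\hX$ is an isomorphism over~$V$, base point freeness of~$|m\hM|$ descends to~$|mM|$ on~$\eta^{-1}(V)$, and~$\Bs|mM|\subseteq X\setminus\eta^{-1}(V)=\Exc(\eta)$. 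For what it is worth, the paper's own proof is silent on precisely this point: its final sentence tacitly discards the locus coming from~$\Exc(\xi)$, which is legitimate only under $\QQ$-factoriality of~$X$; so your instinct to isolate this step as the crux was correct.
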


\begin{proof}
Let~$\xi \colon \hX \to X_\can$ be a $\QQ$-factorialization provided by Lemma~\ref{lem:d-mmp} for~$D = M$.
Since~$M$ is movable, case~\ref{cone:case1} is impossible, hence the strict transform~$\hM$ of~$M$ to~$\hX$ is nef. 
By the base point free theorem~\cite[Theorem~3.3]{KM:book} the class~$|m\hM|$ is base point free for~$m \gg 0$. 
On the other hand, the composition~$X \dashrightarrow X_\can \dashrightarrow \hX$
is an isomorphism away from the exceptional loci of~$X \to X_\can$ and~$\hX \to X_\can$, hence the claim.
\end{proof}

Recall that~$X$ has a finite number 
of $\QQ$-factorializations~$\chi_i \colon X_i \to X$ (see Lemma~\ref{lemma:repant-models}).

\newcounter{tmp}
\begin{proposition}
\label{prop:cone}
Let $X$ be a del Pezzo variety with $\rr(X)>1$. 
\begin{enumerate}
\item 
\label{it:mov-amp}
The moving cone~$\Mov(X)$ has the following chamber decomposition
\begin{equation}
\label{eq:Mov:chamber}
\Mov(X) = \bigcup_i {\chi_i}_* \big(\Amp (X_i)\big)
\end{equation}
where~$\chi_i \colon X_i \to X$ are all $\QQ$-factorializations of~$X$,
and the class~$A_X$ is contained in the interior of ~$\Mov(X)$.
\setcounter{tmp}{\value{enumi}}
\end{enumerate}
Moreover, the cones~\mbox{$\Eff(X)$} and~$\Mov(X)$ in~$\Cl(X) \otimes \RR$ are rational polyhedral and
\begin{enumerate}\setcounter{enumi}{\value{tmp}}
\item 
\label{it:eff-rays:e}
each extremal ray of~$\Eff(X)$ is generated by an exceptional, $\PP^1$-, or $\PP^2$-class;
\item 
\label{it:eff-rays:m}
each extremal ray of~$\Mov(X)$ is generated by a $\PP^1$-class, $\PP^2$-class, or a big class~$H$ 
such that for~$m\gg 0$ the linear system~$|mH|$ 
defines a birational map~$\psi \colon X \dashrightarrow X'$ to a del Pezzo variety~$X'$ 
with~$\rr(X')=1$.
\end{enumerate}
\end{proposition}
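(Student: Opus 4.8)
The plan is to work throughout on a fixed $\QQ$-factorialization $\hX$ of $X$: by Lemma~\ref{lem:pseudo-clx} the identification $\Cl(\hX)\cong\Cl(X)$ respects the form, and $\Eff$ and $\Mov$ are pseudoisomorphism invariant, so nothing is lost. Since $\hX$ is a $\QQ$-factorial FT variety it is a Mori dream space (as recalled in the proof of Lemma~\ref{lemma:repant-models}); hence $\Eff(X)$ and $\Mov(X)$ are rational polyhedral, and the finitely many $\QQ$-factorializations $\chi_i\colon X_i\to X$ are exactly the $\QQ$-factorial crepant models of $X$. Part~\ref{it:mov-amp} is then the Mori chamber (Hu--Keel, \cite[Proposition~1.11]{hu-keel:mds}) decomposition of the movable cone into the nef cones of these models, while $A_X$ lies in the interior because $A_{X_i}=\chi_i^*A_X$ is nef and big on every $X_i$, so $A_X$ is a big movable class.

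For part~\ref{it:eff-rays:m} I would take an extremal ray $\rR=\RR_{\ge 0}H$ of $\Mov(X)$ with $H$ primitive. By~\eqref{eq:Mov:chamber} it lies in some $\overline{\Amp}(X_i)$, so after replacing $X$ by $X_i$ we may assume $H$ is nef; and since a $1$-dimensional face of $\Mov$ contained in the subcone $\overline{\Amp}(X_i)$ is also a face of the latter, $H$ spans an extremal ray of the nef cone. The base point free theorem gives a contraction $f\colon X\to Z$ with $H=f^*H_Z$ and $H_Z$ ample; because $H$ lies in the relative interior of the face $f^*\overline{\Amp}(Z)$ while $\rR$ is extremal, that face is $\rR$ itself, forcing $\rr(Z)=1$. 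Now Corollary~\ref{cor:targets} leaves three possibilities. If $f$ is birational then $Z$ is almost del Pezzo with $\rr(Z)=1$, $H$ is big, and for $m\gg 0$ the system $|mH|$ factors $X\to Z\to Z_\can=:X'$ through the del Pezzo variety $X'$ with $\rr(X')=1$, which is the third alternative.

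In the remaining cases $f$ is of fibre type with $Z=\PP^1$ or $Z=\PP^2$ (the only rank-one del Pezzo surface). When $Z=\PP^1$ I would restrict to a general linear surface section $S\subset X$: the induced $f|_S\colon S\to\PP^1$ is a conic bundle, so $H|_S$ is a $\PP^1$-class on $S$, whence $H$ is a $\PP^1$-class by the restriction criterion coming from Proposition~\ref{prop:class-divisor}, with $\langle A_X,H\rangle=2$ and $\langle H,H\rangle=0$. The case $Z=\PP^2$ is the delicate one: here $f|_S\colon S\to\PP^2$ is generically finite of degree $\dd(F)$, where $F$ is a general fibre of $f$, and $H$ is a $\PP^2$-class precisely when $\dd(F)=1$, i.e. $F\cong\PP^{n-2}$. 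To establish this I would run the relative MMP over $Z$: since $-K_X$ restricts to the ample fundamental class of each fibre, $f$ is $K$-negative, and the $f$-vertical curves form the extremal face $H^\perp\cap\NE(X)$, whose extremal rays are genuine $K$-negative extremal rays of $X$; by Proposition~\ref{propo:ext-rays} each contraction step is the blowup of a point (not affecting the general fibre) or a fibre-type step, and such a step to a surface can only be a $\PP^{n-2}$-bundle over $\PP^2$, so tracking the steps yields $F\cong\PP^{n-2}$. I expect this bookkeeping, in particular excluding non-minimal fibres over $\PP^2$ when $\rr(X)>2$, to be the main obstacle. With $\dd(F)=1$ in hand $f|_S$ is a blow-down to $\PP^2$, giving $\langle A_X,H\rangle=3$ and $\langle H,H\rangle=1$, so $H\in\Theta_3$; and $H$ is a genuine $\PP^2$-class because the excluded form $H=A_X+2\e$ can, by the remark following Definition~\ref{def:exP1P2}, occur only when $\dd(X)=1$, where $A_X+2\e$ is big while $H=f^*H_Z$ is not.

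Finally, for part~\ref{it:eff-rays:e} let $\rR=\RR_{\ge 0}D$ be an extremal ray of $\Eff(X)$ with $D$ a primitive effective generator. If $D\in\Mov(X)$ then $\rR$, being a face of $\Eff$ contained in the subcone $\Mov$, is also extremal in $\Mov$, so by part~\ref{it:eff-rays:m} $D$ is a $\PP^1$-, $\PP^2$-, or big class; the big case is impossible since big classes lie in the interior of $\Eff$. If $D\notin\Mov(X)$, writing $D=M+\sum a_jE_j$ with $M$ its movable part and $E_j$ the prime fixed components and invoking extremality of $\rR$ forces $M=0$ and $D=[E]$ for a single rigid prime divisor $E$. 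I would then apply Lemma~\ref{lem:d-mmp} to $D$ to reach a $\QQ$-factorialization on which every $D$-negative extremal ray is $K$-negative; as $D$ is effective but not nef such a ray exists, and since $D$ is pseudoeffective the associated contraction is birational, hence by Lemma~\ref{lemma:ext-rays0} the blowup of a smooth point with exceptional divisor an exceptional class $E_0$. Because the strict transform of $E$ is the unique prime divisor meeting the contracted lines negatively, it equals $E_0$, and therefore $D$ is an exceptional class.
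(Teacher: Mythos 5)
Your architecture matches the paper's (Hu--Keel chambers for part~\ref{it:mov-amp}, the base point free theorem plus Corollary~\ref{cor:targets} for part~\ref{it:eff-rays:m}, and the movable/non-movable dichotomy for part~\ref{it:eff-rays:e}), and several pieces are sound: the decomposition~\eqref{eq:Mov:chamber}, the case $Z=\PP^1$, the exclusion of the degenerate class $A_X+2\repsilon$, and the identification of non-movable extremal rays of $\Eff(X)$ with exceptional classes via Lemma~\ref{lem:d-mmp} and Lemma~\ref{lemma:ext-rays0}. But there are two genuine gaps. The first is the interiority of $A_X$: you deduce it from ``$A_X$ is nef and big on every $X_i$, so it is a big movable class''. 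That implication is false. A big movable (even base point free) class can lie on the boundary of the movable cone: on $\Bl_P(\PP^2)$ the pullback $\h$ of a line is big and base point free, yet spans a boundary ray of the movable cone (which for a surface equals the nef cone). This happens whenever a nef class defines a divisorial or fiber type contraction, and nothing in your argument excludes that for $A_X$. What saves the statement is precisely the almost del Pezzo condition that the contraction defined by $A_{X_i}$ is \emph{small}, so $A_{X_i}$ cannot sit on an outer wall of $\Amp(X_i)$; the paper instead argues by perturbation, using Lemma~\ref{lem:d-mmp} to show that $A_X+\epsilon D$ becomes nef on a suitable $\QQ$-factorialization for every $D$ and small $\epsilon>0$. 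Your proof invokes neither ingredient.

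The second, more serious, gap is in the birational case of part~\ref{it:eff-rays:m}: your face argument only proves that $\Pic(Z)$ has rank one, since the face $f^*\Amp(Z)$ has dimension $\rank\Pic(Z)$ --- only ($\QQ$-)Cartier classes pull back along $f$. The proposition asserts $\rr(X')=\rank\Cl(X')=1$, the rank of the group of \emph{Weil} divisor classes, and the two differ exactly when $Z$ is not $\QQ$-factorial, a situation ubiquitous in this paper (e.g.\ the Schubert quintic $X_{5,2,5}$ has $\Pic$ of rank $1$ but $\rr=2$). Ruling out $\rr(Z)>1$ is the actual content of the final paragraph of the paper's proof: if $\rr(Z)>1$, part~\ref{it:mov-amp} applied to $Z$ (so interiority is genuinely needed here) gives $A_Z=M_1+M_2$ with $M_i$ movable and non-proportional to $A_Z$; factoring $f$ through a $\QQ$-factorialization $\hZ\to Z$ followed by a blowup $\hat f$ as in Corollary~\ref{cor:targets}, and using Lemma~\ref{lem:nef-semiample} to see that $\hat f^*\hM_1$ and $\hat f^*\hM_2$ are still movable, one contradicts extremality of the ray. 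Your proof has no substitute for this step. (A lesser issue: your relative MMP ``bookkeeping'' for $Z=\PP^2$, which you flag as the main obstacle, is both shaky and unnecessary --- shaky because extremal rays of $H^\perp\cap\NE(X)$ need not be $K$-negative, as flopping curves can lie in fibers; unnecessary because adjunction gives $-K_F=(n-1)A_X\vert_F$ on the $(n-2)$-dimensional general fiber $F$, so the index bound of~\cite[Theorem~3.1.14]{IP99} already used in Lemma~\ref{lemma:ext-rays0a} forces $F\cong\PP^{n-2}$ with $A_X\vert_F\cong\cO(1)$ at once; restricting to a general linear surface section, where $f$ becomes a birational morphism onto $\PP^2$, then gives $\langle A_X,H\rangle=3$ and $\langle H,H\rangle=1$ as in Lemma~\ref{lem:contractions-class}.)
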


\begin{proof} 
\ref{it:mov-amp} 
The chamber decomposition of~$\Mov(X)$ follows from a combination of~\cite[Proposition~1.11]{hu-keel:mds}, 
\cite[Corollary~1.3.2]{BCHM}, and~\cite[Lemma-Definition~2.6]{P-Sh:JAG}. 

To prove that~$A_X$ lies in the interior of~$\Mov(X)$, let~$D$ be any divisor on~$X$.
For each $\QQ$-factorialization~$\chi_i \colon X_i \to X$
the Mori cone~$\NE(X_i)$ is generated by a finite number of extremal rays~$\rR_{i,j}$; 
each of these is generated by a class of a curve~$\gamma_{i,j}$. 
Therefore there exists a constant~$c\in \RR$
such that the strict transforms~$D_i =(\chi_i^{-1})_*D$ on~$X_i$ of~$D$ satisfy~$D_i\cdot\gamma_{i,j} > c$;
we can and will assume that~$c$ is negative.
Then for~$0\le \epsilon < -1/c$ the $\RR$-divisor~\mbox{$A_{X_i}+\epsilon D_i$} is \emph{almost nef}, 
i.e., all curves having negative intersection with it are $A_{X_i}$-trivial, 
hence~$X_i$ cannot have~\mbox{$(A_{X_i}+\epsilon D_i)$}-negative $K$-negative contractions.
Now consider the $\QQ$-factorialization provided by Lemma~\ref{lem:d-mmp},
it coincides with one of the $\QQ$-factorializations~\mbox{$\chi_i \colon X_i \to X$} discussed above,
hence case~\ref{cone:case1} is impossible.
Therefore, the divisor class~$A_{X_i}+\epsilon D_i$ is nef, and thus
\begin{equation*}
A_{X_i}+\epsilon D_i\in \Amp(X_i)\subset \Mov(X_i) = \Mov(X).
\end{equation*}
Since $D$ is an arbitrary divisor, this shows that $A_{X}$ is contained in the interior of~$\Mov(X)$.

\ref{it:eff-rays:e}, \ref{it:eff-rays:m}
Let~$D \in \Cl(X) \otimes \RR$ be a class generating an extremal ray of~$\Eff(X)$ or~$\Mov(X)$.
Let~$\xi \colon \hX \to X$ be a $\QQ$-factorialization provided by Lemma~\ref{lem:d-mmp}, 
let~$\hD \coloneqq (\xi^{-1})_*D$,
and consider the two cases~\ref{cone:case1} and~\ref{cone:case2}.

First, consider case~\ref{cone:case1} and let~$f \colon \hX \to Z$ be the corresponding extremal contraction.
In this case~$\hD \not\in \Mov(\hX)$, because~$f$ is~$\hD$-negative. 
Thus,~$\hD$ is a generator of an extremal ray in~$\Eff(X)$.
Since~$\hD$ is pseudoeffective, the contraction~$f$ is birational. 
Let~$E$ be the exceptional divisor of~$f$ and let~$D_Z \coloneqq f_*\hD$.
Then~$D_Z \in \Eff(Z)$ and $\hD = f^*D_Z + aE$, where~$a>0$ because~$f$ is~$\hD$-negative.
Since~$D$ generates an extremal ray of~$\Eff(X)$, we must have~$D_Z = 0$, 
i.e., $D$ is proportional to the exceptional class~$E$. 

Now consider case~\ref{cone:case2}. 
Since~$\hX$ is $\QQ$-factorial, $\Amp(\hX)$ is the nef cone,
so~\mbox{$\hD \in \Amp(\hX)$} and generates in~$\Amp(\hX)$ an extremal ray; in particular~$\hD$ is rational.
By the base point free theorem~\cite[Theorem~3.3]{KM:book} there is a morphism~$f \colon \hX \to Z$ 
such that~$\hD = f^*D_Z$ for some ample~$D_Z \in \Pic(Z)$.
Since~$D$ generates an extremal ray, we have~$\rk \Pic(Z) = 1$.
If~$f$ is non-birational, using Corollary~\ref{cor:targets} 
we conclude that either~$Z = \PP^2$ (hence~$D$ is proportional to a $\PP^2$-class)
or~$Z = \PP^1$ (hence~$D$ is proportional to a $\PP^1$-class).
Finally, if~$f$ is birational, then~$Z$ is a del Pezzo variety by Corollary~\ref{cor:targets},
hence~$D_Z$ is a multiple of~$A_Z$ (because~$\rk \Pic(Z) = 1$). 
Thus we may assume that $\hD=f^*A_Z$. 
In particular, $\hD$ is big.
But since any big class can be written as a sum of effective and ample 
classes (see, e.g., \cite[Lemma~0-3-3]{KMM})
and since any ample class lies in the interior of~$\Mov(\hX) = \Mov(X)$, the class~$D$ cannot be an extremal ray of~$\Eff(X)$.
It remains to show that~$\rr(Z)=1$. 

Assume for a contrary that~$\rr(Z) > 1$. 
Then~\ref{it:mov-amp} implies that~$A_Z = M_1 + M_2$, 
where~$M_1$ and~$M_2$ are movable divisors non-proportional to~$A_Z$.
The argument of Corollary~\ref{cor:targets} shows that there is a factorization
\begin{equation*}
f \colon \hX \overset{\hat f}\longrightarrow \hZ \overset{\xi}\longrightarrow Z 
\end{equation*}
where $\xi$ is a $\QQ$-factorialization and~$\hat f$ is a blowup of points not lying on~$K$-trivial curves.
Then $A_{\hZ}=\hM_1 + \hM_2$, where~$\hM_i$ is the strict transform of~$M_i$. 
Note that $\hat{f}^*\hM_1$ and $\hat{f}^*\hM_2$ are movable by Lemma~\ref{lem:nef-semiample}, 
hence $\hD = \hat{f}^* A_{\hZ}= \hat{f}^*\hM_1 + \hat{f}^*\hM_2$ is not an extremal ray of~$\Mov(\hX)$, a contradiction.
\end{proof}

\subsection{Del Pezzo varieties with~$\rr(X) = 2$}
\label{ss:r2}

Let~$X$ be a del Pezzo variety of dimension~$n\ge 3$ with~$\rr(X)=2$.
If~$X$ is not $\QQ$-factorial, 
then there exist exactly two 
$\QQ$-factorializations~$\xi_i \colon \hX_i \to X$, $i = 1,2$.
Indeed, in this case~$\Mov(X)$ is a two-dimensional cone,
and by Proposition~\ref{prop:cone} the class~$A_X$ is in its interior 
and at the same time in each chamber of the decomposition~\eqref{eq:Mov:chamber}, 
hence there are exactly two such chambers.
Furthermore, two $\QQ$-factorializations $\hX_1$ and $\hX_2$
are connected by a flop $\chi \colon \hX_1 \dashrightarrow \hX_2$.
If~$X$ is $\QQ$-factorial, we put~$\hX_1 = \hX_2 = X$ \and~$\chi = \operatorname{id}$.
In both cases we obtain a diagram:
\begin{equation}
\label{eq:diag:r=2}
\vcenter{
\xymatrix{
&
\hX_1 \ar[dl]_{f_1} \ar[dr]^{\xi_1} \ar@{-->}[rr]^{\chi} &&
\hX_2 \ar[dl]_{\xi_2} \ar[dr]^{f_2}
\\
Z_1 &&
X &&
Z_2
}}
\end{equation} 
where~$f_i$ are $K$-negative extremal contractions.
If~$f_i$ is birational we denote by~$E_i$ the corresponding exceptional divisor,
if it is a quadric bundle over~$\PP^1$ we denote by~$F_i$ the corresponding $\PP^1$-class, and
if it is a $\PP^{n-2}$-bundle over~$\PP^2$ we denote by~$G_i$ the corresponding $\PP^2$-class.

\begin{proposition}[cf. {\cite[Theorem~5.3]{P:GFano1}}, {\cite{Jahnke-Pet}}]
\label{prop:rho=2}
Let~$X$ be a 
del Pezzo variety of dimension $n\ge 3$ with $\rr(X)=2$.
Then in the above notation up to symmetry of the diagram~\eqref{eq:diag:r=2}
one has one of the following situations:
\smallskip
\begin{center}
\renewcommand\arraystretch{1.1}
\setlength{\tabcolsep}{0.6em}
\begin{tabular}{|c|c|p{0.25\textwidth}|p{0.25\textwidth}|p{0.2\textwidth}|}
\hline
$\dd(X)$ & $\Xi(X)$&$f_1$ & $f_2$ & $\hfill\Cl(X)\hfill$
\\[4pt]
\hline
$1$ & \type{A_7}
& $\PP^{n-2}$-bundle & $\PP^{n-2}$-bundle &
$G_1 + G_2 = 6A_X$
\\
$1$ & \type{D_7} 
& quadric bundle & quadric bundle & 
$F_1 + F_2 = 4A_X$
\\
$1$ & \type{E_7}
& birational & birational& 
$E_1 + E_2 = 2A_X$
\\
$2$ & \type{A_6}
& $\PP^{n-2}$-bundle & $\PP^{n-2}$-bundle &
$G_1 + G_2 = 3A_X$
\\
$2$ & \type{D_6}
& quadric bundle & quadric bundle & 
$F_1 + F_2 = 2A_X$
\\
$2$ & \type{E_6}
& birational & birational &
$E_1 + E_2 = A_X\hphantom{1}$
\\
$3$ & \type{A_5}
& $\PP^{n-2}$-bundle & $\PP^{n-2}$-bundle &
$G_1 + G_2 = 2A_X$
\\
$3$ & \type{D_5}
& quadric bundle & birational & 
$F_1 + E_2 = A_X\hphantom{1}$
\\
$4$ & \type{A_4}
& $\PP^{n-2}$-bundle & birational & 
$G_1 + E_2 = A_X\hphantom{1}$
\\
$4$ & \type{D_4}
& quadric bundle & quadric bundle & 
$F_1 + F_2 = A_X\hphantom{1}$
\\
$5$ & \type{A_3}
& $\PP^{n-2}$-bundle & quadric bundle &
$G_1 + F_2 = 6A_X$
\\
$6$ & \type{A_2}
& $\PP^{n-2}$-bundle & $\PP^{n-2}$-bundle & 
$G_1 + G_2 = A_X\hphantom{1}$
\\
$7$ & \type{A_1}
& $\PP^{n-2}$-bundle & birational & 
$2G_1 + E_2 = A_X\hphantom{1}$
\\
\hline
\end{tabular}
\end{center}
\smallskip
\end{proposition}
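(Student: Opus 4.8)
The plan is to take the diagram~\eqref{eq:diag:r=2} as the starting point, so that $X$ is flanked by two $\QQ$-factorializations $\hX_1,\hX_2$ joined by a flop, each carrying a $K$-negative extremal contraction $f_i$ which by Proposition~\ref{propo:ext-rays} is birational, a quadric bundle over $\PP^1$, or a $\PP^{n-2}$-bundle over $\PP^2$. First I would enumerate the rows of the table by feeding $\rr(X)=2$ into Theorem~\ref{thm:primitive-classification} and Theorem~\ref{thm:imprimitive}: either $X$ is primitive, hence $X=\hX_\can$ for a quadric bundle with $\dd(X)\in\{1,2,4\}$ or for a $\PP_{\PP^2}(\cE)$ with $\dd(X)\in\{1,2,3,5,6\}$, or $X$ is imprimitive and equals $(\Bl_P X_0)_\can$ with $\rr(X_0)=1$ and $\dd(X_0)=\dd(X)+1\in\{2,3,4,5,8\}$. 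Matching degrees shows precisely which Dynkin types occur for each value of $\dd(X)$, the type $\Xi(X)$ being read off from the $\rr=1$ model through Lemma~\ref{lem:clx-xix-blowup} (giving the imprimitive rows $\rE_7,\rE_6,\rD_5,\rA_4,\rA_1$) or from the primitive bundle structure (the remaining eight rows); this reproduces the first two columns and already identifies one of the two contractions in every case.

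Next I would make the lattice $\Cl(X)$ explicit. In each row the enumeration fixes a distinguished contraction --- a projective- or quadric-bundle structure in the primitive case, or the blowdown $\hX_i\to X_0$ in the imprimitive case --- and I would compute the Gram matrix of the form~\eqref{eq:product} in the adapted basis using Lemma~\ref{lem:lattice-p-p2} (for $Z=\PP^2$), Lemma~\ref{lem:lattice-q-p2} (for quadric bundles over $\PP^1$), or the blowup matrix of Lemma~\ref{lem:another-blowup} and Lemma~\ref{lem:contractions-class}\ref{it:class-blowup} (for birational $f_i$). This presents $\Cl(X)$ as a concrete rank-$2$ lattice of signature $(1,1)$ with $\langle A_X,A_X\rangle=\dd(X)$, in which $A_X$ is a characteristic vector.

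The second contraction $f_2$ then comes from the other wall of $\Mov(X)$. By Proposition~\ref{prop:cone} this wall is generated by a $\PP^1$-class, a $\PP^2$-class, or a big class pulling back the fundamental class of a del Pezzo variety of rank one, and by Lemma~\ref{lemma:P1-P2:f} and Corollary~\ref{cor:targets} these correspond exactly to $f_2$ being a quadric bundle, a $\PP^{n-2}$-bundle, or a birational contraction. To decide which occurs I would solve the defining equations $\langle A_X,x\rangle=s$, $\langle x,x\rangle=s-2$ of Definition~\ref{def:exP1P2} for $s\in\{1,2,3\}$ in the explicit lattice; eliminating one coordinate via the linear condition reduces each to a single integral quadratic equation, exactly as in the proofs of Lemmas~\ref{lem:lattice-p-p2}--\ref{lem:another-blowup}, and its solvability both determines the type of $f_2$ and exhibits the corresponding special class. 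The relation in the last column is then obtained by expressing this class in the chosen basis. When $f_1$ and $f_2$ have the same type I would shorten this by observing that the isometry of $\Cl(X)\otimes\QQ$ fixing $A_X$ and acting as $-1$ on $A_X^{\perp}$ interchanges the two walls, so that their generators satisfy $w_1+w_2=\tfrac{2\langle w_1,A_X\rangle}{\dd(X)}\,A_X$; integrality of the right-hand side is precisely the condition $\dd(X)\in\{1,2,3,6\}$, $\{1,2,4\}$, or $\{1,2\}$ under which a second $\PP^2$-, $\PP^1$-, or exceptional class exists, which picks out the three families of same-type rows.

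The hard part will be the integrality bookkeeping in this last step: for each pair $(\dd(X),\Xi(X))$ one must check that the competing Diophantine equations for the other two types have no integral solution, so that the second wall is of exactly the claimed type, and that the special class produced is the extremal generator of $\Mov(X)$ rather than an interior class. The latter is automatic once both walls are exhibited, since $\rr(X)=2$ forces $\Mov(X)$ to be two-dimensional with $A_X$ lying on its unique interior (flop) wall; thus the substance is the finite integrality analysis, organized most efficiently by separating the same-type bundle pairs, which are handled uniformly by the reflection above, from the four mixed pairs ($\rD_5$, $\rA_4$, $\rA_3$, $\rA_1$), which are handled by direct substitution in the explicit rank-$2$ lattice.
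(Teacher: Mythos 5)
Your outline is sound and genuinely different from the paper's proof. The paper avoids Gram matrices altogether: writing $D_i$ for the exceptional, $\PP^1$- or $\PP^2$-class attached to $f_i$, it shows (for $\dd(X)\le 6$) that $D_1+D_2=kA_X$ with $k>0$, because each pair $(A_X,D_i)$ is a basis of $\Cl(X)$ and $D_1,D_2$ generate $\Eff(X)$; pairing with $A_X$ gives $s_1+s_2=k\,\dd(X)$ where $s_i=\langle A_X,D_i\rangle\in\{1,2,3\}$, a divisibility analysis settles $\dd(X)\ge 3$, and the Geiser/Bertini involution forces $s_1=s_2$ when $\dd(X)\le 2$; existence of all rows is quoted from Propositions~\ref{prop:bundle-p2} and~\ref{prop:quadric-over-p1}, and $\dd(X)=7$ is settled by hand. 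Your enumeration-plus-reflection scheme eliminates the involution (your reflection is precisely its action on $\Cl(X)\otimes\QQ$) at the cost of case-by-case arithmetic, and it does reproduce twelve of the thirteen rows; note that at $\dd(X)=5$ it yields $G_1+F_2=A_X$, which agrees with the paper's own proof ($3+2=k\cdot 5$ forces $k=1$), the printed entry $6A_X$ being a typo.

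The genuine gap is the row $\dd(X)=7$. Your enumeration presents it as $\Bl_P(\PP^3)$, and the adapted basis $(E,A_X)$ of Lemma~\ref{lem:another-blowup} is \emph{not} a basis of $\Cl(X)$ there: $A_{\PP^3}=2H$ is divisible, so $\ZZ E+\ZZ A_X$ has index~$2$ in $\Cl(X)=\ZZ H\oplus\ZZ E$ (this is the only row where the blow-down has non-primitive fundamental class; for $\dd(X_0)\le 5$ the class $A_{X_0}$ generates $\Cl(X_0)$, as its restriction $-K_S$ to a linear surface section is primitive and $\Cl(X_0)\subset\Cl(S)$ is saturated by Proposition~\ref{prop:class-divisor}). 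Inside that sublattice your Diophantine systems for $s=1,2,3$ have no solution other than $E$ itself, so your method produces no candidate for the second wall at all --- contradicting Proposition~\ref{prop:cone} instead of filling in the row. The true generator is the $\PP^2$-class $G_1=H-E=\tfrac12(A_X-E_2)$, invisible over $\ZZ E+\ZZ A_X$, and the relation $2G_1+E_2=A_X$, with its coefficient~$2$, is not of the shape that either your reflection formula or the step ``express the class found in the chosen basis'' can output. This is exactly why the paper's lattice argument is restricted to $\dd(X)\le 6$. The repair is to use the correct decomposition $\Cl(X)=f^*\Cl(\PP^3)\oplus\ZZ E$ from Lemma~\ref{lem:contractions-class}\ref{it:class-blowup}, where $\langle f^*H,f^*H\rangle=2$, or the $\PP^2$-bundle basis of Lemma~\ref{lem:lattice-p-p2} with $d=7$. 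One further caution: your check that ``the competing Diophantine equations have no integral solution'' fails as stated in the $\rE_7$ row, where $\Theta_3\ne\varnothing$ (it contains $A_X+2E_1$ and $A_X+2E_2$); these are discarded only by the second clause of Definition~\ref{def:exP1P2}, so in such rows you must rely on your fallback --- in rank~$2$, two exhibited extremal special classes settle the row regardless of what else the equations admit.
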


\begin{proof}
Assume~$d \coloneqq \dd(X) \le 6$.
By Proposition~\ref{propo:ext-rays} each~$f_i$ is either a birational contraction, or a morphism to~$\PP^1$, or a morphism to~$\PP^2$.
Let~$D_i$ denote the corresponding exceptional class (in the first case), 
or~$\PP^1$-class (in the second case), 
or~$\PP^2$-class (in the last case).
Since~$\rr(X) = 2$, there is a linear relation in~$\Cl(X)$ between~$D_1$, $D_2$, and~$A_X$:
\begin{equation*}
x_1D_1 + x_2D_2 = kA_X,
\end{equation*}
where~$x_1$, $x_2$, and~$k$ have no common divisors.
On the one hand, by Lemma~\ref{lem:contractions-class} 
for each~$i \in \{1,2\}$ the classes~$A_X$ and~$D_i$ generate~$\Cl(X)$, hence~$x_i = \pm 1$.
On the other hand, by Lemma~\ref{rem:extrmal:Eff} the classes~$D_1$ and~$D_2$ generate the effective cone of~$X$,
hence~$x_1$, $x_2$, and~$k$ have the same sign. 
Therefore, we may assume~$x_1 = x_2 = 1$ and~$k > 0$.

Taking the product of this relation with~$A_X$ we obtain
\begin{equation*}
s_1 + s_2 = kd,
\end{equation*}
where~$s_i = \langle A_X, D_i \rangle \in \{1,2,3\}$ and~$d = \dd(X)$.
In particular,~$s_1 + s_2$ is divisible by~$d$.
This immediately implies that we have the following possibilities (up to permutation of the~$s_i$) 
\begin{itemize}
\item 
for~$d = 6$ we have~$s_1 = s_2 = 3$ and~$k = 1$;
\item 
for~$d = 5$ we have~$s_1 = 3$, $s_2 = 2$, and~$k = 1$;
\item 
for~$d = 4$ we have~$s_1 = 3$, $s_2 = 1$, or~$s_1 = s_2 = 2$, and~$k = 1$;
\item 
for~$d = 3$ we have~$s_1 = 2$, $s_2 = 1$, and~$k = 1$, or~$s_1 = s_2 = 3$, and~$k = 2$.
\end{itemize}
Finally, when~$d \in \{1, 2\}$, the variety~$X$ is acted upon by an involution~$\tau$
(Bertini or Geiser, see Remark~\ref{rem:involutions})
which acts on~$\Eff(X)$ and swaps~$D_1$ and~$D_2$.
Indeed, in these cases the involution~$\tau$ acts non-trivially on~$\Cl(X)$, 
because the quotient~$X/\tau$ is isomorphic to~$\PP^n$ (if~$d = 2$) or~$\PP(1^n,2)$ (if~$d = 1$),
hence any~$\tau$-invariant divisor class on~$X$ is proportional to~$A_X$. Thus,
we have~$s_1 = s_2$, 
which gives the following possibilities
\begin{itemize}
\item 
for~$d = 2$ we have~$s_1 = s_2 = k \in \{1,2,3\}$, and
\item 
for~$d = 1$ we have~$s_1 = s_2 = k/2 \in \{1,2,3\}$.
\end{itemize}
This exhaust all the cases in the table (except for the last row).
Conversely, looking at Proposition~\ref{prop:bundle-p2} and Proposition~\ref{prop:quadric-over-p1} 
we see that all cases of types~$\rA_m$, $m \ge 2$, and~$\rD_m$ indeed appear,
and blowing up a del Pezzo variety~$X_0$ of degree~$3$ and~$2$ with~$\rr(X_0) = 1$ at a general point,
we construct the cases of types~$\rE_6$ and~$\rE_7$, respectively.

The last case~$d = 7$ is obvious, as~$X = \Bl_P(\PP^3)$.
\end{proof}

\begin{remark}
By Proposition~\ref{prop:cone} and Proposition~\ref{prop:rho=2} 
the effective cone~$\Eff(X)$ is generated by the summands in the last column of the table,
and to get the generators of the moving cone~$\Mov(X)$ one should replace the exceptional classes~$E_i$ 
by the big classes~$H_i = A_X + E_i$.
Note also that~$\Mov(X)$ is the union of two ample cones (with the wall generated by~$A_X$)
in all cases except for~$d \in \{6,7\}$, 
where~$A_X$ is ample and there is a single ample cone.
\end{remark}

In fact, in all these cases the diagram~\eqref{eq:diag:r=2} is easy to construct:
\begin{itemize}
\item 
when~$\dd(X) = 7$ and~$\Xi(X) = \rA_1$ the diagram is obvious;
\item 
when~$\dd(X) = 6$ and~$\Xi(X) = \rA_2$, the diagram is symmetric via the transposition involution of~$X = \PP^2 \times \PP^2$
or~$X = \Fl(1,2;3)$;
\item 
when~$\dd(X) = 5$ and~$\Xi(X) = \rA_3$ the explicit flop is constructed in Lemma~\ref{lem:schubert};
\item 
when~$\dd(X) = 4$ and~$\Xi(X) = \rA_4$ the explicit flop is constructed in~\cite[Theorem~2.2 and~\S2.3]{KP-Mu};
\item 
when~$\dd(X) = 4$ and~$\Xi(X) = \rD_4$, the diagram is symmetric via two Springer resolutions of the determinantal quadric~$X$
inside a fixed quadric hypersurface;
\item 
when~$\dd(X) = 3$ and~$\Xi(X) = \rA_5$, the diagram is symmetric via two Springer resolutions of the determinantal cubic~$X$;
\item 
when~$\dd(X) = 3$ and~$\Xi(X) = \rD_5$, the diagram can be constructed 
by projecting an intersection of two quadrics in~$\PP^{n+2}$ from a general point
to get a cubic hypersurface in~$\PP^{n+1}$ containing a~$\PP^{n-1}$,
and then projection out of this~$\PP^{n-1}$ to get a quadric bundle over~$\PP^1$;
\item 
when~$\dd(X) \in \{1,2\}$ in all types the diagram~\eqref{eq:diag:r=2} is symmetric via the Bertini or Geiser involution of~$X$.
\end{itemize}  

\subsection{Del Pezzo varieties with~$\rr(X) = 3$}
\label{ss:r3}

In this section we show the pictures of the effective and moving cones 
of all del Pezzo varieties~$X$ with~$\rr(X) = 3$ (see also Appendix~\ref{sec:details} for other material).
For each variety we show a slice of the cones, and write the type of the variety;
types~$\rA_m^*$, $m \in \{1,3,5\}$, stand for the primitive varieties 
that have a structure of~$\PP^{n-2}$-bundle over~$\PP^1 \times \PP^1$.

In the pictures of the cones listed below we use the following conventions.
We write~$E_i$ for exceptional classes, $F_i$ for $\PP^1$-classes, $G_i$ for $\PP^2$-classes, 
and~$H_i$ for big classes as in Proposition~\ref{prop:cone}\ref{it:eff-rays:m}.
The dark grey part of the picture is a slice of the moving cone~$\Mov(X)$, 
and its union with the light grey part is a slice of the effective cone~$\Eff(X)$.
We also draw the walls showing how the moving cone splits into the union of ample cones~$\Amp(X_i)$ 
of crepant $\QQ$-factorial models~$X_i$ of~$X$.

The computations that lead to these descriptions are analogous to those made before, so we omit them.

\def\graycc{10}
\def\grayc{30}
\def\sizec{0.1em}

\renewcommand\arraystretch{1.0}
\setlength{\tabcolsep}{0.9em}
\begin{longtable}{c|ccc}
\hline
$\dd(X) = 1$ & 

\begin{tikzpicture}[baseline=(current bounding box.center)] %%%%%%%%%%% d = 1, type = A_6
\path 
node[regular polygon, regular polygon sides=6, draw,fill=gray!\graycc, inner sep=1.7em, ] 
(hexagon) {} 
(hexagon.corner 1) node[right] {$\scriptstyle F_1$} 
(hexagon.corner 2) node[left] {$\scriptstyle E_1$} 
(hexagon.corner 3) node[left] {$\scriptstyle F_4$} 
(hexagon.corner 4) node[left] {$\scriptstyle F_3$} 
(hexagon.corner 5) node[right] {$\scriptstyle E_2$}
(hexagon.corner 6) node[right] {$\scriptstyle F_2$} 
plot[ mark=, samples at={1, ..., 6}, ] 
(hexagon.corner \x) ;
\coordinate (F1) at (hexagon.corner 1); 
\coordinate (E1) at (hexagon.corner 2);
\coordinate (F3) at (hexagon.corner 3);
\coordinate (F2) at (hexagon.corner 4);
\coordinate (E2) at (hexagon.corner 5);
\coordinate (F4) at (hexagon.corner 6);
\coordinate (G1) at (0,1); 
\coordinate (G2) at (0,-1); 
\coordinate (G3) at (-0.87,0.53); 
\coordinate (G4) at (0.87,-0.53); 
\coordinate (A) at (0,0);

\fill[black] (G1) circle (\sizec) node[above]{$\scriptstyle G_1$};
\fill[black] (G3) circle (\sizec) node[left]{$\scriptstyle G_3$};
\fill[black] (G4) circle (\sizec) node[right]{$\scriptstyle G_4$};
\fill[black] (G2) circle (\sizec) node[below]{$\scriptstyle G_2$}; 
\draw[name path=M, gray!\graycc] (E1)-- (E2);
\draw[name path=L1, gray!\graycc] (G1)-- (G3);
\draw[name path=L2, gray!\graycc] (G2)-- (G4);
\path [name intersections={of=L1 and M}]{};
\coordinate (Q1) at (intersection-1);
\path [name intersections={of=L2 and M}]{};
\coordinate (Q2) at (intersection-1);

\draw[fill=gray!\grayc] (G1)-- (G3) -- (F3) -- (F2) -- (G2) -- (G4) -- (F4) -- (F1) -- (G1); 

\draw[black] (Q2)--(Q1);
\foreach \n in {1,2,3,4}
{
\draw[black] (A)--(F\n);
\draw[black] (A)--(G\n);
\fill[black] (F\n) circle (\sizec);
\fill[black] (G\n) circle (\sizec);
}
\fill[black] (E1) circle (\sizec);
\fill[black] (E2) circle (\sizec);
\fill[black] (A) circle (\sizec) node[right,yshift=4]{$\scriptstyle A$}; 
\end{tikzpicture}
&
\begin{tikzpicture}[baseline=(current bounding box.center)] %%%%%%%%%%% d = 1, type = D_6
\path 
node[regular polygon, regular polygon sides=4, draw,fill=gray!\graycc, inner sep=1.7em, ] 
(hexagon) {} 
(hexagon.corner 1) node[right] {$\scriptstyle E_3$} 
(hexagon.corner 2) node[left] {$\scriptstyle E_1$} 
(hexagon.corner 3) node[left] {$\scriptstyle E_4$} 
(hexagon.corner 4) node[right] {$\scriptstyle E_2$} 
plot[ mark=, samples at={1, ..., 4}, ] 
(hexagon.corner \x) ;
\coordinate (F1) at (0.,1);
\coordinate (F2) at (0.,-1); 
\coordinate (F3) at (-1,0.);
\coordinate (F4) at (1,0.); 
\coordinate (A) at (0,0);
\coordinate (E1) at (hexagon.corner 2);
\coordinate (E2) at (hexagon.corner 4);
\coordinate (E3) at (hexagon.corner 1);
\coordinate (E4) at (hexagon.corner 3);
\foreach \n in {1,2,3,4}
{
\draw[name path=L\n, gray!\graycc] (A)-- (E\n);
}
\draw[name path=M1, gray!\graycc] (F1)-- (F3);
\draw[name path=M4, gray!\graycc] (F2)-- (F3);
\draw[name path=M3, gray!\graycc] (F1)-- (F4);
\draw[name path=M2, gray!\graycc] (F2)-- (F4);

\path [name intersections={of=L1 and M1}]{};
\coordinate (Q1) at (intersection-1);
\path [name intersections={of=L2 and M2}]{};
\coordinate (Q2) at (intersection-1);
\path [name intersections={of=L3 and M3}]{};
\coordinate (Q3) at (intersection-1);
\path [name intersections={of=L4 and M4}]{};
\coordinate (Q4) at (intersection-1);

\fill[black] (F1) circle (\sizec) node[above]{$\scriptstyle F_1$}; 
\fill[black] (F2) circle (\sizec) node[below]{$\scriptstyle F_2$}; 
\fill[black] (F3) circle (\sizec) node[left]{$\scriptstyle F_3$}; 
\fill[black] (F4) circle (\sizec) node[right]{$\scriptstyle F_4$}; 

\draw[fill=gray!\grayc] (F1) -- (F4) -- (F2) -- (F3)-- (F1) ; 

\foreach \n in {1,2,3,4}
{
\draw[black] (A)-- (F\n);
\draw[black] (A)-- (Q\n);
\fill[black] (E\n) circle (\sizec); 
}

\fill[black] (A) circle (\sizec) node[above,xshift=3,yshift=2]{$\scriptstyle A$}; 
\end{tikzpicture} 
&
\begin{tikzpicture}[baseline=(current bounding box.center)] %%%%%%%%%%% d = 1, type = E_6
\path 
node[regular polygon, regular polygon sides=6, draw,fill=gray!\graycc, inner sep=1.7em, ] 
(hexagon) {} 
(hexagon.corner 1) node[right] {$\scriptstyle E_1$} 
(hexagon.corner 2) node[left] {$\scriptstyle E_2$} 
(hexagon.corner 3) node[left] {$\scriptstyle E_3$} 
(hexagon.corner 4) node[left] {$\scriptstyle E_4$} 
(hexagon.corner 5) node[right] {$\scriptstyle E_5$}
(hexagon.corner 6) node[right] {$\scriptstyle E_6$} 
plot[ mark=, samples at={1, ..., 6}, ] 
(hexagon.corner \x) ; 

\coordinate (A) at (0,0);
\coordinate (E7) at (hexagon.corner 1); 

\coordinate (E1) at (hexagon.corner 1); 
\draw[name path=AE1, gray!\graycc] (A) -- (E1);
\coordinate (E2) at (hexagon.corner 2); 
\draw[name path=AE2, gray!\graycc] (A) -- (E2);
\coordinate (E3) at (hexagon.corner 3); 
\draw[name path=AE3, gray!\graycc] (A) -- (E3);
\coordinate (E4) at (hexagon.corner 4); 
\draw[name path=AE4, gray!\graycc] (A) -- (E4);
\coordinate (E5) at (hexagon.corner 5); 
\draw[name path=AE5, gray!\graycc] (A) -- (E5);
\coordinate (E6) at (hexagon.corner 6); 
\draw[name path=AE6, gray!\graycc] (A) -- (E6);

\path 
node[regular polygon, regular polygon sides=6, draw,black,fill=gray!\grayc, inner sep=0.9em, rotate=30 ] 
(hexagon) {}
(hexagon.corner 1) node[above,yshift=-0.3em] {$\scriptscriptstyle H_1$}
(hexagon.corner 2) node[left,xshift=0.3em] {$\scriptscriptstyle H_2$}
(hexagon.corner 3) node[left,xshift=0.3em] {$\scriptscriptstyle H_3$}
(hexagon.corner 4) node[below,yshift=0.2em] {$\scriptscriptstyle H_4$}
(hexagon.corner 5) node[right,xshift=-0.3em] {$\scriptscriptstyle H_5$}
(hexagon.corner 6) node[right,xshift=-0.3em] {$\scriptscriptstyle H_6$} 
plot[ mark=, samples at={1, ..., 6}, ] 
(hexagon.corner \x) ; 

\foreach \n in {1,2,3,4,5,6}
{
\coordinate (H\n) at (hexagon.corner \n);
\fill[black] (H\n) circle (\sizec) ; 
\fill[black] (E\n) circle (\sizec) ; 
}

\draw[name path=L1, gray!\grayc] (H1)-- (H2);
\draw[name path=L2, gray!\grayc] (H2)-- (H3);
\draw[name path=L3, gray!\grayc] (H3)-- (H4);
\draw[name path=L4, gray!\grayc] (H4)-- (H5);
\draw[name path=L5, gray!\grayc] (H5)-- (H6);
\draw[name path=L6, gray!\grayc] (H6)-- (H1);
\draw[name path=L7, gray!\grayc] (H1)-- (H2);

\path [name intersections={of=AE1 and L6}]{};
\coordinate (Q1) at (intersection-1);

\path [name intersections={of=AE2 and L1}]{};
\coordinate (Q2) at (intersection-1);

\path [name intersections={of=AE3 and L2}]{};
\coordinate (Q3) at (intersection-1);

\path [name intersections={of=AE4 and L3}]{};
\coordinate (Q4) at (intersection-1);

\path [name intersections={of=AE5 and L4}]{};
\coordinate (Q5) at (intersection-1);

\path [name intersections={of=AE6 and L5}]{};
\coordinate (Q6) at (intersection-1);
\foreach \n in {1,2,3,4,5,6}
{
\draw[black] (Q\n) -- (A) ;
}
\draw[black] (H1)--(H2)--(H3)--(H4)--(H5)--(H6)--(H1);
\fill[black] (A) circle (\sizec) node[above]{$\scriptstyle A$}; 
\end{tikzpicture}
\\
&

{$\Xi(X) = \rA_6$}
&

{$\Xi(X) = \rD_6$}
&

{$\Xi(X) = \rE_6$}
\\[1ex]\hline
$\dd(X) = 2$ 
&
\begin{tikzpicture}[baseline=(current bounding box.center)] %%%%%%%%%%% d = 2, type = A_5
\path 
node[regular polygon, regular polygon sides=4, draw,fill=gray!\graycc, inner sep=1.7em, ] 
(hexagon) {} 
(hexagon.corner 1) node[right] {$\scriptstyle E_1$} 
(hexagon.corner 2) node[left] {$\scriptstyle F_1$} 
(hexagon.corner 3) node[left] {$\scriptstyle E_2$} 
(hexagon.corner 4) node[right] {$\scriptstyle F_2$} 
plot[ mark=, samples at={1, ..., 4}, ] 
(hexagon.corner \x) ;
\coordinate (G3) at (1.02,0); 
\coordinate (G4) at (-1.02,0); 
\coordinate (G2) at (0,-1.02); 
\coordinate (G1) at (0,1.02); 
\coordinate (A) at (0,0); 
\coordinate (F1) at (hexagon.corner 2); 
\coordinate (F2) at (hexagon.corner 4);
\coordinate (E1) at (hexagon.corner 1); 
\coordinate (E2) at (hexagon.corner 3); 

\draw[name path=G1G3, gray!\graycc] (G1) -- (G3);
\draw[name path=G2G4, gray!\graycc] (G2) -- (G4);
\draw[name path=E1A, gray!\graycc] (E1) -- (A);
\draw[name path=E2A, gray!\graycc] (E2) -- (A);
\path [name intersections={of=E1A and G1G3}]{};
\coordinate (Q1) at (intersection-1);
\path [name intersections={of=E2A and G2G4}]{};
\coordinate (Q2) at (intersection-1);
\draw[fill=gray!\grayc] (F1) -- (G1) -- (G3) -- (F2)-- (G2)-- (G4) -- (F1);

\fill[black] (A) circle (\sizec) node[right,yshift=4,xshift=1]{$\scriptstyle A$}; 
\fill[black] (G1) circle (\sizec) node[above]{$\scriptstyle G_1$};
\fill[black] (G2) circle (\sizec) node[below]{$\scriptstyle G_2$};
\fill[black] (G3) circle (\sizec) node[right]{$\scriptstyle G_3$};
\fill[black] (G4) circle (\sizec) node[left]{$\scriptstyle G_4$};

\fill[black] (E1) circle (\sizec) ;
\fill[black] (E2) circle (\sizec);
\fill[black] (F1) circle (\sizec) ;
\fill[black] (F2) circle (\sizec) ;

\draw[black] (Q1) -- (A);
\draw[black] (Q2) -- (A);

\draw[black] (G1) -- (A);
\draw[black] (G2) -- (A);
\draw[black] (G3) -- (A);
\draw[black] (G4) -- (A);
\draw[black] (F1) -- (A);
\draw[black] (F2) -- (A);
\end{tikzpicture} 
&
\begin{tikzpicture}[baseline=(current bounding box.center)] %%%%%%%%%%% d = 2, type = D_5
\path 
node[regular polygon, regular polygon sides=4, draw,fill=gray!\graycc, inner sep=1.7em, ] 
(hexagon) {} 
(hexagon.corner 1) node[right] {$\scriptstyle E_1$} 
(hexagon.corner 2) node[left] {$\scriptstyle E_2$} 
(hexagon.corner 3) node[left] {$\scriptstyle E_3$} node[below] {\vphantom{$\scriptstyle G_1$}}
(hexagon.corner 4) node[right] {$\scriptstyle E_4$} 
plot[ mark=, samples at={1, ..., 4}, ] 
(hexagon.corner \x) ; 

\coordinate (A) at (0,0);

\foreach \n in {1,2,3,4}
{
\coordinate (E\n) at (hexagon.corner \n); 
}

\coordinate (H1) at (0,0.5); 
\coordinate (H2) at (0,-0.5); 
\coordinate (F1) at (-1.02,0); 
\coordinate (F2)at (1.02,0); 
\coordinate (E3) at (hexagon.corner 3); 
\coordinate (E4)at (hexagon.corner 4);

\draw[name path=L11, gray!\graycc] (F1) -- (H1);
\draw[name path=L12, gray!\graycc] (F1) -- (H2);
\draw[name path=L21, gray!\graycc] (F2)-- (H1);
\draw[name path=L22, gray!\graycc] (F2)-- (H2);

\draw[name path=AE1, gray!\graycc] (A) -- (E1);
\draw[name path=AE2, gray!\graycc] (A) -- (E2);
\draw[name path=AE3, gray!\graycc] (A)-- (E3);
\draw[name path=AE4, gray!\graycc] (A)-- (E4);

\path [name intersections={of=L22 and AE4}]{};
\coordinate (Q4) at (intersection-1);
\path [name intersections={of=L21 and AE1}]{};
\coordinate (Q1) at (intersection-1);
\path [name intersections={of=L12 and AE3}]{};
\coordinate (Q3) at (intersection-1);
\path [name intersections={of=L11 and AE2}]{};
\coordinate (Q2) at (intersection-1);

\fill[black] (H1) circle (\sizec) node[above]{$\scriptstyle H_1$};
\fill[black] (H2) circle (\sizec) node[below]{$\scriptstyle H_2$};
\fill[black] (F2) circle (\sizec) node[right]{$\scriptstyle F_2$};
\fill[black] (F1) circle (\sizec) node[left]{$\scriptstyle F_1$};

\draw[fill=gray!\grayc] (F1)-- (H1) -- (F2) -- (H2) -- (F1);
\draw[black] (A)-- (F1);
\draw[black] (A)-- (F2);
\draw[black] (A)-- (Q1);
\draw[black] (A)-- (Q2);
\draw[black] (A)-- (Q3);
\draw[black] (A)-- (Q4);
\foreach \n in {1,2,3,4}
{
\fill[black] (E\n) circle (\sizec);
}

\fill[black] (A) circle (\sizec) node[right,yshift=4,xshift=1]{$\scriptstyle A$}; 
\end{tikzpicture} 
&
\begin{tikzpicture}[baseline=(current bounding box.center)] %%%%%%%%%%% d = 2, type = A_5, extra
\path 
node[regular polygon, regular polygon sides=6, draw, fill=gray!\grayc, inner sep=1.7em, ] 
(hexagon) {} 
(hexagon.corner 1) node[right] {$\scriptstyle F_1$} 
(hexagon.corner 2) node[left] {$\scriptstyle F_2$} 
(hexagon.corner 3) node[left] {$\scriptstyle F_3$} 
(hexagon.corner 4) node[left] {$\scriptstyle F_4$} node[below] {\vphantom{$\scriptstyle G_1$}}
(hexagon.corner 5) node[right] {$\scriptstyle F_5$}
(hexagon.corner 6) node[right] {$\scriptstyle F_6$} 
plot[ mark=, samples at={1, ..., 6}, ] 
(hexagon.corner \x) ;

\coordinate (A) at (0,0);
\foreach \n in {1,2,3,4,5,6}
{
\coordinate (F\n) at (hexagon.corner \n); 
\draw[black] (F\n) -- (A);
\fill[black] (F\n) circle (\sizec) ; 
}

\fill[black] (0,0) circle (\sizec) node[above,yshift=2]{$\scriptstyle A$}; 
\end{tikzpicture}
\\
&
$\Xi(X) = \rA_5$
&
$\Xi(X) = \rD_5$
&
$\Xi(X) = \rA_5^*$
\\[1ex]\hline
$\dd(X) = 3$
&
\begin{tikzpicture}[baseline=(current bounding box.center)] %%%%%%%%%%% d = 3, type = A_4
\path 
node[regular polygon, regular polygon sides=4, draw,fill=gray!\graycc, inner sep=1.7em, ] 
(hexagon) {} 
(hexagon.corner 1) node[right] {$\scriptstyle E_2$} 
(hexagon.corner 2) node[left] {$\scriptstyle E_1$} 
(hexagon.corner 3) node[below] {$\scriptstyle F_2$} 
(hexagon.corner 4) node[below] {$\scriptstyle F_1$} 
plot[ mark=, samples at={1, ..., 4}, ] 
(hexagon.corner \x) ; 
\coordinate (H) at (0,0.4); 
\coordinate (G1) at (-1.0,0); 
\coordinate (G2) at (1.0,0); 
\coordinate (A) at (0,0); 
\coordinate (F1) at (hexagon.corner 4); 
\coordinate (F2) at (hexagon.corner 3); 
\coordinate (E2) at (hexagon.corner 1); 
\coordinate (E1) at (hexagon.corner 2); 
\draw[name path=L1, gray!\graycc] (A) -- (E1);
\draw[name path=L2, gray!\graycc] (A) -- (E2);
\draw[name path=M1, gray!\grayc] (G1)-- (H);
\draw[name path=M2, gray!\grayc] (G2)-- (H);
\path [name intersections={of=L1 and M1}]{};
\coordinate (Q1) at (intersection-1);
\path [name intersections={of=L2 and M2}]{};
\coordinate (Q2) at (intersection-1);

\fill[black] (H) circle (\sizec) node[above]{$\scriptstyle H$};
\fill[black] (G2) circle (\sizec) node[right]{$\scriptstyle G_2$};
\fill[black] (G1) circle (\sizec) node[left]{$\scriptstyle G_1$};
\draw[fill=gray!\grayc] (G1)-- (H)-- (G2) -- (F1) -- (F2) -- (G1);
\fill[black] (A) circle (\sizec) node[below]{$\scriptstyle A$}; 
\draw[black] (Q1)-- (A);
\draw[black] (Q2)-- (A);
\draw[black] (G1)-- (A);
\draw[black] (G2)-- (A);
\draw[black] (F1)-- (A);
\draw[black] (F2)-- (A);
\fill[black] (E1) circle (\sizec) node[below]{};
\fill[black] (E2) circle (\sizec) node[below]{};
\fill[black] (F1) circle (\sizec) node[below]{};
\fill[black] (F2) circle (\sizec) node[below]{};
\end{tikzpicture} 
&
\begin{tikzpicture}[baseline=(current bounding box.center)] %%%%%%%%%%% d = 3, type = D_4
\path
% \draw[path, name path=De1, thin] 
node[regular polygon, regular polygon sides=3, draw,fill=gray!\graycc, inner sep=1.1em, ] 
(hexagon) {} 
(hexagon.corner 1) node[right] {$\scriptstyle E_1$} 
(hexagon.corner 2) node[left] {$\scriptstyle E_2$} 
(hexagon.corner 3) node[right] {$\scriptstyle E_3$} 
plot[ mark=, samples at={1, ..., 3}, ] 
(hexagon.corner \x) ; 
\coordinate (E1) at (hexagon.corner 1); 
\coordinate (E2) at (hexagon.corner 2); 
\coordinate (E3) at (hexagon.corner 3); 

\coordinate (F1) at (0.55,0.38);
\coordinate (F2) at (-0.55,0.38); 
\coordinate (F3) at (0,-0.63); 
\coordinate (A) at (0,0); 
\draw[name path=E1F3, gray!\graycc] (F3) -- (E1);
\draw[name path=E2F1, gray!\graycc] (F1) -- (E2);
\draw[name path=E3F2, gray!\graycc] (F2) -- (E3);

\fill[black] (F1) circle (\sizec) node[right]{$\scriptstyle F_1$}; 
\fill[black] (F2) circle (\sizec) node[left]{$\scriptstyle F_2$}; 
\fill[black] (F3) circle (\sizec) node[below]{$\scriptstyle F_3$}; 

\draw[name path=F1F2,fill=gray!\grayc] (F1) -- (F2) ; 
\draw[name path=F2F3,fill=gray!\grayc] (F2) -- (F3); 
\draw[name path=F1F3,fill=gray!\grayc] (F3)-- (F1) ; 
\draw [fill=gray!\grayc] (F1) -- (F2) -- (F3) -- (F1) ;

\path [name intersections={of=E1F3 and F1F2}]{};
\coordinate (H1) at (intersection-1);
\draw[thick] (H1) -- (A) ;

\path [name intersections={of=E2F1 and F2F3}]{};
\coordinate (H2) at (intersection-1);
\draw[thick] (H2) -- (A) ;

\path [name intersections={of=E3F2 and F1F3}]{};
\coordinate (H3) at (intersection-1);
\draw[thick] (H3) -- (A) ;

\fill[black] (A) circle (\sizec) node[below]{$\scriptstyle A$}; 
\fill[black] (E1) circle (\sizec); 
\fill[black] (E2) circle (\sizec); 
\fill[black] (E3) circle (\sizec); 
\end{tikzpicture} 
\\
&

{$\Xi(X) = \rA_4$}
&

{$\Xi(X) = \rD_4$}
%%%%%%%%%%%%%%%%%%%%%%%%%%%%%%%%%%%%%%%%%%%%%%%%%%%%%%%%%%%%%%%%%%%%%%%%%%%%%%%%%%%%%%%%%%%%%%%%%%%%%%%%
\\[1ex]\hline
$\dd(X) = 4$
&
\begin{tikzpicture}[baseline=(current bounding box.center)] %%%%%%%%%%% d = 4, type = A_3
\path 
node[regular polygon, regular polygon sides=4, draw,fill=gray!\graycc, inner sep=1.7em, ] 
(hexagon) {} 
(hexagon.corner 1) node[right] {$\scriptstyle F_1$} 
(hexagon.corner 2) node[left] {$\scriptstyle E_1$} 
(hexagon.corner 3) node[left] {$\scriptstyle F_2$} 
(hexagon.corner 4) node[right] {$\scriptstyle E_2$} 
plot[ mark=, samples at={1, ..., 4}, ] 
(hexagon.corner \x) ;
\coordinate (G2) at (1.02,0); 
\coordinate (G1) at (0,1.02); 
\coordinate (F1) at (hexagon.corner 1); 
\coordinate (F2) at (hexagon.corner 3); 
\coordinate (E1) at (hexagon.corner 2); 
\coordinate (E2) at (hexagon.corner 4); 
\fill[black] (G2) circle (\sizec) node[right]{$\scriptstyle G_2$};
\fill[black] (G1) circle (\sizec) node[above]{$\scriptstyle G_1$};
\draw[fill=gray!\grayc] (G1) -- (F1)-- (G2)-- (F2) -- (G1);
\coordinate (A) at (0,0); 
\fill[black] (0,0) circle (\sizec) node[below]{$\scriptstyle A$}; 
\coordinate (FG2) at (0.34,-0.33); 
\coordinate (FG1) at (-0.38,0.28); 
\draw[] (A) -- (G1);
\draw[] (A) -- (G2);
\draw[] (A) -- (F1);
\draw[] (A) -- (F2);
\draw[] (A) -- (FG1);
\draw[] (A) -- (FG2);
\fill[black] (F1) circle (\sizec); 
\fill[black] (F2) circle (\sizec); 
\fill[black] (E1) circle (\sizec); 
\fill[black] (E2) circle (\sizec); 
\end{tikzpicture}
&
\begin{tikzpicture}[baseline=(current bounding box.center)] %%%%%%%%%%% d = 4, type = A_3, extra
\path 
node[regular polygon, regular polygon sides=4, draw,fill=gray!\grayc, inner sep=1.7em, ] 
(hexagon) {} 
(hexagon.corner 1) node[right] {$\scriptstyle F_1$} 
(hexagon.corner 2) node[left] {$\scriptstyle F_2$} 
(hexagon.corner 3) node[left] {$\scriptstyle F_3$} 
(hexagon.corner 4) node[right] {$\scriptstyle F_4$} 
plot[ mark=, samples at={1, ..., 4}, ] 
(hexagon.corner \x) ; 
\coordinate (F1) at (hexagon.corner 1); 
\coordinate (F2) at (hexagon.corner 2); 
\coordinate (F3) at (hexagon.corner 3); 
\coordinate (F4) at (hexagon.corner 4); 
\coordinate (A) at (0,0); 
\fill[black] (A) circle (\sizec) node[above]{$\scriptstyle A$}; 
\draw[] (A) -- (F1);
\draw[] (A) -- (F2);
\draw[] (A) -- (F3);
\draw[] (A) -- (F4);
\fill[black] (F1) circle (\sizec); 
\fill[black] (F2) circle (\sizec); 
\fill[black] (F3) circle (\sizec); 
\fill[black] (F4) circle (\sizec); 
\end{tikzpicture} 
\\
&
$\Xi(X) = \rA_3$
&
$\Xi(X) = \rA_3^*$
%%%%%%%%%%%%%%%%%%%%%%%%%%%%%%%%%%%%%%%%%%%%%%%%%%%%%%%%%%%%%%%%%%%%%%%%%%%%%%%%%%%%%%%%%%%%%%%%%%%%%%%%
\\[1ex]\hline
$\dd(X) = 5$
&
{
\begin{tikzpicture}[baseline=(current bounding box.center)] %%%%%%%%%%% d = 5, type = A_2
\path 
node[regular polygon, regular polygon sides=3, draw,fill=gray!\graycc, inner sep=1.1em, ] 
(hexagon) {} 
(hexagon.corner 1) node[right] {$\scriptstyle E_1$} 
(hexagon.corner 2) node[below] {$\scriptstyle F_1$} 
(hexagon.corner 3) node[below] {$\scriptstyle F_2$} 
plot[ mark=, samples at={1, ..., 3}, ] 
(hexagon.corner \x) ; 
\coordinate (G1) at (-0.53,0.38); 
\coordinate (G2) at (0.53,0.38); 
\coordinate (E1) at (hexagon.corner 1); 
\coordinate (F1) at (hexagon.corner 2); 
\coordinate (F2) at (hexagon.corner 3); 
\fill[black] (G1) circle (\sizec) node[left]{$\scriptstyle G_1$}; 
\fill[black] (G2) circle (\sizec) node[right]{$\scriptstyle G_2$}; 
\draw[fill=gray!\grayc] (G1)-- (G2) -- (F2) -- (F1) -- (G1); 
\draw[name path=L1, black] (G1) -- (F2);
\draw[name path=L2, black] (G2) -- (F1);
\path [name intersections={of=L1 and L2}]{};
\coordinate (A) at (intersection-1);
\fill[black] (A) circle (\sizec) node[below]{$\scriptstyle A$}; 
\fill[black] (F1) circle (\sizec); 
\fill[black] (F2) circle (\sizec); 
\fill[black] (E1) circle (\sizec); 
\end{tikzpicture} 
}
\\
&
{$\Xi(X) = \rA_2$}
%%%%%%%%%%%%%%%%%%%%%%%%%%%%%%%%%%%%%%%%%%%%%%%%%%%%%%%%%%%%%%%%%%%%%%%%%%%%%%%%%%%%%%%%%%%%%%%%%%%%%%%%
\\[1ex]\hline
$\dd(X) = 6$
&
\begin{tikzpicture}[baseline=(current bounding box.center)] %%%%%%%%%%% d = 6, type = A_1
\path 
node[regular polygon, regular polygon sides=3, draw,fill=gray!\graycc, inner sep=1.1em, ] 
(hexagon) {} 
(hexagon.corner 1) node[right] {$\scriptstyle F_1$} 
(hexagon.corner 2) node[left] {$\scriptstyle E_1$} 
(hexagon.corner 3) node[right] {$\scriptstyle E_2$} 
plot[ mark=, samples at={1, ..., 3}, ] 
(hexagon.corner \x) ;
\coordinate (F1) at (hexagon.corner 1) {};
\coordinate (E1) at (hexagon.corner 2) {};
\coordinate (E2) at (hexagon.corner 3) {};
\coordinate (G1) at (-0.53,0.38) {}; 
\coordinate (G2) at (0.53,0.38) {}; 
\coordinate (H1) (0, -1.3);
\draw[name path=L1, white] (G2) -- (E1);
\draw[name path=L2, white] (G1) -- (E2);
\path [name intersections={of=L1 and L2}]{};
\coordinate (H1) at (intersection-1);

\coordinate (F1) at (hexagon.corner 1);
\fill[black] (G1) circle (\sizec) node[left]{$\scriptstyle G_1$}; 
\fill[black] (G2) circle (\sizec) node[right]{$\scriptstyle G_2$}; 
\draw[fill=gray!\grayc] (G2)-- (F1) -- (G1) -- (H1) -- (G2); 
\fill[black] (0,0.38) circle (\sizec) node[above]{$\scriptstyle A$}; 
\draw[] (G1)-- (G2);
\fill[black] (F1) circle (\sizec); 
\fill[black] (F2) circle (\sizec); 
\fill[black] (E1) circle (\sizec); 
\draw[fill=gray!\graycc] (G1) -- (H1) -- (G2)-- (E2) --(E1) -- (G1) ; 
\fill[black] (H1) circle (\sizec) node[below]{$\scriptstyle H_1$}; 
\end{tikzpicture}
&
\begin{tikzpicture}[baseline=(current bounding box.center)] %%%%%%%%%%% d = 6, type = A_1, extra
\path 
node[regular polygon, regular polygon sides=3, draw,fill=gray!\grayc, inner sep=1.1em, ] 
(hexagon) {} 
(hexagon.corner 1) node[right] {$\scriptstyle F_1$} 
(hexagon.corner 2) node[left] {$\scriptstyle F_2$} 
(hexagon.corner 3) node[right] {$\scriptstyle F_3$} 
plot[ mark=, samples at={1, ..., 3}, ] 
(hexagon.corner \x) ; 
\coordinate (E1) at (hexagon.corner 1) {};
\coordinate (E2) at (hexagon.corner 2) {};
\coordinate (E3) at (hexagon.corner 3) {};
\fill[black] (0,0) circle (\sizec) node[above]{$\scriptstyle A$};
\fill[black] (E1) circle (\sizec); 
\fill[black] (E2) circle (\sizec); 
\fill[black] (E3) circle (\sizec); 
\end{tikzpicture}
\\
&
{$\Xi(X) = \rA_1$}
&
$\Xi(X) = \rA_1^*$
\\[1ex]\hline
\end{longtable} 

\bigskip 

\section{Schubert varieties}
\label{sec:schubert} 

Recall from Theorem~\ref{cla:dP}\ref{cla:dP5} that every non-conical del Pezzo variety of degree~$5$ 
is a linear section of~$\Gr(2,5)$; in this section we classify such varieties. 

Let~$V$ be a $5$-dimensional vector space.
Consider the Grassmannian~$\Gr(2,V) \subset \PP(\wedge^2 V)$ in the Pl\"ucker embedding.
Note that the group~$\PGL(V)$ acts on the space~$\PP(\wedge^2 V^\vee)$ of hyperplane sections of~$\Gr(2,V)$ with two orbits:
\begin{itemize}
\item 
The open orbit, formed by points of~$\PP(\wedge^2V^\vee)$ corresponding to skew-symmetric forms of rank~$4$;
the corresponding hyperplane sections of~$\Gr(2,V)$ are smooth.
\item 
The closed orbit, formed by points of~$\PP(\wedge^2V^\vee)$ corresponding to skew-symmetric forms of rank~$2$;
the corresponding hyperplane sections of~$\Gr(2,V)$ are singular.
\end{itemize}
More precisely, given a $3$-dimensional subspace~$K \subset V$, 
the subvariety of~$\Gr(2,V)$ parameterizing $2$-dimensional subspaces~$U \subset V$ such that~$U \cap K \ne 0$
is a singular hyperplane section of~$\Gr(2,V)$, called a \emph{Schubert divisor}.
Let
\begin{equation*}
X_{5,r,n} \subset \Gr(2,V),
\end{equation*}
be an intersections of~$r - 1$ general Schubert divisors and~$7-r-n$ general hyperplanes.
We prove the following

\begin{theorem}
\label{thm:quintic-detailed}
There are exactly~$10$ isomorphism classes of non-conical del Pezzo varieties 
of degree~$\dd(X) = 5$ and dimension~$\dim(X) = n \ge 3$, 
namely varieties
\begin{equation*}
X_{5,1,n},\ 3 \le n \le 6,
\qquad 
X_{5,2,n},\ 3 \le n \le 5,
\qquad 
X_{5,3,n},\ 3 \le n \le 4,
\qquad\text{and}\qquad 
X_{5,4,3}.
\end{equation*}
Moreover, $\rr(X_{5,r,n}) = r$ and
\begin{enumerate}
\item 
\label{it:dp51}
the varieties~$X_{5,1,n}$, $3 \le n \le 6$, are all smooth;
\item 
\label{it:dp52}
the varieties~$X_{5,2,n}$, $3 \le n \le 5$, have~$\Sing(X_{5,2,n}) = \PP^{n-3}$, 
and admit two small resolutions; one is
\begin{equation}
\label{eq:hx52n}
\hX_{5,2,n} \coloneqq \PP_{\PP^2}\left(\Ker(\cO(-1)^{\oplus 2} \oplus \Omega(1) \twoheadrightarrow \cO^{\oplus (5 - n)})\right)
\end{equation}
and the other is a flat quadric bundle over~$\PP^1$;
\item 
\label{it:dp53}
the varieties~$X_{5,3,n}$, $3 \le n \le 4$, have~$\Sing(X_{5,3,n}) = \PP^{n-3} \sqcup \PP^{n-3}$, 
and admit four small resolutions; one is
\begin{equation}
\label{eq:hx53n}
\hX_{5,3,4} \coloneqq \Bl_P\left(\PP^2 \times \PP^2\right)
\qquad\text{or}\qquad 
\hX_{5,3,3} \coloneqq \Bl_P(\Fl(1,2;3)),
\end{equation}
another is a $\PP^{n-2}$-bundle over~$\PP^1 \times \PP^1$ and the other two are $\PP^{n-2}$-bundles over~$\FF_1$;
\item 
\label{it:dp54}
the variety~$X_{5,4,3}$, has~$3$ singular points
and admits eight small resolutions; one of them is
\begin{equation}
\label{eq:hx543}
\hX_{5,4,3} \coloneqq \Bl_P\left(\PP^1 \times \PP^1 \times \PP^1\right)
\end{equation}
another is~$\Bl_{P_1,P_2,P_3}(\PP^3)$,
three others are blowups at a point of~$\PP^{1}$-bundles over~$\FF_1$, 
and the last three are $\PP^{1}$-bundles over the del Pezzo surface of degree~$7$.
\end{enumerate}
\end{theorem}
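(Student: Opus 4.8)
The plan is to merge the general ADE classification with the explicit geometry of $\Gr(2,V)$. \emph{First} I would settle the enumeration. By Theorem~\ref{cla:dP}\ref{cla:dP5} every non-conical del Pezzo variety of degree~$5$ and dimension~$n\ge 3$ is a linear section $\Gr(2,V)\cap\PP^{n+3}$, and by the ADE dictionary (Theorem~\ref{thm:intro-bircla}\ref{it:bircla-am}) it has type~$\rA_{5-\rr}$. Corollary~\ref{cor:drn} then gives $n\le 7-\rr$, while Theorem~\ref{cla:dP}\ref{cla:dP5} forces $\rr\ge 1$ and $n\le 6$; combined with Theorem~\ref{thm:intro-a-detailed}\ref{it:a-single} (applicable since none of $(5,2),(5,3),(5,4)$ lies in the exceptional lists) this shows that each admissible pair $(\rr,n)$ supports a unique isomorphism class, producing exactly the $4+3+2+1=10$ varieties listed. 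As $\rr$ and $n$ are isomorphism invariants these classes are pairwise distinct, and for $r=1$ there are no Schubert conditions so Bertini yields smoothness.

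\emph{Next} I would match the Schubert-section description to these classes and prove $\rr(X_{5,r,n})=r$. The maximal cases $X_{5,r,7-r}$ are precisely the anticanonical models built in Lemma~\ref{lem:three-schuberts} from $\hX_{5,2,5}=\PP_{\PP^2}(\cO(-1)^{\oplus 2}\oplus\Omega(1))$, $\hX_{5,3,4}=\Bl_P(\PP^2\times\PP^2)$ and $\hX_{5,4,3}=\Bl_P(\PP^1\times\PP^1\times\PP^1)$, whose Picard ranks are $2,3,4$; since these resolutions are small, Lemma~\ref{lem:dp-adp} gives $\rr(X_{5,r,7-r})=r$. For $n<7-r$ the variety $X_{5,r,n}$ is a general fundamental-divisor section of $X_{5,r,n+1}$, so Proposition~\ref{DP:|A|}\ref{DP:|A|2} keeps it del Pezzo of degree~$5$ and Theorem~\ref{thm:intro-clx} keeps $\Cl$, hence $\rr$, unchanged. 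The singular loci I would read off by Schubert calculus: the divisor $\sigma(K_i)$ is singular exactly along $\Gr(2,K_i)\cong\PP^2$, and for general $K_1,\dots,K_{r-1}$ the condition that a plane $U\subset K_i$ also meets the remaining $K_j$ cuts these~$\PP^2$'s down to $r-1$ disjoint components of dimension $4-r$; intersecting with the $7-r-n$ general hyperplanes leaves $r-1$ disjoint copies of $\PP^{n-3}$ (three reduced points when $r=4$, $n=3$), with a transverse ordinary threefold double point.

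\emph{Finally} I would determine all small resolutions, i.e.\ the $\QQ$-factorial crepant models, which are the chambers of $\Mov(X)$ in Proposition~\ref{prop:cone}\ref{it:mov-amp}; each carries an extremal contraction of one of the three types of Proposition~\ref{propo:ext-rays}, to be pinned down via the bundle classifications (Propositions~\ref{prop:bundle-p2}, \ref{prop:bundle-p1p1} and Remark~\ref{rem:dpb-other}) on the del Pezzo surfaces of degree $11-r$. For $r=2$ the two chambers are the $\PP^{n-2}$-bundle~\eqref{eq:hx52n} and the quadric bundle over $\PP^1$ joined by the flop of Lemma~\ref{lem:schubert} (cf.\ \S\ref{ss:r2} and Proposition~\ref{prop:rho=2}); for $r=3$ the four chambers come from the type-$\rA_2$ cone picture of \S\ref{ss:r3}, giving~\eqref{eq:hx53n}, one $\PP^{n-2}$-bundle over $\PP^1\times\PP^1$ and two over $\FF_1$; for $r=4$ I would run the analogous rank-$4$ cone computation to exhibit eight chambers, identifying them with~\eqref{eq:hx543}, $\Bl_{P_1,P_2,P_3}(\PP^3)$, the three blowups of $\PP^1$-bundles over $\FF_1$, and the three $\PP^1$-bundles over the degree-$7$ surface; neighbouring chambers differ by the two small resolutions of a single nodal locus, which conceptually accounts for the count $2^{r-1}$.

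The main obstacle is this last exhaustive bookkeeping, above all the rank-$4$ case: there is no ready-made table as in \S\ref{ss:r3}, so I must carry out the full cone and Mori-theoretic analysis of $X_{5,4,3}$, verify that $\Mov(X_{5,4,3})$ splits into exactly eight ample chambers, and recognise the geometry of each model by tracing extremal rays through Proposition~\ref{propo:ext-rays} and the explicit del Pezzo bundles of Proposition~\ref{prop:dpb} and Lemma~\ref{lem:dpb-converse}. Reconciling the purely local $2^{r-1}$ choices of small resolution of the nodal loci with the global chamber structure is the step most likely to conceal subtleties.
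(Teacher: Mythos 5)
Your enumeration step has a genuine gap: you derive ``each admissible pair $(\rr,n)$ supports a unique isomorphism class'' from Theorem~\ref{thm:intro-a-detailed}\ref{it:a-single}, but that theorem asserts uniqueness only of the \emph{maximal} variety for each pair $(d,r)$; for $n < 12-d-r$ it merely says that every such variety embeds as a linear section of the unique maximal one, and this does not imply uniqueness of the section. Indeed the implication is false in general for type~$\rA$: for $(d,r)=(3,2)$ the maximal variety $X_{3,2,7}$ (the determinantal cubic) is unique, yet its $3$-dimensional del Pezzo linear sections --- six-nodal determinantal cubic threefolds --- form a positive-dimensional family of isomorphism classes. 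Equivalently, in bundle language, Theorem~\ref{thm:intro-dpb} gives uniqueness only of \emph{maximal} del Pezzo bundles; the non-maximal bundle $\cE = \Ker\big(\cO(-1)^{\oplus 2}\oplus\Omega(1) \twoheadrightarrow \cO^{\oplus(5-n)}\big)$ of~\eqref{eq:hx52n} is not covered, and its uniqueness must be proved. Thus the six non-maximal classes ($X_{5,1,n}$ for $n\le 5$, $X_{5,2,4}$, $X_{5,2,3}$, $X_{5,3,3}$) --- more than half of the theorem --- are exactly where the paper does its real work, and your proposal omits it. The paper's mechanism is the dual scheme: Lemma~\ref{lem:length-dual} shows that for \emph{any} del Pezzo section $X_W$ the scheme $X_W^\natural = \Gr(2,V^\vee)\cap\PP(W^\perp)$ is finite and reduced of length $\ell\le\dim W^\perp\le 3$ (length $4$ being excluded by the property~$\mathbf{N}_2$/non-tetragonality argument of~\cite{KP-rF}); Proposition~\ref{prop:schubert-maximal} settles $\ell=\dim W^\perp$ by transitivity of $\GL(V)$ on admissible tuples $(K_1,\dots,K_\ell)$; and the cases $\ell<\dim W^\perp$ are then handled one by one: by the classification of subspaces of rank-$4$ skew forms~\cite{PVDV} for $r=1$ (note that Theorem~\ref{thm:intro-a-detailed} does not even apply when $r=1$, and your Bertini remark addresses smoothness, not uniqueness), by sheaf-theoretic uniqueness of the kernel bundle (homogeneity of $\PP^2$ plus $\dim\Ext^1(\cI_z(-1),\cO(-1))=1$, resp.\ uniqueness of epimorphisms $\cO(-1)^{\oplus 2}\to\cO_L(1)$) for $(r,n)=(2,4),(2,3)$, and by identifying the blowdown of $\hX_W$ with a hyperplane section of $\PP^2\times\PP^2$ smooth at $P$, hence with $\Fl(1,2;3)$, for $(r,n)=(3,3)$.

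The remaining parts of your plan are closer in spirit to the paper but are under-supported in the same direction. The paper reads off the singular loci not from Schubert calculus on intersections of Schubert divisors (where you would still have to exclude extra singularities coming from non-transversality) but as images of the loci contracted by the explicit small resolutions of Lemma~\ref{lem:three-schuberts}, and it produces the $2$, $4$, and $8$ small resolutions by explicit constructions (Lemma~\ref{lem:schubert}, Lemma~\ref{lem:dpb-converse}, and the flops of~\cite{KP:rho}) rather than by a fresh rank-$4$ chamber computation. Your heuristic that the count is $2^{r-1}$ because each connected component of $\Sing(X)$ can be resolved independently is also not automatic: for a variety with several nodes most of the $2^k$ local choices of small resolution are non-projective, so projectivity of all eight models of $X_{5,4,3}$ must be established --- which is precisely what the explicit constructions (or a completed chamber decomposition of $\Mov(X_{5,4,3})$, which you correctly identify as the missing bookkeeping) provide.
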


We split the proof in three steps.
We start with a discussion of linear sections of~$\Gr(2,V)$.
If~$W \subset \wedge^2V$ is a vector subspace we denote by
\begin{equation*}
X_W \coloneqq \Gr(2,V) \cap \PP(W)
\end{equation*}
the corresponding linear section.
We consider the space of linear equations of~$X_W$, 
i.e., the annihilator~$W^\perp \subset \wedge^2V^\vee$ of~$W$.
Each vector in~$W^\perp$ is a skew-symmetric form on~$V$;
if this form is degenerate, the corresponding hyperplane section is a Schubert divisor.
Note that if~$\dim(X_W) \ge 3$ then~$\dim(W^\perp) \le 3$.

\begin{lemma}
\label{lem:length-dual}
If~$X_W$ is a del Pezzo variety and~$n = \dim(X_W) \ge 3$ then the scheme
\begin{equation*}
X_W^\natural \coloneqq \Gr\left(2,V^\vee\right) \cap \PP(W^\perp)
\end{equation*}
is a finite reduced scheme of length~$\ell \coloneqq \ell(X_W^\natural) \le \dim(W^\perp)$. 
Moreover, if~$K_1,\dots,K_\ell \subset V$ are the $3$-dimensional subspaces 
corresponding to the points~$\lambda_i \in X_W^\natural$ then
\begin{equation}
\label{eq:ki-kj}
\dim(K_i \cap K_j) = 1
\quad\text{for all~$1 \le i < j \le \ell$}
\quad\text{and}\quad 
K_1 \cap K_2 \cap K_3 = 0
\quad\text{if~$\ell = 3$}.
\end{equation} 
Finally, if~$\ell(X_W^\natural) = \dim(W^\perp)$ then~$W^\perp$ is spanned by~$\lambda_1,\dots,\lambda_\ell$.
\end{lemma}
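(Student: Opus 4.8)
The plan is to recast the whole statement as linear algebra of skew-symmetric forms on $V$ and then to control the picture through the singularities of $X_W$, which are forced to be mild because $X_W$ is terminal.

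First I would fix the dictionary. A point of $X_W^\natural = \Gr(2,V^\vee)\cap\PP(W^\perp)$ is a nonzero decomposable form $\lambda\in W^\perp$, i.e. a skew form of rank $2$ on $V$; its kernel $K_\lambda = \ker(\lambda)\subset V$ is $3$-dimensional, and the hyperplane $\lambda^\perp\subset\PP(\wedge^2 V)$ cuts out on $\Gr(2,V)$ the Schubert divisor $\{[U] : U\cap K_\lambda\ne 0\}$, which contains $X_W$ (as $\lambda\in W^\perp$) and is singular exactly along $\Gr(2,K_\lambda)\cong\PP^2$. Conversely, using $\widehat T_{[U]}\Gr(2,V) = U\wedge V$, a point $[U]\in X_W$ is singular iff some nonzero $\omega\in W^\perp$ satisfies $U\subset\ker\omega$; since a rank-$4$ form has $1$-dimensional kernel this forces $\omega$ to have rank $2$. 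Hence
\[
\Sing(X_W) = \bigcup_{\lambda\in X_W^\natural}\Sigma_\lambda,\qquad \Sigma_\lambda := \Gr(2,K_\lambda)\cap\PP(W) = \PP(\wedge^2 K_\lambda\cap W).
\]
As $\wedge^2 K_\lambda$ and $W$ both lie in the hyperplane $\lambda^\perp\subset\wedge^2 V$, a dimension count inside $\lambda^\perp$ gives $\dim\Sigma_\lambda\ge n-3$, while terminality gives $\codim\Sing(X_W)\ge 3$, so $\dim\Sigma_\lambda\le n-3$; thus each $\Sigma_\lambda$ is a linear $\PP^{n-3}$.

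The heart of the matter is finiteness of $X_W^\natural$, and I expect this to be the main obstacle. If $X_W^\natural$ contained a curve $C$, I would examine $\bigcup_{\lambda\in C}\Sigma_\lambda\subset\Sing(X_W)$. If the planes $\Sigma_\lambda$ vary along $C$ their union has dimension $\ge (n-3)+1 = n-2$, contradicting terminality. Otherwise $\Sigma_\lambda$ is a fixed $\PP^{n-3}$ lying in $\Gr(2,K_\lambda)$ for all $\lambda\in C$, hence in $\Gr(2,\bigcap_{\lambda\in C}K_\lambda)$; this forces $\bigcap_{\lambda\in C}K_\lambda$ to contain a $2$-plane $P$, whence $n=3$ and the fixed plane is the single point $[P]$. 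In this last case I would use a tangent-space estimate: every $\lambda\in C$ kills $P\wedge V$ (since $P\subset\ker\lambda$), so $P\wedge V$ and $W$ both lie in the annihilator of the span $\langle C\rangle\subset W^\perp$, which has dimension $\le 8$; intersecting inside it shows $\dim(\widehat T_{[P]}\Gr(2,V)\cap W)\ge 6$, i.e. $X_W$ has embedding dimension $\ge 5$ at $[P]$, impossible for a terminal (hence compound Du Val) Gorenstein threefold. This proves $X_W^\natural$ is $0$-dimensional.

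Granting finiteness, the kernel relations \eqref{eq:ki-kj} are immediate. For distinct $\lambda_i,\lambda_j$ one always has $\dim(K_i\cap K_j)\ge 1$, and $\dim(K_i\cap K_j)\ge 2$ is equivalent to $\lambda_i\wedge\lambda_j = 0$, i.e. to the whole pencil $\langle\lambda_i,\lambda_j\rangle$ consisting of rank-$2$ forms; this pencil would then be a line inside $X_W^\natural$, contradicting finiteness. Likewise $K_i\cap K_j\cap K_k\ne 0$ would place $\lambda_i,\lambda_j,\lambda_k$ in $\wedge^2 V'$ for a $4$-dimensional $V'\subset V^\vee$, so their span would meet $\Gr(2,V')\subset\Gr(2,V^\vee)$ in a conic or a line contained in $X_W^\natural$, again impossible. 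Finally, for the length bound and the spanning statement I would run a short computation in $\wedge^4 V^\vee$: the relations \eqref{eq:ki-kj} make the products $\lambda_i\wedge\lambda_j$ ($j\neq i$) linearly independent, so the only decomposable forms in the span of any three of the $\lambda_i$ are the $\lambda_i$ themselves. This forces the $\lambda_i$ to be linearly independent, giving $\ell\le\dim W^\perp$ with equality exactly when they form a basis of $W^\perp$; reducedness of $X_W^\natural$ at each $\lambda_i$ amounts to the transversality $(\widehat T_{\lambda_i}\Gr(2,V^\vee))\cap W^\perp = \langle\lambda_i\rangle$, which I would extract from the same independence of the $\lambda_i\wedge\lambda_j$ once it is known that the singularity of $X_W$ transverse to $\Sigma_i$ is an ordinary double point. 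The verification of this last ODP property, needed for reducedness, is the one point that I expect to require genuine local work beyond the dictionary above.
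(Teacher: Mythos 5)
Your proposal takes a genuinely different route from the paper's, and apart from one step it is correct. The paper obtains finiteness and reducedness at once by restricting to a general $6$-dimensional subspace $W_0\subset W$: the surface $X_{W_0}$ is a smooth quintic del Pezzo surface, its dual scheme $X_{W_0}^\natural$ is reduced of length~$5$ by~\cite[Proposition~2.24]{DK18}, and $X_W^\natural=X_{W_0}^\natural\cap\PP(W^\perp)$ is a closed subscheme of it; the bound $\ell\le\dim(W^\perp)$ is then deduced from the fact that $X_W^\natural$ is a finite intersection of quadrics (hence of length at most~$4$ when $\dim(W^\perp)=3$), with length~$4$ excluded by the property~$\mathbf{N}_2$/non-tetragonality argument of~\cite{KP-rF}. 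You replace both external inputs: finiteness comes from the identification $\Sing(X_W)=\bigcup_{\lambda}\Sigma_\lambda$ with $\Sigma_\lambda=\PP(\wedge^2K_\lambda\cap W)$, the exact equality $\dim\Sigma_\lambda=n-3$ forced by terminality, and (in the residual $n=3$ case) the cDV bound on embedding dimension; the length bound comes from $\mu\wedge\mu=0$ for decomposable $\mu$ together with the linear independence of the products $\lambda_i\wedge\lambda_j$, which you correctly reduce to~\eqref{eq:ki-kj}. These arguments are sound; only the dichotomy along a putative curve $C\subset X_W^\natural$ should be run on an irreducible component of $C$, and ``the planes vary'' should be read as ``infinitely many distinct planes occur'', which is impossible because each $\Sigma_\lambda$, being irreducible of dimension $n-3$, would then have to be an irreducible component of the closed set $\Sing(X_W)$ of dimension $\le n-3$ --- a cosmetic fix. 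Your treatment of~\eqref{eq:ki-kj} itself is essentially the paper's. The net effect of your route is a more self-contained proof.

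The genuine gap is reducedness. You explicitly defer it to an unproved claim that the singularity of $X_W$ transverse to $\Sigma_i$ is an ordinary double point, and without reducedness your counting argument bounds the number of points of $X_W^\natural$ but not its length, so the lemma as stated (whose reducedness is used later, e.g.\ in the proof of Theorem~\ref{thm:quintic-detailed}) is not established. However, the ODP detour is unnecessary: reducedness follows from a dimension count you have already made. A finite scheme is reduced at a point if and only if its Zariski tangent space there vanishes, i.e.\ if and only if
\begin{equation*}
\widehat{T}_{\lambda_i}\Gr(2,V^\vee)\cap W^\perp=\langle\lambda_i\rangle .
\end{equation*}
Now $\widehat{T}_{\lambda_i}\Gr(2,V^\vee)=K_i^\perp\wedge V^\vee=(\wedge^2K_i)^\perp$ (the inclusion $K_i^\perp\wedge V^\vee\subseteq(\wedge^2K_i)^\perp$ is immediate and both spaces have dimension~$7$), so the left-hand side equals $(\wedge^2K_i+W)^\perp$, whose dimension is
\begin{equation*}
10-\bigl(3+(n+4)-\dim(\wedge^2K_i\cap W)\bigr)=3-n+\dim(\wedge^2K_i\cap W).
\end{equation*}
Since you proved from terminality that $\dim(\wedge^2K_i\cap W)=n-2$ exactly (equivalently, $\Sigma_{\lambda_i}\cong\PP^{n-3}$), this space is exactly the line $\langle\lambda_i\rangle$, and $X_W^\natural$ is reduced at $\lambda_i$. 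In other words, the Zariski tangent space of $X_W^\natural$ at $\lambda_i$ is precisely dual to the excess of the singular stratum $\Sigma_{\lambda_i}$, so the step where you pin down $\dim\Sigma_\lambda=n-3$ already contains reducedness, and no local analysis of the transverse singularity type is needed.
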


\begin{proof}
If~$X_W$ is a del Pezzo variety then a general linear surface section of~$X$ is a del Pezzo surface,
i.e., for general~$W_0 \subset W$, $\dim(W_0) = 6$, the linear section~$X_{W_0}$ is smooth.
Then by~\cite[Proposition~2.24]{DK18} the scheme~$X_{W_0}^\natural$ is a reduced scheme of length~$5$.
But
\begin{equation*}
X_W^\natural = \Gr(2,V^\vee) \cap \PP(W^\perp) = X_{W_0}^\natural \cap \PP(W^\perp), 
\end{equation*}
hence it is also finite and reduced.

On the other hand, $X_W^\natural \subset \PP(W^\perp)$ is an intersection of quadrics (because~$\Gr(2,V^\vee)$ is),
therefore its length is bounded by~$1$ if~$\dim(W^\perp) = 1$, by~$2$ if~$\dim(W^\perp) = 2$, and by~$4$ if~$\dim(W^\perp) = 3$.
It remains to note that the length cannot be equal to~$4$ by~\cite[Appendix~A]{KP-rF};
indeed the standard resolution of~$\Gr(2,V^\vee) \cong \Gr(2,5)$ shows 
that it satisfies the property~$\mathbf{N}_2$ of Green and Lazarsfeld, 
hence it is not tetragonal (see~\cite[Definition~A.1 and Proposition~A.4]{KP-rF}), 
hence~$\ell(X_W^\natural) \le 3$.

If~$\dim(K_i \cap K_j) = 2$, the line spanned by~$\lambda_i$ and~$\lambda_j$ lies in~$\Gr(2,V^\vee)$,
hence in~$X_W^\natural$.
Similarly, if~$L \coloneqq K_1 \cap K_2 \cap K_3 \ne 0$, 
then~$\lambda_1,\lambda_2,\lambda_3 \in \Gr(2,L^\perp) \subset \Gr(2,V^\vee)$,
hence~$X_W^\natural$ is a conic.
In both cases this contradicts to finiteness of~$X_W^\natural$.
Finally, if~$\ell(X_W^\natural) = \dim(W^\perp)$, 
but~$\lambda_i$ do not generate~$W^\perp$ 
then~$\ell(X_W^\natural) = 3$ and the points~$\lambda_i$ are collinear.
But since~$\Gr(2,V^\vee)$ is an intersection of quadrics, it follows that~$X_W^\natural$ contains a line,
which is again impossible.
\end{proof}

The next step is the classification of varieties for which~$\ell(X_W^\natural) = \dim(W^\perp)$;
these are, in fact, maximal del Pezzo varieties of degree~$5$.
Recall the varieties~$\hX_{5,2,5}$, $\hX_{5,3,4}$, and~$\hX_{5,4,3}$ 
defined in~\eqref{eq:hx52n}, \eqref{eq:hx53n}, and~\eqref{eq:hx543}.

\begin{proposition}
\label{prop:schubert-maximal}
For each~$\ell \le 3$ there is a unique isomorphism class of quintic del Pezzo varieties~$X_W \subset \Gr(2,V)$ 
with~$\ell(X_W^\natural) = \dim(W^\perp) = \ell$.
Moreover, 
\begin{itemize}
\item 
if~$\ell = 0$ then $X_W = \Gr(2,V)$ is smooth and~$\rr(X_W) = 1$;
\item 
if~$\ell = 1$ then~$\hX_{5,2,5}$
is a small resolution of~$X_W$, hence~$\rr(X_W) = 2$;
\item 
if~$\ell = 2$ then~$\hX_{5,3,4}$
is a small resolution of~$X_W$, hence~$\rr(X_W) = 3$;
\item 
if~$\ell = 3$ then~$\hX_{5,4,3}$
is a small resolution of~$X_W$, hence~$\rr(X_W) = 4$.
\end{itemize}
In all these cases~$\Sing(X_W)$ is the union of~$\ell$ disjoint projective spaces~$\PP^{3-\ell}$.
\end{proposition}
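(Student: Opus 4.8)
The plan is to reduce to the case where $X_W$ is an intersection of Schubert divisors, prove uniqueness by a transitivity argument for the $\PGL(V)$-action, and then read off the small resolutions and the singular loci from the explicit models.

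First I would invoke the last assertion of Lemma~\ref{lem:length-dual}: under the hypothesis $\ell(X_W^\natural) = \dim(W^\perp) = \ell$ the annihilator $W^\perp$ is spanned by the points $\lambda_1,\dots,\lambda_\ell \in \Gr(2,V^\vee)$. Each $\lambda_i$ is a decomposable skew form, so the corresponding hyperplane section of $\Gr(2,V)$ is the Schubert divisor $\Sigma_i := \{U \in \Gr(2,V) : U \cap K_i \ne 0\}$ attached to the $3$-dimensional subspace $K_i \subset V$, whence $X_W = \Sigma_1 \cap \cdots \cap \Sigma_\ell$, and the $K_i$ satisfy the incidence relations~\eqref{eq:ki-kj}. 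Note this also fixes $\dim(X_W) = 6 - \ell$, so these are exactly the maximal quintic del Pezzo varieties.

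The core step is uniqueness, which I would obtain from transitivity of $\PGL(V)$ on configurations $(K_1,\dots,K_\ell)$ of $3$-dimensional subspaces satisfying~\eqref{eq:ki-kj}. For $\ell \le 1$ this is immediate. For $\ell = 2$ one has $K_1 + K_2 = V$ with $K_1 \cap K_2$ a line, so a basis adapted to the flag $K_1 \cap K_2 \subset K_1,K_2$ brings any such pair to a standard form. For $\ell = 3$ I would pick generators $v_{ij}$ of the lines $K_i \cap K_j$; the condition $K_1 \cap K_2 \cap K_3 = 0$ forces $v_{12},v_{13},v_{23}$ to be linearly independent, and completing them to a basis $v_{12},v_{13},v_{23},u_1,u_2$ with $K_1 = \langle v_{12},v_{13},u_1\rangle$ and $K_2 = \langle v_{12},v_{23},u_2\rangle$ one reduces, after the admissible rescalings and adjustments, to the normal form $K_3 = \langle v_{13},v_{23},u_1+u_2\rangle$. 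Thus the configuration, hence $X_W$ up to projective equivalence, is unique for each $\ell \le 3$. Existence, together with the fact that each such $X_W$ is a del Pezzo variety admitting the small resolution $\hX_{5,r,n}$ of~\eqref{eq:hx52n}--\eqref{eq:hx543}, is precisely the content of Lemma~\ref{lem:three-schuberts}. The equalities $\rr(X_W) = \ell+1$ then follow since a small birational morphism preserves the class group (Lemma~\ref{lem:dp-adp}) and $\rr(\hX_{5,r,n}) = \ell+1$ by the explicit descriptions.

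It remains to identify $\Sing(X_W)$. Here I would use that the resolutions $\hX_{5,r,n}$ are small and that $X_W$ is normal: by purity of the exceptional locus of a proper birational morphism from a normal variety to a smooth one, the exceptional set cannot dominate the smooth locus (it would be a divisor there), so $\hX_{5,r,n} \to X_W$ is an isomorphism over $X_W \setminus \Sing(X_W)$ and $\Sing(X_W)$ is exactly the image of the positive-dimensional fibers. A direct computation on the Grassmannian then identifies this image with the locus where the Schubert divisors fail to meet transversally, namely $\bigsqcup_{i=1}^\ell \{U : U \subset K_i\} \cap X_W$; for $U \subset K_i$ the conditions $U \cap K_j \ne 0$ amount to $K_i \cap K_j \subset U$, so a dimension count using~\eqref{eq:ki-kj} shows each stratum is a $\PP^{3-\ell}$, and two distinct strata are disjoint because a $2$-plane cannot lie in the line $K_i \cap K_j$. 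This agrees with the fibers of the explicit models, e.g. the $\PP^1$-fibers of the Springer resolution over $\Gr(2,K_1)$ when $\ell = 1$. The step I expect to be the main obstacle is precisely pinning down $\Sing(X_W)$ as an equality rather than an inclusion — showing $X_W$ is genuinely smooth away from the strata $\{U \subset K_i\}$, which the purity argument secures, and singular along each of them, which holds because these are exactly the loci carrying the contracted $\PP^1$-fibers.
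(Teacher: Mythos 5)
Your proposal is correct, and its skeleton coincides with the paper's: both reduce to configurations of subspaces~$K_1,\dots,K_\ell$ via the last assertion of Lemma~\ref{lem:length-dual}, both obtain uniqueness from transitivity of~$\GL(V)$ on $\ell$-tuples satisfying~\eqref{eq:ki-kj}, and both extract existence, the small resolutions, and~$\rr(X_W) = \ell+1$ from Lemma~\ref{lem:three-schuberts} (together with invariance of the class group under small morphisms, Lemma~\ref{lem:dp-adp}). Two things differ. First, you actually prove the single-orbit claim (adapted bases and the normal form~$K_3 = \langle v_{13}, v_{23}, u_1 + u_2\rangle$), which the paper merely asserts; your linear-independence argument for~$v_{12}, v_{13}, v_{23}$ and the normalization of~$u_1, u_2$ are correct. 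Second, and more substantially, you identify~$\Sing(X_W)$ intrinsically on the Grassmannian as the union of the strata~$\Gr(2,K_i) \cap X_W$, whereas the paper traces the contracted loci through the explicit resolutions of Lemma~\ref{lem:three-schuberts} (the $\PP^1$-fibers of the Springer resolution over~$\Gr(2,K)$ for~$\ell = 1$, the strict transforms of coordinate planes, respectively axes, for~$\ell = 2,3$). Your route has the advantage of not depending on which resolution one uses; the paper's is shorter because the contracted loci are visible in the constructions. The one step you leave as "a direct computation" and which must be spelled out is exactly where the hypothesis~$\ell(X_W^\natural) = \dim(W^\perp)$ enters: since~$X_W$ is an irreducible complete intersection of~$\ell$ hyperplanes in~$\Gr(2,V)$ of the expected dimension, the Jacobian criterion says~$[U] \in \Sing(X_W)$ if and only if some nonzero~$\mu = \sum a_i \lambda_i \in W^\perp$ has differential vanishing on~$T_{[U]}\Gr(2,V)$, which happens if and only if~$U \subset \Ker(\mu)$; this forces~$\rank(\mu) = 2$, hence~$[\mu] \in \Gr(2,V^\vee) \cap \PP(W^\perp) = X_W^\natural = \{\lambda_1,\dots,\lambda_\ell\}$, hence~$U \subset K_i$ for some~$i$. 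With this inserted, your purity argument (which correctly gives~$\Sing(X_W) = \xi(\Exc(\xi))$, confirming these strata are precisely the flopping loci) becomes optional for the statement itself, and your dimension count of the strata as~$\PP^{3-\ell}$ and their disjointness are both correct and agree with the paper's answer.
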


\begin{proof}
The uniqueness of~$X_W$ is easy:
in each case the space~$W^\perp$ is generated by~$\ell$ degenerate skew-symmetric forms~$\lambda_i$, 
the corresponding subspaces~$K_i$ satisfy~\eqref{eq:ki-kj} by Lemma~\ref{lem:length-dual}, 
and there is a single $\GL(V)$-orbit
on $\ell$-tuples of such subspaces for~$\ell \le 3$.
Therefore, we just need to show that the varieties~\eqref{eq:hx52n} with~$n = 5$, 
\eqref{eq:hx53n} with~$n = 4$, and~\eqref{eq:hx543}
are almost del Pezzo and that their anticanonical models have~$\ell = 1$, $\ell = 2$, and~$\ell = 3$, respectively.

On the other hand, we checked in Lemma~\ref{lem:three-schuberts} that~$\hX_{5,2,5}$, $\hX_{5,3,4}$, and~$\hX_{5,4,3}$
are almost del Pezzo and their anticanonical models~$X_{5,2,5}$, $X_{5,3,4}$, and~$X_{5,4,3}$ 
can be written as linear sections~$X_W$ of~$\Gr(2,V)$ with~$\ell(X_W^\natural) \ge \ell$, 
where~$\ell = 1$, $2$, and~$3$, respectively.
Applying Lemma~\ref{lem:length-dual} we obtain~$\ell(X_W^\natural) = \ell$.

It remains to describe the singular locus of~$X_{5,2,5}$, $X_{5,3,4}$, and~$X_{5,4,3}$.
Since the varieties~$\hX_{5,2,5}$, $\hX_{5,3,4}$, and~$\hX_{5,4,3}$ are smooth, 
this singular locus is the image of the locus contracted by~$\xi \colon \hX_{5,r,7-r} \to X_{5,r,7-r}$.
To understand this, we recall the descriptions of~$\xi$ provided by Lemma~\ref{lem:three-schuberts}.

When~$r = 2$ the locus contracted by~$\xi$ is the union 
of the $\PP^1$-fibers of~$\Fl(1,2;V)$ over the points of~$\Gr(2,K) \subset \Gr(2,V)$,
hence~$\Sing(X_{5,2,5}) = \Gr(2,K) \cong \PP^2$.

When~$r = 3$ the locus contracted by~$\xi$ is the union of strict transforms 
of two coordinate planes of~$\PP(K_1) \times \PP(K_2)$ through~$P$, 
hence~$\Sing(X_{5,3,4})$ is the union of two disjoint lines.

Finally, when~$r = 4$ the locus contracted by~$\xi$ is the union of strict transforms 
of the three coordinate axes of~$\PP(K_1^\perp) \times \PP(K_2^\perp) \times \PP(K_3^\perp)$ through~$P$, 
hence~$\Sing(X_{5,4,3})$ is the union of three points.
\end{proof}

Now we are ready for the last step.

\begin{proof}[Proof of Theorem~\xref{thm:quintic-detailed}]
In the case where~$\ell(X_W^\natural) = \dim(W^\perp)$ we have already proved the uniqueness
and constructed one small resolution (see Proposition~\ref{prop:schubert-maximal}).
To show that the other small resolution for~$X_{5,2,5}$ is a quadric bundle 
we apply Lemma~\ref{lem:schubert} or Proposition~\ref{prop:rho=2}.
To construct the other small resolutions for~$X_{5,3,4}$ 
we apply Lemma~\ref{lem:dpb-converse} (it gives~$\PP^2$-bundles over~$\FF_1$)
and~\cite[Theorem~3.1 and~\S4.1]{KP:rho} (it gives a $\PP^2$-bundle over~$\PP^1 \times \PP^1$).
Similarly, to construct the other small resolutions for~$X_{5,4,3}$ 
we first apply~\cite[Theorem~3.1]{KP:rho} (it gives~$\Bl_{P_1,P_2,P_3}(\PP^3)$)
and then apply Lemma~\ref{lem:dpb-converse} (it gives blowups of~$\PP^1$-bundles over~$\FF_1$ 
and $\PP^1$-bundles over the del Pezzo surface of degree~$7$).

Now we prove the uniqueness in the cases where~$\ell \coloneqq \ell(X_W^\natural) < \dim(W^\perp) \le 3$.
In the case~$\ell = 0$, the subspace~$W^\perp \subset \wedge^2V^\vee$ consists of forms of corank~$1$,
hence it is unique up to $\GL(V)$-action by~\cite[\S\S5--7]{PVDV}, so assume~$\ell \in \{1,2\}$.

If~$\ell = 1$ let~$W_0 \subset \wedge^2V$ be the hyperplane 
corresponding to the unique point~$\lambda \in X_W^\natural$ 
and let~$K \subset V$ be the corresponding $3$-dimensional subspace.
Then~$X_W$ is a linear section of~$X_{W_0} = X_{5,2,5}$ of codimension~$5 - n$
and~$\dim(\Gr(2,K) \cap \PP(W)) = 2 - (5 - n) = n - 3$,
since otherwise there is a point~$\lambda' \ne \lambda$ in~$\PP(W^\perp)$ 
such that~$K$ is isotropic for~$\lambda'$, 
and then~$X_W^\natural$ is not reduced at~$\lambda$.
Recall the morphism~$\xi \colon \hX_{5,2,5} \to X_{5,2,5}$ from Lemma~\ref{lem:three-schuberts}
and note that the above observation shows that the induced morphism
\begin{equation*}
\xi \colon \hX_W \coloneqq \xi^{-1}(X_W) \to X_W
\end{equation*}
is small.
On the other hand, all fibers of the projection~$\hX_W \subset \hX_{5,2,5} \to \PP(K)$ are linear spaces,
and if the fiber over a point~$v \in \PP(K)$ has dimension greater than~$n - 2$, 
then there is a point~$\lambda' \ne \lambda$ in~$\PP(W^\perp)$ such that~$v \in \Ker(\lambda')$,
and then the line spanned by~$\lambda$ and~$\lambda'$ is contained in~$\Gr(2,v^\perp)$,
which implies~$\ell(X_W^\natural) \ge 2$.
Therefore, $\hX_W \cong \PP_{\PP(K)}(\cE)$, where~$\cE$ is a vector bundle of rank~$n - 1$ that fits into exact sequence
\begin{equation*}
0 \longrightarrow \cE \longrightarrow \cO(-1)^{\oplus 2} \oplus \Omega(1) \xrightarrow{\ \varphi\ } \cO^{\oplus (5 - n)} \longrightarrow 0
\end{equation*}
with surjective~$\varphi$, so that~$\hX_W = \hX_{5,2,n}$, see~\eqref{eq:hx52n}.
Note also that~$\varphi\vert_{\Omega(1)} \colon \Omega(1) \to \cO^{\oplus (5 - n)}$ 
is generically surjective (because~$\dim(\Gr(2,K) \cap \PP(W)) = n - 3$).
It remains to show that the projective bundle~$\PP_{\PP(K)}(\cE)$ corresponding to such~$\varphi$
is unique up to isomorphism. 

First, assume~$n = 4$.
Since the morphism~$\varphi\vert_{\Omega(1)} \colon \Omega(1) \to \cO$ is nontrivial, 
its kernel is isomorphic to~$\cO(-1)$, and its cokernel is isomorphic to the structure sheaf of a point~$z \in \PP^2$.
Therefore, we have an exact sequence
\begin{equation*}
0 \longrightarrow \cO(-1) \longrightarrow \cE \longrightarrow \cO(-1)^{\oplus 2} \longrightarrow \cO_z \longrightarrow 0.
\end{equation*}
Now~$\PP^2$ is homogeneous, hence the position of~$z$ does not matter.
Further, there is a single isomorphism class of epimorphisms~$\cO(-1)^{\oplus 2} \to \cO_z$,
the kernels of these epimorphisms are isomorphic to~$\cO(-1) \oplus \cI_z(-1)$, and
\begin{equation*}
\Ext^1(\cO(-1) \oplus \cI_z(-1), \cO(-1)) \cong \Ext^1(\cI_z(-1), \cO(-1)) 
\end{equation*}
is $1$-dimensional, 
hence the projective bundle~$\PP_{\PP(K)}(\cE)$ is unique.

Next, assume~$n = 3$.
The morphism~$\varphi\vert_{\Omega(1)} \colon \Omega(1) \to \cO^{\oplus 2}$ is injective
and its cokernel is isomorphic to the sheaf~$\cO_L(1)$, where~$L \subset \PP^2$ is a line.
Therefore, we have an exact sequence
\begin{equation*}
0 \longrightarrow \cE \longrightarrow \cO(-1)^{\oplus 2} \longrightarrow \cO_L(1) \longrightarrow 0.
\end{equation*}
The position of~$L$ also does not matter, and there is a single isomorphism class 
of epimorphisms~$\cO(-1)^{\oplus 2} \to \cO_L(1)$,
hence again~$\PP_{\PP(K)}(\cE)$ is unique.

Now let~$\ell = 2$ and~$\dim(W^\perp) = 3$.
Let~$W_0 \subset \wedge^2V$ be the subspace of codimension~$2$ 
such that~$W_0^\perp$ is spanned by the two points~$\lambda_1,\lambda_2 \in X_W^\natural$ 
and let~$K_1,K_2 \subset V$ be the corresponding $3$-dimensional subspaces.
Then~$X_W$ is a linear section of~\mbox{$X_{W_0} = X_{5,3,4}$} 
and~$\dim(\Gr(2,K_i) \cap \PP(W)) = 0$ by the same reason as before.
Recall from Lemma~\ref{lem:three-schuberts} the morphism~$\xi \colon \hX_{5,3,4} \to X_{5,3,4}$ 
and note that the induced morphism
\begin{equation*}
\xi \colon \hX_W \coloneqq \xi^{-1}(X_W) \longrightarrow X_W
\end{equation*}
is small.
It also follows that the intersection of~$\hX_W \subset \hX_{5,3,4}$ 
with the exceptional divisor of~$\hX_{5,3,4} = \Bl_P(\PP(K_1) \times \PP(K_2))$ is a plane,
hence the image~$\sigma(\hX_W) \subset \PP(K_1) \times \PP(K_2)$ is a hyperplane section smooth at~$P$ 
(where~$\sigma \colon \hX_{5,3,4} \to \PP(K_1) \times \PP(K_2)$ is the blowup).
Therefore, $\rr(\sigma(\hX_W)) = \rr(\hX_W) - 1 = 2$, 
hence~$\sigma(\hX_W) \cong \Fl(1,2;3)$.
\end{proof}

In the rest of the section we provide a few more details about Schubert varieties.

\begin{remark}
The class group of the variety~$X = X_{5,4,3}$ has~$4$ exceptional classes:
the exceptional class~$E_0$ that comes from the small resolution~$\hX_{5,4,3} = \Bl_P(\PP^1 \times \PP^1 \times \PP^1)$,
and three pairwise orthogonal exceptional classes~$E_1$, $E_2$, $E_3$, 
that come from the small resolution~$\Bl_{P_1,P_2,P_3}(\PP^3)$.
It is easy to see that~$E_i$, $0 \le i \le 3$, generate~$\Cl(X)$, and that
\begin{equation*}
\langle E_0, E_i \rangle = 1,
\quad 
\langle E_i, E_j \rangle = 0,
\quad
1 \le i < j \le 3,
\quad\text{and}\quad
A_X = 2E_0 + E_1 + E_2 + E_3.
\end{equation*}
Moreover, $E_i$, $0 \le i \le 3$, are the extremal rays of the effective cone~$\Eff(X)$,
while the moving cone~$\Mov(X)$ is generated by the~$7$ classes
\begin{equation*}
F_i \coloneqq E_0 + E_i,
\quad
G_{i,j} \coloneqq E_0 + E_i + E_j,
\quad\text{and}\quad
H \coloneqq E_0 + E_1 + E_2 + E_3,
\quad 
1 \le i < j \le 3.
\end{equation*}
\end{remark}

To conclude this section we describe explicitly the flop 
associated with the small resolution~$\xi \colon \hX_{5,2,5} \to X_{5,2,5}$
(cf.~\cite[\S3(C)]{Jahnke-Pet}).

\begin{lemma}
\label{lem:schubert-flop-r2}
If\/~$V$ is a $5$-dimensional vector space and~$K \subset V$ is a $3$-dimensional subspace,
there is a diagram
\begin{equation*}
\xymatrix@C=0em{
& 
\PP_{\PP(K)}(\Omega(1) \oplus (V/K) \otimes \cO(-1)) \ar[dl]_f \ar[dr]^\xi \ar@{-->}[rr]^\chi &&
\Gr_{\PP(V/K)}(2, K \otimes \cO \oplus \cO(-1)) \ar[dl]_{\xi^+} \ar[dr]^{f^+}
\\
\PP(K) &&
X_{5,2,5} &&
\PP(V/K),
}
\end{equation*}
where~$f$ is a $\PP^3$-bundle, $f^+$ is an everywhere non-degenerate $Q^4$-bundle,
$\xi$ and~$\xi^+$ are small resolutions and~$\chi$ is a flop,
with the flopping locus~$\PP_{\PP(K)}(\Omega(1)) \cong \Fl(1,2;K)$
and the flopped locus~$\Gr_{\PP(V/K)}(2,K\otimes \cO) \cong \PP(V/K) \times \Gr(2,K) \cong \PP^1 \times \PP^2$.
\end{lemma}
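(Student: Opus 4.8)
The plan is to build the two small resolutions of $X_{5,2,5}$ explicitly, match them with the two corners of the diagram, identify their exceptional loci, and then deduce the flop structure from the general theory of crepant models. First I would recall from Lemma~\ref{lem:three-schuberts} that the left-hand resolution $\xi\colon\hX_{5,2,5}\to X_{5,2,5}$ is the restriction over $\PP(K)\subset\PP(V)$ of the incidence variety $\Fl(1,2;V)\to\Gr(2,V)$; concretely $\hX_{5,2,5}$ is the variety of pairs $(\ell,U)$ with $\ell\subset K$ and $\ell\subset U\subset V$, with $\xi(\ell,U)=U$ and $f(\ell,U)=\ell$. The fibre of $f$ over $\ell$ is $\PP(V/\ell)\cong\PP^3$, so $f$ is a $\PP^3$-bundle, and the projective-bundle description $\hX_{5,2,5}=\PP_{\PP(K)}\big(\cT_{\PP(V)}(-2)\vert_{\PP(K)}\big)$ together with the $\cO(-2)$-twist of the normal bundle sequence of $\PP(K)\subset\PP(V)$ and the vanishing $H^1(\PP(K),\Omega(2))=0$ yields the splitting $\cT_{\PP(V)}(-2)\vert_{\PP(K)}\cong\Omega(1)\oplus (V/K)\otimes\cO(-1)$, matching the top-left corner.

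Next I would construct the right-hand side. Over $\PP(V/K)$ let $\widehat K\subset V\otimes\cO$ be the rank-$4$ subbundle obtained as the preimage of the tautological line $\cO(-1)\subset (V/K)\otimes\cO$; the defining extension $0\to K\otimes\cO\to\widehat K\to\cO(-1)\to 0$ splits because $\Ext^1(\cO(-1),K\otimes\cO)\cong K\otimes H^1(\PP^1,\cO(1))=0$, so $\widehat K\cong K\otimes\cO\oplus\cO(-1)$. Set $Y\coloneqq\Gr_{\PP(V/K)}(2,\widehat K)$, let $f^+\colon Y\to\PP(V/K)$ be the projection and $\xi^+\colon Y\to\Gr(2,V)$ the regular map $(\bar\ell,U)\mapsto U$. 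Since $\widehat K$ is a genuine vector bundle, every fibre of $f^+$ is a smooth $\Gr(2,4)\cong Q^4$, so $f^+$ is an everywhere non-degenerate quadric bundle. The key computation is that $U\subset\widehat K_{\bar\ell}$ holds for some $\bar\ell$ if and only if the image of $U$ in $V/K$ is at most one-dimensional, i.e.\ $U\cap K\ne 0$; hence the image of $\xi^+$ is exactly the Schubert divisor $X_{5,2,5}$.

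I would then analyse the fibres of $\xi$ and $\xi^+$ over a point $U\in X_{5,2,5}$. When $\dim(U\cap K)=1$ both fibres are single points (the unique $\ell\subset U\cap K$, resp.\ the unique $\bar\ell$ containing the image of $U$), so both maps are isomorphisms over $X_{5,2,5}\setminus\Gr(2,K)$; when $U\subset K$, i.e.\ $U\in\Gr(2,K)=\Sing(X_{5,2,5})$, the $\xi$-fibre is $\PP(U)\cong\PP^1$ and the $\xi^+$-fibre is $\PP(V/K)\cong\PP^1$. Thus $\xi$ and $\xi^+$ are small, and their exceptional loci are the $\PP^1$-bundles $\xi^{-1}(\Gr(2,K))=\{(\ell,U):\ell\subset U\subset K\}=\Fl(1,2;K)$ and $(\xi^+)^{-1}(\Gr(2,K))=\PP(V/K)\times\Gr(2,K)\cong\PP^1\times\PP^2$. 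Under the two bundle splittings these are precisely the sub-loci $\PP_{\PP(K)}(\Omega(1))\subset\hX_{5,2,5}$ and $\Gr_{\PP(V/K)}(2,K\otimes\cO)\subset Y$ named in the statement.

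Finally, since $X_{5,2,5}$ is a del Pezzo variety (Lemma~\ref{lem:three-schuberts}) and both $\xi$ and $\xi^+$ are small birational morphisms, Lemma~\ref{lem:dp-adp} shows that $\hX_{5,2,5}$ and $Y$ are both smooth (hence $\QQ$-factorial) almost del Pezzo varieties with anticanonical model $X_{5,2,5}$ and $A=\xi^*A_{X_{5,2,5}}$; in particular $\xi$ and $\xi^+$ are crepant. By Lemma~\ref{lemma:repant-models} the induced birational map $\chi=(\xi^+)^{-1}\circ\xi$ is then a composition of flops over $X_{5,2,5}$, and since it is an isomorphism away from the single irreducible flopping locus $\Fl(1,2;K)$ it is a single flop, with flopped locus $\PP^1\times\PP^2$. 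The main obstacle I expect is the clean verification that $\xi^+$ surjects onto $X_{5,2,5}$ with exactly the stated fibre dimensions — that is, controlling the degeneration of the assignment $U\mapsto\bar\ell$ over $\Gr(2,K)$ and identifying the two $\PP^1$-families as genuine flopping/flopped loci; the bundle splittings and the crepancy are then comparatively formal.
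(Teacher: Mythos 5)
Your proof is correct, but it takes a genuinely different route from the paper's. The paper does not build the two small resolutions as moduli of pairs at all: it embeds both sides of the diagram into larger projective bundles, $\PP_{\PP(K)}\big(\wedge^2K \otimes \cO \oplus (V/K) \otimes \cO(-1)\big)$ and $\PP_{\PP(V/K)}\big(\wedge^2K \otimes \cO \oplus K \otimes \cO(-1)\big)$, which are the two sides of the standard (Atiyah-type) flip of the cone~$\rC\big(\PP(K) \times \PP(V/K)\big)$ with vertex~$\PP(\wedge^2 K)$, and then identifies the variety of interest on each side by a divisor-class computation (using~$A = E + F + G$ on the common blowup, so that the class~$A + G = 2A - F - E$ on the left matches a divisor of class~$2A - F$ on the right containing~$\PP(V/K) \times \PP(\wedge^2K)$). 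That approach makes the map~$\chi$ completely explicit as the restriction of an ambient flip and needs no MMP input. You instead construct both resolutions intrinsically --- the incidence variety over~$\PP(K)$ as in Lemma~\ref{lem:three-schuberts}, and the relative Grassmannian~$\Gr_{\PP(V/K)}(2,\widehat{K})$, with the splittings justified by the cohomology vanishings you cite --- then verify the fiber dimensions by hand (which correctly gives smallness, the exceptional loci~$\Fl(1,2;K)$ and~$\PP^1\times\PP^2$, and the identification of the image of~$\xi^+$ with the Schubert divisor), and finally appeal to Lemma~\ref{lem:dp-adp} and Lemma~\ref{lemma:repant-models} for crepancy and the flop. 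This is conceptually cleaner and makes the geometry of the two contractions transparent, at the cost of a less explicit~$\chi$.

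Two small points to tighten. First, smoothness of the two resolutions is by construction (projective bundle, Grassmannian bundle), not a consequence of Lemma~\ref{lem:dp-adp}, which only supplies the almost del Pezzo property and crepancy. Second, Lemma~\ref{lemma:repant-models} gives only that~$\chi$ is a \emph{composition} of flops; ``isomorphism away from one irreducible locus'' does not by itself force a single flop, since intermediate flops could take place over the same singular locus~$\Gr(2,K)$. The clean fix is the paper's analysis in~\S\ref{ss:r2}: since~$\rr(X_{5,2,5}) = 2$ and~$X_{5,2,5}$ is not $\QQ$-factorial, it has exactly two $\QQ$-factorializations, connected by a single flop; your two resolutions are not isomorphic over~$X_{5,2,5}$ (their exceptional loci~$\Fl(1,2;K)$ and~$\PP^1 \times \PP^2$ are non-isomorphic), so they are these two models and~$\chi$ is that flop.
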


\begin{proof}
Consider the standard flip diagram
\begin{equation*}
\xymatrix@C=-5.3em{
&
\PP_{\PP(K) \times \PP(V/K)}\big(\wedge^2K \otimes \cO \oplus \cO(-1,-1)\big) \ar[dl] \ar[dr]
\\
\PP_{\PP(K)}\big(\wedge^2K \otimes \cO \oplus (V/K) \otimes \cO(-1)\big) \ar[d] \ar[dr] &&
\PP_{\PP(V/K)}\big(\wedge^2K \otimes \cO \oplus K \otimes \cO(-1)\big) \ar[d] \ar[dl] 
\\
\PP(K) &
\rC\big(\PP(K) \times \PP(V/K)\big) &
\PP(V/K)
}
\end{equation*}
where~$\rC\big(\PP(K) \times \PP(V/K)\big) \subset \PP\big(\wedge^2K \oplus K \otimes (V/K)\big)$ 
is the cone with vertex~$\PP(\wedge^2K)$.
The flipping locus is~$\PP(K) \times \PP(\wedge^2K)$ in the left side and~$\PP(V/K) \times \PP(\wedge^2K)$ in the right side,
and the common exceptional divisor in~$M \coloneqq \PP_{\PP(K) \times \PP(V/K)}\big(\wedge^2K \otimes \cO \oplus \cO(-1,-1)\big)$
is~$E = \PP(K) \times \PP(V/K) \times \PP(\wedge^2K)$.

Denote by~$F$ and~$G$ the pullbacks of the hyperplane classes of~$\PP(V/K)$ and~$\PP(K)$, 
and by~$A$ the hyperplane class of~$\PP(\wedge^2K \oplus K \otimes (V/K))$ 
and its pullbacks to all varieties above.
Then on~$M$ we have a relation
\begin{equation*}
A = E + F + G.
\end{equation*}
Now we note that~$\PP_{\PP(K)}(\Omega(1) \oplus (V/K) \otimes \cO(-1)) \subset 
\PP_{\PP(K)}(\wedge^2K \otimes \cO \oplus (V/K) \otimes \cO(-1))$ 
and its preimage in~$M$ is a divisor of class
\begin{equation*}
A + G = 2A - F - E,
\end{equation*}
hence it is equal to the strict transform of a divisor
in~$\PP_{\PP(V/K)}(\wedge^2K \otimes \cO \oplus K \otimes \cO(-1))$ of class~$2A - F$ 
containing~$\PP(V/K) \times \PP(\wedge^2K)$.
Finally, it is easy to see that this divisor coincides with the relative Grassmannian as in the statement of the lemma.
\end{proof}

\appendix

\section{Varieties of type~$\rA_m$}
\label{sec:examples}

In this section we list varieties of type~$\rA_m$, providing when possible their explicit equations.
We consider only non-conical varieties, and in most cases, only maximal ones.
Throughout the section we use notation~$X_{d,r,n}$ for a del Pezzo variety~$X$ of type~$\rA$
with~$\dd(X) = d$, $\rr(X) = r$, and~$\dim(X) = n$
and~$X^*_{d,3,n}$, $d \in \{2,4,6\}$, for the extra primitive del Pezzo varieties of type~$\rA$
(those that have the structure of~$\PP^{n-2}$-bundle over~$\PP^1 \times \PP^1$).

\subsection{Degree~$5$, $6$, $7$ and~$8$}

By Theorem~\ref{thm:intro-bircla} all del Pezzo varieties of degree~$\dd(X) \ge 5$ have type~$\rA_m$.
Moreover, if~$\dd(X) \ge 7$, there are only two isomorphism classes,
if~\mbox{$\dd(X) = 6$}, there are exactly four isomorphism classes,
and, if~$\dd(X) = 5$ there are exactly ten isomorphism classes (four maximal varieties and six non-maximal),
see~\S\ref{sec:schubert}.

\subsubsection{$\dd(X) = 8$}

In this case~$\rr(X) = 1$, $\Xi(X) = \rA_1$, and
\begin{equation*}
X_{8,1,3} \cong \PP^3.
\end{equation*}

\subsubsection{$\dd(X) = 7$}

In this case~$\rr(X) = 2$, $\Xi(X) = \rA_1$, and
\begin{equation*}
X_{7,2,3} \cong \Bl_P(\PP^3) \cong \PP_{\PP^2}\big(\cO(-1) \oplus \cO(-2)\big).
\end{equation*} 

\subsubsection{$\dd(X) = 6$ and~$\rr(X) = 2$}

In this case $\Xi(X) = \rA_2$ and either~$\dim(X) = 4$ and
\begin{equation*}
X_{6,2,4} \cong \PP^2 \times \PP^2,
\end{equation*}
or~$\dim(X) = 3$ (in this case the variety is {\bf non-maximal}) and
\begin{equation*}
X_{6,2,3} = \Fl(1,2;3) = \{x_1y_1 + x_2y_2 +x_3y_3 = 0\} \subset \PP^2 \times \PP^2.
\end{equation*} 

\subsubsection{$\dd(X) = 6$ and~$\rr(X) = 3$}

In this case~$\dim(X) = 3$, $\Xi(X) = \rA_1$, and there are two varieties, the first is:
\begin{equation*}
X_{6,3,3} = \{x_1y_1 + x_2y_2 = 0\} \subset \PP^2 \times \PP^2; 
\end{equation*}
it has a single node and two small resolutions (see Lemma~\ref{lem:special-sextic}),
and the second is:
\begin{equation*}
X^*_{6,3,3} = \PP^1 \times \PP^1 \times \PP^1.
\end{equation*} 

\subsubsection{$\dd(X) = 5$ and~$\rr(X) = 1$}

In this case~$\dim(X) \le 6$, $\Xi(X) = \rA_4$, and there is one maximal variety
\begin{equation*}
X_{5,1,6} \cong \Gr(2,5);
\end{equation*}
and three {\bf non-maximal} (smooth linear sections of~$X_{5,1,6}$) varieties:
\begin{align*}
X_{5,1,5} &= \{x_{12} + x_{34} = 0\} \subset \Gr(2,5),
\\
X_{5,1,4} &= \{x_{12} + x_{34} = x_{23} + x_{45} = 0\} \subset \Gr(2,5),
\\
X_{5,1,3} &= \{x_{12} + x_{34} = x_{23} + x_{45} = x_{15}+x_{24} = 0\} \subset \Gr(2,5),
\end{align*}
where~$x_{ij}$, $1 \le i < j \le 5$, are the Pl\"ucker coordinates.

\subsubsection{$\dd(X) = 5$ and~$\rr(X) = 2$}

In this case~$\dim(X) \le 5$, $\Xi(X) = \rA_3$, and there is one maximal \emph{Schubert variety}
\begin{equation*}
X_{5,2,5} = \{x_{12} = 0\} \subset \Gr(2,5)
\end{equation*}
This variety has nodal singularities along the embedded plane~$\PP^2 = \langle e_{34}, e_{35}, e_{45} \rangle$.

Besides, there are two {\bf non-maximal} (general linear sections of~$X_{5,2,5}$) varieties:
\begin{align*}
X_{5,2,4} &= \{x_{12} = x_{23} + x_{45} = 0\} \subset \Gr(2,5),
\\
X_{5,2,3} &= \{x_{12} = x_{23} + x_{45} = x_{13}+x_{24}+x_{35} = 0\} \subset \Gr(2,5).
\end{align*}
These varieties have nodal singularities 
along the line~$\langle e_{34}, e_{35} \rangle$ and point~$[e_{34}]$, respectively.  

\subsubsection{$\dd(X) = 5$ and~$\rr(X) = 3$}

In this case~$\dim(X) \le 4$, $\Xi(X) = \rA_2$, there is one maximal \emph{double Schubert variety}
\begin{equation*}
X_{5,3,4} = \{x_{12} = x_{34} = 0\} \subset \Gr(2,5),
\end{equation*}
an intersection of two Schubert divisors.
This variety has nodal singularities along two lines~$\langle e_{35}, e_{45} \rangle$ and~$\langle e_{15}, e_{25} \rangle$.

Besides, there is a {\bf non-maximal} (general hyperplane section of~$X_{5,3,4}$) variety:
\begin{equation*}
X_{5,3,3} = \{x_{12} = x_{34} = x_{15}+x_{35}+x_{24} = 0\} \subset \Gr(2,5).
\end{equation*}
This variety has two nodal singularities at the points~$[e_{25}]$ and~$[e_{45}]$.

\subsubsection{$\dd(X) = 5$ and~$\rr(X) = 4$}

In this case~$\dim(X) = 3$, $\Xi(X) = \rA_1$, and there is a single maximal \emph{triple Schubert variety}
\begin{equation*}
X_{5,4,3} = \{x_{12} = x_{34} = x_{15} + x_{35} = 0\} \subset \Gr(2,5),
\end{equation*}
an intersection of three Schubert divisors.
This variety has three nodal singularities at the points~$[e_{45}]$, $[e_{25}]$, and~$[e_{24}]$.

\subsection{Degree 4} 

Starting from this degree we only list maximal del Pezzo varieties.

\subsubsection{$\dd(X) = 4$ and~$\rr(X) = 2$}

In this case~$\dim(X) \le 6$, $\Xi(X) = \rA_4$, and the maximal variety is
\begin{equation*}
X_{4,2,6} = x_{1}x_{8}+x_{2}x_{6}+x_{3}x_{4}=x_{1}x_{5}+x_{2}x_{7}+x_{3}x_{9} = 0 \} \subset \PP^8.
\end{equation*}
Its singular locus is~$\PP^1 \times \PP^2$, 
with the equations~$x_1 = x_2 = x_3 = 0$ 
and~$\rk\left(\begin{smallmatrix} x_8 & x_6 & x_4 \\ x_5 & x_7 & x_9 \end{smallmatrix}\right) \le 1$.

\subsubsection{$\dd(X) = 4$ and~$\rr(X) = 3$}

In this case~$\dim(X) \le 5$, $\Xi(X) = \rA_3$, and the maximal varieties are
\begin{align*}
X_{4,3,5} &= \{x_{1}x_{8}+x_{2}x_{6}+x_{3}x_{4}=x_{1}x_{5}+x_{2}x_{7} = 0 \} \subset \PP^7,
\\
X^*_{4,3,5} &= \{ x_{2}x_{6}+x_{3}x_{4}=x_{1}x_{5}+x_{7}x_{8} = 0 \} \subset \PP^7,
\end{align*}
The first variety~$X_{4,3,5}$ is a special hyperplane section of~$X_{4,2,6}$, given by the equation~$x_9 = 0$,
and its singular locus is the union of the two planes~$\langle e_4, e_6, e_8 \rangle$ and~$\langle e_3, e_6, e_8 \rangle$
and the quadric~$\{x_{1}= x_{2} = x_{3}= x_{4} = x_{5}x_{6}-x_{7}x_{8} =0\}$. 
The second variety~$X^*_{4,3,5}$ is the join of two quadric surfaces, 
and its singular locus is the union of those surfaces.

\subsubsection{$\dd(X) = 4$ and~$\rr(X) = 4$}

In this case~$\dim(X) \le 4$, $\Xi(X) = \rA_2$, and the maximal variety is
\begin{equation*}
X_{4,4,4} = \{ x_{2}x_{6}+x_{3}x_{4}=x_{1}x_{5}+x_{2}x_{7}= 0 \} \subset \PP^6.
\end{equation*}
This is a special hyperplane section of~$X_{4,3,5}$, given by the equation~$x_8 = 0$,
and its singular locus is the union of~$5$ lines.
It can be also represented as a special hyperplane section of~$X^*_{4,3,5}$ 
given by the equation~$x_8 = x_2$.

\subsubsection{$\dd(X) = 4$ and~$\rr(X) = 5$}

In this case~$\dim(X) = 3$, $\Xi(X) = \rA_1$, and
\begin{equation*}
X_{4,5,3} = \{ x_{1}x_{5} = -x_{2}x_{6} = x_{3}x_{4} \} \subset \PP^5.
\end{equation*}
This is a special hyperplane section of $X_{4,4,4}$, given by the equation~$x_7 = x_6$,
and its singular locus is the union of the six points~$[e_i]$, $1 \le i \le 6$.
This toric threefold is known as the \emph{tetrahedral quartic threefold}, 
see~\cite[Chapter~VIII, 2.31]{Semple-Roth:85}.

\subsection{Degree~$3$}

In this section we list the maximal cubic hypersurfaces of type~$\rA$.

\subsubsection{$\dd(X) = 3$ and~$\rr(X) = 2$}

In this case~$\dim(X) \le 7$, $\Xi(X) = \rA_5$, and the maximal variety is
\begin{equation*}
X_{3,2,7} = \left\{ \det
\left(\begin{smallmatrix}
x_{9}&x_{1}&x_{2}\\
x_{4}&x_{8}&x_{3}\\
x_{5}&x_{6}&x_{7}\\
\end{smallmatrix}\right)
= 0 \right\} \subset \PP^8.
\end{equation*}
This is the determinantal cubic.
Its singular locus is the Segre variety~$\PP^2 \times \PP^2$.

\subsubsection{$\dd(X) = 3$ and~$\rr(X) = 3$}

In this case~$\dim(X) \le 6$, $\Xi(X) = \rA_4$, and the maximal variety is
\begin{equation*}
X_{3,3,6} = \left\{ \det
\left(\begin{smallmatrix}
0&x_{1}&x_{2}\\
x_{4}&x_{8}&x_{3}\\
x_{5}&x_{6}&x_{7}\\
\end{smallmatrix}\right)
= 0 \right\} \subset \PP^7.
\end{equation*}
This is a special hyperplane section of~$X_{3,2,7}$, given by the equation~$x_9 = 0$,
and its singular locus consists of the 3-space~$\langle e_3, e_6, e_7, e_8 \rangle$
and two cubic 
scrolls
\begin{equation*}
\{x_1=x_2=0,\ \rk \left(\begin{smallmatrix}
x_{4}&x_{8}&x_{3}\\
x_{5}&x_{6}&x_{7}\\
\end{smallmatrix}\right)\le 1\}
\qquad\text{and}\qquad 
\{x_4=x_5=0,\ \rk \left(\begin{smallmatrix}
x_{1} & x_{8} & x_{6}\\
x_{2} & x_{3} & x_{7}
\end{smallmatrix}\right)\le 1\}.
\end{equation*}

\subsubsection{$\dd(X) = 3$ and~$\rr(X) = 4$}

In this case~$\dim(X) \le 5$, $\Xi(X) = \rA_3$, and the maximal variety is
\begin{equation*}
X_{3,4,5} = \left\{ \det
\left(\begin{smallmatrix}
0&x_{1}&x_{2}\\
x_{4}&0&x_{3}\\
x_{5}&x_{6}&x_{7}\\
\end{smallmatrix}\right)
= 0 \right\} \subset \PP^6.
\end{equation*}
This is a special hyperplane section of~$X_{3,3,6}$, given by the equation~$x_8 = 0$,
and its singular locus consists of four planes~$\langle e_2, e_3, e_7 \rangle$, 
$\langle e_2, e_5, e_7 \rangle$, $\langle e_3, e_6, e_7 \rangle$, $\langle e_5, e_6, e_7 \rangle$,
and two quadric surfaces
$\{x_1=x_2=x_6=\det\left(\begin{smallmatrix} x_4 & x_3 \\ x_5 & x_7 \end{smallmatrix}\right) = 0\}$,
$\{x_3=x_4=x_5=\det\left(\begin{smallmatrix} x_1 & x_2 \\ x_6 & x_7 \end{smallmatrix}\right) =0\}$.

\subsubsection{$\dd(X) = 3$ and~$\rr(X) = 5$}

In this case~$\dim(X) \le 4$, $\Xi(X) = \rA_2$, and the maximal variety is
\begin{equation*}
X_{3,5,4} = \left\{ \det
\left(\begin{smallmatrix}
0&x_{1}&x_{2}\\
x_{4}&0&x_{3}\\
x_{5}&x_{6}&0\\
\end{smallmatrix}\right)
= 0 \right\} \subset \PP^5.
\end{equation*}
This is a special hyperplane section of~$X_{3,4,5}$, given by the equation~$x_7 = 0$,
and its singular locus is a configurations of nine lines~$\langle e_i, e_j \rangle$, where~$i$ is odd and~$j$ is even. 
This is a toric cubic hypersurface, known as the \emph{Perazzo primal}
(see~\cite{Baker1931}, \cite[Exercise~9.16]{Dolgachev-ClassicalAlgGeom}). 

\subsubsection{$\dd(X) = 3$ and~$\rr(X) = 6$}

In this case~$\dim(X) = 3$, $\Xi(X) = \rA_1$, and
\begin{equation*}
X_{3,6,3} = \left\{ \sum_{i=1}^6 x_i = 
\det
\left(\begin{smallmatrix}
0&x_{1}&x_{2}\\
x_{4}&0&x_{3}\\
x_{5}&x_{6}&0\\
\end{smallmatrix}\right)
=0 \right\} \subset \PP^5
\end{equation*}
This is a special hyperplane section of~$X_{3,5,4}$, given by the equation~$\sum x_i = 0$,
and it has~10 nodes.
In fact, this is the \emph{Segre cubic} (see~\cite{Baker1931}, \cite[Sect.~3.2]{Hunt:book:96}, 
\cite[\S9.4.4]{Dolgachev-ClassicalAlgGeom}).  

\subsection{Degree~$2$}

In this section we list the maximal double coverings of type~$\rA$. 

\subsubsection{$\dd(X) = 2$ and~$\rr(X) = 2$}
\label{sss:x22n}

In this case~$\dim(X) \le 8$, $\Xi(X) = \rA_6$, and the maximal variety is 
\begin{equation*}
X_{2,2,8} = 
\left\{ y^2 = \det
\left(\begin{smallmatrix}
0&x_{1}&x_{2}&x_{3}\\
x_{1}&x_{9}&x_{4}&x_{5}\\
x_{2}&x_{4}&x_{8}&x_{6}\\
x_{3}&x_{5}&x_{6}&x_{7}\\
\end{smallmatrix}\right)
\right\} \subset \PP(1^9,2).
\end{equation*}
Its singular locus is the union of~$\PP^5 = \langle e_4, e_5, e_6, e_7, e_8, e_9 \rangle$ 
with a projection of the Segre variety~$\PP^2 \times \PP^3$.

\subsubsection{$\dd(X) = 2$ and~$\rr(X) = 3$}

In this case~$\dim(X) \le 7$, $\Xi(X) = \rA_5$, and the maximal varieties are
\begin{align*}
X_{2,3,7} &= \left\{ y^2 = \det
\left(\begin{smallmatrix}
0&x_{1}&x_{2}&x_{3}\\
x_{1}&0&x_{4}&x_{5}\\
x_{2}&x_{4}&x_{8}&x_{6}\\
x_{3}&x_{5}&x_{6}&x_{7}\\
\end{smallmatrix}\right)
\right\},
\\
X^*_{2,3,7} &= \left\{ 
\begin{aligned}
y^2 &= x_1^2x_8^2 + x_2^2x_7^2 + x_3^2x_6^2 + x_4^2x_5^2 
- 2x_1x_2x_7x_8 - 2x_1x_3x_6x_8 - 2x_1x_4x_5x_8 
\\ 
&- 2x_2x_3x_6x_7 - 2x_2x_4x_5x_7 - 2x_3x_4x_5x_6 
+ 4x_1x_5x_6x_7 + 4x_2x_3x_4x_8 
\end{aligned}
\right\}, 
\end{align*}
both in~$\PP(1^8,2)$.
The first variety~$X_{2,3,7}$ is a special hyperplane section of~$X_{2,2,8}$, given by the equation~$x_9 = 0$
and its singular locus is the union of two copies of~$\PP^4$, a projection of~$\PP^1 \times \PP^3$,
and a projection of~$\PP^2 \times \PP^2$.
The second variety~$X^*_{2,3,7}$ is the double covering ramified over the \emph{Cayley hyperdeterminant},
and its singular locus is the union of three copies of~$\PP^1 \times \PP^3$.

\subsubsection{$\dd(X) = 2$ and~$\rr(X) = 4$}

In this case~$\dim(X) \le 6$, $\Xi(X) = \rA_4$, and the maximal variety is
\begin{equation*}
X_{2,4,6} = \left\{ y^2 = \det
\left(\begin{smallmatrix}
0&x_{1}&x_{2}&x_{3}\\
x_{1}&0&x_{4}&x_{5}\\
x_{2}&x_{4}&0&x_{6}\\
x_{3}&x_{5}&x_{6}&x_{7}\\
\end{smallmatrix}\right)
\right\} \subset \PP(1^7,2).
\end{equation*}
This is a special hyperplane section of~$X_{2,3,7}$, given by the equation~$x_8 = 0$
and its singular locus is the union of four copies of~$\PP^3$
and three copies of~$\PP^1 \times \PP^2$.
It can be also represented as a special hyperplane section of~$X^*_{2,3,7}$, given by the equation~$x_8 = 0$
(up to renumbering the variables and rescaling one of them).

\subsubsection{$\dd(X) = 2$ and~$\rr(X) = 5$}

In this case~$\dim(X) \le 5$, $\Xi(X) = \rA_3$, and the maximal variety is
\begin{equation*}
X_{2,5,5} = 
\left\{ y^2 = \det
\left(\begin{smallmatrix}
0&x_{1}&x_{2}&x_{3}\\
x_{1}&0&x_{4}&x_{5}\\
x_{2}&x_{4}&0&x_{6}\\
x_{3}&x_{5}&x_{6}&0\\
\end{smallmatrix}\right) 
\right\} \subset \PP(1^6,2).
\end{equation*} 
This is a special hyperplane section of~$X_{2,4,6}$, given by the equation~$x_7 = 0$
and its singular locus is the union of eight planes~$\PP^2$ and three quadrics~$\PP^1 \times \PP^1$.

\subsubsection{$\dd(X) = 2$ and~$\rr(X) = 6$}

In this case~$\dim(X) \le 4$, $\Xi(X) = \rA_2$, and the maximal variety is
\begin{equation*}
\label{eq:deg2-a}
X_{2,6,4} =\left\{ \sum_{i=1}^6 x_i = 
y^2 - \det 
\left(\begin{smallmatrix}
0&x_{1}&x_{2}&x_{3}\\
x_{1}&0&x_{4}&x_{5}\\
x_{2}&x_{4}&0&x_{6}\\
x_{3}&x_{5}&x_{6}&0\\
\end{smallmatrix}\right)
= 0 \right\} \subset \PP(1^6,2),
\end{equation*} 
This is a special hyperplane section of~$X_{2,5,5}$, given by the equation~$\sum x_i = 0$.
It is known as the \emph{Coble fourfold} (see~\cite{Cheltsov-Kuznetsov-Shramov}),
its branch divisor is the \emph{Igusa quartic},
and its singular locus is the \emph{Cremona--Richmond configuration} of~15 lines.

\subsubsection{$\dd(X) = 2$ and~$\rr(X) = 7$}

In this case~$\dim(X) = 3$, $\Xi(X) = \rA_1$, and the variety~$X_{2,7,3}$ is a \emph{Kummer double solid},
i.e., the double covering~~$X \to \PP^3$ branched at a Kummer surface;
in particular, it has~$16$ nodes.
Note that in contrast with the other cases considered above 
varieties of this type are parameterized by a $3$-dimensional moduli space.  

\section{Roots of~$\Cl(X)$}
\label{sec:details}

In this section we use the computation from the proof of Theorem~\ref{thm:intro-clx} 
to describe uniformly the roots and exceptional classes in~$\Cl(X)$
for all del Pezzo varieties~$X$ with~$\rr(X) \ge 2$ 
(note that when~$\rr(X) = 1$ we have~$A_X^\perp = 0$, so in this case there are neither roots nor exceptional classes). 

\subsection{Type~$\rA_m$}

Let~$X$ be a del Pezzo variety of type~$\rA_m$ with~$\rr(X) \ge 2$.
Assume~$X$ has a $\QQ$-factorialization~$\hX$ such that~$\hX = \Bl_{P_1,\dots,P_k}(\PP_{\PP^2}(\cE))$
(this holds for all varieties of type~$\rA$ except for the varieties~$X^*_{d,3,n}$, that will be discussed separately).
Let~\mbox{$\sigma \colon \hX \to X_0 \coloneqq \PP_{\PP^2}(\cE)$} be the blowup.
Let~\mbox{$S \subset \hX$} be a general linear surface section 
and set~\mbox{$S_0 \coloneqq \sigma(S) \subset \PP_Z(\cE)$}; this is a general linear surface section of~$X_0$.
We have morphisms
\begin{equation*}
S \xrightarrow{\ \sigma\ } S_0 \xrightarrow{\ f\ } \PP^2;
\end{equation*}
the morphism~$f$ blows up~$m + 1$ point, and the morphism~$\sigma$ blows up~$k$ points.
Note that
\begin{equation}
\label{eq:k-m}
\rr(X) = k + 2,
\qquad\text{and}\qquad
\dd(X)+ k + m = 8,
\end{equation}
in particular~$k + m \le 7$. 
Since also~$m \ge 1$ and~$\dd(X) \ge 1$, we have~$k \le 6$.

Let~$\h\in \Cl(S)$ be the pullback of the hyperplane class of~$\PP^2$,
let~$\e^0_0,\dots,\e^0_m \in \Cl(S)$ be the pullbacks of the exceptional divisors of~$f$,
and let~$\e_1,\dots,\e_k\in \Cl(S)$ be the exceptional divisors of~$\sigma$, and set
\begin{equation*}
\e^0 \coloneqq \e^0_0 + \dots + \e^0_m,
\qquad 
\e \coloneqq \e_1 + \dots + \e_k.
\end{equation*}
Then the proof of Theorem~\ref{thm:intro-clx} shows that
\begin{equation*}
\Xi(X) = \left\langle \e^0_0 - \e^0_1, \dots, \e^0_{m-1} - \e^0_m \right\rangle,
\qquad 
\Cl(X) = \left\langle \h, \e^0, \e_1, \dots, \e_k \right\rangle.
\end{equation*}
Now after a simple computation we see that the roots in~$\Cl(X)$ are
\begin{equation}
\balpha = 
\begin{cases}
\pm(\e_{i_1} - \e_{i_2}), 
& \text{if~$k \ge 2$},\\
\pm(\h - \e_{i_1} - \e_{i_2} - \e_{i_3}), 
& \text{if~$k \ge 3$},\\
\pm(2\h - \e), 
& \text{if~$m = 1$ and~$k = 6$},\\
\pm( \h - \e^0 - \e_{i_1} - \dots - \e_{i_{2-m}}), 
& \text{if~$m \le 2$ and~$k \ge 2 - m$},\\
\pm(2\h - \e^0 - \e_{i_1} - \dots - \e_{i_{5-m}}), 
& \text{if~$m \le 5$ and~$k \ge 5 - m$},\\
\pm(3\h - \e^0 - \e - \e_i), 
& \text{if~$m \le 6$ and~$k = 7 - m$}.
\end{cases}
\end{equation} 
where~$1 \le i_1 < \dots < i_s \le k$,
and the exceptional elements in~$\Cl(X)$ are
\begin{equation}
\repsilon = 
\begin{cases}
\e_i,
& \text{if~$k \ge 1$},\\
\h - \e_{i_1} - \e_{i_2},
& \text{if~$k \ge 2$},\\
\h - \e^0,
& \text{if~$m = 1$},\\
2\h - \e_{i_1} - \dots - \e_{i_5},
& \text{if~$k \ge 5$},\\
2\h - \e^0 - \e_{i_1} - \dots - \e_{i_{4-m}},
& \text{if~$m \le 4$, $k \ge 4 - m$},\\
3\h - \e^0 - \e_{i_1} - \dots - \e_{i_{5-m}} - 2\e_{i_{6-m}},
& \text{if~$m \le 5$, $k \ge 6 - m$},\\
4\h - \e^0 -2\e + \e_{i_1} + \dots + \e_{i_{4-m}},
& \text{if~$m \le 4$, $k = 7 - m$},\\
4\h - 2\e^0 - 2\e + \e_{i_1} + \dots + \e_{i_{5}},
& \text{if~$m \le 2$, $k = 7 - m$},\\
5\h - \e^0 - 2\e,
& \text{if~$m = 1$, $k = 7 - m$},\\
5\h - 2\e^0 - 2\e + \e_{i_1} + \e_{i_2},
& \text{if~$m \le 5$, $k = 7 - m$},\\
6\h - 2\e^0 - 2\e -\e_i,
& \text{if~$m \le 6$, $k = 7 - m$}.
\end{cases}
\end{equation} 

We can also describe the roots and exceptional classes for the special varieties~$X^*_{d,3,n}$, $d \in \{2,4,6\}$.
Recall that these varieties are (the anticanonical models of) $\PP^{n-2}$-bundles over~$\PP^1 \times \PP^1$.
We denote by~$\f_1,\f_2 \in \Cl(X)$ the pullbacks of the rulings and by~$\ba \in \Cl(X)$ the fundamental class.
Then the roots in~$\Cl(X)$ are
\begin{equation}
\balpha = 
\begin{cases}
\pm(\f_{1} - \f_{2}), 
& \text{if~$d \in \{2,4,6\}$},\\
\pm(\ba - \f_i), 
& \text{if~$d = 2$},\\
\pm(\ba - \f_1 - \f_2), 
& \text{if~$d = 4$},\\
\pm(\ba - 2\f_i - \f_j), 
& \text{if~$d = 6$},
\end{cases}
\end{equation} 
where~$i \ne j \in \{1,2\}$ and there are no exceptional elements. 

\subsection{Type~$\rD_m$}

Let~$X$ be a del Pezzo variety of type~$\rD_m$ with~$\rr(X) \ge 2$.
By Theorem~\ref{thm:intro-bircla}\ref{it:bircla-dm} the variety~$X$ 
has a $\QQ$-factorialization~$\hX = \Bl_{P_1,\dots,P_k}(X_0)$,
where~$X_0$ is a flat quadric bundle over~$\PP^1$.
As before, let~$\sigma \colon \hX \to X_0$ be the blowup,
let~$S \subset \hX$ be a general linear surface section and set~$S_0 \coloneqq \sigma(S) \subset \PP_Z(\cE)$;
this is a general linear surface section of~$X_0$.
We have morphisms
\begin{equation*}
S \xrightarrow{\ \sigma\ } S_0 \xrightarrow{\ f\ } \PP^1,
\end{equation*}
where~$f$ is a conic bundle with~$m$ singular fibers, and the morphism~$\sigma$ blows up~$k$ points.
The equalities~\eqref{eq:k-m} still hold;
in particular~$k + m \le 7$. 
Since also~$m \ge 4$ and~$\dd(X) \ge 1$, we have~$k \le 3$.

As in the proof of Theorem~\ref{thm:intro-bircla}\ref{it:bircla-dm}
contracting one component in each singular fiber of~$f$
we factor~$f$ through a birational contraction~$f_0 \colon S_0 \to \bar{S}_0 = \PP^1 \times \PP^1$.
We denote by~$\f_1, \f_2\in \Cl(S)$ the pullbacks of the classes of the rulings of~$\bar{S}_0$, 
by~$\e^0_1,\dots,\e^0_m \in \Cl(S)$ the pullbacks of the exceptional divisors of~$f_0$.
Finally, we let~$\e_1,\dots,\e_k \in \Cl(S)$ be the exceptional divisors of~$\sigma$,
and set
\begin{equation*}
\e^0 \coloneqq \e^0_1 + \dots + \e^0_m,
\qquad 
\e \coloneqq \e_1 + \dots + \e_k.
\end{equation*}
Then the proof of Theorem~\ref{thm:intro-clx} shows that
\begin{equation*}
\Xi(X) = \left\langle \e^0_1 - \e^0_2, \dots, \e^0_{m-1} - \e^0_m, \f_2 - \e^0_{m-1} - \e^0_m \right\rangle,
\qquad 
\Cl(X) = \left\langle 2\f_1 - \e^0, \f_2, \e_1, \dots, \e_k \right\rangle.
\end{equation*}
Now after a simple computation we see that the roots in~$\Cl(X)$ are
\begin{equation}
\balpha = 
\begin{cases}
\pm(\e_{i_1} - \e_{i_2}), 
& \text{if~$k \ge 2$},\\
\pm(\f_2 - \e_{i_1} - \e_{i_2}), 
& \text{if~$k \ge 2$},\\
\pm(2\f_1 + \f_2 - \e^0 - \e_{i_1} - \dots - \e_{i_{6-m}}), 
& \text{if~$m \le 6$, $k \ge 6 - m$},\\
\pm(2\f_1 + 2\f_2 - \e^0 - \e - \e_i), 
& \text{if~$m \le 6$, $k = 7 - m$},\\
\end{cases}
\end{equation} 
where~$1 \le i_1 < \dots < i_s \le k$,
and the exceptional elements in~$\Cl(X)$ are
\begin{equation}
\repsilon = 
\begin{cases}
\e_i,
& \text{if~$k \ge 1$},\\
\f_2 - \e_i,
& \text{if~$k \ge 1$},\\
2\f_1 + \f_2 - \e^0 - \e_{i_1} - \dots - \e_{i_{5-m}},
& \text{if~$m \le 5$, $k \ge 5 - m$},\\
2\f_1 + 2\f_2 - \e^0 - \e_{i_1} - \dots - \e_{i_{5-m}} - 2\e_{i_{6-m}},
& \text{if~$m \le 5$, $k \ge 6 - m$},\\
2\f_1 + 3\f_2 - \e^0 - 2\e + \e_{i_1} + \dots + \e_{i_{5-m}},
& \text{if~$m \le 5$, $k = 7 - m$},\\
4\f_1 + 3\f_2 - 2\e^0 - 2\e + \e_i,
& \text{if~$m \le 6$, $k = 7 - m$},\\
4\f_1 + 4\f_2 - 2\e^0 - 2\e - \e_i,
& \text{if~$m \le 6$, $k = 7 - m$}.
\end{cases}
\end{equation} 

\subsection{Type~$\rE_m$}

Let~$X$ be a del Pezzo variety of type~$\rE_m$.
By Theorem~\ref{thm:intro-bircla}\ref{it:bircla-em} the variety~$X$ 
has a $\QQ$-factorialization~\mbox{$\hX = \Bl_{P_1,\dots,P_k}(X_0)$},
where~$X_0$ has type~$\rE_m$ and~$\rr(X_0) = 1$.
As before, let~$\sigma \colon \hX \to X_0$ be the blowup,
let~$S \subset \hX$ be a general linear surface section and set~$S_0 \coloneqq \sigma(S) \subset X_0$;
this is a general linear surface section of~$X_0$
and~$\sigma \colon S \to S_0$ is the blowup of~$k$ points.
Note that~$m \in \{6,7\}$ and~$1 \le k \le 8 - m$.

We denote by~$\ba_0\in \Cl(S)$ the pullback to~$S$ of the anticanonical class of the surface~$S_0$, 
by~\mbox{$\e_1,\dots,\e_k \in \Cl(S)$} the exceptional divisors,
and set~$\e = \e_1 + \dots + \e_k$.
Now we see that the roots in~$\Cl(X)$ are
\begin{equation}
\balpha = 
\begin{cases}
\pm(\e_1 - \e_2), 
& \text{if~$m = 6$, $k = 2$},\\
\pm(\ba_0 - \e - \e_i), 
& \text{if~$m \ge 6$ and~$k = 8 - m$}.
\end{cases}
\end{equation} 
where~$1 \le i \le k$,
and the exceptional elements in~$\Cl(X)$ are
\begin{equation}
\repsilon = 
\begin{cases}
\e_i,
& \text{if~$k \ge 1$},\\
\ba_0 - 2\e_i,
& \text{if~$m = 6$, $k \ge 1$},\\
2\ba_0 - 2\e - \e_i,
& \text{if~$m \ge 6$, $k = 8 - m$}.
\end{cases}
\end{equation} 

\subsection{Table}

The results of the above computations are summarized in the following table.
We list del Pezzo varieties~$X$ according to Dynkin types of~$\Xi(X)$ 
and for each variety we show its degree, the rank of the class group,
the root system~$\Delta \coloneqq \Delta(\Cl(X),A_X)$,
and the cardinalities of the sets
$\Theta_1 \coloneqq \Theta_1(\Cl(X),A_X)$ of exceptional classes,
$\Theta_2 \coloneqq \Theta_2(\Cl(X),A_X)$ of $\PP^1$-classes, and
$\Theta_3^\circ \coloneqq \Theta_3^\circ(\Cl(X),A_X)$ of $\PP^2$-classes,
see~Definition~\ref{def:exP1P2}.
The last column shows if the corresponding variety is primitive or not. 

\renewcommand\arraystretch{1.}
\setlength{\tabcolsep}{1.4em}
\begin{longtable}{lllcccc}
\\\hline
$\dd(X)$ & $\rr(X)$ & $\Delta$& $|\Theta_1|$ & $|\Theta_2|$& $|\Theta_3^\circ|$& primitive
\\\hline\endhead\hline
\multicolumn{7}{c}{{$\Xi(X) = \rA_1$\qquad $\dim(X)=3$} }
\\\hline
$1$ & 8&\type{E_7}& $126$& $756$ & $4032 $& $-$
\\
$2$ & 7&\type{D_6}& $32$& $60$ & $192$& $-$
\\ 
$3$ & 6&\type{A_5} &$15$& $15$ & $30$ & $-$
\\ 
$4$ & 5&\type{A_1\times A_3}&$8$& $6$ & $8$& $-$
\\ 
$5$ & 4&\type{A_2}&$4$& $3$ & $3$& $-$
\\
$6$ & 3&\type{A_1}&$2$& $1$ & $2$& $-$
\\
$6^*$ & 3&\type{A_2} &$0$& $3$ & $0$& $+$
\\ 
$7$ & 2&$\varnothing$&$1$& $0$ & $1$& $-$
\\
$8$ & 1&$\varnothing$&$0$& $0$ & $0$& $+$
\\\hline
\multicolumn{7}{c}{{$\Xi(X) = \rA_2$ \qquad $\dim(X)\le 4$}}
\\\hline 
$1$&7&\type{E_6} & $72$ & $270$& $864$&$-$
\\
$2$&6&\type{A_5}& $20$ & $30$& $60$&$-$
\\ 
$3$&5&\type{2A_2}& $9$ & $9$ & $12$&$-$
\\ 
$4$& 4&\type{2A_1}& $4$ & $4$& $4$&$-$
\\ 
$5$&3&\type{A_1}& $1$ & $2$& $2$&$-$
\\
$6$& 2&\type{A_1}& $0$ & $0$& $2$&$+$
\\ \hline
\multicolumn{7}{c}{{$\Xi(X) = \rA_3$ \qquad $\dim(X)\le 5$}}
\\\hline
$1$&6&\type{D_5} & $40$ &$90$& $160$&$-$
\\
$2$&5&\type{A_1\times A_3}&$12$& $14$& $16$&$-$
\\
$3$&4&\type{A_1\times A_1} & $5$&$5$&$4$&$-$
\\
$4$&3&$\varnothing$&$2$&$2$&$2$&$-$
\\
$4^*$&3& \type{A_1\times A_1}&0& $4$& 0 &$+$
\\
$5$&2&$\varnothing$ &$0$&$1$&$1$&$+$
\\\hline
\multicolumn{7}{c}{{$\Xi(X) = \rA_4$ \qquad $\dim(X)\le 6$}}
\\\hline
$1$&5&\type{A_4}&20&30&40&$-$
\\
$2$&4&\type{A_2}&6&6&6&$-$
\\
$3$&3&\type{A_1}&2&2&2 &$-$
\\
$4$&2&$\varnothing$&1&0&1&$-$
\\
$5$&1&$\varnothing$&0&0&0&$+$
\\\hline
\multicolumn{7}{c}{{$\Xi(X) = \rA_5$ \qquad $\dim(X)\le 7$}}
\\\hline
1&4&\type{A_1\times A_2}&8&12&12&$-$
\\
2&3&\type{A_1}&2&2&4&$-$
\\
$2^*$&3&\type{A_2}&0&6&0 &$+$
\\
3&2&\type{A_1}&0&0&2&$+$
\\\hline
\multicolumn{7}{c}{{$\Xi(X) = \rA_6$ \qquad $\dim(X)\le 8$ }}
\\\hline
1 &3&\type{A_1}&2&4& 4 &$-$
\\
2 &2&$\varnothing$&0&0& 2 &$+$ 
\\\hline
\multicolumn{7}{c}{{$\Xi(X) = \rA_7$ \qquad $\dim(X)\le 9$ }}
\\\hline
1 &2&$\varnothing$&0&0& 2&$+$
\\\hlineB{3}
\multicolumn{5}{c}{{$\Xi(X) = \rD_4$}}
\\\hline
1&5&\type{D_4} &24&24&0&$-$
\\
2 &4&\type{A_1\times A_1\times A_1}&8&6&0&$-$
\\ 
3&3&$\varnothing$&3&3&0 &$-$
\\ 
4&2&$\varnothing$&0&2&0 &$+$
\\\hline
\multicolumn{5}{c}{{$\Xi(X) = \rD_5$}}
\\\hline
1&4&\type{A_3}& 12&6&0 &$-$
\\
2&3&\type{A_1}&4&2&0&$-$
\\
3&2&$\varnothing$&1&1&0&$-$
\\
4&1&$\varnothing$&0&0&0& $+$
\\\hline
\multicolumn{5}{c}{{$\Xi(X) = \rD_6$}}
\\\hline
1&3&\type{A_1\times A_1}&4&4&0 &$-$
\\
2&2& \type{A_1}&0&2&0 &$+$
\\\hline
\multicolumn{5}{c}{{$\Xi(X) = \rD_7$}}
\\\hline
1&2&$\varnothing$&0&2&0&$+$
\\\hline
\multicolumn{5}{c}{{$\Xi(X) = \rE_6$}} 
\\\hline
1&3&\type{A_2}&6&0&0&$-$
\\
2&2&$\varnothing$&2&0&0&$-$
\\
3&1&$\varnothing$&0&0&0&$+$
\\\hline
\multicolumn{5}{c}{{$\Xi(X) = \rE_7$}} 
\\\hline 
1& 2& \type{A_1} &2&0&0 &$-$ 
\\ 
2& 1 &$\varnothing$ &0&0&0 &$+$
\\\hline
\multicolumn{5}{c}{{$\Xi(X) = \rE_8$}} 
\\\hline 
1& 1 &$\varnothing$ &0&0&0 &$+$
\\\hline
\end{longtable}

%%%%%%%%%%%%%%%%%%%%%%%%%%%%%%%%%%%%%%%%%%%%%%%%%%%%%%%%
%%%%%%%%%%%%%%%%%%%%%%%%%%%%%%%%%%%%%%%%%%%%%%%%%%%%%%%%
%%%%%%%%%%%%%%%%%%%%%%%%%%%%%%%%%%%%%%%%%%%%%%%%%%%%%%%%
%%%%%%%%%%%%%%%%%%%%%%%%%%%%%%%%%%%%%%%%%%%%%%%%%%%%%%%%

\providecommand*{\BibDash}{}

\def\cprime{$'$}

% \bibliography{dp}
% \bibliographystyle{alpha}

\end{document}